\begin{document}

\newtheorem{tm}{Theorem}[section]
\newtheorem{pp}[tm]{Proposition}
\newtheorem{lm}[tm]{Lemma}
\newtheorem{df}[tm]{Definition}
\newtheorem{tl}[tm]{Corollary}
\newtheorem{re}[tm]{Remark}
\newtheorem{eap}[tm]{Example}

\newcommand{\pof}{\noindent {\bf Proof} }
\newcommand{\ep}{$\quad \Box$}

\newcommand{\al}{\alpha}
\newcommand{\be}{\beta}
\newcommand{\var}{\varepsilon}
\newcommand{\la}{\lambda}
\newcommand{\de}{\delta}
\newcommand{\str}{\stackrel}
\newcommand{\rmn}{\romannumeral}

\renewcommand{\proofname}{\bf Proof}

\allowdisplaybreaks

\begin{frontmatter}

\title{Properties of fuzzy set spaces with $L_p$ metrics
\tnoteref{usc}
 }
\tnotetext[usc]{Project supported by
 Natural Science Foundation of Fujian Province of China (No. 2020J01706) }
\author{Huan Huang}
 \ead{hhuangjy@126.com }
\address{Department of Mathematics, Jimei
University, Xiamen 361021, China}

\date{}

\begin{abstract}
This paper discusses the properties of
 the spaces of fuzzy sets in a metric space with $L_p$-type $d_p$ metrics, $p\geq 1$.
The $d_p$ metrics are well-defined
if and only if the corresponding Haudorff
distance functions are measurable.
In this paper, we give
some fundamental conclusions on the measurability of these Haudorff
distance functions.
Then we give the characterizations of compactness in fuzzy set space with $d_p$ metrics.
At last we
present the completions of fuzzy set spaces with $d_p$ metrics.

\end{abstract}

\begin{keyword}
$L_p$ metric; Hausdorff metric; Measurability; Compactness; Completion
\end{keyword}

\end{frontmatter}

Please refer to the version 5 of this paper (arXiv:2209.12978v5) for the
history of the results in this paper.

\section{Introduction}

The $L_p$-type metrics are widely used in theoretical research and practical applications.
The class of $d_p$ metrics are a kind of $L_p$-type metrics.
The $d_p$ metrics are commonly used metrics
on
fuzzy sets \cite{da3, wu, du, trutschnig}.

The $d_p$ metrics are well-defined
if and only if the corresponding Haudorff distance functions are measurable.
So it is important to discuss the measurability of these Haudorff distance functions.
In \cite{huang17}, we give some conclusions on this topic.
In this paper, we give proofs to the positive conclusions and counterexamples to the negative conclusion.
Further, we make great improvements to these conclusions.
The conclusions on measurability of the function in this paper
pointed out the cases in which the $d_p$ metrics are well-defined,
and, therefore, indicate the properties of the $L_p$-type metrics can be used in these cases.
So
these conclusions are fundamental for relevant studies of the $d_p$ metrics.

Compactness is a fundamental property in both theory and applications \cite{gh3, kelley, fs}.
The characterizations of compactness for fuzzy set space with $d_p$ metrics
have attracted attentions of scholars \cite{ma,zhao,wu2}.
The completion of a given metric space is an important topic in analysis.

In \cite{huang97}, we give the characterizations of total boundedness, relative compactness and compactness
for fuzzy set space with $d_p$ metrics.
We also give the completions of various fuzzy set spaces with $d_p$ metrics.
All the fuzzy sets involved in the conclusions in \cite{huang97} are fuzzy sets
in
 the $m$-dimensional Euclidean space $\mathbb{R}^m$.

It is natural to consider spaces of
fuzzy sets in a metric space
\cite{jarn, greco, greco3}.
In this paper, we discuss the properties spaces of fuzzy sets in a general metric space with $d_p$ metrics.

We
discuss the properties of the $d_p^*$ metric ($d_p$ metrics) and the $H_{\rm end}$ metric
including the
relationship between them.
Based on these results and the characterizations of total boundedness, relative compactness and compactness
in fuzzy set space with $d_p$ metrics given in \cite{huang19c}.
We present
 the characterizations of total boundedness, relative compactness and compactness
 for space of fuzzy sets in a metric space with $d_p$ metrics.
These results reveal a connection between
 a set being total bounded (respectively, relatively
compact, compact) in the sense of the $d_p$ metrics and this set being total bounded (respectively, relatively
compact, compact) in the sense of the endograph metrics.
The results on the characterizations of total boundedness, relative compactness and compactness in this paper generalize the corresponding results in \cite{huang97}.
Furthermore, using the results
in this paper,
we give new characterizations of total boundedness, relative compactness and compactness
 for space of fuzzy sets in $\mathbb{R}^m$.

We construct completions of fuzzy set spaces in a metric space with $d_p$ metrics.
These conclusions on the completions of the spaces of fuzzy set in a metric space $(X,d)$
apply to not only the cases that $X$ is a complete metric space
but also the cases that $X$ is an incomplete metric space.
The fuzzy sets involved in the corresponding results in \cite{huang97} is fuzzy sets in $\mathbb{R}^m$, which is a complete space.
The results on completions of fuzzy set spaces in this paper improve the corresponding results in \cite{huang97}.

The remainder of this paper is organized as follows.
In Section \ref{bas}, we recall and give some basic notions and results related to fuzzy sets
and
 metrics on them.
In Section \ref{meau}, we give fundamental conclusions on
the measurability of the Hausdorff distance functions.
 In Section \ref{per},
 we discuss the properties and relation of the $d_p^*$ metric and the $H_{\rm end}$ metric.
In Section \ref{cng}, we give the characterizations of total boundedness, relative compactness and compactness in
$(F^1_{USCG} (X)^p, d_p)$.
In Section \ref{cnr},
 we pointed out that the results in Section \ref{cng} generalize the corresponding results in \cite{huang97} for $(F^1_{USCG} (\mathbb{R}^m)^p, d_p)$.
Furthermore, by using results in Sections \ref{per} and \ref{cng},
we give new
characterizations of total boundedness, relative compactness and compactness in $ (F^1_{USCG} (\mathbb{R}^m)^p, d_p)$.
In Section 7, we construct completions of spaces of fuzzy sets in a metric space according to $d_p$ metrics.
In Section 8, we draw our conclusions.

\section{Fuzzy sets and metrics on them} \label{bas}

In this section, we recall and give some notions and results related to fuzzy sets
and
metrics on them.
Readers
can refer to \cite{wu, da3, du, wang2, wa, rojas, garcia, kloeden, kloeden2}
for studies and applications of fuzzy theory.

Let $\mathbb{N}$ be the set of all positive integers,
and let
$\mathbb{R}^m$
be the $m$-dimensional Euclidean space ($\mathbb{R}^1$ is also written as $\mathbb{R}$).
We use \bm{$\rho_m$} to denote the Euclidean metric on $\mathbb{R}^m$.
We use $\mathbb{R}^+ $ to denote the set $\{ x\in \mathbb{R}: x\geq 0\}$.

In this paper,
if not specifically mentioned, we suppose that $X$ is \emph{a metric space endowed with a metric}
$d$. For simplicity, we also use $X$ to denote the metric space $(X,d)$.

Let
$F(X)$
denote the set of
all fuzzy sets in $X$. A fuzzy set $u\in F(X)$ can be seen as a function $u:X \to [0,1]$.
A
subset $S$ of $X$ can be seen as a fuzzy set in $X$.
If there is no confusion,
 the fuzzy set in $X$ corresponding to $S$ is often denoted by $\chi_{S}$; that is,
\[ \chi_{S} (x) = \left\{
                    \begin{array}{ll}
                      1, & x\in S, \\
                      0, & x\in X \setminus S.
                    \end{array}
                  \right.
\]
For simplicity,
for
$x\in X$, we will use $\widehat{x}$ to denote the fuzzy set  $\chi_{\{x\}}$ in $X$.
In this paper, if we want to emphasize a specific metric space $X$, we will write the fuzzy set in $X$ corresponding to $S$ as
$S_{F(X)}$, and the fuzzy set in $X$ corresponding to $\{x\}$ as $\widehat{x}_{F(X)}$.

For
$u\in F(X)$, let $[u]_{\al}$ denote the $\al$-cut of
$u$, i.e.
\[
[u]_{\al}=\begin{cases}
\{x\in X : u(x)\geq \al \}, & \ \al\in(0,1],
\\
{\rm supp}\, u=\overline{    \{x: u(x) > 0 \}    }, & \ \al=0,
\end{cases}
\]
where $\overline{S}$
denotes
the topological closure of $S$ in $(X,d)$.

For
$u\in F(X)$, $u$ is said to be normal if $[u]_1 \not= \emptyset$.
We use $F^1(X)$ to denote the family of all normal fuzzy
sets in $X$.

For
$u\in F(X)$,
define
\begin{gather*}
{\rm end}\, u:= \{ (x, t)\in  X \times [0,1]: u(x) \geq t\},
\\
{\rm send}\, u:= \{ (x, t)\in  X \times [0,1]: u(x) \geq t\} \cap  ([u]_0 \times [0,1]).
\end{gather*}
$
{\rm end}\, u$ and ${\rm send}\, u$
 are called the endograph of $u$ and the sendograph of $u$, respectively.

If $X$ is replaced by a nonempty set $Y$ in the above four paragraphs, then
the definitions and notations in the above four paragraphs still apply except
for the notations $[u]_0$ and ${\rm send}\, u$.
If $(X,d)$ is replaced by a topological space $(Y, \tau)$ in the above four paragraphs, then
the definitions and notations in the above four paragraphs still apply.

Let
$(X,d)$ be a metric space.
 We
use $\bm{H}$ to denote the \emph{\textbf{Hausdorff distance}}
on
 $C(X)$ induced by $d$, i.e.,
\begin{equation} \label{hau}
\bm{H(U,V)}  =   \max\{H^{*}(U,V),\ H^{*}(V,U)\}
\end{equation}
for each $U,V\in C(X)$,
where
\begin{equation*}\label{haus}
  d\, (u,V) = \inf_{v\in
V}d\, (u,v),\  H^{*}(U,V)=\sup\limits_{u\in U}\,d\, (u,V) =\sup\limits_{u\in U}\inf\limits_{v\in
V}d\, (u,v).
\end{equation*}
We call $H^*$ the \emph{Hausdorff pre-distance} related to $H$.

In this paper, if we want to emphasize that $H$ and $H^*$  are induced by a specific metric $\lambda$, we will
write $H$ and $H^*$ as $H_\lambda$ and $H^*_{\lambda}$, respectively.

The metric $\overline{d}$ on $X \times [0,1]$ is defined
as
$$  \overline{d } ((x,\al), (y, \beta)) = d(x,y) + |\al-\beta| .$$
If not specifically mentioned, we suppose by default that the metric on $X \times [0,1]$ is $\overline{d}$. For simplicity, if there is no confusion,
we also use $X \times [0,1]$ to denote the metric
space $(X \times [0,1], \overline{d})$, and
also use $H$ to denote the Hausdorff distance on $C(X\times [0,1])$ induced by $\overline{d}$ on $X \times [0,1]$.

\begin{re}
 {\rm
Let $Y$ be a nonempty set.
$\rho$ is said to be a \emph{metric} on $Y$ if $\rho$ is a function from $Y\times Y$ into $\mathbb{R}$
satisfying
positivity, symmetry and triangle inequality. At this time, $(Y, \rho)$ is said to be a metric space.
  $\rho$ is said to be an \emph{extended metric} on $Y$ if $\rho$ is a function from $Y\times Y$ into $\mathbb{R} \cup \{+\infty\} $
satisfying
positivity, symmetry and triangle inequality. At this time, $(Y, \rho)$ is said to be an extended metric space. Clearly a metric space is an extended metric space.

Let $(Y, \rho)$ be an extended metric space,
$y\in Y$ and $\varepsilon>0$. We use
$B_{(Y,\rho)}(y, \varepsilon)$ to denote the set
$\{z\in Y: \rho(y,z) < \varepsilon\}$
and use
$\overline{B}_{(Y,\rho)}(y, \varepsilon)$ to denote the set
$\{z\in Y: \rho(y,z) \leq \varepsilon\}$.
If there is no confusion, we will write
$B_{(Y,\rho)}(y, \varepsilon)$ as $B(y, \varepsilon)$
and write
$\overline{B}_{(Y,\rho)}(y, \varepsilon)$ as $\overline{B}(y, \varepsilon)$.

Let $(Y, \rho)$ be an extended metric space.
$\{B(y, \varepsilon): y\in Y, \varepsilon>0\}$
is a basis for the topology induced by $\rho$ on $Y$.
The topological closure of a set $A$ in $(Y, \rho)$, denoted by
$\overline{A}$,
refers to the closure of $A$ in $Y$ according to the topology induced by $\rho$ on $Y$.
Then $x\in \overline{A}$
if and only if
there is a sequence   $\{x_n\}$ in $A$
such that
$\rho(x_n, x) \to 0$.
So $x\in \overline{A}$ if and only if $\rho(x, A) =0$.

Let $(Y, \rho)$ be an extended metric space
and $S$ a nonempty set in $Y$.
we use $\rho|_{S}$ to denote the induced metric on $S$ by $\rho$.
$(S, \rho|_{S})$ is called
 a subspace of $(Y, \rho)$.
If there is no confusion we \emph{also use} $\rho$ \emph{to denote} $\rho|_{S}$.
Obviously, if $(Z, \lambda)$ is an extended metric space (respectively, metric space) and $A$ is a subset of $Z$,
then $(A, \lambda|_A)$ is also an extended metric space (respectively, metric space).

 }
\end{re}

\begin{re}\label{haum}
{\rm
Let $(X,d)$ be a metric space. The Hausdorff distance $H$ on $K(X)$ induced by $d$ on $X$ is a metric. The Hausdorff distance $H$ on $C(X)$ induced by $d$
on $X$
is an extended metric.
As $(X \times [0,1], \overline{d})$ is a metric space,
the Hausdorff distance $H$ on $K(X\times [0,1])$ induced by $\overline{d}$ on $X \times [0,1]$
is a metric and the Hausdorff distance $H$ on $C(X\times [0,1])$ induced by $\overline{d}$
 on $X \times [0,1]$ is an extended metric.

Each of $H$ on $C(X)$ induced by $d$ on $X$ and $H$ on $C(X\times [0,1])$ induced by $\overline{d}$
 on $X \times [0,1]$
 does not need to be a metric.
Clearly $H$ on $C(X)$ induced by $d$ on $X$ is a metric
if and only if $H$ on $C(X\times [0,1])$ induced by $\overline{d}$
 on $X \times [0,1]$ is a metric.

Clearly $H(\{0\}, [0,+\infty))=+\infty$. So $H$ on $C(\mathbb{R}^m)$ is an extended metric but not a metric,
and then this is also true for $H$ on $C(\mathbb{R}^m\times [0,1])$.

 When the Hausdorff distance $H$ is an extended metric, it is also called
the Hausdorff extended metric.
When the Hausdorff distance $H$ is a metric, it is also called the Hausdorff
metric.
In this paper, for simplicity,
 we refer to both the Hausdorff extended metric and the Hausdorff metric as the \textbf{Hausdorff metric} unless
there is a need to specifically indicate what they are.
}
\end{re}

Let $(Y,\rho)$ be an extended metric space.
The symbols $K(Y)$ and
 $C(Y)$ are used to
 denote the set of all non-empty compact subsets of $(Y,\rho)$ and the set of all non-empty closed subsets of $(Y,\rho)$, respectively.

Let $u\in F(Y)$. We say that $u$ is an upper semi-continuous fuzzy set in $(Y,\rho)$
if $u(x) \geq \limsup_{y\to x} u(y)$ for each $x\in Y$.
The following conditions are equivalent:
(\rmn1) $u$ is an upper semi-continuous fuzzy set in $(Y,\rho)$,
(\rmn2) for each $\al\in \mathbb{R}$, $\{x\in Y: u(x)\geq \al\} \in C(Y)\cup \{\emptyset\} $, and (\rmn3) for each $\al\in (0,1]$, $[u]_\al \in C(Y)\cup \{\emptyset\} $. (\rmn1)$\Leftrightarrow$(\rmn2) is well-known.
Given $v\in F(Y)$.
If $\al>1$ then $\{x\in Y: v(x)\geq \al\} = \emptyset \in C(Y)\cup \{\emptyset\} $. If
$\al\leq 0$ then $\{x\in Y: v(x)\geq \al\} = Y \in C(Y)\cup \{\emptyset\}$.
So (\rmn2)$\Leftrightarrow$(\rmn3).
Thus (\rmn1)$\Leftrightarrow$(\rmn2)$\Leftrightarrow$(\rmn3).

Let
$F_{USC}(Y)$
denote
the family of all upper semi-continuous fuzzy sets in $(Y,\rho)$
and let $F^1_{USC}(Y)$ denote
the family of normal fuzzy sets in $F_{USC}(Y)$,
i.e.,
\begin{gather*}
 F_{USC}(Y) :=\{ u\in F(Y) : [u]_\al \in  C(Y)\cup \{\emptyset\}  \  \mbox{for all} \   \al \in (0,1]   \},
\\
 F^1_{USC}(Y) :=  F_{USC}(Y)\cap F^1(Y)=\{ u\in F(Y) : [u]_\al \in  C(Y)   \  \mbox{for all} \   \al \in (0,1]   \}.
\end{gather*}
Obviously for each $u\in F(Y)$, $[u]_0\in C(Y)\cup \{\emptyset\}$,
and
for each $u\in F^1_{USC}(Y)$, $[u]_0\in C(Y)$.
So
$\al \in (0,1]$ in the above definitions of $F_{USC}(Y)$ and $F^1_{USC}(Y)$ can be replaced
by $\al \in [0,1]$.

In this paper, we mainly discuss
  the normal and upper semi-continuous fuzzy sets in a metric space.
Some of the discussion will also involve more general fuzzy sets.
Define
\begin{gather*}
F_{USCB}(X):=\{ u\in  F(X): [u]_\al \in K(X)\cup\{\emptyset\}  \ \mbox{for all} \   \al\in [0,1] \},
\\
F_{USCG}(X):=\{ u\in  F(X): [u]_\al \in K(X)\cup\{\emptyset\} \ \mbox{for all} \   \al\in (0,1] \},
\\
F^1_{USCB}(X):= F^1(X)\cap F_{USCB}(X)=\{ u\in  F(X): [u]_\al \in K(X)\ \mbox{for all} \   \al\in [0,1] \},
\\
F^1_{USCG}(X):= F^1(X)\cap F_{USCG}(X)=\{ u\in  F(X): [u]_\al \in K(X) \ \mbox{for all} \   \al\in (0,1] \}.
 \end{gather*}
Clearly
$F_{USCB}(X) = \{ u\in  F_{USC}(X): [u]_0 \in K(X)\cup\{\emptyset\}  \}$
(``$\subseteq$'' is obvious.
Let $u\in  F_{USC}(X)$ with $[u]_0 \in K(X)\cup\{\emptyset\} $.
Then for each $\al\in (0,1]$,
$[u]_\al$ is a closed subset of the compact set $[u]_0$,
and therefore $[u]_\al \in K(X)\cup\{\emptyset\} $. Thus
 $u\in  F_{USCB}(X)$.
Hence
 ``$\supseteq$'' holds.).
So $F^1_{USCB}(X) =F^1(X) \cap F_{USCB}(X) = \{ u\in  F^1_{USC}(X): [u]_0 \in K(X) \}$.
Obviously
$F_{USCB}(X)\subseteq F_{USCG}(X) \subseteq F_{USC}(X)$.

The supremum metric $d_\infty$,
the endograph metric $H_{\rm end} $ and the sendograph metric $H_{\rm send} $
on $F^1_{USC}(X)$ are defined as follows.
Kloeden \cite{kloeden2} introduced the endograph metric $H_{\rm end}$.
For each $u,v \in F^1_{USC}(X)$,
\begin{align*}
\bm{d_\infty (u,v)} &:=   \sup\{ H([u]_\al, [v]_\al) :\al\in [0,1]  \},
\\
\bm{  H_{\rm end}(u,v)    } &: =  H({\rm end}\, u,  {\rm end}\, v ),
\\
\bm{  H_{\rm send}(u,v)    } &: =  H({\rm send}\, u,  {\rm send}\, v ),
  \end{align*}
where the first $H$ refers to
the Hausdorff
metric on $C(X)$ and the other two
 $H$s refer to
the Hausdorff
metric on $C(X \times [0,1])$ induced by $\overline{d}$ on $X \times [0,1]$.
Clearly
\begin{equation}\label{bre}
  \mbox{ for each $u,v \in F^1_{USC} (X)$,
$
  d_\infty(u,v) \geq H_{\rm send}(u,v) \geq H_{\rm end}(u,v).
$}
\end{equation}

\begin{re}
  {\rm

We can see that
each one of
$d_\infty$ and $H_{\rm send}$ on $F^1_{USC}(X)$ is an extended metric but does not need to be a metric.
 Both $d_\infty$
and
$H_{\rm send}$ are metrics on $F^1_{USCB}(X)$.
By \eqref{bre} and
Example \ref{pefium},
the $d_\infty$ metric and $H_{\rm send}$ metric on $F^1_{USCG}(\mathbb{R}^m)$ could take
the value $+\infty$.
So both $d_\infty$ and $H_{\rm send}$ on $F^1_{USCG}(\mathbb{R}^m)$ are not metrics, they are extended metrics.
 $H_{\rm end}$ is a metric on $F^1_{USC}(X)$ with $H_{\rm end}(u,v) \leq 1$ for all $u,v \in F^1_{USC}(X)$.
See also
Remark 3.3 in \cite{huang17} (We made a misprint in the last sentence of Remark 3.3 in \cite{huang17}. The ``$H_{\rm end}$'' must be deleted from this sentence).

For simplicity,
in this paper, we call
$H_{\rm send}$ on $F^1_{USC}(X)$ the $H_{\rm send}$ metric or the sendograph metric $H_{\rm send}$,
and call
$d_\infty$ on $F^1_{USC}(X)$ the $d_\infty$ metric or the supremum metric $d_\infty$.

}
\end{re}

We will often use the following known facts directly without quoting.
\\
Fact 1: Let $f$ be a function from an interval $[\mu,\nu]$ to $\mathbb{R}^+ \cup \{0\}$
and $p>0$.
Then the following conditions are equivalent to each other: (\rmn1) $f$ is measurable on $[\mu,\nu]$,
(\rmn2) $f^p$ is measurable on $[\mu,\nu]$, and (\rmn3) $(\int_\mu^\nu {f(\al)}^p \,d\al)^{1/p}$
is well-defined.
\\
Fact 2: Let $p>0$. For each $i=1,\ldots, n$, let $f_i$ be functions from an interval $[\mu_i,\nu_i]$ to $\mathbb{R}^+ \cup \{0\}$.
Then
the following conditions are equivalent to each other: (\rmn1) for each $i=1,\ldots, n$, $f_i$ is measurable on $[\mu_i,\nu_i]$,
(\rmn2) for each $i=1,\ldots, n$, $(\int_{\mu_i}^{\nu_i} {f_i (\al)}^p \,d\al)^{1/p}$
is well-defined,
(\rmn3) $\sum_{i=1}^n (\int_{\mu_i}^{\nu_i} {f_i (\al)}^p \,d\al)^{1/p}$
is well-defined.
(By Fact 1, (\rmn1)$\Leftrightarrow$(\rmn2).
Note that for each $i=1,\ldots, n$, if
$(\int_\mu^\nu {f_i(\al)}^p \,d\al)^{1/p}$
is well-defined, then $(\int_\mu^\nu {f_i(\al)}^p \,d\al)^{1/p}\geq 0$.
So (\rmn2)$\Rightarrow$(\rmn3).
(\rmn3)$\Rightarrow$(\rmn2) is obvious.
So (\rmn1), (\rmn2) and (\rmn3) are equivalent to each other.)

For each $u,v\in F^1_{USC}(X)$, the $d_p$ distance of $u,v$ given by
$$\bm{d_p(u,v)}= \left(\int_0^1 H([u]_\al, [v]_\al)^p  \,   d\al   \right)^{1/p}$$
 is well-defined if and only if
$H([u]_\al, [v]_\al)$ is a measurable function of $\al$ on $[0, 1]$.
Here we see
 $H([u]_\al, [v]_\al)$ as a function of $[0,1]$ into $\widehat{\mathbb{R}}$ with $\al$ as the independent variable.
We suppose that, in the sequel, ``$p$'', which appears in the mathematical expressions such as $d_p$, etc., satisfies
 $p \geq 1$.

In Section \ref{meau}, we will give some fundamental conclusions for the measurability of the function
$H([u]_\al, [v]_\al)$ of $\al$ on $[0,1]$.
Since $H([u]_\al, [v]_\al)$ could be a non-measurable function
of $\al$ on
$[0,1]$ (see Example \ref{nmf}),
we introduce the
 $d_p^*$ distance on $F^1_{USC} (X)$, $p\geq 1$, in
\cite{huang17},
which is defined by
\begin{tiny}
 \begin{gather*}
\bm{d_p^*(u,v) }:= \inf \left\{  \  \left(\int_0^1 f(\al)^p  \,   d\al   \right)^{1/p}
 :
\ f \mbox{ is a measurable function from } [0,1] \mbox{ to } \mathbb{R}^+ \cup \{+\infty\}    \mbox{ satisfying }
  f(\al)  \geq H([u]_\al, [v]_\al) \mbox{ for all } \al\in [0,1]  \  \right\}
\end{gather*}
\end{tiny}
for each
$u,v \in F^1_{USC} (X)$.

Conclusions (\rmn1) and (\rmn2)
below have been shown
in \cite{huang17} (see Theorem 3.1 and
Remarks 3.2 and 3.3 in \cite{huang17}.).
\\
(\rmn1)
$d_p^*$ on $F^1_{USC}(X)$ is an extended metric but does not need to be a metric.
\\
(\rmn2)
For each $u,v \in F^1_{USC} (X)$,
if $d_p(u,v)$ is well-defined then clearly
$d_p^*(u,v) = d_p(u,v)$.
So the $d_p^*$ distance is
 an expansion of the $d_p$ distance on
$F^1_{USC} (X)$.

The $d_p$ distance is well-defined on $F^1_{USC}(\mathbb{R}^m)$ (see Theorem \ref{rcm}).
The
 $d_p$ distance is well-defined on $F^1_{USCG}(X)$ (see Section 6 of \cite{huang19c} or Proposition \ref{lcpm}).
In Section \ref{meau}, we also give further results on the measurability of the function
$H([u]_\al, [v]_\al)$ of $\al$ on $[0,1]$.

\begin{re}
{\rm

  In this paper, for simplicity,
 we also call the $d_p^*$ distance as the
$d_p^*$ metric when the $d_p^*$ distance is an extended metric,
and
also call the $d_p$
distance as the
$d_p$ metric when the $d_p$ distance is an extended metric.

}
\end{re}

In \cite{huang19c},
we showed that for each $u,v\in F^1_{USC}(X)$,
$
  d_\infty(u,v) \geq d_p^*(u,v).
$

The following example shows that
the $d_p$ metric and $H_{\rm send}$ metric on $F^1_{USCG}(\mathbb{R}^m)$ could take
the value $+\infty$. This example comes from Section 6 of \cite{huang19c}.
The symbol $\|\cdot\|$ is used to denote the Euclidean norm on $\mathbb{R}^m$.

\begin{eap} \label{pefium}
  {\rm
Denote the origin of $\mathbb{R}^m$ by $o$.
Clearly $\widehat{o}_{F(\mathbb{R}^m)} \in F^1_{USCB}(\mathbb{R}^m) \subsetneqq F^1_{USCG}(\mathbb{R}^m)$.
Let $u\in F^1_{USCG}(\mathbb{R}^m)$ be defined by putting
$
[u]_\al=
\{x\in \mathbb{R}^m: \|x\|\leq n\} \mbox{ for each } n\in \mathbb{N} \mbox{ and each } \al\in (1/(n+1), 1/n].
$
Then
$d_p(u, \widehat{o}_{F(\mathbb{R}^m)}) = + \infty$
(this fact has been given in \cite{huang19c}).
Given $n\in \mathbb{N}$.
Pick an
 $x_n \in \mathbb{R}^m$ with $\|x_n\|=n$.
Then $x_n\in [u]_{1/n}$. This means that $(x_n, 1/n) \in {\rm send}\, u$.
Note that $\overline{\rho_m}((x_n, 1/n), \, {\rm send}\, \widehat{o}_{F(\mathbb{R}^m)}) = \overline{\rho_m}((x_n, 1/n), \, \{o \}\times [0,1])=n$.
Thus
$+\infty\geq H^*({\rm send}\, u, {\rm send}\,\widehat{o}_{F(\mathbb{R}^m)}) \geq \sup_{n=1}^{+\infty} \overline{\rho_m}((x_n, 1/n), \, \{o \}\times [0,1])=+\infty$.
Hence $H^*({\rm send}\, u, {\rm send}\,\widehat{o}_{F(\mathbb{R}^m)})= +\infty$,
and so
$H_{\rm send} (u, \widehat{o}_{F(\mathbb{R}^m)}) =+\infty$.
}
\end{eap}

We introduce the following subset of $ F^1_{USC}(X)$
\begin{itemize}
  \item
 $F^1_{USCG} (X)^p  :=  \{ u\in  F^1_{USCG}(X):   d_p(u, \widehat{x_0} ) = (\int_0^1 H([u]_\al, \{x_0\}   )^p  \,   d\al)^{1/p} <  +\infty     \}$,
where $x_0$ is a point in $X$.
\end{itemize}
 Let $u\in  F^1_{USCG}(X)$. If there is an $x\in X$ such that $ d_p(u, \widehat{x}) < +\infty$, then for each $y\in X$,
$d_p(u, \widehat{y}) < +\infty$ since
$d_p(u, \widehat{y}) \leq d_p(u, \widehat{x}) + d_p(\widehat{x}, \widehat{y}) = d_p(u, \widehat{x}) + d(x,y) < +\infty$. So the definition of $F^1_{USCG} (X)^p$ does not depend on the choice of $x_0$. And
$F^1_{USCG} (X)^p  =  \{ u\in  F^1_{USCG}(X):   d_p(u, \widehat{x} )  <  +\infty   \mbox{ for each } x\in X  \}$.

Let $x\in X$.
Clearly, for each $u,v\in F^1_{USCG} (X)^p $, $d_p(u,v) \leq  d_p(u, \widehat{x}) + d_p(v, \widehat{x}) < +\infty$.
So the
$d_p$ distance is a metric on $F^1_{USCG} (X)^p$.

\begin{re}\label{dfu}
  {\rm
Let $x\in X$. Then for each $\al\in [0,1]$, $[\widehat{x}]_\al = \{x\} \in K(X)$. Thus $\widehat{x} \in F^1_{USCB}(X) \subseteq F^1_{USCG}(X)$.
Note that the
 $d_p$ distance is well-defined on $F^1_{USCG}(X)$ (see Section 6 of \cite{huang19c} or Proposition \ref{lcpm}).
So $d_p(u, \widehat{x_0} )$ in the definition of $F^1_{USCG} (X)^p$ is well-defined.
This fact also follows from Theorem \ref{cme}
as $u\in  F^1_{USCG}(X)\subseteq F^1_{USC}(X)$ and $\widehat{x_0} \in F^1_{USCG}(X)$.
This fact also follows from Proposition \ref{gmn}.

}
\end{re}

Define $\widehat{X}:=\{\widehat{x}: x\in X\}$. Clearly
$$
\widehat{X}\subseteq F^1_{USCB}(X) \subseteq  F^1_{USCG} (X)^p   \subseteq  F^1_{USCG}(X)   \subseteq  F^1_{USC}(X).
$$

In this paper both the $\lim$ quantities and the $\liminf$ (see Page \pageref{limf}) quantities are allowed to take the values $-\infty$ and $+\infty$.

\section{Measurability of function $H([u]_\al, [v]_\al)$} \label{meau}

For each $u,v\in F^1_{USC}(X)$,
$d_p(u,v)$
 is well-defined if and only if
$H([u]_\al, [v]_\al)$ is a measurable function of $\al$ on $[0, 1]$.
So it is important to discuss the measurability of the function
$H([u]_\al, [v]_\al)$ of $\al$ on $[0,1]$.
In this section, we give some fundamental conclusions on this topic.

If not specifically mentioned,
we
uniformly use $H$ to denote
the Hausdorff
metric on $C(Y)$ induced by $d_Y$ on $Y$, where $(Y, d_Y)$ is a certain metric space.
The meaning of
$H$ can be judged
according to the context.

We have pointed out
 the following statements (\rmn1)--(\rmn4) on the measurability
of the function
$H([u]_\al, [v]_\al  )$ in \cite{huang17} (See also \cite{huang9}, which was submitted on 2019.07.06).
\\
(\romannumeral1)
  For $u \in  F^1_{USC}(X)$ and $x_0 \in X$,  $H([u]_\al, \{x_0\}) $ is a measurable function of $\al$ on $[0,1]$.
\\
(\romannumeral2)   For    $u,v \in  F^1_{USC} (\mathbb{R}^m)$, $H([u]_\al, [v]_\al) $ is a measurable function of $\al$ on $[0,1]$.
\\
(\romannumeral3) For $u,v \in  F^1_{USCG}(X)$, $H([u]_\al, [v]_\al) $ is a measurable function of $\al$ on $[0,1]$.
\\
(\romannumeral4)  There exists a metric space $X$ and $u,v\in F^1_{USC}(X)$ such that $H([u]_\al, [v]_\al) $ is a non-measurable function
of $\al$ on
$[0,1]$.
\\
Statement (\rmn3) was shown in \cite{huang19c} (see Proposition \ref{lcpm}), which is an easy corollary of a conclusion in \cite{huang17} (see Remark \ref{uen}).
We prove the other three statements in this section.
 Statement (\rmn1) is Proposition \ref{gmn}.
Statement (\rmn2) is Theorem \ref{rcm}.
Example \ref{nmf} shows statement (\rmn4).
We submitted the proofs of the first three statements in \cite{huang31}.
Further,
we give some improvements of these statements, which are listed below.
\\
(\rmn5) The function
$H([u]_\al, [v]_\al)$ of $\al$ on $[0,1]$ is measurable when $u\in F^1_{USC} (X)$ and $v\in F^1_{USCG} (X)$
(Theorem \ref{cme}). Theorem \ref{cmeg} is an improvement
of
  Theorem \ref{cme}.
\\
(\rmn6)
Theorem \ref{regn} is an improvement of Theorem \ref{reg}.
Theorem \ref{reg} is an improvement of
Theorem \ref{rcm}.

We say a function $f: [0,1] \to \widehat{\mathbb{R}}$ is decreasing
if
$f(x) \geq f(y)$ when $x < y$.

\begin{pp}  \label{gmn}
   Let $u \in  F^1_{USC}(X)$ and $x_0 \in X$. Then $H([u]_\al, \{x_0\}) $
 is a decreasing function of $\al$ on $[0,1]$.
So $H([u]_\al, \{x_0\}) $
is a measurable function of $\al$ on $[0,1]$, which is equivalent to that $d_p(u, \widehat{x_0})$ is well-defined.
\end{pp}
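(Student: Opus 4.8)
The plan is to reduce the Hausdorff distance $H([u]_\al,\{x_0\})$ to its single dominating pre-distance term and then read off monotonicity from the nesting of the cuts; measurability and the equivalence with well-definedness of $d_p(u,\widehat{x_0})$ will follow from standard facts. First I would unwind the definition \eqref{hau} applied to the one-point set $\{x_0\}$. One computes $H^*([u]_\al,\{x_0\}) = \sup_{x\in [u]_\al} d(x,x_0)$ and $H^*(\{x_0\},[u]_\al) = \inf_{x\in [u]_\al} d(x,x_0) = d(x_0,[u]_\al)$, so that $H([u]_\al,\{x_0\})$ is the maximum of these two. The key observation is that for $u\in F^1_{USC}(X)$ every cut $[u]_\al$ is nonempty (since $u$ is normal, $[u]_1\neq\emptyset$, and $[u]_\al\supseteq[u]_1$ for all $\al\in[0,1]$), and over a nonempty set the supremum of $d(\cdot,x_0)$ is at least the infimum. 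Hence the maximum collapses to the supremum term, giving $H([u]_\al,\{x_0\}) = \sup_{x\in[u]_\al} d(x,x_0)$.

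The point most worth getting right — and essentially the only subtle one — is precisely this collapse. The infimum term $d(x_0,[u]_\al)$ is in fact \emph{increasing} in $\al$, because the infimum over the shrinking set $[u]_\be\subseteq[u]_\al$ can only grow; so if one tried to keep $H$ as a genuine maximum of two terms, monotonicity would be lost. It is only because the supremum term always dominates that the whole expression is decreasing. Granting the collapse, monotonicity is immediate: for $\al\leq\be$ the nesting $[u]_\be\subseteq[u]_\al$ (clear for $\al,\be\in(0,1]$, and for $\al=0$ because $[u]_\be\subseteq\{x:u(x)>0\}\subseteq[u]_0$) yields $\sup_{x\in[u]_\be}d(x,x_0)\leq\sup_{x\in[u]_\al}d(x,x_0)$, that is $H([u]_\be,\{x_0\})\leq H([u]_\al,\{x_0\})$, so the function is decreasing on $[0,1]$.

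Finally I would invoke that any decreasing function from $[0,1]$ into $\widehat{\mathbb{R}}$ is measurable, since the preimage of each tail $(c,+\infty]$ is an interval; the possibility that the value is $+\infty$ (when $[u]_\al$ is unbounded) causes no difficulty here. Thus $\al\mapsto H([u]_\al,\{x_0\})$ is measurable, and since $\{x_0\}=[\widehat{x_0}]_\al$ for every $\al$, the equivalence with the well-definedness of $d_p(u,\widehat{x_0})$ is exactly Fact 1 applied to $f(\al)=H([u]_\al,[\widehat{x_0}]_\al)$.
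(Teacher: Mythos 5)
Your proposal is correct and follows essentially the same route as the paper: the collapse $H([u]_\al,\{x_0\})=H^*([u]_\al,\{x_0\})=\sup_{x\in[u]_\al}d(x,x_0)$ is exactly the content of Remark \ref{bsm}(a), monotonicity then comes from the nesting $[u]_\beta\subseteq[u]_\al$ just as in the paper, and the equivalence with well-definedness of $d_p(u,\widehat{x_0})$ is handled identically via $[\widehat{x_0}]_\al=\{x_0\}$. Your explicit remarks on why the infimum term must be dominated (it is increasing in $\al$) and on measurability of decreasing $\widehat{\mathbb{R}}$-valued functions are sound elaborations of steps the paper leaves implicit.
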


\begin{proof}

Let $\al,\beta$ with $0\leq \al \leq \beta \leq 1$. Then $[u]_\al\supseteq [u]_\beta$ and hence
$$H([u]_\al, \{x_0\}) =\sup_{x\in [u]_\al} d(x, x_0)\geq \sup_{x\in [u]_\beta} d(x, x_0) = H([u]_\beta, \{x_0\}).$$
By (a) in Remark \ref{bsm}, the above two ``='' hold.
  So $H([u]_\al, \{x_0\}) $ is a decreasing function of $\al$ on $[0,1]$.
  Thus
$H([u]_\al, \{x_0\}) $ is a measurable function of $\al$ on $[0,1]$.

$d_p(u, \widehat{x_0})$ is well-defined if and only if
$H([u]_\al, [\widehat{x_0}]_{\al})$ is a
 measurable function of $\al$ on $[0,1]$.
Clearly for each $\al\in[0,1]$, $H([u]_\al, [\widehat{x_0}]_{\al}) = H([u]_\al, \{x_0\}) $.
So
$d_p(u, \widehat{x_0})$ is well-defined if and only if
$H([u]_\al, \{x_0\})$ is a
 measurable function of $\al$ on $[0,1]$.
\end{proof}

\begin{re}\label{bsm}
{\rm
It is easy to see
\\
(a) Let $(X,d)$ be a metric space. Let $A\in C(X)$ and $x\in X$. Then $H(A, \{x\}) = H^*(A, \{x\}) = \sup_{y\in A} d(y,x)$.

Observe that
\begin{gather*}
  H^*(A, \{x\})= \sup_{y\in A} d(y,\{x\}) = \sup_{y\in A} d(y,x)=\sup_{y\in A} d(x,y), \\
  H^*(\{x\}, A) =  d(x, A) =\inf_{y\in A} d(x,y),
\end{gather*}
 so
$H^*(A, \{x\})\geq H^*(\{x\}, A)$.
Hence
 $H(A, \{x\}) = \max\{H^*(A, \{x\}),   H^*(\{x\}, A) \}= H^*(A, \{x\}) = \sup_{y\in A} d(y,x)$. Thus (a) is proved.

}
\end{re}

A set $S$ is said to be an \emph{interval} if it is a set in $\mathbb{R}$
which has the following property: if $x,y\in S$ satisfying $x<y$,
then $[x,y]\in S$. Clearly
$\emptyset$ and singleton sets in $\mathbb{R}$ are intervals.
For each $r\in \mathbb{R}$, $(r,r]=[r,r)=\emptyset$.

Let $\widehat{\mathbb{R}} = \mathbb{R} \cup \{-\infty\} \cup \{+\infty\}$.
Let $f$ be a function from an interval $[\mu, \nu]$ to $\widehat{\mathbb{R}}$
and
$\al\in (\mu, \nu]$.
\bm{$\liminf_{\gamma\to \al-} f(\gamma)$} \label{limf} is defined by
\begin{gather*}
       \liminf_{\gamma\to \al-} f(\gamma) :=\inf S_{f,\al}, \mbox{ where}
 \\               \bm{S_{f,\al}}= \{x\in \widehat{\mathbb{R}}:  \mbox{ there is a sequence } \{\gamma_n\} \mbox{ in } [\mu, \nu]
 \mbox{ such that } \gamma_n\to\al-  \mbox{ and } x= \lim_{n\to +\infty}  f(\gamma_n) \}.
\end{gather*}

\begin{re}\label{lfe}
  {\rm
(\rmn1) $S_{f,\al}\not= \emptyset$ and so
 $\liminf_{\gamma\to \al-} f(\gamma) $ is well-defined;
\\
(\rmn2)
 $\liminf_{\gamma\to \al-} f(\gamma) $
 could take the values $\pm\infty$;
\\
(\rmn3)
if $\lim_{\gamma\to \al-} f(\gamma) $ exists, then
$\lim_{\gamma\to \al-} f(\gamma) = \liminf_{\gamma\to \al-} f(\gamma) $;
\\
(\rmn4) $\liminf_{\gamma\to \al-} f(\gamma) = \min S_{f,\al}$.

(\rmn1)--(\rmn4) should be known. (\rmn3) is obvious. For the completeness of this paper, we give the proofs
or illustration
of (\rmn1), (\rmn2) and (\rmn4) in the following.

To show $S_{f,\al} \not= \emptyset$,
pick a sequence $\{\gamma_n\}$ in $[\mu, \nu]$ with $\gamma_n\to \al-$.

If $\{f(\gamma_n)\}$ is a bounded set,
 then there is an $x_0\in \mathbb{R}$ and a subsequence $\{f(\gamma_{n_k})\}$ of $\{f(\gamma_n)\}$
 such that $x_0 = \lim_{k\to +\infty}  f(\gamma_{n_k})$.
 Obviously
 $\gamma_{n_k}\to \al-$ as $k\to +\infty$. Thus
  $x_0\in S_{f,\al}$. Hence $S_{f,\al}\not=\emptyset$.

If $\{f(\gamma_n)\}$ is an unbounded set,
 then there is a subsequence $\{f(\gamma_{n_k})\}$ of $\{f(\gamma_n)\}$
 such that
 $+\infty= \lim_{k\to +\infty}  f(\gamma_{n_k})$
 or $-\infty= \lim_{k\to +\infty}  f(\gamma_{n_k})$.
Clearly
 $\gamma_{n_k}\to \al-$ as $k\to +\infty$. Thus
 $+\infty$ or $-\infty$ belongs to $S_{f,\al}$. Hence $S_{f,\al}\not=\emptyset$.
   So (\rmn1) is true.

The following simple examples illustrate (\rmn2).

  Consider $f: [0,1]\to \widehat{\mathbb{R}}$ defined by $f(y)=+\infty$ (respectively, $-\infty$), for all $y\in [0,1]$. Then $\liminf_{\gamma\to\al-} f(\gamma) = \lim_{\gamma\to\al-} f(\gamma)= +\infty$ (respectively, $-\infty$) for each $\al\in (0,1]$.

   Consider $f: [0,1]\to \widehat{\mathbb{R}}$ defined by $f(1)=0$, and $f(y)=\frac{1}{1-y}$ (respectively, $\frac{1}{y-1}$), for all $y\in [0,1)$. Then $\liminf_{\gamma\to 1-} f(\gamma) =\lim_{\gamma\to 1-} f(\gamma) =  +\infty$ (respectively, $-\infty$).
So (\rmn2) is true.

To show (\rmn4),
set $z:=\liminf_{\gamma\to \al-} f(\gamma)$.
It suffices to show that
$z\in S_{f,\al}$.

Case 1. $z=+\infty$. In this case, obviously $z\in S_{f,\al}$.

Case 2. $z\in \mathbb{R}$. Given $n\in \mathbb{N}$. Then there is a $\xi_n\in [z-\frac{1}{n},z]\cap S_{f,\al}$.
As $\xi_n\in S_{f,\al}$, it follows that
 there exists a $\gamma_n$ such that $\gamma_n\in (\al-\frac{1}{n}, \al)\cap [\mu, \nu]$
 and $|f(\gamma_n) - \xi_n|< \frac{1}{n}$.
 Thus
 $|f(\gamma_n) - z| \leq |f(\gamma_n) - \xi_n| + |\xi_n-z|< \frac{2}{n}$.
Hence the sequence $\{\gamma_n\}$ in $[\mu, \nu]$ satisfies that
 $\gamma_n \to \al-$ as $n\to +\infty$
 and $z= \lim_{n\to +\infty}  f(\gamma_n)$.
 So $z\in S_{f,\al}$.

  Case 3. $z=-\infty$. Given $n\in \mathbb{N}$.
  Then there is a $\xi_n\in (\{-\infty\} \cup (-\infty,-n])\cap S_{f,\al}$.
Since $\xi_n\in S_{f,\al}$,
we have that there exists a $\gamma_n$ such that $\gamma_n\in (\al-\frac{1}{n}, \al)\cap [\mu,\nu]$
 and $f(\gamma_n) < (-n) + \frac{1}{n}$.
 Thus the sequence $\{\gamma_n\}$ in $[\mu,\nu]$ satisfies that
 $\gamma_n \to \al-$ as $n\to +\infty$
and
 $f(\gamma_n) \to -\infty$ as $n\to +\infty$.
Hence $z=-\infty\in S_{f,\al}$. So (\rmn4) is true.

}
\end{re}

Let
 $f$ be a function from an interval $[\mu, \nu]$ to $\widehat{\mathbb{R}}$
 and
  $\al\in (\mu, \nu]$. We say that $f$ is \textbf{\emph{left lower semicontinuous at}}
\bm{$\al$} if
   $f(\al) \leq \liminf_{\gamma\to \al-} f(\gamma) $.
We say that $f$ is \textbf{\emph{left lower semicontinuous on}}
\bm{$(\mu,\nu]$} if $f$ is left lower semicontinuous at each
$\xi \in (\mu,\nu]$.
Let $r\in \mathbb{R}$.
The symbol $\bm{\{f>r\}}$ is used to denote the set $ \{\xi\in [\mu, \nu]: f(\xi) > r\}$.

\begin{lm} \label{fcm}
Let
 $f$ be a function from an interval $[\mu, \nu]$ to $\widehat{\mathbb{R}}$
 and
  $\al\in (\mu, \nu]$.
Then the following properties are equivalent:
\\
(\romannumeral1) \
  For each $r\in \mathbb{R}$, if $\al\in \{f > r\}  $,
then there exists $\delta (\al) > 0$
such that $[\al-\delta(\al), \al]  \subseteq \{ f > r \}$;
\\
(\romannumeral2) \   $f$ is left lower semicontinuous at
$\al$; that is, $f(\al) \leq \liminf_{\gamma\to \al-} f(\gamma) $.
\end{lm}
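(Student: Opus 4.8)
The plan is to prove the two implications by contraposition, relying on the identity $\liminf_{\gamma\to\al-} f(\gamma) = \inf S_{f,\al} = \min S_{f,\al}$ established in Remark \ref{lfe}. Throughout, recall that $\al\in\{f>r\}$ means exactly $f(\al)>r$, so properties (\rmn1) and (\rmn2) are just different ways of relating the value $f(\al)$ to the behaviour of $f$ on left neighbourhoods of $\al$.

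For (\rmn1)$\Rightarrow$(\rmn2), I would assume (\rmn1) and suppose, toward a contradiction, that $f(\al) > \liminf_{\gamma\to\al-} f(\gamma)=\inf S_{f,\al}$. Then there is a subsequential limit $x\in S_{f,\al}$ with $x<f(\al)$, and I can choose a real number $r$ with $x<r<f(\al)$ (this is legitimate even when $x=-\infty$ or $f(\al)=+\infty$). Since $f(\al)>r$ we have $\al\in\{f>r\}$, so (\rmn1) supplies $\delta>0$ with $f(\gamma)>r$ for all $\gamma\in[\al-\delta,\al]$. On the other hand, $x\in S_{f,\al}$ gives a sequence $\gamma_n\to\al-$ with $f(\gamma_n)\to x$; for large $n$ we have $\gamma_n\in[\al-\delta,\al)$, hence $f(\gamma_n)>r$, forcing $x=\lim_n f(\gamma_n)\geq r$, which contradicts $x<r$.

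For (\rmn2)$\Rightarrow$(\rmn1), I would assume (\rmn2) and suppose (\rmn1) fails: there is $r\in\mathbb{R}$ with $f(\al)>r$ but no $\delta>0$ making $[\al-\delta,\al]\subseteq\{f>r\}$. Since $f(\al)>r$, every witness to this failure must occur strictly to the left of $\al$, so for each $n\in\mathbb{N}$ I can pick $\gamma_n\in[\al-\frac1n,\al)\cap[\mu,\nu]$ with $f(\gamma_n)\leq r$. Then $\gamma_n\to\al-$, and since the values $f(\gamma_n)$ are bounded above by $r$, the subsequence-extraction argument of Remark \ref{lfe} produces a subsequential limit $x\in S_{f,\al}$ with $x\leq r$. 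Consequently $\liminf_{\gamma\to\al-} f(\gamma)=\inf S_{f,\al}\leq x\leq r<f(\al)$, contradicting (\rmn2).

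I expect the only delicate point to be the careful bookkeeping with the extended reals $\pm\infty$: choosing the intermediate real $r$ in the first implication, and extracting the convergent subsequence landing in $S_{f,\al}$ (possibly with limit $-\infty$) in the second. Both are handled cleanly by the facts $S_{f,\al}\neq\emptyset$ and $\liminf_{\gamma\to\al-} f(\gamma)=\min S_{f,\al}$ recorded in Remark \ref{lfe}, so once those are in hand the proof reduces to a short contraposition on each side.
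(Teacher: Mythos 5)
Your proposal is correct and takes essentially the same approach as the paper: both implications rest on the facts about $S_{f,\al}$ recorded in Remark \ref{lfe}, use the same intermediate real $r$ strictly between the relevant extended-real values, and in (\romannumeral2)$\Rightarrow$(\romannumeral1) perform the same construction of $\gamma_n\in[\al-\tfrac{1}{n},\al)\cap[\mu,\nu]$ with $f(\gamma_n)\leq r$ followed by the same bounded/unbounded (possibly limit $-\infty$) subsequence extraction. The only cosmetic difference is that you cast (\romannumeral1)$\Rightarrow$(\romannumeral2) as a contradiction with a chosen $x\in S_{f,\al}$, whereas the paper argues directly that $\liminf_{\gamma\to\al-}f(\gamma)\geq r$ for every real $r<f(\al)$ (after disposing of the trivial case $f(\al)=-\infty$); the underlying computation is identical.
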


\begin{proof}
The desired result may be known.
Here we give a proof for the completeness of this paper.
 Let $r\in \mathbb{R}$.
 Obviously $f(\al) > r$ means $\al\in \{f>r\}$.

Assume that (\romannumeral1) is true.
If $f(\al) = -\infty$, then (\romannumeral2) is true.
Now suppose that $f(\al) > -\infty$.
Given $r\in \mathbb{R}$ with $f(\al) > r$.
Then by (\rmn1),
 there exists $\delta (\al) > 0$
such that $[\al-\delta(\al), \al]  \subseteq \{f>r\}$,
thus
$\liminf_{\gamma\to \al-} f(\gamma) \geq r$.
Since $r\in (-\infty, f(\al))$ is arbitrary, we conclude that $ \liminf_{\gamma\to \al-} f(\gamma)\geq f(\al) $.
So (\romannumeral2) is true.

Assume that (\romannumeral2) is true. Let
 $r\in \mathbb{R}$. If
$\al\in \{f>r\}  $, then $\liminf_{\gamma\to \al-} f(\gamma) >r $.
To show (\rmn1) is true, we only need to prove
\\
 (a)
there exists $\delta (\al) > 0$
such that $[\al-\delta(\al), \al]  \subseteq \{f>r\}$.

Suppose that (a) is not true; that is, for each $\delta < \al$,
 $[\delta, \al]  \nsubseteq \{f>r\}$.
 Let $n\in \mathbb{N}$.
Then $(\al-\frac{1}{n})\vee \mu <\al$. Thus $[(\al-\frac{1}{n})\vee \mu, \al]  \nsubseteq \{f>r\}$. This means that
there is an $\al_n$ such that $\al_n\in[(\al-\frac{1}{n})\vee \mu, \al] $
but  $\al_n\notin\{f>r\}$.
We can see that the sequence $\{\al_n\}$ in $[\mu, \nu]$
satisfies that $\al_n \to \al-$ and $f(\al_n) \leq r $ for all $n\in \mathbb{N}$.

Case 1. $\{f(\al_n)\}$ is bounded. Then there is a subsequence $\{f(\al_{n_k})\}$ of $\{f(\al_n)\}$ and an $x_0\in \mathbb{R}$
such that
$x_0=\lim_{k\to\infty} f(\al_{n_k})$.
Since $\al_{n_k} \to \al-$ as $k\to\infty$, we have that
  $x_0\in S_{f,\al}$.
Clearly $x_0\leq r$.
Thus
 $\liminf_{\gamma\to \al-} f(\gamma) \leq r $. This
contradicts $\liminf_{\gamma\to \al-} f(\gamma) >r$.

Case 2.
$\{f(\al_n)\}$ is unbounded. Then $\{f(\al_n)\}$ has a subsequence $\{f(\al_{n_k})\}$
such that $-\infty=\lim_{k\to\infty} f(\al_{n_k})$.
Obviously $\al_{n_k} \to \al-$ as $k\to\infty$. So
  $-\infty\in S_{f,\al}$.
Thus
 $\liminf_{\gamma\to \al-} f(\gamma) = -\infty$. This
contradicts $\liminf_{\gamma\to \al-} f(\gamma) >r$.

As there are contradictions in both cases, it follows that (a) is true.
So (\rmn1) is true. This completes the proof.

\end{proof}

\begin{pp} \label{rsf}
Let
 $f$ be a function from an interval $[\mu, \nu]$ to $\widehat{\mathbb{R}}$.
If $f$ is left lower semicontinuous on
$(\mu,\nu]$; that is,
   $f(\al) \leq \liminf_{\gamma\to \al-} f(\gamma) $ for all $\al\in (\mu, \nu]$,
then $f $ is a measurable function on $[\mu, \nu]$.
\end{pp}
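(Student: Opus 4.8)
The plan is to prove measurability through the standard criterion: it suffices to show that for every $r\in\mathbb{R}$ the super-level set $\{f>r\}=\{\xi\in[\mu,\nu]:f(\xi)>r\}$ is measurable. So I would fix such an $r$ and write $E:=\{f>r\}$. Since left lower semicontinuity is assumed only on $(\mu,\nu]$, I would first split off the single point $\mu$, writing $E=(E\cap\{\mu\})\cup E_1$ with $E_1:=E\cap(\mu,\nu]$; the set $E\cap\{\mu\}$ is either empty or a singleton, hence measurable, so everything reduces to showing that $E_1$ is measurable. For each $\al\in E_1$ I invoke Lemma \ref{fcm} (applicable because $\al\in(\mu,\nu]$ and $f$ is left lower semicontinuous at $\al$): there exists $\delta(\al)>0$ with $[\al-\delta(\al),\al]\subseteq E$. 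Note that $E\subseteq[\mu,\nu]$ forces $\al-\delta(\al)\geq\mu$ automatically, so no shrinking of $\delta(\al)$ is needed.

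Next I would approximate $E_1$ from the inside by an open set. Put $O:=\bigcup_{\al\in E_1}(\al-\delta(\al),\al)$. As a union of open intervals $O$ is open, hence measurable; moreover each such interval lies in $[\al-\delta(\al),\al]\subseteq E$ and consists of points exceeding $\al-\delta(\al)\geq\mu$, so $O\subseteq E_1$. Thus $E_1=O\cup(E_1\setminus O)$, and the whole problem is reduced to understanding the ``exceptional'' set $E_1\setminus O$ of non-interior points of $E_1$.

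The hard part, and the crux of the argument, will be to show that $E_1\setminus O$ is countable (hence null, hence measurable). The key observation is that the family of open intervals $\{(\al-\delta(\al),\al):\al\in E_1\setminus O\}$ is pairwise disjoint. Indeed, for $\al<\al'$ in $E_1\setminus O$, the inclusion $(\al'-\delta(\al'),\al')\subseteq O$ together with $\al\notin O$ forces $\al\notin(\al'-\delta(\al'),\al')$, whence $\al\leq\al'-\delta(\al')$; this places $(\al'-\delta(\al'),\al')$ entirely to the right of $\al$, while $(\al-\delta(\al),\al)$ lies strictly to the left of $\al$, so the two intervals do not meet. A pairwise disjoint family of nonempty open intervals in $\mathbb{R}$ is countable (each contains a distinct rational), and the map $\al\mapsto(\al-\delta(\al),\al)$ is injective since $\al$ is recovered as the right endpoint; therefore $E_1\setminus O$ is countable.

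Finally I would assemble the pieces: $E_1=O\cup(E_1\setminus O)$ is the union of an open set and a countable set, hence measurable, so $E=(E\cap\{\mu\})\cup E_1$ is measurable. As $r\in\mathbb{R}$ was arbitrary, every super-level set of $f$ is measurable, which is exactly what is needed to conclude that $f$ is a measurable function on $[\mu,\nu]$. I expect the only genuinely delicate step to be the disjointness/countability of the exceptional set; everything else is routine once Lemma \ref{fcm} has been used to convert left lower semicontinuity into the left-neighborhood inclusion property.
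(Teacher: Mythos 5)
Your proposal is correct, and it takes a genuinely different decomposition from the paper's, though both rest on the same two ingredients: the reduction of measurability to measurability of the super-level sets $\{f>r\}$, and Lemma \ref{fcm} to convert left lower semicontinuity at each $\al\in(\mu,\nu]$ into the inclusion $[\al-\delta(\al),\al]\subseteq\{f>r\}$. The paper (Steps 2--3 of its proof) attaches to every point $x$ of $\{f>r\}\setminus\{\mu\}$ the maximal interval $\overbrace{x}$ containing $x$ inside that set, verifies that it is an interval of positive length and that two such intervals either coincide or are disjoint, and concludes that $\{f>r\}\setminus\{\mu\}$ is an at most countable union of disjoint positive-length intervals (countability again via distinct rationals). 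You instead cover $E_1=\{f>r\}\cap(\mu,\nu]$ by the open set $O=\bigcup_{\al\in E_1}(\al-\delta(\al),\al)$ and show that the exceptional set $E_1\setminus O$ is countable: for $\al<\al'$ in $E_1\setminus O$, the fact that $\al\notin O\supseteq(\al'-\delta(\al'),\al')$ forces $\al\leq\al'-\delta(\al')$, so the attached left intervals are pairwise disjoint, and injectivity of $\al\mapsto(\al-\delta(\al),\al)$ via the right endpoint finishes the count. I checked this disjointness argument and it is sound; likewise your remark that $[\al-\delta(\al),\al]\subseteq\{f>r\}\subseteq[\mu,\nu]$ automatically gives $\al-\delta(\al)\geq\mu$ and $O\subseteq E_1$ is exactly the point that needs saying. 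What each approach buys: your ``open plus countable'' decomposition is shorter, since it avoids constructing the maximal intervals and verifying the paper's properties (a)--(c); the paper's approach, in exchange, yields a structure theorem for the super-level sets that it reuses later (Remark \ref{ref}, e.g.\ that each component has the form $(b,c]$ or $(b,c)$), a byproduct your argument does not produce.
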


\begin{proof}

To show the desired result,
we only need to show
that
for each $r\in \mathbb{R}$, the set $\{f>r\}$ is a measurable set.
We split the proof into three steps.

\textbf{Step 1} \ Prove statement (\romannumeral1) For each $r\in \mathbb{R}$,
if $\al>\mu$ and $\al\in \{f>r\}  $,
then there exists $\delta (\al) > 0$
such that $[\al-\delta(\al), \al]  \subseteq \{f>r\}$.

By Lemma \ref{fcm}, statement (\rmn1) is equivalent to the statement
that
  $f(\al) \leq \liminf_{\gamma\to \al-} f(\gamma) $ for all $\al\in (\mu, \nu]$, which is our assumption. So statement (\rmn1) is true.

\textbf{Step 2} \ Prove statement (\romannumeral2)
For each $r\in \mathbb{R}$ with $\{f>r\}\setminus \{\mu\}  \not= \emptyset$,
 $\{f > r\}\setminus \{\mu\}  $ is a union of disjoint
positive length intervals.

For each $x \in \{f>r\}\setminus \{\mu\}   $,
 define $\overbrace{x} = \bigcup \{A : A\in S_x\}$,
 where
 $S_x = \{ A: A \mbox{ is an interval with } x \in A \subseteq \{f>r\}\setminus \{\mu\}   \}$.
 We claim that
 \\
 (a) For each $x \in \{f>r\}\setminus \{\mu\}$, $\overbrace{x}$ is the largest interval in $S_x$;
 \\
  (b) For each $x \in \{f>r\}\setminus \{\mu\}$,
 $\overbrace{x}$ is a positive length interval;
 \\
 (c) For each
 $x,y \in \{f>r\}\setminus \{\mu\} $ with $\overbrace{x} \cap \overbrace{y} \not= \emptyset$, $\overbrace{x} = \overbrace{y}$.

First, we show (a).
Clearly $\overbrace{x}$ is an interval
(To show this,
 let $y,z\in \overbrace{x}$ with $y< z$.
It suffices to
 verify $[y,z]\subseteq \overbrace{x}$.
Choose $A_1$ and $A_2$ in $S_x$
 such that $y\in A_1$ and $z\in A_2$.
As $x\in A_1\cap A_2\not=\emptyset$,
$A_1\cup A_2$ is an interval.
Observe that
 $A_1\cup A_2\in S_x$, so $[y,z]\subseteq A_1\cup A_2 \subseteq \overbrace{x}$.
  Thus $\overbrace{x}$ is an interval.).
Obviously
 $[x,x]\in S_x$, i.e., $x\in \overbrace{x}$. Also $ \overbrace{x} \subseteq\{f>r\}\setminus \{\mu\}  $.
So $\overbrace{x}\in S_x$. Then from the definition of $\overbrace{x}$,
(a) is true.

Next we show (b). By statement (\romannumeral1), there is a $\delta(x)>0$ such that $[x-\delta(x),x]\subseteq \{f>r\}$.
Note that $\mu \leq x-\delta(x)$.
Thus $x\in (x-\delta(x),x]\subseteq \{f>r\} \setminus \{\mu\}$, i.e., $(x-\delta(x),x]\in S_x$.
Hence $(x-\delta(x), x] \subseteq \overbrace{x}$. So (b) is true.

Now we show (c). Observe that
 $\overbrace{x} \cup \overbrace{y} $ is an interval
 with $\{x,y\}\subseteq \overbrace{x} \cup \overbrace{y} \subseteq \{f>r\}\setminus \{\mu\}$. In other words,
 $\overbrace{x} \cup \overbrace{y}  \in S_x\cap S_y$.
 By (a),
this means that
 $\overbrace{x} =\overbrace{x} \cup \overbrace{y} = \overbrace{y}$.
So (c) is true.

 As $\{f>r\}\setminus \{\mu\} = \bigcup\{\overbrace{x}: x \in \{f>r\}\setminus \{\mu\} \}$,
it follows
from (b) and (c) that statement (\rmn2) is true.

From the proof of (a) and (c), it can be seen that (a) and (c) are true regardless of whether
$f$ is left lower semicontinuous on
$(\mu,\nu]$ or not.
Let $x,y\in \{f>r\}\setminus \{\mu\}$.
If $f$ is not left lower semicontinuous on
$(\mu,\nu]$,
then $\overbrace{x}$ and $\overbrace{y}$ may both be singleton sets, and
in this case, $\{x,y\}= \overbrace{x} \cup \overbrace{y} $.
If $f$ is left lower semicontinuous on
$(\mu,\nu]$,
then of course $\{x,y\}\subsetneqq \overbrace{x} \cup \overbrace{y}$.

\textbf{Step 3} \ Prove statement (\romannumeral3) For each $r\in \mathbb{R}$, $\{f>r\}$ is a measurable set.

Note that a set $S$ of disjoint positive length intervals is at most countable (For each $B\in S$,
choose a $q_B$ in $B\cap \mathbb{Q}$. Then $q_{B_1} \not= q_{B_2}$ for each $B_1$ and $B_2$ in $S$ with $B_1\not= B_2$. Thus $\overline{\overline{S}}= \overline{\overline{\{q_B: B\in S\}}}\leq \overline{\overline{\mathbb{Q}}}$ and so $S$ is at most countable, where $\overline{\overline{C}}$ denotes the cardinality of a set $C$.).
So
by statement (\romannumeral2), $\{f>r\} \setminus \{\mu\}$
 is an at most countable union of intervals.
  Hence $\{f>r\} \setminus \{\mu\}$ is a measurable set.
 As $\{f>r\}=\{f>r\} \setminus \{\mu\}$ or $\{f>r\}=(\{f>r\} \setminus \{\mu\})\cup \{\mu\}$,
it follows that
$\{f>r\}$ is a measurable set.

  Here we mention that if
  ($\{f>r\} \setminus \{\mu\}= \emptyset$, then it is an at most countable union of intervals as it
 can be seen as an empty union of intervals.
Of course, if $\{f>r\} \setminus \{\mu\}= \emptyset$, then obviously $\{f>r\} \setminus \{\mu\}$ is measurable.

\end{proof}

\begin{re}\label{ref}
  {\rm   Let $f$ be a function from an interval $[\mu, \nu]$ to $\widehat{\mathbb{R}}$, $r\in \mathbb{R}$,
 and $\{f>r\}\setminus \{\mu\} \not= \emptyset$.

(\rmn1) Let $f$ be left lower semicontinuous on $(\mu, \nu]$. Then
 (\rmn1-a) for each
 $x \in \{f>r\}\setminus \{\mu\}$,
 $\overbrace{x}$ is an interval $(b,c]$ or $(b,c)$ for some $b<c$;
  (\rmn1-b) setting $\lambda:=\inf\{y: y\in \{f>r\}\setminus \{\mu\}\}$, we have
  $ \lambda \notin \{f>r\}\setminus \{\mu\}$.

   Set $a:=\inf\overbrace{x}$. If $a\in \overbrace{x}$, then
  by Lemma \ref{fcm}, there is a $\delta>0$ with $(a-\delta, a]\subseteq \overbrace{x}$, which contradicts $a=\inf\overbrace{x}$. Thus $a\notin \overbrace{x}$.
Putting together this fact and below simple example we see that
  (\rmn1-a) is true.
The proof of (\rmn1-b) is similar to that of $a\notin \overbrace{x}$.
  (\rmn1-b) can also be seen as an easy consequence of (\rmn1-a).
  If $\lambda\in \{f>r\}\setminus \{\mu\}$, then by (\rmn1-a),
  $\overbrace{\lambda}$ is an interval
  $(\lambda_b, c]$ or $(\lambda_b, c)$.
  Clearly $\lambda_b<\lambda$.
  So $(\lambda_b,\lambda]\subseteq \{f>r\}\setminus \{\mu\}$. This contradicts the definition of $\lambda$.
  Thus (\rmn1-b) is true.

 Consider
   $f: [0,1] \to [0,1]$ defined by $f(y)=1$ for all $y\in [0,1)$
  and $f(1)=0$. Then $f$ is left lower semicontinuous on $(0,1]$.
  Clearly $\{f>1/2\}\setminus \{0\} = (0,1)$ and
  $\overbrace{\xi} (\mbox{according to } r=1/2)=(0,1)$ for all $\xi\in (0,1)$;
   $\{f>-1\}\setminus \{0\} = (0,1]$ and
  $\overbrace{\xi} (\mbox{according to } r=-1)=(0,1]$ for all $\xi\in (0,1]$.

(\rmn1-a) has essentially been given in Remark 1.3 of
\cite{huang31}.

  (\rmn2)
Essentially, the proof of Proposition \ref{rsf}, except its step 1, has already been given in
the proof of Proposition 6.1 in \cite{huang19c}.
In the proof of Proposition 6.1 in \cite{huang19c},
``$x\in X$'' should be replaced by ``$x\in \{f>r\}\setminus \{0\}$''(we were a little careless);
``$[\al- \delta(\al), \al]\subseteq \{f>r\}\setminus \{0\}$'' can also be written as ``$(\al- \delta(\al), \al]\subseteq \{f>r\}\setminus \{0\}$''
or ``$[\al- \delta(\al), \al]\subsetneqq \{f>r\}\setminus \{0\}$''
(By (\rmn1-b), $[\al- \delta(\al), \al]\not= \{f>r\}\setminus \{0\}$. The example in (\rmn1) shows that $(\al- \delta(\al), \al]= \{f>r\}\setminus \{0\}$ is possible.).
Indeed,
 this modified proof of Proposition 6.1 in \cite{huang19c}
will be a proof for Proposition \ref{rsf} if we
replace $0$ by $\mu$, $1$ by $\nu$, and ``since $f$ is left-continuous at $\al$'' by ``, by Lemma \ref{fcm} and the fact that $f$ is left lower semi continuous at $\al$''.

The following
fact (\rmn2-a) is known. Fact (\rmn2-b)
follows easily from (\rmn2-a).
\\
  (\rmn2-a) Let $A$ be
an interval and $x\in A$. Then $A=\bigcup \{[a,b] : x\in [a,b]\subseteq A\}$.
\\
(\rmn2-b)
For each $x \in \{f>r\}\setminus \{\mu\}   $,
 $\overbrace{x} = \bigcup \{[a,b] : [a,b]\in S_x\}$.

First we show (\rmn2-a).
Set $\xi:=\inf A$ and $\eta:=\sup A$.
For each $n\in \mathbb{N}$, define
\[a_n
=
\left\{
  \begin{array}{ll}
    \xi, & \mbox{if } \xi \in A, \\
 \xi + \frac{1}{n} (x-\xi), & \mbox{if }  \xi \notin A, \ \xi\in \mathbb{R},\\
 x -n, & \mbox{if }  \xi=-\infty,
  \end{array}
\right.
\
b_n
=
\left\{
  \begin{array}{ll}
   \eta, & \mbox{if } \eta \in A, \\
 \eta - \frac{1}{n} (\eta-x), & \mbox{if }  \eta \notin A, \ \eta\in \mathbb{R},\\
 x + n, & \mbox{if }  \eta=+\infty.
  \end{array}
\right.
\]
Then $x\in [a_n, b_n]$ for all $n\in \mathbb{N}$,
and $A= \bigcup_{n=1}^{+\infty} [a_n, b_n]$.
Thus $A\subseteq\bigcup \{[a,b] : x\in [a,b]\subseteq A\}$.
Obviously $A\supseteq\bigcup \{[a,b] : x\in [a,b]\subseteq A\}$.
So (\rmn2-a) is true.

Now we show (\rmn2-b).
Let $B$ be an
 interval in $S_x$. Define $S_B:=\{[a,b] : x\in [a,b]\subseteq B\}$.
Clearly $S_B\subseteq S_x$. By (\rmn2-a),
 $B=\bigcup \{[a,b]: [a,b] \in S_B\}$.
Thus
$\overbrace{x} = \bigcup \{C : C\in S_x\} = \bigcup \{[a,b]: [a,b] \in S_C, C\in S_x\}\subseteq \bigcup \{[a,b] : [a,b]\in S_x\}$.
Obviously
$\overbrace{x} \supseteq \bigcup \{[a,b] : [a,b]\in S_x\}$.
So (\rmn2-b) is true.

 (\rmn3) Suppose that $f$ is a measurable function on $[\mu, \nu]$; that is,
 for each $r\in \mathbb{R}$, the set $\{f>r\}$ is a measurable set. Then $\{\alpha\in [\mu, \nu]: f(\alpha)=+\infty\} = \bigcap_{n=1}^{+\infty}\{f>n\}$ is a measurable set.
 Also $\{\alpha\in [\mu, \nu]: f(\alpha)=-\infty\} = [\mu,v] \setminus \bigcup_{n=1}^{+\infty}\{f> -n\}$ is a measurable set.

  }
\end{re}

Let
 $f$ be a function from an interval $[\mu, \nu]$ to $\widehat{\mathbb{R}}$.
 Let $\al\in (\mu, \nu]$.
$f$ is said to be \textbf{\emph{left continuous at}} \bm{$\al$} if $f(\al) = \lim_{\gamma\to \al-} f(\gamma)$.
$f$ is said to be \textbf{\emph{left continuous on}} \bm{$(\mu, \nu]$} if
$f$ is left continuous at each $\xi\in (\mu, \nu]$.
 Let $\beta\in [\mu, \nu)$.
$f$ is said to be \textbf{\emph{right continuous at}} \bm{$\beta$} if $f(\beta) = \lim_{\gamma\to \beta+} f(\gamma)$.
$f$ is said to be \textbf{\emph{right continuous on}} \bm{$[\mu, \nu)$} if
$f$ is right continuous at each $\eta\in [\mu, \nu)$.

Let
 $f$ be a function from an interval $[\mu, \nu]$ to $\widehat{\mathbb{R}}$,
and $\al\in (\mu, \nu]$.
Clearly if $f$ is left continuous at $\al$,
then
 $f$ is left lower semicontinuous at $\al$;
if $f$ is left continuous on $(\mu, \nu]$,
then
 $f$ is left lower semicontinuous on $(\mu, \nu]$ (see Remark \ref{lfe}(\rmn3)).
So the following
Corollary \ref{rsfc} follows immediately from
Proposition \ref{rsf}.

\begin{tl} \label{rsfc}
Let
 $f$ be a function from an interval $[\mu, \nu]$ to $\widehat{\mathbb{R}}$.
If $f$ is left continuous on $(\mu, \nu]$,
then $f $ is a measurable function on $[\mu, \nu]$.
\end{tl}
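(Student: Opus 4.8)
The plan is to reduce Corollary \ref{rsfc} directly to Proposition \ref{rsf}, which already establishes measurability under the weaker hypothesis of left lower semicontinuity on $(\mu, \nu]$. The entire task is therefore to check that left continuity is a strictly stronger condition than left lower semicontinuity, so that the hypothesis of the corollary feeds verbatim into the proposition and no new measurability argument is required.

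First I would fix an arbitrary $\al \in (\mu, \nu]$ and unwind the definition of left continuity at $\al$, namely $f(\al) = \lim_{\gamma\to \al-} f(\gamma)$; in particular the left-hand limit exists at $\al$. By Remark \ref{lfe}(\rmn3), whenever $\lim_{\gamma\to \al-} f(\gamma)$ exists it coincides with $\liminf_{\gamma\to \al-} f(\gamma)$. Combining these two facts gives $f(\al) = \liminf_{\gamma\to \al-} f(\gamma)$, and in particular $f(\al) \leq \liminf_{\gamma\to \al-} f(\gamma)$. Since $\al \in (\mu, \nu]$ was arbitrary, this shows that $f$ is left lower semicontinuous on $(\mu, \nu]$. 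I would then invoke Proposition \ref{rsf}: its hypothesis is precisely left lower semicontinuity of $f$ on $(\mu, \nu]$, which we have just verified, and its conclusion is the desired measurability of $f$ on $[\mu, \nu]$.

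There is no genuine obstacle here: the corollary is an immediate consequence of Proposition \ref{rsf}, and the only content is the elementary observation of Remark \ref{lfe}(\rmn3) that an existing left-hand limit equals the corresponding $\liminf$. All of the substantive measurability work -- the maximal-interval construction decomposing $\{f>r\}\setminus \{\mu\}$ into an at most countable union of positive-length intervals -- has already been done inside the proof of Proposition \ref{rsf} and need not be repeated. Accordingly I expect the proof to be a short two-sentence deduction rather than a fresh argument.
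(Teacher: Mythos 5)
Your proposal is correct and matches the paper's own argument exactly: the paper also derives the corollary by noting (via Remark \ref{lfe}(\rmn3)) that left continuity at each $\al\in(\mu,\nu]$ implies $f(\al)\leq \liminf_{\gamma\to\al-}f(\gamma)$, i.e.\ left lower semicontinuity on $(\mu,\nu]$, and then invoking Proposition \ref{rsf}. No gaps; the deduction is as short as you predicted.
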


Let $(X,d)$ be a metric space and $x\in X$.
Define a function $f: X\to \mathbb{R}$ as $f(y)=d(x,y)$ for all $y\in X$.
Then $|f(y_1)-f(y_2)|= |d(x,y_1)-d(x,y_2)|\leq d(y_1, y_2)$ for all $y_1,y_2\in X$.
Thus $f$ is a Lipschitz mapping,
and so is uniformly continuous.
Below we use the symbol
$d(x, \cdot)$ to denote this function $f$.

A compact subset of a Hausdorff topological space is closed.
Each extended metric space is a Hausdorff topological space.
Let $(Y,\tau)$ be a topological space and $A,B,C$ be subsets of $Y$.
(\rmn1) $C$ is compact in $(Y,\tau)$ if and only if $(C, \tau_C)$ is a
compact topological space, where $\tau_C$ denotes the topology induced on $C$ by $\tau$.
(\rmn2) If $A$ is compact in $(Y,\tau)$ and $B$ is closed in
 $(Y,\tau)$, then $A\cap B$ is compact in $(Y,\tau)$.
 (\rmn2) can be shown as follows.
Consider conditions (a) $A\cap B$ is compact in $(Y,\tau)$,
and
 (b) $A\cap B$ is compact in $(A, \tau_A)$.
Then (a)$\Leftrightarrow$(b) because,
by (\rmn1), each of (a) and (b) is equivalent to
$(A\cap B, \tau_{A\cap B})$ is compact.
Note that $A\cap B$ is closed in the compact topological space$(A, \tau_A)$ (see (\rmn1)).
Thus (b) is true and so (a) is true. So (\rmn2) is proved.

\begin{tm} \label{rcm}
 Let    $u,v \in  F^1_{USC} (\mathbb{R}^m)$. Then $H([u]_\al, [v]_\al) $ is a measurable function of $\al$ on $[0,1]$; that is, $d_p(u,v)$ is well-defined.
\end{tm}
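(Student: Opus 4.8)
The plan is to prove that the function $g(\alpha):=H([u]_\alpha,[v]_\alpha)$ is left lower semicontinuous on $(0,1]$, i.e. $g(\alpha)\le\liminf_{\gamma\to\alpha-}g(\gamma)$ for every $\alpha\in(0,1]$, and then to invoke Proposition \ref{rsf} (with $[\mu,\nu]=[0,1]$) to conclude that $g$ is measurable on $[0,1]$; by the discussion in Section \ref{bas} this is equivalent to $d_p(u,v)$ being well-defined. The value $g(0)$ plays no role here, so only the behaviour of $g$ as $\alpha$ is approached from the left matters. Two elementary facts are used throughout. First, cuts are nested and left-continuous \emph{as sets}: for every $w\in F(\mathbb{R}^m)$ and $\alpha\in(0,1]$ one has $\bigcap_{\beta<\alpha}[w]_\beta=[w]_\alpha$ directly from $[w]_\beta=\{x:w(x)\ge\beta\}$, and $[w]_\beta\supseteq[w]_\alpha$ whenever $\beta\le\alpha$. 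Second, since $u,v$ are normal, $[v]_\alpha\supseteq[v]_1\ne\emptyset$, so each $[v]_\alpha$ is a nonempty closed set and $d(x,[v]_\alpha)<+\infty$ for every $x\in\mathbb{R}^m$.

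The technical heart is the following lower semicontinuity of the point-to-set distance, which is where the special geometry of $\mathbb{R}^m$ enters. Fixing $x\in\mathbb{R}^m$, $\alpha\in(0,1]$ and any sequence $\gamma_n\to\alpha-$, I would prove $\liminf_n d(x,[v]_{\gamma_n})\ge d(x,[v]_\alpha)$ by contradiction: if the liminf equalled some $L<d(x,[v]_\alpha)$, then along a subsequence I may pick near-minimizers $y_k\in[v]_{\gamma_{n_k}}$ with $d(x,y_k)<L+1$. These $y_k$ lie in a fixed closed ball about $x$, which is compact by the Heine--Borel theorem, so a sub-subsequence converges to some $y$. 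For each $\beta<\alpha$ one has $\gamma_{n_k}>\beta$ eventually, hence the tail lies in the closed set $[v]_\beta$, giving $y\in[v]_\beta$; letting $\beta\uparrow\alpha$ and using left-continuity of cuts yields $y\in[v]_\alpha$, whence $d(x,[v]_\alpha)\le d(x,y)\le L$, a contradiction. Note that $\gamma_n\to\alpha-$ need not be monotone, and the compactness argument handles this automatically.

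Granting the claim, left lower semicontinuity of $g$ follows quickly. For any $\gamma_n\to\alpha-$ and any fixed $x\in[u]_\alpha\subseteq[u]_{\gamma_n}$, one has $g(\gamma_n)\ge H^*([u]_{\gamma_n},[v]_{\gamma_n})\ge d(x,[v]_{\gamma_n})$, so the claim gives $\liminf_n g(\gamma_n)\ge d(x,[v]_\alpha)$; taking the supremum over $x\in[u]_\alpha$ yields $\liminf_n g(\gamma_n)\ge H^*([u]_\alpha,[v]_\alpha)$, and the symmetric argument gives the same bound for $H^*([v]_\alpha,[u]_\alpha)$. Hence $\liminf_n g(\gamma_n)\ge\max\{H^*([u]_\alpha,[v]_\alpha),H^*([v]_\alpha,[u]_\alpha)\}=g(\alpha)$, which is exactly $g(\alpha)\le\liminf_{\gamma\to\alpha-}g(\gamma)$. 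All of this reads verbatim in the extended reals, so the case $g\equiv+\infty$ on a left neighbourhood (or $g(\alpha)=+\infty$) causes no trouble. Applying Proposition \ref{rsf} then finishes the proof.

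I expect the main obstacle to be precisely the point-to-set distance claim. Because cuts in $\mathbb{R}^m$ need not be bounded, the sets $[v]_{\gamma_n}$ may ``escape to infinity,'' and the only mechanism preventing $d(x,[v]_{\gamma_n})$ from dropping spuriously below $d(x,[v]_\alpha)$ is that near-minimizers are trapped in a compact ball. This is exactly the Heine--Borel property, and it is what makes the statement true for $\mathbb{R}^m$ while, as statement (\rmn4) and Example \ref{nmf} show, measurability can fail in a general metric space lacking local compactness.
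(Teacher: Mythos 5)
Your proposal is correct and follows essentially the same route as the paper's proof: both establish left lower semicontinuity of $\alpha\mapsto H([u]_\alpha,[v]_\alpha)$ on $(0,1]$ and conclude via Proposition \ref{rsf}, with the key step in both being that (near-)minimizers realizing the point-to-set distance are trapped in a compact ball by Heine--Borel and their limit point lies in $[v]_\alpha$. The differences are cosmetic: you prove the $\liminf$ inequality directly for the point-to-set distance using near-minimizers (handling non-monotone sequences), while the paper argues by contradiction through Lemma \ref{fcm} with a monotone sequence, extracts exact minimizers by a first compactness step, and deduces $y\in[v]_\alpha$ from upper semicontinuity of $v$ --- though it also records your nested-intersection argument for $y\in[v]_\alpha$ as an alternative at the end of its proof.
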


\begin{proof}

From Proposition \ref{rsf}, to show the desired result, we only need to show the conclusion
that
 $H([u]_\al, [v]_\al) \leq \liminf_{\gamma\to \al-} H([u]_\gamma, [v]_\gamma) $ for all $\al\in (0,1]$,
which,
by Lemma \ref{fcm},
 is equivalent to the following conclusion
 \begin{description}
   \item[(a)] For each $r\in \mathbb{R}$,
if $\al>0$ and $\al\in \{\al\in [0,1]: H([u]_\al, [v]_\al ) >r\}  $,
then there exists a $\delta (\al) > 0$
such that $[\al-\delta(\al), \al]  \subseteq \{\al\in [0,1]: H([u]_\al, [v]_\al ) >r\}$.
 \end{description}

 Assume that (a) is not true; that is, there is a $r\in \mathbb{R}$ and $\al>0$
with $\al\in \{\al\in [0,1]: H([u]_\al, [v]_\al ) >r\}  $
such that
 for each $\delta  > 0$,
 $[\al-\delta, \al] \nsubseteq \{\al\in [0,1]: H([u]_\al, [v]_\al ) >r\}$.
Then
 there exists a sequence $\{\gamma_n\}$ in $[0,\al)$ such that for $n=1,2,\ldots$, $\gamma_{n+1} > \gamma_n $, $\gamma_n \to \al$,
and
\begin{equation}\label{aem}
H([u]_{\gamma_n}, [v]_{\gamma_n})  \leq r
\end{equation}

 In this proof, we still use
  $d$ to denote the Euclidean metric on $\mathbb{R}^m$.
Given $x\in [u]_\al$.
We claim that
\\
(b) for each $n\in N$, $d(x, [v]_{\gamma_n}) \leq H([u]_{\gamma_n}, [v]_{\gamma_n})  \leq r$.
\\
(c) for each $n\in N$, there is a $y_n \in [v]_{\gamma_n}$ such that
$d(x, y_n) = d(x, [v]_{\gamma_n}) \leq r $.

 Let $n\in \mathbb{N}$. Note that $[u]_\al\subseteq [u]_{\gamma_n}$. Thus
 $x\in [u]_{\gamma_n}$ and hence by \eqref{aem}, $d(x, [v]_{\gamma_n}) \leq H([u]_{\gamma_n}, [v]_{\gamma_n})  \leq r$. So (b) is true.

 Let $n\in \mathbb{N}$.
For each $k\in \mathbb{N}$, there is
a $z_k\in [v]_{\gamma_n}$ with $d(x, z_k) \leq d(x, [v]_{\gamma_n}) +1/k\leq r+1/k \leq r+1$ (by (b), the second $\leq$ holds).
Note that the sequence $\{z_k\}$ is in $\overline{B}(x, r+1) \cap [v]_{\gamma_n}$, which is a compact subset of $\mathbb{R}^m$
as $\overline{B}(x, r+1)\in K(\mathbb{R}^m)$ and $[v]_{\gamma_n} \in C(\mathbb{R}^m)$.
 Thus there is a subsequence $\{z_{k_l}\}$ of $\{z_k\}$
and a $y_n$ in $\overline{B}(x, r+1) \cap [v]_{\gamma_n}$
such that
$y_n=\lim_{l\to\infty} z_{k_l}$.

Clearly $d(x, y_n)\geq d(x, [v]_{\gamma_n})$.
On the other hand, $d(x, y_n) = \lim_{l\to\infty} d(x, z_{k_l}) \leq \lim_{l\to\infty} \big(d(x, [v]_{\gamma_n})+1/{k_l}\big) = d(x, [v]_{\gamma_n})$ (since $d(x, \cdot)$ is continuous and $y_n=\lim_{l\to\infty} z_{k_l}$, the first ``='' holds).
Thus $d(x, y_n)= d(x, [v]_{\gamma_n})$.
 This together with (b) imply that (c) is true.

By (c), the sequence $\{y_n\}$ is in $\overline{B}(x, r)$, which is a compact subset of $\mathbb{R}^m$.
Thus
there is a subsequence $\{ y_{n_i}  \}$
of
$\{y_n\}$ and a $y$ in $\overline{B}(x, r)$
such that
$y=\lim_{i\to\infty} y_{n_i}$.
Observe that
$v(y)\geq \limsup_{i\to \infty} v(y_{n_i}) \geq \limsup_{i\to \infty} \gamma_{n_i}=\lim_{i\to \infty} \gamma_{n_i} = \al$;
so $y\in [v]_\al$ (see also (\uppercase\expandafter{\romannumeral1}) below).
Thus
$d(x, [v]_\al)\leq d(x,y)\leq r$
($y\in \overline{B}(x, r)$ means $d(x,y)\leq r$).

Since $x\in [u]_\al$ is arbitrary, we have that $H^*([u]_\al, [v]_\al) \leq r$.
Similarly, we can deduce
 that
 $H^*([v]_\al, [u]_\al) \leq r$.
Thus
$H([u]_\al, [v]_\al ) \leq r$, which is a contradiction.
So (a) is true. This completes the proof.

\vspace{1mm}
Contents in the following clause (\uppercase\expandafter{\romannumeral1})
are basic and easy to see.

(\uppercase\expandafter{\romannumeral1})
(\uppercase\expandafter{\romannumeral1}-1)
Clearly, for each $\gamma\in (0,1]$, $v(y)\geq \gamma \Leftrightarrow  y\in [v]_\gamma$.
(\uppercase\expandafter{\romannumeral1}-2)
for each $\gamma\in [0,1]$, $y\in [v]_\gamma \Rightarrow  v(y)\geq \gamma$.
If $\gamma=0$, then (\uppercase\expandafter{\romannumeral1}-2) holds obviously.
If $\gamma\in (0,1]$, then (\uppercase\expandafter{\romannumeral1}-2)
follows from (\uppercase\expandafter{\romannumeral1}-1).
So (\uppercase\expandafter{\romannumeral1}-2) is true.

As $\al\in (0,1]$, by (\uppercase\expandafter{\romannumeral1}-1), $v(y)\geq \al $ implies $y\in [v]_\al$.
By (\uppercase\expandafter{\romannumeral1}-2),
for each $i\in \mathbb{N}$,
$ v(y_{n_i}) \geq \gamma_{n_i}$ as $y_{n_i} \in [v]_{\gamma_{n_i}}$.

We can also show $y\in [v]_\al$ as follows.
Given $i_0\in \mathbb{N}$.
Note that for each $i\in \mathbb{N}$, $[v]_{\gamma_{n_i}} \supseteq [v]_{\gamma_{n_{i+1}}}$ and $y_{n_i} \in [v]_{\gamma_{n_i}}$. So
 $\{y_{n_i}, i\geq i_0\}$ is a sequence in
$[v]_{\gamma_{n_{i_0}}}$.
Since
$[v]_{\gamma_{n_{i_0}}}\in C(\mathbb{R}^m)$
and $y$ is the limit of
$\{y_{n_i}, i\geq i_0\}$, we have that $y\in [v]_{\gamma_{n_{i_0}}}$.
As $i_0\in \mathbb{N}$ is arbitrary,
it follows
that $y \in \bigcap_{i=1}^{+\infty} [v]_{\gamma_{n_i}} = [v]_\al$.

\end{proof}

\begin{re}
 {\rm
Let $(X, d)$ be a metric space.
We say that
 $S\subseteq F^1_{USC}(X)$ satisfies condition $(X,d)$-\uppercase\expandafter{\romannumeral1} if
$[u]_\al \cap \overline{B}(x,r)$ is compact in $(X, d)$ for all $u\in S$, $\al\in (0,1]$, $x\in X$ and $r\in \mathbb{R}^+$.
We have conclusion
\\
(\rmn1) If $S\subseteq F^1_{USC}(X)$ satisfies condition $(X,d)$-\uppercase\expandafter{\romannumeral1}, then
for all $u,v \in S$, $H([u]_\al, [v]_\al) $ is a measurable function of $\al$ on $[0,1]$.

The proof of Theorem \ref{rcm} will become the proof of conclusion (\rmn1)
if the proof of Theorem \ref{rcm} is adjusted as follows.
First delete ``In this proof, we still use
  $d$ to denote the Euclidean metric on $\mathbb{R}^m$''
and ``as $\overline{B}(x, r+1)\in K(\mathbb{R}^m)$ and $[v]_{\gamma_n} \in C(\mathbb{R}^m)$''.
Then
 replace ``a sequence $\{\gamma_n\}$ in $[0,\al)$'' by ``a sequence $\{\gamma_n\}$ in $[\al/2,\al)$'' and
replace $\mathbb{R}^m$ by $X$.
At last,
replace $\overline{B}(x, r)$ by $\overline{B}(x, r) \cap [v]_{\al/2}$
in the paragraph beginning with ``By (c)''.
As $\{\gamma_{n_i} \}$ in the adjusted proof
is a sequence in $[\al/2, \al) \subseteq (0,1]$,
``By (\uppercase\expandafter{\romannumeral1}-2)'' can also be replaced by ``By (\uppercase\expandafter{\romannumeral1}-1)''

Clearly,
for all $u\in F^1_{USC}(\mathbb{R}^m)$, $\al\in (0,1]$, $x\in \mathbb{R}^m$ and $r\in \mathbb{R}^+$,
$[u]_\al \cap \overline{B}(x,r) \in K(\mathbb{R}^m)\cup \{\emptyset\}$
as $[u]_\al \in C(\mathbb{R}^m)$ and $ \overline{B}(x,r) \in K(\mathbb{R}^m)$.
So
 $S=F^1_{USC}(\mathbb{R}^m) $ satisfies condition $\mathbb{R}^m $-\uppercase\expandafter{\romannumeral1}.

Clearly,
for all $u\in F^1_{USCG} (X)$, $\al\in (0,1]$, $x\in X$ and $r\in \mathbb{R}^+$,
$[u]_\al \cap \overline{B}(x,r) \in K(X)\cup \{\emptyset\}$
as $[u]_\al \in K(X)$ and $ \overline{B}(x,r) \in C(X)$.
So
 $S= F^1_{USCG} (X)$ satisfies condition $(X,d)$-\uppercase\expandafter{\romannumeral1}.

So Theorem \ref{rcm} and Proposition \ref{lcpm}
are special cases of conclusion (\rmn1).

 }
\end{re}

The following Proposition \ref{cmer} is known.
From Proposition \ref{cmer}, it is easy to obtain Proposition \ref{gnc}, which are part of Lemma 5.4 in \cite{huang19c}.

\begin{pp}  \label{cmer}
For $n=1,2,\ldots$, let $U_n \in K(X)$ and $V_n \in K(X)$ .
\\
(\romannumeral1) If $U_1\supseteq U_2 \supseteq \ldots \supseteq U_n\supseteq \ldots$, then
$U = \bigcap_{n=1}^{+\infty}  U_n    \in K(X)$ and $H(U_n, U) \to 0$ as $n\to +\infty$.
\\
(\romannumeral2) If $V_1\subseteq V_2 \subseteq \ldots \subseteq V_n\subseteq \ldots$
and
$V = \overline{\bigcup_{n=1}^{+\infty}  V_n }   \in K(X)$, then $H(V_n, V) \to 0$ as $n\to +\infty$.
\end{pp}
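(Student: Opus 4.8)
The plan is to handle the two parts in parallel: in each case I would first reduce the symmetric Hausdorff distance $H(\cdot,\cdot)$ to a single one-sided term $H^*$, and then force that term to zero by a compactness-and-contradiction argument exploiting the monotonicity of the sequence.

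For (\rmn1), I would first record that $U=\bigcap_{n=1}^{+\infty} U_n\in K(X)$: as an intersection of closed sets $U$ is closed, and since $U\subseteq U_1\in K(X)$ it is a closed subset of a compact set, hence compact; nonemptiness follows from the finite intersection property, because the $U_n$ are nested nonempty compact sets, so every finite subintersection equals the smallest one and is nonempty. Next, since $U\subseteq U_n$ for every $n$ we have $d(x,U_n)=0$ for all $x\in U$, hence $H^*(U,U_n)=0$ and therefore $H(U_n,U)=H^*(U_n,U)=\sup_{x\in U_n}d(x,U)$. Because the $U_n$ decrease, this quantity is nonincreasing in $n$ and so converges to some $L\geq 0$. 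To show $L=0$ I would argue by contradiction: if $L>0$, pick for each $n$ a point $x_n\in U_n$ with $d(x_n,U)>L/2$ (possible since the sup is $\geq L$); all $x_n$ lie in the compact set $U_1$, so a subsequence $x_{n_k}$ converges to some $x^*$. The key step is that $x^*\in U$: for each fixed $m$, the tail $\{x_{n_k}: n_k\geq m\}$ lies in the closed set $U_m$, so $x^*\in U_m$, and hence $x^*\in\bigcap_{m} U_m=U$. Since $z\mapsto d(z,U)$ is $1$-Lipschitz, $d(x^*,U)=\lim_k d(x_{n_k},U)\geq L/2>0$, contradicting $d(x^*,U)=0$.

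For (\rmn2), the reduction is symmetric: since $V_n\subseteq\bigcup_{m} V_m\subseteq V$ we get $H^*(V_n,V)=0$, hence $H(V_n,V)=H^*(V,V_n)=\sup_{y\in V}d(y,V_n)$, which is nonincreasing in $n$ (the $V_n$ increase) and converges to some $L\geq 0$. Supposing $L>0$, I would choose $y_n\in V$ with $d(y_n,V_n)>L/2$ and extract, by compactness of $V$, a subsequence $y_{n_k}\to y^*\in V$. This is where the closure in $V=\overline{\bigcup_{m} V_m}$ must be handled carefully, and I expect it to be the main obstacle: since $y^*\in\overline{\bigcup_{m} V_m}$ there exist $M$ and $z\in V_M$ with $d(y^*,z)<L/8$, and for $k$ large enough that $n_k\geq M$ and $d(y_{n_k},y^*)<L/8$, monotonicity gives $z\in V_M\subseteq V_{n_k}$, whence $d(y_{n_k},V_{n_k})\leq d(y_{n_k},z)<L/4<L/2$, a contradiction. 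Thus $L=0$ and $H(V_n,V)\to 0$.

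In both parts the only real work is locating the limit point: in (\rmn1) one exploits that a nested intersection of closed sets captures any limit of a tail sequence, while in (\rmn2) one cannot place $y^*$ directly in any single $V_n$ and must instead approximate it by a member $z$ of some $V_M$ and then use monotonicity to absorb $z$ into $V_{n_k}$. The $1$-Lipschitz continuity of $d(\cdot,S)$ and the characterization that $x$ lies in a closure exactly when its distance to the set is $0$, both from the preliminaries, are the supporting tools throughout.
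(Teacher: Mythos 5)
Your proof is correct in both parts: the reduction of $H$ to the one-sided term $H^*(U_n,U)$ (resp.\ $H^*(V,V_n)$), the monotone convergence of that term to a limit $L$, and the compactness-and-contradiction step are all sound, and the delicate point in (\romannumeral2) --- that the limit point $y^*$ need not lie in any single $V_n$ and must be approximated by some $z\in V_M$ which monotonicity then absorbs into $V_{n_k}$ --- is handled exactly right. For comparison: the paper offers no proof at all here; it states Proposition \ref{cmer} as a known result and only uses it to derive Proposition \ref{gnc}, so your argument, which is the standard one for nested compact sets under the Hausdorff metric, fills a gap the paper deliberately leaves to the literature rather than diverging from anything in it.
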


\begin{pp}\cite{huang19c}
  \label{gnc}
Let $u\in F^1_{USCG}(X)$.
\\
(\romannumeral1) \ For each $\al\in (0,1]$, $\lim_{\beta\to \al-} H([u]_\beta, [u]_\al) =0$.
\\
(\romannumeral2) \ For each $\al\in (0,1)$, $\lim_{\gamma\to \al+} H([u]_\gamma, \overline{\{u>\al\}}) =0$.
\\
(\romannumeral3) \ If $u\in F^1_{USCB}(X)$, then $\lim_{\gamma\to 0+} H([u]_\gamma, [u]_0) =0$.

\end{pp}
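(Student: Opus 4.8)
The plan is to reduce all three limits to the monotone-convergence statements for the Hausdorff metric recorded in Proposition \ref{cmer}, exploiting the nestedness $[u]_\beta \supseteq [u]_\gamma$ for $\beta \leq \gamma$. The recurring device is that whenever one closed set is contained in another, say $A \subseteq B$, the one-sided pre-distance $H^*(A,B) = \sup_{x\in A} d(x,B)$ vanishes, so $H(A,B)$ collapses to a single supremum. This both simplifies each Hausdorff distance and makes it monotone in the level parameter, which is what lets me pass from convergence along sequences (all that Proposition \ref{cmer} directly delivers) to the genuine one-sided limits asserted in the statement.

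For (\romannumeral1), fix $\alpha \in (0,1]$ and pick any sequence $\beta_n \uparrow \alpha$ with $\beta_n \in (0,\alpha)$. Then $[u]_{\beta_1} \supseteq [u]_{\beta_2} \supseteq \cdots$ is a decreasing chain in $K(X)$, and the identity $\bigcap_n [u]_{\beta_n} = \{x : u(x) \geq \beta_n \text{ for all } n\} = [u]_\alpha$ (using $\beta_n \uparrow \alpha$) pins down the intersection. Proposition \ref{cmer}(\romannumeral1) then gives $H([u]_{\beta_n}, [u]_\alpha) \to 0$. To upgrade this to $\lim_{\beta\to\alpha-}$, I note that for $\beta < \alpha$ we have $[u]_\alpha \subseteq [u]_\beta$, so $H([u]_\beta, [u]_\alpha) = H^*([u]_\beta, [u]_\alpha) = \sup_{x\in[u]_\beta} d(x, [u]_\alpha)$ is non-increasing as $\beta \uparrow \alpha$ (the supremum runs over shrinking compact sets); a monotone function whose limit along one sequence is $0$ has full one-sided limit $0$.

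Parts (\romannumeral2) and (\romannumeral3) are the ascending analogue and are handled uniformly. For (\romannumeral2), fix $\alpha \in (0,1)$, take $\gamma_n \downarrow \alpha$ with $\gamma_n \in (\alpha,1)$, so that $[u]_{\gamma_1} \subseteq [u]_{\gamma_2} \subseteq \cdots$ increases in $K(X)$ and $\bigcup_n [u]_{\gamma_n} = \{x : u(x) > \alpha\}$, whence $\overline{\bigcup_n [u]_{\gamma_n}} = \overline{\{u>\alpha\}}$. The compactness needed to invoke Proposition \ref{cmer}(\romannumeral2) follows because $\{u>\alpha\} \subseteq [u]_\alpha \in K(X)$, so its closure is a closed subset of a compact set, hence compact; thus $H([u]_{\gamma_n}, \overline{\{u>\alpha\}}) \to 0$. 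Monotonicity again upgrades this: for $\gamma>\alpha$ we have $[u]_\gamma \subseteq \overline{\{u>\alpha\}}$, so $H([u]_\gamma, \overline{\{u>\alpha\}}) = \sup_{y\in\overline{\{u>\alpha\}}} d(y, [u]_\gamma)$ is non-increasing as $\gamma\downarrow\alpha$, giving the full limit $0$. Part (\romannumeral3) is the case $\alpha=0$ of this argument, with $\{u>0\}$ in place of $\{u>\alpha\}$ and $[u]_0 = \overline{\{u>0\}}$.

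The only genuinely delicate point is the compactness of the ascending limit set: Proposition \ref{cmer}(\romannumeral2) requires $\overline{\bigcup_n [u]_{\gamma_n}}$ to be compact, which in (\romannumeral2) is automatic from containment in $[u]_\alpha$, but in (\romannumeral3) is exactly why the stronger hypothesis $u\in F^1_{USCB}(X)$ must be invoked, since it is precisely this class that guarantees $[u]_0 \in K(X)$. Everything else is bookkeeping with the cut identities and the monotonicity that removes the sequential-versus-genuine-limit gap.
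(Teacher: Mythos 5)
Your proposal is correct and follows exactly the route the paper indicates: the paper states that Proposition \ref{gnc} is obtained easily from Proposition \ref{cmer}, and your argument carries out precisely that reduction — nested cuts give monotone sequences of compact sets, Proposition \ref{cmer} yields the sequential limits, and the containment-induced monotonicity of $H([u]_\beta,[u]_\al)$ (respectively $H([u]_\gamma, \overline{\{u>\al\}})$) upgrades these to genuine one-sided limits. Your identification of where compactness of $\overline{\{u>\al\}}$ comes from, and of why $u\in F^1_{USCB}(X)$ is needed in part (\rmn3), is also exactly right.
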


Let $u,v \in  F^1_{USC}(X)$ and $h\in [0,1)$.
We can see
 $H([u]_\al, [v]_\al)$ as a function of $[0,1]$ into $\widehat{\mathbb{R}}$ with $\al$ as the independent variable,
 $H([u]_\al, [u]_{\al-h})$ a function of $[h,1]$ into $\widehat{\mathbb{R}}$ with $\al$ as the the independent variable,
and
$H([u]_\al, [u]_{\al+h}) $
a function of $[0, 1-h]$ into $\widehat{\mathbb{R}}$ with $\al$ as the independent variable.
In the sequel,
we will not emphasize whether the expressions $H([u]_\al, [v]_\al)$,
$H([u]_\al, [u]_{\al-h})$ and $H([u]_\al, [u]_{\al+h})$
are functions or numeric values, because it is easy to tell what they mean from the context.

The following Proposition \ref{lrcm} is Proposition 5.5 in \cite{huang19c}.

\begin{pp} \cite{huang19c} \label{lrcm}
 (\romannumeral1) If $u,v \in  F^1_{USCG}(X)$, then $H([u]_\al, [v]_\al) $
is
 left continuous at each $\al\in (0,1]$.
  (\romannumeral2) If $u,v \in  F^1_{USCB}(X)$, then $H([u]_\al, [v]_\al) $
is
 right continuous at $\al=0$.
\end{pp}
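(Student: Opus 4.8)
The plan is to reduce both statements to the elementary quadrilateral inequality for the Hausdorff metric, combined with Proposition \ref{gnc}. First I would record that for any four nonempty compact sets $A,B,C,D\in K(X)$, the triangle inequality for $H$ gives $H(A,B)\le H(A,C)+H(C,D)+H(D,B)$, whence $H(A,B)-H(C,D)\le H(A,C)+H(B,D)$; interchanging the two pairs yields
$$\bigl|H(A,B)-H(C,D)\bigr|\le H(A,C)+H(B,D).$$
Since $u,v\in F^1_{USCG}(X)$ are normal, for every $\al\in(0,1]$ the cuts $[u]_\al$ and $[v]_\al$ are nonempty and compact, so all Hausdorff distances appearing below are finite and these subtractions are legitimate.

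For part (\romannumeral1), I would fix $\al\in(0,1]$ and let $\beta\to\al-$. For $\beta$ close enough to $\al$ we have $\beta\in(0,\al)\subseteq(0,1]$, so $[u]_\beta,[v]_\beta\in K(X)$. Applying the quadrilateral inequality with $A=[u]_\beta$, $B=[v]_\beta$, $C=[u]_\al$, $D=[v]_\al$ gives
$$\bigl|H([u]_\beta,[v]_\beta)-H([u]_\al,[v]_\al)\bigr|\le H([u]_\beta,[u]_\al)+H([v]_\beta,[v]_\al).$$
By Proposition \ref{gnc}(\romannumeral1) applied separately to $u$ and to $v$, both terms on the right tend to $0$ as $\beta\to\al-$. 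Hence $H([u]_\beta,[v]_\beta)\to H([u]_\al,[v]_\al)$, which is exactly left continuity at $\al$.

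For part (\romannumeral2), the hypothesis $u,v\in F^1_{USCB}(X)$ additionally gives $[u]_0,[v]_0\in K(X)$. Letting $\gamma\to 0+$ and applying the same inequality with the cuts at levels $\gamma$ and $0$ yields
$$\bigl|H([u]_\gamma,[v]_\gamma)-H([u]_0,[v]_0)\bigr|\le H([u]_\gamma,[u]_0)+H([v]_\gamma,[v]_0),$$
and by Proposition \ref{gnc}(\romannumeral3) both right-hand terms tend to $0$ as $\gamma\to 0+$, giving right continuity at $\al=0$. I do not expect a serious obstacle: the entire argument merely packages Proposition \ref{gnc} through the quadrilateral inequality. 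The one point that genuinely requires the stated hypotheses is the finiteness of every Hausdorff distance involved, which is guaranteed by compactness and normality of the cuts. This is precisely why $F^1_{USCG}$ is needed for (\romannumeral1) and the stronger $F^1_{USCB}$ (which makes $[u]_0$ compact) is needed for (\romannumeral2); without it Proposition \ref{gnc} would not apply and $H$ could equal $+\infty$, invalidating the subtraction.
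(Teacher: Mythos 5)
Your proof is correct and follows essentially the same route the paper indicates: the quadrilateral inequality $|H(A,B)-H(C,D)|\le H(A,C)+H(B,D)$ (recorded as fact (a) in Remark \ref{tra}, which the paper explicitly says is used in the proof of Proposition \ref{lrcm}) combined with Proposition \ref{gnc}(\romannumeral1) for left continuity and \ref{gnc}(\romannumeral3) for right continuity at $0$. Indeed, the paper's proof of the analogous Proposition \ref{lcpmcu}, stated to be ``similar to that of Proposition \ref{lrcm}(\romannumeral1)'', carries out exactly your argument, including the finiteness check that legitimizes the subtraction.
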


\begin{re} \label{tra}
  {\rm The following fact (a) is known and used in the proofs of Propositions \ref{lrcm} and \ref{lcpmcu}.
 (a) Let $(Y,\rho)$ be an extended metric space and $x,y,x_1,y_1\in Y$.
If $\rho(x,y)$ and $\rho(x_1, y_1)$ are finite, then
$|\rho(x,y) - \rho(x_1,y_1)|\leq \rho(x,x_1) + \rho(y,y_1)$.

To show (a), it suffices to verify
(a-1) $\rho(x,y) - \rho(x_1,y_1) \leq  \rho(x,x_1) + \rho(y,y_1)$
and
(a-2) $\rho(x_1,y_1) - \rho(x,y) \leq  \rho(x,x_1) + \rho(y,y_1)$.
By the triangle inequality,
$\rho(x,y) \leq \rho(x,y_1) + \rho(y,y_1) \leq \rho(x_1,y_1) + \rho(x,x_1) + \rho(y,y_1)$.
So (a-1) is true.
Similarly $\rho(x_1,y_1) \leq \rho(x_1,y) + \rho(y,y_1) \leq \rho(x,y) + \rho(x,x_1) + \rho(y,y_1)$. So (a-2) is true.
Thus (a) is true.

 (a-2) can also be obtained
by interchanging $x$ and $x_1$ and interchanging $y$ and $y_1$ in (a-1).
Of course, the inequality to show (a-2) can also be obtained from the inequality to show (a-1)
by same means.

}
\end{re}

The following
Proposition \ref{lcpm} follows immediately from Proposition \ref{lrcm}(\romannumeral1) and
Corollary \ref{rsfc}. Proposition \ref{lcpm} has been given in \cite{huang19c} (see the paragraph before Proposition 6.1 of \cite{huang19c}).

\begin{pp} \cite{huang19c} \label{lcpm}
  For each $u,v \in  F^1_{USCG}(X)$, $H([u]_\al, [v]_\al) $ is a measurable function of $\al$ on $[0,1]$; that is, $d_p(u,v)$ is well-defined.
\end{pp}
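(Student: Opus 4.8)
The plan is to obtain the statement as an immediate assembly of two results already available: the left continuity of the cut distance from Proposition \ref{lrcm}(\romannumeral1), and the measurability criterion for left continuous functions from Corollary \ref{rsfc}. First I would view $f(\al):=H([u]_\al,[v]_\al)$ as a function from the interval $[0,1]$ into $\widehat{\mathbb{R}}$ with $\al$ as the independent variable. Since $u,v\in F^1_{USCG}(X)$, every cut $[u]_\al$ and $[v]_\al$ with $\al\in(0,1]$ lies in $K(X)$, so $f$ is an honest $\widehat{\mathbb{R}}$-valued function there and the whole question is reduced to verifying the hypothesis needed to feed $f$ into the machinery of Section \ref{meau}.

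Next I would invoke Proposition \ref{lrcm}(\romannumeral1), which asserts precisely that for $u,v\in F^1_{USCG}(X)$ the function $f(\al)=H([u]_\al,[v]_\al)$ is left continuous at each $\al\in(0,1]$; that is, $f$ is left continuous on $(0,1]$. Taking $\mu=0$ and $\nu=1$, this is exactly the hypothesis of Corollary \ref{rsfc}, so I would apply that corollary to conclude that $f$ is a measurable function on $[0,1]$. Finally, by the characterization recorded just after the definition of the $d_p$ distance (via Fact 1), measurability of $\al\mapsto H([u]_\al,[v]_\al)$ on $[0,1]$ is equivalent to the well-definedness of $d_p(u,v)=\left(\int_0^1 H([u]_\al,[v]_\al)^p\,d\al\right)^{1/p}$, which yields the second assertion and completes the deduction.

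At this level there is essentially no obstacle, because the substantive content has already been pushed into Proposition \ref{lrcm}(\romannumeral1): granted that result together with Corollary \ref{rsfc}, the proof is a two-line citation. The one place where genuine effort would be required is if one had to establish the left continuity directly, namely showing $H([u]_\al,[v]_\al)\to H([u]_{\al_0},[v]_{\al_0})$ as $\al\to\al_0-$; that step rests on the monotone-cut convergence $H([u]_\beta,[u]_\al)\to 0$ as $\beta\to\al-$ from Proposition \ref{gnc}(\romannumeral1), combined with the reverse-triangle estimate for the Hausdorff distance in Remark \ref{tra}(a), and it is exactly here that the compactness of the cuts guaranteed by membership in $F^1_{USCG}(X)$ is used. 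Since we may assume Proposition \ref{lrcm}, I would not reprove this and would instead present the argument as the short corollary that it is.
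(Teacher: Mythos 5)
Your proposal is correct and matches the paper's own derivation exactly: the paper obtains Proposition \ref{lcpm} precisely by combining the left continuity of $\al\mapsto H([u]_\al,[v]_\al)$ on $(0,1]$ from Proposition \ref{lrcm}(\romannumeral1) with the measurability criterion of Corollary \ref{rsfc}, with well-definedness of $d_p(u,v)$ then following from Fact 1. Your closing remark about where the real work lies (Proposition \ref{gnc}(\romannumeral1) plus the reverse-triangle estimate of Remark \ref{tra}(a)) also agrees with the paper's own account in Remark \ref{uen}.
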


\begin{re}\label{uen}
 {\rm

In the proof of Lemma 6.5 of \cite{huang17}, we gave the conclusion (a) for each $u\in F_{USCG}^1(X)$,
the cut-function
$[u](\al) = [u]_\al$ from $[0,1]$ to $(C(X),H)$ is left continuous on $(0,1]$.
From conclusion (a) it is easy to get Proposition \ref{lrcm}(\rmn1):
for each $u,v \in  F^1_{USCG}(X)$, $H([u]_\al, [v]_\al) $
is
 left continuous at each $\al\in (0,1]$.
We consider Corollary \ref{rsfc}, which says that a left continuous function $f: [\mu, \nu] \to \widehat{\mathbb{R}}$ is measurable,
 to be a known conclusion.
So Proposition \ref{lcpm} can be seen as an easy corollary of
conclusion (a).

Conclusion (a) and the statement of Proposition \ref{lcpm} have been given in our paper arxiv1904.07489v2 (see the proof of Lemma 6.3
and Page 5 in this paper), which is an early version of \cite{huang17} and was submitted on
2019.07.06.
From these contents in arxiv1904.07489v2, it is easy and natural to give the proof
of Proposition \ref{lcpm}.
Conclusion (a) is Proposition \ref{gnc}(\rmn1).

Steps 2 and 3 of the proof of Proposition \ref{rsf} should be known,
we give them for completeness of this paper.

 }
\end{re}

\begin{pp}\label{lcpmcu}
Let $u \in  F^1_{USCG}(X)$ and $h\in [0,1]$.
\\
(\romannumeral1) \ $H([u]_\al, [u]_{\al-h}) $
is
 left continuous at each $\al\in (h,1]$.
\\
(\romannumeral2) \
$H([u]_\al, [u]_{\al-h}) $ is a measurable function of $\al$ on $[h,1]$; that is, $\left( \int_h^1  H([u]_\alpha,   [u]_{\alpha-h})^p \;d\alpha   \right)^{1/p}$ is well-defined.
\\
(\romannumeral3) \ $H([u]_\al, [u]_{\al+h}) $
is
 left continuous at each $\al\in (0,1-h]$.
\\
(\romannumeral4) \
$H([u]_\al, [u]_{\al+h}) $ is a measurable function of $\al$ on $[0, 1-h]$;
that is, $\left( \int_0^{1-h}  H([u]_\alpha,   [u]_{\alpha+h})^p \;d\alpha   \right)^{1/p}$ is well-defined.
\end{pp}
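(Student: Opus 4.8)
The plan is to obtain the measurability claims (ii) and (iv) as immediate consequences of the left-continuity claims (i) and (iii). Indeed, once $\al \mapsto H([u]_\al, [u]_{\al-h})$ is shown to be left continuous on $(h,1]$, Corollary \ref{rsfc} (applied with $\mu = h$, $\nu = 1$) yields its measurability on $[h,1]$, and this is equivalent, by Fact 1, to the integral $\left(\int_h^1 H([u]_\al, [u]_{\al-h})^p \, d\al\right)^{1/p}$ being well-defined; the same argument with $\mu = 0$, $\nu = 1-h$ derives (iv) from (iii). So the real content lies in establishing (i) and (iii).

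For (i) I would fix $\al \in (h,1]$ and estimate the increment of the function directly. Since $u \in F^1_{USCG}(X)$, the cut indexed by any value in $(0,1]$ lies in $K(X)$, on which $H$ is a genuine finite-valued metric; hence for $\gamma$ close enough to $\al$ that $\gamma > h$, all four cuts $[u]_\al, [u]_{\al-h}, [u]_\gamma, [u]_{\gamma-h}$ are compact and the quantities $H([u]_\gamma, [u]_{\gamma-h})$ and $H([u]_\al, [u]_{\al-h})$ are finite. This licenses invoking Remark \ref{tra}(a) with $\rho = H$ on $K(X)$ to obtain
\[
|H([u]_\gamma, [u]_{\gamma-h}) - H([u]_\al, [u]_{\al-h})| \leq H([u]_\gamma, [u]_\al) + H([u]_{\gamma-h}, [u]_{\al-h}).
\]
As $\gamma \to \al-$, the first term tends to $0$ by Proposition \ref{gnc}(i) applied at $\al \in (0,1]$, while the second tends to $0$ by Proposition \ref{gnc}(i) applied at $\al - h$, which lies in $(0,1]$ precisely because $\al > h$, using that $\gamma - h \to (\al - h)-$. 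Hence the increment vanishes, which is left continuity at $\al$.

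Claim (iii) is proved the same way with the upward shift: for $\al \in (0,1-h]$ one has $\al \in (0,1]$ and $\al + h \in (0,1]$, so both cuts are compact, and the analogous bound
\[
|H([u]_\gamma, [u]_{\gamma+h}) - H([u]_\al, [u]_{\al+h})| \leq H([u]_\gamma, [u]_\al) + H([u]_{\gamma+h}, [u]_{\al+h})
\]
together with Proposition \ref{gnc}(i) at $\al$ and at $\al + h$ gives the result.

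I expect the only delicate point to be the bookkeeping on compactness of the cuts, so that the finiteness hypotheses required by Remark \ref{tra}(a) are legitimately satisfied. Concretely, the shifted index can reach $0$ at a left endpoint of the interval of definition — $[u]_{\al-h} = [u]_0$ at $\al = h$ in (i), and $[u]_{\al+h}$ at $\al = 0$ in (iii) — where $[u]_0 = {\rm supp}\, u$ need not be compact and $H$ could take the value $+\infty$. This causes no trouble, however: claims (i) and (iii) concern only the half-open intervals $(h,1]$ and $(0,1-h]$, on which the shifted index stays strictly positive, and in the measurability statements (ii) and (iv) a single endpoint has measure zero and affects neither measurability nor the value of the integral.
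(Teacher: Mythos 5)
Your proof is correct and follows essentially the same route as the paper's: the same triangle-inequality estimate $|H([u]_\gamma,[u]_{\gamma\mp h})-H([u]_\al,[u]_{\al\mp h})|\leq H([u]_\gamma,[u]_\al)+H([u]_{\gamma\mp h},[u]_{\al\mp h})$ (Remark \ref{tra}(a), legitimate because the relevant cuts are compact so the distances are finite), Proposition \ref{gnc}(\rmn1) applied to both terms, and Corollary \ref{rsfc} to pass from left continuity to measurability. The only differences are cosmetic: the paper additionally flags the trivial case $h=1$ (where the domains are singletons), and in your endpoint aside for (\rmn3) the possibly non-compact cut at $\al=0$ is $[u]_\al=[u]_0$ itself rather than $[u]_{\al+h}$ --- neither point affects the argument, since those endpoints are excluded from the left-continuity claims and are null for the integrals.
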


\begin{proof}

The proofs of (\romannumeral1) and (\romannumeral3)
are similar to that of Proposition \ref{lrcm}(\rmn1).

First we show (\rmn1).
Let $\al\in (h,1]$.
Note that for each $\beta \in (h,1]$,
$H([u]_\beta, [u]_{\beta-h}) $
is
finite and
$
   |H([u]_\al, [u]_{\al-h}) -H([u]_\beta, [u]_{\beta-h}) |  \leq
  H([u]_\al,   [u]_\beta) + H([u]_{\al-h}, [u]_{\beta-h}).
$
By
 Proposition \ref{gnc} (\romannumeral1), $\lim_{\beta \to \al-} (H([u]_\al,   [u]_\beta) + H([u]_{\al-h}, [u]_{\beta-h}) ) = 0$. Hence
  $\lim_{\beta \to \al-} H([u]_\beta, [u]_{\beta-h}) =H([u]_\alpha, [u]_{\alpha-h})$;
that is,
 $H([u]_\alpha, [u]_{\alpha-h}) $ is
 left continuous at $\al$.
So (\romannumeral1) is true. Thus from
Corollary \ref{rsfc},
 (\romannumeral2) is true.

Now we show (\rmn3). Let $\al\in (0,1-h]$.
Note that for each $\beta\in (0, 1-h]$,
 $H([u]_\beta, [u]_{\beta+h}) $
is
finite and
$ |H([u]_\al, [u]_{\al+h}) -H([u]_\beta, [u]_{\beta+h}) |  \leq
  H([u]_\al,   [u]_\beta) + H([u]_{\al + h}, [u]_{\beta + h}).
$
By
 Proposition \ref{gnc}(\romannumeral1), $\lim_{\beta \to \al-} (H([u]_\al,   [u]_\beta) + H([u]_{\al+h}, [u]_{\beta+h}) ) = 0$. Hence
  $\lim_{\beta \to \al-} H([u]_\beta, [u]_{\beta+h})  = H([u]_\alpha, [u]_{\alpha+h})$;
that is,
 $H([u]_\alpha, [u]_{\alpha+h})$ is
 left continuous at $\al$.
So (\romannumeral3) is true. Thus from
Corollary \ref{rsfc},
 (\romannumeral4) is true.

If $h=1$, then $H([u]_\al, [u]_{\al-h}) $ is a function with its domain being a single point $\{1\}$ and  $H([u]_\al, [u]_{\al+h}) $ is a function with its domain
being a single point $\{0\}$. So Proposition \ref{lcpmcu} holds trivially in the case when $h=1$.
\end{proof}

To give the example which shows the last statement presented in \cite{huang17} which is listed at the beginning of this section,
we need some conclusions at first.

The following two representation theorems should be known.
We suppose that
$\sup \emptyset =0$ in this paper.
\begin{tm} \label{rep}
Let $Y$ be a set.
  If $u\in F(Y)$, then for each
$\al\in (0,1]$,
  $[u]_\al = \cap_{\beta<\al} [u]_\beta.$

Conversely,
suppose that
$\{v(\al): \al \in (0,1]\}$
is
a family of sets in $Y$ satisfying $v(\al) = \cap_{\beta<\al} v(\beta)$ for all
$\al\in (0,1]$. Define
$u\in F(Y)$ by
$u(x) := \sup\{  \al\in (0,1]:  x\in v(\al)    \} $ ($\sup\emptyset = 0$) for each
$x\in Y$.
Then $u$ is the unique fuzzy set in $Y$ satisfying that $[u]_\al = v(\al)$ for all $\al\in (0,1]$.
\end{tm}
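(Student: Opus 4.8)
The plan is to prove the two assertions separately, reducing the converse to a single identity describing how a fuzzy set is recovered from its cuts. For the first (direct) part, I would argue by double inclusion. Since $\alpha\in(0,1]$, the index set in $\bigcap_{\beta<\alpha}[u]_\beta$ is $\beta\in(0,\alpha)$, which is nonempty. If $x\in[u]_\alpha$ then $u(x)\geq\alpha>\beta$ for every such $\beta$, so $x\in[u]_\beta$ and hence $x\in\bigcap_{\beta<\alpha}[u]_\beta$. Conversely, if $x$ lies in every $[u]_\beta$ with $\beta<\alpha$, then $u(x)\geq\beta$ for all such $\beta$; taking the supremum over $\beta\in(0,\alpha)$ gives $u(x)\geq\sup_{\beta<\alpha}\beta=\alpha$, so $x\in[u]_\alpha$. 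This settles the first claim.

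For the converse I would first record the elementary fact that any $w\in F(Y)$ satisfies $w(x)=\sup\{\alpha\in(0,1]:x\in[w]_\alpha\}$ (with $\sup\emptyset=0$), because $\{\alpha\in(0,1]:w(x)\geq\alpha\}$ equals $(0,w(x)]$ when $w(x)>0$ and is empty when $w(x)=0$. Granting the existence claim $[u]_\alpha=v(\alpha)$, uniqueness is then immediate: any $w$ with $[w]_\alpha=v(\alpha)$ for all $\alpha\in(0,1]$ must satisfy $w(x)=\sup\{\alpha:x\in v(\alpha)\}=u(x)$, so $w=u$.

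The substance is therefore the identity $[u]_\alpha=v(\alpha)$. A preliminary observation is that the compatibility condition $v(\alpha)=\bigcap_{\beta<\alpha}v(\beta)$ forces the family $\{v(\alpha)\}$ to be nested: if $\alpha_1<\alpha_2$ then $\alpha_1$ is among the indices $\beta<\alpha_2$, so $v(\alpha_2)\subseteq v(\alpha_1)$. The inclusion $v(\alpha)\subseteq[u]_\alpha$ needs no hypothesis, since $x\in v(\alpha)$ puts $\alpha$ into $\{\gamma:x\in v(\gamma)\}$, whence $u(x)\geq\alpha$. The reverse inclusion $[u]_\alpha\subseteq v(\alpha)$ is the main obstacle, and it is exactly where the compatibility condition must be invoked. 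Assuming $u(x)\geq\alpha$, I would fix an arbitrary $\beta<\alpha$ and observe that, because $\sup\{\gamma:x\in v(\gamma)\}\geq\alpha>\beta$, there is some $\gamma>\beta$ with $x\in v(\gamma)$; nestedness then yields $x\in v(\gamma)\subseteq v(\beta)$. Since $\beta<\alpha$ was arbitrary, $x\in\bigcap_{\beta<\alpha}v(\beta)=v(\alpha)$, which closes the argument.
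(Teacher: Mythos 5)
Your proposal is correct. Note that the paper offers no proof of Theorem \ref{rep} to compare against: it is introduced with ``the following two representation theorems should be known'' and stated without argument, so your write-up supplies the standard proof the paper takes for granted. Your argument is complete and handles the delicate points properly — the index set $\beta\in(0,\alpha)$ is nonempty precisely because $\alpha>0$ (and $[u]_0$, $v(0)$ play no role here, consistent with the paper's convention that $[u]_0$ is undefined for a bare set $Y$); the convention $\sup\emptyset=0$ is respected in the identity $w(x)=\sup\{\alpha\in(0,1]:x\in[w]_\alpha\}$, which cleanly yields uniqueness; and you correctly isolate that the compatibility condition $v(\alpha)=\bigcap_{\beta<\alpha}v(\beta)$ is needed only for the inclusion $[u]_\alpha\subseteq v(\alpha)$, via nestedness of the family.
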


\begin{tl} \label{repc}
Let $(Y,\tau)$ be a topological space.
 (\rmn1) If $u\in F(Y)$, then for each
$\al\in (0,1]$,
  $[u]_\al = \cap_{\beta<\al} [u]_\beta$,
and $[u]_0= \overline{\cup_{\al>0} [u]_\al}$.

(\rmn2)
Suppose that
$\{v(\al): \al \in (0,1]\}$
is
a family of sets in $Y$ satisfying $v(\al) = \cap_{\beta<\al} v(\beta)$ for all
$\al\in (0,1]$ and $v(0)= \overline{\cup_{\al>0} v(\al)}$. Define
$u\in F(Y)$ by
$u(x) := \sup\{  \al\in (0,1]:  x\in v(\al)    \} $ for each
$x\in Y$.
Then $u(x) = \sup\{  \al\in [0,1]:  x\in v(\al)    \} $ for each
$x\in Y$ and $u$ is the unique fuzzy set in $Y$ satisfying that $[u]_\al = v(\al)$ for all $\al\in [0,1]$ (respectively, $\al\in (0,1]$).
\end{tl}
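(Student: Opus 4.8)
The plan is to derive both parts directly from Theorem \ref{rep} together with the definition $[u]_0=\overline{\{x:u(x)>0\}}$, so that no new set-theoretic work is needed beyond two observations: that for positive $\al$ the cut $[u]_\al=\{x:u(x)\geq \al\}$ does not involve the topology, and that the union of all positive cuts is exactly the strict support of $u$.

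For part (\rmn1), the identity $[u]_\al=\cap_{\be<\al}[u]_\be$ for $\al\in(0,1]$ is precisely the first assertion of Theorem \ref{rep}, applied verbatim since positive cuts are insensitive to $\tau$. For the zero-cut I would first establish the key equality $\cup_{\al>0}[u]_\al=\{x\in Y:u(x)>0\}$: a point $x$ lies in some positive cut if and only if $u(x)\geq \al$ for some $\al>0$, i.e.\ if and only if $u(x)>0$. Taking topological closures and invoking the definition of the support then yields $\overline{\cup_{\al>0}[u]_\al}=\overline{\{x:u(x)>0\}}=[u]_0$.

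For part (\rmn2) I would apply Theorem \ref{rep} to the family $\{v(\al):\al\in(0,1]\}$, whose compatibility hypothesis $v(\al)=\cap_{\be<\al}v(\be)$ is exactly what is assumed, to conclude that $u$ is the unique fuzzy set with $[u]_\al=v(\al)$ for all $\al\in(0,1]$; this already disposes of the ``$\al\in(0,1]$'' uniqueness clause. To pin down the zero-cut I would feed these cuts into part (\rmn1): $[u]_0=\overline{\cup_{\al>0}[u]_\al}=\overline{\cup_{\al>0}v(\al)}=v(0)$, the last equality being the standing hypothesis on $v(0)$. Hence $[u]_\al=v(\al)$ on all of $[0,1]$; and since any $w$ with $[w]_\al=v(\al)$ on $[0,1]$ in particular matches $v$ on $(0,1]$, the uniqueness from Theorem \ref{rep} forces $w=u$, which settles the ``$\al\in[0,1]$'' uniqueness clause as well.

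The one point requiring genuine care — and the main (if modest) obstacle — is the reformulation $u(x)=\sup\{\al\in[0,1]:x\in v(\al)\}$, namely that enlarging the index set to include $\al=0$ leaves the supremum unchanged. I would split on whether $x\in v(0)$. If $x\notin v(0)$, then from $\cup_{\al>0}v(\al)\subseteq\overline{\cup_{\al>0}v(\al)}=v(0)$ we get $x\notin v(\al)$ for every $\al>0$, so both index sets are empty and both suprema equal $\sup\emptyset=0$ under the standing convention. If $x\in v(0)$, the enlarged set is $\{\al\in(0,1]:x\in v(\al)\}\cup\{0\}$, and since every element of such a set is already nonnegative while $\sup\emptyset=0$, adjoining $0$ does not alter the supremum. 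This careful tracking of the inclusion $\cup_{\al>0}v(\al)\subseteq v(0)$ together with the $\sup\emptyset=0$ convention is the only place where the argument is not purely mechanical.
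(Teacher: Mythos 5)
Your proposal is correct and follows essentially the same route as the paper: both reduce everything to Theorem \ref{rep} via the observation $[u]_0=\overline{\{x:u(x)>0\}}=\overline{\cup_{\al>0}[u]_\al}$, derive $[u]_0=v(0)$ from the standing hypothesis, and settle the $[0,1]$ uniqueness by restriction to $(0,1]$. The only cosmetic difference is in the supremum identity, where the paper avoids your case split on $x\in v(0)$ by writing $u(x)=a=a\vee b$ with $b=\sup\{\al\in\{0\}:x\in v(\al)\}=0$; your two cases verify exactly the same facts.
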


\begin{proof} If $u\in F(Y)$, then
$[u]_0= \overline{    \{x: u(x) > 0 \}    }=\overline{\cup_{\al>0} [u]_\al}$.
(\rmn1) follows from this fact and Theorem \ref{rep}.

Now we show (\rmn2).
Given $x\in X$.
Put
$a=\sup\{  \al\in (0,1]:  x\in v(\al)    \}$
and
$b = \sup\{  \al\in \{0\}:  x\in v(\al)    \}$.
Then
$a \geq 0$ and $b=0$.
Thus
$u(x)=a= a \vee b=\sup\{  \al\in [0,1]:  x\in v(\al)    \} $ (As $[0,1] = (0,1] \cup \{0\}$, the last $=$ holds).

By Theorem \ref{rep}, $u$ is the unique fuzzy
set in $Y$ satisfying that $[u]_\al = v(\al)$ for all $\al\in (0,1]$.
Then
$[u]_0 = \overline{\cup_{\al>0} [u]_\al}= \overline{\cup_{\al>0} v(\al)}=v(0)$.
So
$u$ is also the unique fuzzy
set in $Y$ satisfying that $[u]_\al = v(\al)$ for all $\al\in [0,1]$.
Thus (\rmn2) is proved.

\end{proof}

Let $Y$ be a set. By Theorem \ref{rep},
we can define a fuzzy set $u$ in $Y$
by putting $[u]_\al=v_\al$ for all $\al\in (0,1]$, where $\{v_\al: \al\in (0,1]\}$ satisfies condition (a)
 $v(\al) = \cap_{\beta<\al} v(\beta)$ for all
$\al\in (0,1]$.
Let $(Y,\tau)$ be a topological space.
By Corollary \ref{repc},
we can define a fuzzy set $u$ in $Y$
by putting $[u]_\al=v_\al$ for all $\al\in [0,1]$, where $\{v_\al: \al\in [0,1]\}$ satisfies condition (b)
 $v(\al) = \cap_{\beta<\al} v(\beta)$ for all
$\al\in (0,1]$ and $v(0)= \overline{\cup_{\al>0} v(\al)}$. See Example \ref{nmf}.
As conditions (a) and (b) are easy to verify, we omit these steps in this paper.
In the sequel, we
will define a fuzzy set in a set or in a metric space without saying which representation theorem is used since it is easy
to see.

\begin{pp}\label{ace}
  Let $Y$ and $Z$ be two nonempty sets and $A\in F(Y)$ and $B\in F(Z)$.
Consider the conditions (a) $A=B$;
 (b) for each $\al\in (0,1]$, $[A]_\al = [B]_\al$;
 (c) there is a dense set $S$ of $(0,1]$ such that for each $\al\in S$, $[A]_\al = [B]_\al$.
Then
(\rmn1) (a)$\Rightarrow$(b)$\Leftrightarrow$(c);
(\rmn2)
if $Y=Z$, then (a)$\Leftrightarrow$(b);
(\rmn3)
(b) does not imply (a).

Endow two topologies $\tau_1$ and $\tau_2$ on $Y$ and $Z$,
respectively. Consider
condition (b$_1$) for each $\al\in [0,1]$, $[A]_\al = [B]_\al$.
We have that
 (\rmn4) clearly (a)$\Rightarrow$(b$_1$)$\Rightarrow$(b);
 (\rmn5) (b$_1$) does not imply (a); (\rmn6) (b) does not imply (b$_1$);
 and (\rmn7) if $(Y, \tau_1)=(Z, \tau_2)$, then (a)$\Leftrightarrow$(b$_1$)$\Leftrightarrow$(b).
\end{pp}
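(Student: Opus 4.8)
The engine for the whole proposition is the pointwise reconstruction formula coming from Theorem~\ref{rep}: for any fuzzy set $u$ on a set and any point $x$ of its domain, $u(x)=\sup\{\al\in(0,1]:x\in[u]_\al\}$ (with $\sup\emptyset=0$), since $\{\al\in(0,1]:x\in[u]_\al\}=\{\al\in(0,1]:u(x)\ge\al\}$. The plan is to settle the set-level claims (\rmn1)--(\rmn3) first, and then bolt on the topological claims (\rmn4)--(\rmn7), where the only new ingredient is the behaviour of the $0$-cut $[u]_0=\overline{\cup_{\al>0}[u]_\al}$ under a change of topology.

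For (\rmn1), the implication (a)$\Rightarrow$(b) is immediate (equal fuzzy sets have equal domains and values, hence equal cuts), and (b)$\Rightarrow$(c) is trivial by taking $S=(0,1]$. The content is (c)$\Rightarrow$(b). First I would record that, by Theorem~\ref{rep}, $[A]_\al=\bigcap_{0<\beta<\al}[A]_\beta$ for each $\al\in(0,1]$, and likewise for $B$. I would then prove the key reduction
\[
\bigcap_{0<\beta<\al}[A]_\beta=\bigcap_{\beta\in S,\,\beta<\al}[A]_\beta .
\]
The inclusion ``$\subseteq$'' holds because the right-hand intersection runs over a smaller index set; for ``$\supseteq$'', if $x$ lies in every $[A]_\beta$ with $\beta\in S\cap(0,\al)$ then, given any $\gamma\in(0,\al)$, density of $S$ in $(0,1]$ yields some $\beta\in S\cap(\gamma,\al)$, so monotonicity of the cuts gives $x\in[A]_\beta\subseteq[A]_\gamma$; hence $x\in\bigcap_{0<\gamma<\al}[A]_\gamma=[A]_\al$. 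The same reduction holds for $B$, and since $[A]_\beta=[B]_\beta$ for all $\beta\in S$, the two reduced intersections coincide, giving $[A]_\al=[B]_\al$. For (\rmn2), when $Y=Z$ the implication (b)$\Rightarrow$(a) follows by applying the reconstruction formula at each $x\in Y=Z$: $A(x)=\sup\{\al:x\in[A]_\al\}=\sup\{\al:x\in[B]_\al\}=B(x)$. For (\rmn3) I would exhibit a counterexample with distinct domains, e.g.\ $Y=\{p\}$ and $Z=\{p,q\}$ with $A(p)=B(p)=1$ and $B(q)=0$: every positive cut equals $\{p\}$, so (b) holds, yet $A\neq B$ because their domains differ.

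For the topological claims I read (a) as equality of $A$ and $B$ as fuzzy sets \emph{together with their ambient spaces}, i.e.\ $(Y,\tau_1)=(Z,\tau_2)$ and $A=B$ as functions; this reading is forced by the requirement in (\rmn4) that (a)$\Rightarrow$(b$_1$), since the $0$-cut is a closure and therefore depends on the topology. With it, (\rmn4) is routine: (a) gives equal supports in the same topology, hence equal closures, i.e.\ $[A]_0=[B]_0$, and together with (\rmn1) this yields (b$_1$); (b$_1$)$\Rightarrow$(b) is trivial since $(0,1]\subseteq[0,1]$. For (\rmn5) and (\rmn6) I would keep the same underlying set $Y=Z$ and the same function but change the topology. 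Taking $Y=Z=\{p,q\}$ with $A(p)=B(p)=1$, $A(q)=B(q)=0$, $\tau_1$ discrete and $\tau_2$ indiscrete, all positive cuts equal $\{p\}$ (so (b) holds), while $[A]_0=\overline{\{p\}}^{\tau_1}=\{p\}\neq\{p,q\}=\overline{\{p\}}^{\tau_2}=[B]_0$; this proves (\rmn6). For (\rmn5) I would instead take a function whose $0$-cut is topology-insensitive, e.g.\ $A(p)=A(q)=B(p)=B(q)=1$ on $\{p,q\}$ with $\tau_1\neq\tau_2$: every cut in $[0,1]$ equals $\{p,q\}$, so (b$_1$) holds, yet $(Y,\tau_1)\neq(Z,\tau_2)$, so (a) fails. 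Finally (\rmn7) closes the circle: (a)$\Rightarrow$(b$_1$)$\Rightarrow$(b) by (\rmn4), while under the hypothesis $(Y,\tau_1)=(Z,\tau_2)$ one has $Y=Z$, so (b)$\Rightarrow$(a) follows from (\rmn2) (function equality) together with the assumed equality of the topological spaces.

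The only genuinely non-formal step is the reduction of the intersection in (c)$\Rightarrow$(b), where density of $S$ and monotonicity of the cut map must be combined correctly; everything else is either the reconstruction formula of Theorem~\ref{rep}/Corollary~\ref{repc} or an explicit small counterexample. The subtlety most likely to cause trouble is pinning down the intended meaning of (a) in the topological setting and then choosing the counterexamples in (\rmn5) and (\rmn6) so that their positive cuts agree while their $0$-cuts separate (or fail to separate) exactly as required.
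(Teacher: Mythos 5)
Your proof is correct, and on the first half it is essentially the paper's own argument: your density-plus-monotonicity reduction of $\bigcap_{0<\beta<\alpha}[A]_\beta$ to the index set $S\cap(0,\alpha)$ is the same mechanism as the paper's choice of a sequence $\alpha_k\in S\cap(\alpha\cdot\frac{k-1}{k},\alpha)$ with $\alpha_k\to\alpha-$ and $[A]_\alpha=\bigcap_{k}[A]_{\alpha_k}$, and your pointwise reconstruction $A(x)=\sup\{\alpha\in(0,1]:x\in[A]_\alpha\}$ is exactly the uniqueness clause of Theorem \ref{rep} that the paper invokes for (ii); your two-point example for (iii) plays the role of the paper's $\widehat{3}_{F(\mathbb{R}\setminus\{1\})}$ versus $\widehat{3}_{F(\mathbb{R})}$. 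The genuine divergence is in (iv)--(vii). The paper keeps (a) as bare equality of functions (its clause on domains: $u=v$ forces $Y=Z$, with nothing said about topologies), and accordingly settles (iii) and (v) with the single example above, in which every cut in $[0,1]$ equals $\{3\}$ but the domains differ; for (vii) it proves (b)$\Rightarrow$(b$_1$) directly from equality of closures of the union of positive cuts, then cites (ii) and (iv). You instead build the ambient topological space into condition (a). Your instinct here is sound --- if $Y=Z$ carries two different topologies and $A=B$ is a common function whose support is not closed in one of them, then (a)-as-function-equality holds while (b$_1$) fails, so the paper's ``clearly (a)$\Rightarrow$(b$_1$)'' is only literally watertight under a reading like yours --- but the price is that your counterexample for (v) becomes reading-dependent: under the paper's convention your constant-$1$ pair on $\{p,q\}$ with $\tau_1\neq\tau_2$ \emph{satisfies} (a), hence does not witness (v). The robust move, valid under either convention, is the paper's domain-mismatch device; indeed your own (iii) example already delivers it once you give $Z=\{p,q\}$ the discrete topology, so that $[B]_0=\overline{\{p\}}=\{p\}=[A]_0$, making (b$_1$) hold while $A\neq B$. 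With that one substitution in (v), your write-up covers the proposition in full under both readings; everything else, including your (vi) example (the finite analogue of the paper's $(0,1)_{F(\mathbb{R}\setminus\{1\})}$ versus $(0,1)_{F(\mathbb{R})}$) and your route to (vii) via (b)$\Rightarrow$(a)$\Rightarrow$(b$_1$) rather than the paper's direct (b)$\Rightarrow$(b$_1$), is sound.
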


\begin{proof}
Clearly (a)$\Rightarrow$(b)$\Rightarrow$(c) (see  (\uppercase\expandafter{\romannumeral1}) below).
Suppose that (c) is true. Given $\al\in (0,1]$.
Observe that for each $k\in \mathbb{N}$, $S\cap (\al\cdot \frac{k-1}{k}, \al) \not= \emptyset$, so we can choose $\al_k \in S\cap(\al\cdot \frac{k-1}{k}, \al)$.
Then $\al_k \to \al-$ as $k\to\infty$.
We have that $[A]_\al = \cap_{k=1}^{+\infty}[A]_{\al_k} = \cap_{k=1}^{+\infty}[B]_{\al_k} = [B]_\al$ (see also (\uppercase\expandafter{\romannumeral2}) below).
Hence (b) is true. Thus (c)$\Rightarrow$(b).
So (\rmn1) is proved.

We show (\rmn2) as follows.
Assume that (b) is true. As $Y=Z$,
by Theorem \ref{rep}, there is a unique fuzzy set
$u$ in $Y$ satisfying condition (1) $[u]_\al = [A]_\al = [B]_\al$ for all $\al\in (0,1]$. Thus $A=u=B$ as both $A$ and $B$ satisfy
condition (1).
Hence (a) is true and (b)$\Rightarrow$(a) is proved.
By (\rmn1), (a)$\Rightarrow$(b). Thus (a)$\Leftrightarrow$(b).
So (\rmn2) is proved.

We still use $\mathbb{R} \setminus \{1\}$ to denote the metric space
$(\mathbb{R} \setminus \{1\}, \rho_1|_{\mathbb{R} \setminus \{1\}})$.
Consider
$\widehat{3}_{F(\mathbb{R} \setminus \{1\})}$ and $\widehat{3}_{F(\mathbb{R})}$.
Then for each $\al\in [0,1]$,
 $[\widehat{3}_{F(\mathbb{R} \setminus \{1\})}]_\al =  \{3\} = [\widehat{3}_{F(\mathbb{R})}]_\al$. But   $\widehat{3}_{F(\mathbb{R} \setminus \{1\})} \not= \widehat{3}_{F(\mathbb{R})}$ (see also (\uppercase\expandafter{\romannumeral1}) below). So (\rmn3) and (\rmn5) are proved.
Consider
$(0,1)_{F(\mathbb{R} \setminus \{1\})}$ and $(0,1)_{F(\mathbb{R})}$.
Then for each $\al\in (0,1]$,
 $[(0,1)_{F(\mathbb{R} \setminus \{1\})}]_\al =  (0,1) = [(0,1)_{F(\mathbb{R})}]_\al$.
But $[(0,1)_{F(\mathbb{R} \setminus \{1\})}]_0 \not= [(0,1)_{F(\mathbb{R})}]_0$
because $[(0,1)_{F(\mathbb{R} \setminus \{1\})}]_0 =  [0,1)$ and $[(0,1)_{F(\mathbb{R})}]_0 = [0,1]$.
So (\rmn6) is proved.

Suppose that $(Y, \tau_1)=(Z, \tau_2)$.
If (b) is true, then $[A]_0=  \overline{\cup_{\al>0} [A]_\al}=  \overline{\cup_{\al>0} [B]_\al}=[B]_0$, and therefore
 (b$_1$) is true. Thus (b)$\Rightarrow$(b$_1$).
Hence (a)$\Leftrightarrow$(b$_1$)$\Leftrightarrow$(b), by (\rmn2) and (\rmn4). So (\rmn7) is proved.

\vspace{1mm}
The contents in the following (\uppercase\expandafter{\romannumeral1}) and (\uppercase\expandafter{\romannumeral2}) are easy to see.

(\uppercase\expandafter{\romannumeral1})
 Let $u\in F(Y)$ and $v\in F(Z)$. Then $u$ and $v$
are functions with domains $Y$ and $Z$, respectively.
If $u=v$, then they have the same domain; that is $Y=Z$.

(\uppercase\expandafter{\romannumeral2})
 Let $P$ be a set and $u\in F(P)$. If $\beta\in (0,1]$,
and the sequence ${\beta_n}$ in $[0, \beta]$ satisfying that
 $\beta_n\to \beta$, then $[u]_\beta = \cap_{n=1}^{+\infty} [u]_{\beta_n}$.

Clearly  $[u]_\beta \subseteq \cap_{n=1}^{+\infty} [u]_{\beta_n}$
as for each $n\in \mathbb{N}$, $[u]_\beta \subseteq  [u]_{\beta_n}$.
Let $x\in \cap_{n=1}^{+\infty} [u]_{\beta_n}$.
This means that
for each $n\in \mathbb{N}$, $x \in [u]_{\beta_n}$,
i.e., $u(x) \geq \beta_n$.
So $u(x) \geq \beta= \lim_{n\to\infty} \beta_n$.
Then $x\in [u]_\beta$.
Hence $ \cap_{n=1}^{+\infty} [u]_{\beta_n} \subseteq [u]_\beta $.
Thus $[u]_\beta =\cap_{n=1}^{+\infty} [u]_{\beta_n}$.

\end{proof}

\begin{re}\label{hme}
  {\rm
(a) If $(Y, \rho)$ is an extended metric space,
then
the Hausdorff distance $H$ on $C(Y)$ induced by $\rho$ using \eqref{hau} is
an extended metric on
$C(Y)$.

Clearly $H$ is a function
of $C(Y)\times C(Y)$ into $\widehat{\mathbb{R}}$
and
$H$ satisfies positivity and symmetry.
To show (a), we only need to show that
$H$
satisfies the triangle inequality.
To do this,
it suffices to show
\\
(b)
 $ H^*(U, W) \leq  H^*(U,V) + H^*(V,W)$
for each $U, V, W \in C(Y)$.
\\
(Assume (b) is true. Then for each $U, V, W \in C(Y)$,
 $ H^*(U, W) \leq  H(U,V) + H(V,W)$ and  $H^*(W,U) \leq  H(W,V) + H(V,U)$;
whence $ H(U, W) \leq  H(U,V) + H(V,W)$.)

To show (b), let $x\in U$. Then
\begin{align*}
  \rho(x, W)=\inf_{z\in W} \rho(x,z) &\leq \inf_{z\in W} \inf_{y\in V} \{\rho(x,y) + \rho(y,z) \} \\
&= \inf_{y\in V}\inf_{z\in W}  \{\rho(x,y) + \rho(y,z) \}
\\
&\leq \inf_{y\in V} \{\rho(x,y) + \rho(y,W) \}
\\
&\leq \inf_{y\in V} \{\rho(x,y) + H^*(V,W)\}
= \inf_{y\in V} \rho(x,y) + H^*(V,W)
\\
&=  \rho(x,V) + H^*(V,W)
\leq  H^*(U,V) + H^*(V,W).
\end{align*}
As $x\in U$ is arbitrary,
it follows that (b) is true.
So (a) is true.

Assume that $(Y,\rho)$ is an extended metric space but not a metric space.
Then there exist $x,y\in Y$ such that $\rho(x,y)=+\infty$.
Thus $\{x\}$ and $\{y\}$ are in $C(Y)$ and $H(\{x\},\{y\})=+\infty$.
So the Hausdorff distance $H$ on $C(Y)$ is not a metric.
$H$ on $C(Y)$ does not need to be a metric even if
$(Y,\rho)$ is a metric space. See Remark \ref{haum}.

}
\end{re}

Let $J$ be a nonempty set, and for each $j \in J$, let $(X_j, d_j)$
be a
metric space.
For each $x \in \prod_{j\in J} X_j$ and each $j\in J$, we use $x_j$
to denote $p_j(x)$, where $p_j: \prod_{j\in J} X_j \to X_j$ is the projection mapping.

Define a mapping $d_J$
of
$\prod_{j\in J} X_j\times \prod_{j\in J} X_j$ into $\widehat{\mathbb{R}}$
as
\begin{equation}\label{upm}
d_J(x,y):= \sup\{d_j(x_j, y_j):  j \in J\}
\end{equation}
for each $x, y \in \prod_{j\in J} X_j$.
Then $d_J$ is an extended metric on $\prod_{j\in J} X_j$
(Clearly $d_J$ satisfies positivity and symmetry.
For each $x,y,z\in \prod_{j\in J} X_j$,
$d_J(x,z)= \sup\{d_j(x_j, z_j):  j \in J\} \leq \sup\{d_j(x_j, y_j)+ d_j(y_j, z_j):  j \in J\} \leq \sup\{d_j(x_j, y_j):  j \in J\} + \sup\{d_j(y_j, z_j):  j \in J\} = d_J(x,y) + d_J(y,z)$.
So $d_J$
satisfies the triangle inequality.).

We use the symbol $\prod_{j\in J} (X_j, d_j)$ to denote the extended metric space $(\prod_{j\in J} X_j, d_J)$.
If not mentioned specially,
 we suppose by default
that
$\prod_{j\in J} X_j$ is endowed with the extended metric $d_J$ given by \eqref{upm}.
We also use
$\prod_{j\in J} X_j$ to denote the metric space $(\prod_{j\in J} X_j, d_J)$.

For each $j \in J$, let $u_j  \in  F(X_j)$.
Define
 $u \in F (\prod_{j \in J} X_j) $
as
\begin{equation}\label{pdf}
 [u]_\al = \prod_{j \in J} [u_j]_\al \   \mbox{for each} \    \al \in (0,1].
\end{equation}
We use
\bm{$ \prod_{j \in J} u_j$}
to denote the fuzzy set
 $u$ in $\prod_{j \in J} X_j$ determined by \eqref{pdf}.

From
 Theorem \ref{rep},
$u $ is well-defined because for each $\al\in (0,1]$,
$$
 [u]_\al  =   \prod_{j \in J} [u_j]_\al
=  \bigcap_{\beta < \al}   \prod_{j \in J}  [u_j]_\beta
 = \bigcap_{\beta < \al} [u]_\beta
$$
(Given $x=(x_j)_{j \in J} \in \bigcap_{\beta < \al}   \prod_{j \in J}  [u_j]_\beta$. Let $j\in J$.
Then for each $\beta\in [0,\al)$, $x_j\in [u_j]_\beta$.
This means $x_j\in [u_j]_\alpha$.
Since $j\in J$ is arbitrary, we have that $x\in \prod_{j \in J} [u_j]_\al$.
Thus $\prod_{j \in J} [u_j]_\al
\supseteq \bigcap_{\beta < \al}   \prod_{j \in J}  [u_j]_\beta$.
Clearly  $\prod_{j \in J} [u_j]_\al
\subseteq \bigcap_{\beta < \al}   \prod_{j \in J}  [u_j]_\beta$,
and so $\prod_{j \in J} [u_j]_\al
= \bigcap_{\beta < \al}   \prod_{j \in J}  [u_j]_\beta$.).

In this paper, if not mentioned specially,
we use $\overline{S}$ to denote
the closure of $S$ in a certain extended metric space $(X, d_X)$.
For a set $S \subseteq X_j$, $j \in J$,
we use
$\overline{S}$
to denote
  the closure
of
$S$ in $( X_j, d_j)$.
For
 a set $S \subseteq \prod_{j\in J} X_j$,
we also use
$\overline{S} $ to denote the closure
 of $S$
in $(\prod_{j\in J} X_j, d_J)$.
The readers can judge
the meaning of
$\overline{S}$
according to the contexts.

\begin{lm}\label{cdm}
Let $J$ be a nonempty set. For each $j \in J$, let $(X_j, d_j)$
be a
metric space and
   $A_j$ be a subset of $X_j$.
  Then
$\overline{\prod_{j \in J}   A_j} = \prod_{j \in J}  \overline{A_j }  $.

\end{lm}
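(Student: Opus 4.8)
The plan is to prove equality of the two sets by establishing both inclusions, using throughout the characterization of closure in an extended metric space recalled in the first Remark: for a set $S$ in an extended metric space $(Y,\rho)$ and a point $x\in Y$, one has $x\in\overline{S}$ if and only if $\rho(x,S)=0$. Here the ambient extended metric is $d_J(x,y)=\sup\{d_j(x_j,y_j):j\in J\}$ on $\prod_{j\in J}X_j$, while each factor $X_j$ carries $d_j$; the two inclusions then reduce to elementary manipulations of the distance-to-a-set.

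For the inclusion $\overline{\prod_{j\in J}A_j}\subseteq\prod_{j\in J}\overline{A_j}$, I would take $x=(x_j)_{j\in J}$ with $d_J\!\left(x,\prod_{j\in J}A_j\right)=0$ and fix an arbitrary index $j_0\in J$. Given $\varepsilon>0$, there is $y\in\prod_{j\in J}A_j$ with $d_J(x,y)<\varepsilon$, so in particular $d_{j_0}(x_{j_0},y_{j_0})\le\sup_{j\in J}d_j(x_j,y_j)=d_J(x,y)<\varepsilon$ with $y_{j_0}\in A_{j_0}$; hence $d_{j_0}(x_{j_0},A_{j_0})<\varepsilon$. Letting $\varepsilon\to0$ gives $d_{j_0}(x_{j_0},A_{j_0})=0$, so $x_{j_0}\in\overline{A_{j_0}}$, and since $j_0$ was arbitrary, $x\in\prod_{j\in J}\overline{A_j}$.

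For the reverse inclusion, I would take $x=(x_j)_{j\in J}\in\prod_{j\in J}\overline{A_j}$, so that $d_j(x_j,A_j)=0$ for every $j$ (in particular every $A_j$ is nonempty). Given $\varepsilon>0$, for each $j$ choose $y_j\in A_j$ with $d_j(x_j,y_j)<\varepsilon$; then $y=(y_j)_{j\in J}\in\prod_{j\in J}A_j$ and $d_J(x,y)=\sup_{j\in J}d_j(x_j,y_j)\le\varepsilon$. Thus $d_J\!\left(x,\prod_{j\in J}A_j\right)\le\varepsilon$ for every $\varepsilon>0$, whence $d_J\!\left(x,\prod_{j\in J}A_j\right)=0$ and $x\in\overline{\prod_{j\in J}A_j}$. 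The degenerate case in which some $A_j$ is empty is handled uniformly by the same distance characterization, since then both $\prod_{j\in J}A_j$ and $\prod_{j\in J}\overline{A_j}$ are empty.

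The only genuinely delicate point is in the reverse inclusion, and it is exactly where the supremum metric matters: one must produce a single point of $\prod_{j\in J}A_j$ that is within $\varepsilon$ of $x$ in \emph{all} coordinates at once. A naive attempt to argue sequentially --- pick, for each $j$, a sequence in $A_j$ converging to $x_j$ and assemble them coordinatewise --- fails when $J$ is infinite, because the coordinatewise rates of convergence need not be uniform, so the assembled sequence need not converge in $d_J$. Phrasing everything through $d_j(x_j,A_j)=0$ and choosing the approximants $y_j$ as functions of a prescribed $\varepsilon$ sidesteps this obstacle, and the argument then goes through for an arbitrary (possibly uncountable) index set $J$.
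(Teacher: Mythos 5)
Your proof is correct and takes essentially the same route as the paper's: the reverse inclusion is the identical argument (for each $\varepsilon>0$, choose $y_j\in A_j$ with $d_j(x_j,y_j)\le\varepsilon$, assemble $y=(y_j)_{j\in J}$, and conclude $d_J(x,y)\le\varepsilon$), while your forward inclusion merely replaces the paper's sequence argument with the equivalent characterization $x\in\overline{S}\Leftrightarrow\rho(x,S)=0$ recalled in the paper's first remark. Your explicit handling of the case where some $A_j=\emptyset$ and your warning that assembling coordinatewise sequences fails for infinite $J$ without uniform rates are sound but cosmetic additions.
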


\begin{proof}
Let $x\in  \overline{\prod_{j \in J}   A_j}$.
Then there is a sequence $\{x_n\}$ in $\prod_{j \in J}   A_j$
satisfying $d_J(x_n, x)\to 0$ as $n\to\infty$.
Thus for each $j\in J$, $d_j({x_n}_j,  {x}_j)\to 0$ as $n\to\infty$;
whence ${x}_j\in \overline{A_j}$.
So $x\in  \prod_{j \in J} \overline{A_j}$.
Thus $\overline{\prod_{j \in J}   A_j} \subseteq \prod_{j \in J}  \overline{A_j }$.

Let $x= (x_j)_{j\in J} \in \prod_{j \in J}  \overline{A_j }$.
Given $\varepsilon>0$. Then for each $j\in J$, there exists
$y_j\in A_j$
such that
$d_j (x_j, y_j) \leq \varepsilon$, as $x_j\in  \overline{A_j }$.
Put $y:=(y_j)_{j\in J}$.
Then $y\in \prod_{j \in J}   A_j$ and
$d_J(x,y) \leq \varepsilon$.
Since $\varepsilon>0$ is arbitrary, we have $x\in \overline{\prod_{j \in J}   A_j}  $.
Thus
 $\overline{\prod_{j \in J}   A_j} \supseteq \prod_{j \in J}  \overline{A_j }  $.

In summary, $\overline{\prod_{j \in J}   A_j} = \prod_{j \in J}  \overline{A_j }  $.

\end{proof}

\begin{tm} \label{pfne}
Let $J$ be a nonempty set. For each $j \in J$, let $(X_j, d_j)$
be a
metric space and
 $u_j  \in  F_{USC} (X_j)$.
Then
 $u = \prod_{j \in J} u_j \in F_{USC} (\prod_{j \in J} X_j)$.

\end{tm}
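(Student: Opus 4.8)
The plan is to verify directly that $u=\prod_{j\in J}u_j$ satisfies the defining condition of $F_{USC}(\prod_{j\in J}X_j)$, namely that $[u]_\al\in C(\prod_{j\in J}X_j)\cup\{\emptyset\}$ for every $\al\in(0,1]$. Since $u$ has already been exhibited as a well-defined fuzzy set with $[u]_\al=\prod_{j\in J}[u_j]_\al$ for each $\al\in(0,1]$ (via Theorem \ref{rep}), it suffices to show that each such $\al$-cut is either a closed subset of the extended metric space $(\prod_{j\in J}X_j,d_J)$ or empty. No separate treatment of the $\al=0$ cut is needed, because the definition of $F_{USC}$ imposes its requirement only for $\al\in(0,1]$.

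Fix $\al\in(0,1]$. Because each $u_j\in F_{USC}(X_j)$, every factor $[u_j]_\al$ lies in $C(X_j)\cup\{\emptyset\}$, that is, $[u_j]_\al$ is closed in $(X_j,d_j)$ or empty. I would then apply Lemma \ref{cdm} with $A_j:=[u_j]_\al$ to obtain
\[
\overline{[u]_\al}=\overline{\prod_{j\in J}[u_j]_\al}=\prod_{j\in J}\overline{[u_j]_\al}.
\]
When every factor is nonempty, $\overline{[u_j]_\al}=[u_j]_\al$ since each is closed, so the right-hand side equals $\prod_{j\in J}[u_j]_\al=[u]_\al$; hence $[u]_\al$ is closed, i.e. $[u]_\al\in C(\prod_{j\in J}X_j)$. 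When at least one factor $[u_{j_0}]_\al$ is empty, the product $[u]_\al=\prod_{j\in J}[u_j]_\al$ is empty, so $[u]_\al=\emptyset$. In either case $[u]_\al\in C(\prod_{j\in J}X_j)\cup\{\emptyset\}$.

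Since $\al\in(0,1]$ was arbitrary, $u$ meets the membership condition for $F_{USC}(\prod_{j\in J}X_j)$, completing the argument. There is no serious obstacle here: the whole content is the topological fact that an arbitrary product of closed sets is closed in the product (extended) metric $d_J$, and this has already been isolated as Lemma \ref{cdm}. The only points requiring care are the bookkeeping for the empty-factor case, where the product collapses to $\emptyset$ and Lemma \ref{cdm} holds vacuously, and the fact that a closed set coincides with its own closure, which is what lets us read off closedness of $[u]_\al$ from the computed closure.
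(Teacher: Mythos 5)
Your proposal is correct and follows essentially the same route as the paper: both use the defining identity $[u]_\al=\prod_{j\in J}[u_j]_\al$ together with Lemma \ref{cdm} to get $\overline{[u]_\al}=\prod_{j\in J}\overline{[u_j]_\al}=[u]_\al$, concluding closedness of each $\al$-cut for $\al\in(0,1]$. Your explicit case split for an empty factor is harmless but unnecessary, since the paper's chain of equalities (and Lemma \ref{cdm} itself) holds vacuously when some $[u_{j_0}]_\al=\emptyset$.
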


\begin{proof}
  By \eqref{pdf} and Lemma \ref{cdm}, for each $\al\in (0,1]$,
$$ \overline{[u]_\al} =\overline{\prod_{j \in J} [u_j]_\al}= \prod_{j \in J} \overline{[u_j]_\al  } =  \prod_{j \in J} [u_j]_\al = [u]_\al. $$
Thus
$u \in F_{USC} (\prod_{j \in J} X_j)$.

\end{proof}

\begin{tm} \label{pfn}
Let $J$ be a nonempty set. For each $j \in J$, let $(X_j, d_j)$
be a
metric space and
  $u_j  \in  F^1_{USC}(X_j)$.
Then
 $u = \prod_{j \in J} u_j \in F^1_{USC} (\prod_{j \in J} X_j)$.

\end{tm}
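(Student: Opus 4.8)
The plan is to split the claim along the identity $F^1_{USC}(\prod_{j\in J} X_j) = F_{USC}(\prod_{j\in J} X_j) \cap F^1(\prod_{j\in J} X_j)$ and to dispatch the two conjuncts separately. Since $F^1_{USC}(X_j) \subseteq F_{USC}(X_j)$ for every $j$, each factor $u_j$ already lies in $F_{USC}(X_j)$, so the upper semicontinuity half is handed to me for free by Theorem \ref{pfne}: it yields $u = \prod_{j\in J} u_j \in F_{USC}(\prod_{j\in J} X_j)$ with no further work.

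It then remains only to check that $u$ is normal, i.e. that $[u]_1 \neq \emptyset$. Here I would invoke the defining relation \eqref{pdf} at the level $\al = 1$, which reads $[u]_1 = \prod_{j\in J} [u_j]_1$. Because each $u_j$ is normal, every factor $[u_j]_1$ is nonempty, and a product of nonempty sets over a nonempty index set is nonempty: choosing one point $x_j \in [u_j]_1$ for each $j$ produces a point $(x_j)_{j\in J} \in \prod_{j\in J}[u_j]_1 = [u]_1$. Hence $[u]_1 \neq \emptyset$, that is, $u \in F^1(\prod_{j\in J} X_j)$.

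Combining the two parts gives $u \in F_{USC}(\prod_{j\in J} X_j) \cap F^1(\prod_{j\in J} X_j) = F^1_{USC}(\prod_{j\in J} X_j)$, which is exactly the assertion. I expect essentially no obstacle here: the only substantive input is the already-established Theorem \ref{pfne}, and the normality step is a one-line consequence of the product definition \eqref{pdf} together with the nonemptiness of a product of nonempty sets (the latter relying on the axiom of choice when $J$ is infinite).
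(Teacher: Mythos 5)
Your proposal is correct and follows essentially the same route as the paper's own proof: invoke Theorem \ref{pfne} for upper semicontinuity, then observe that $[u]_1 = \prod_{j\in J}[u_j]_1 \neq \emptyset$ because each $[u_j]_1$ is nonempty, so $u\in F^1(\prod_{j\in J}X_j)$. Your remark that the nonemptiness of the product rests on the axiom of choice for infinite $J$ is a fair observation the paper leaves implicit, but the argument is otherwise identical.
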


\begin{proof}
By
 Theorem \ref{pfne}, $u \in F_{USC} (\prod_{j \in J} X_j)$.
As for each $j\in J$, $[u_j]_1 \not= \emptyset$,
it follows that $[u]_1=\prod_{j \in J} [u_j]_1 \not= \emptyset$;
that is, $u\in F^1(\prod_{j \in J} X_j)$.
So $u\in F^1_{USC} (\prod_{j \in J} X_j)$.

\end{proof}

In the following theorem, we
use $H$ to denote the Hausdorff metric on $C(X_j)$ induced by $d_j$.
We also
use
$H$ to denote the Hausdorff metric on $C(\prod_{j\in J} X_j)$
induced by $d_J$.

\begin{tm} \label{hnp}
Let $J$ be a nonempty set. For each $j \in J$, let $(X_j, d_j)$
be a
metric space and let
$A_j$
and
$B_j$ be elements in $C(X_j)$.
Then
\\
(\rmn1) $\prod_{j\in J} A_j$ and $\prod_{j\in J} B_j$
are elements in $C(\prod_{j \in J}  X_j)$;
\\
(\rmn2) for each $x\in \prod_{j \in J}  X_j $,
$
d_J(x,    \prod_{j\in J} B_j)
= \sup_{j\in J}  d_j (x_j, B_j)
$;
\\
(\rmn3)  $H^*(\prod_{j\in J} A_j, \prod_{j\in J} B_j) = \sup_{j\in J} H^*(A_j, B_j )$;
\\
(\rmn4)
  $H(\prod_{j\in J} A_j, \prod_{j\in J} B_j) = \sup_{j\in J} H (A_j, B_j )$.
\end{tm}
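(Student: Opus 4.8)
The plan is to prove the four assertions in the stated order, since each rests on its predecessors.

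Part (\rmn1) is immediate from Lemma \ref{cdm}: every factor $A_j$ is nonempty, so $\prod_{j\in J}A_j$ is nonempty, and since each $A_j$ is closed, $\overline{\prod_{j\in J}A_j}=\prod_{j\in J}\overline{A_j}=\prod_{j\in J}A_j$, whence $\prod_{j\in J}A_j\in C(\prod_{j\in J}X_j)$; the same reasoning applies to $\prod_{j\in J}B_j$.

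For part (\rmn2) I would unfold the definition of $d_J$ from \eqref{upm} to write $d_J(x,\prod_{j\in J}B_j)=\inf_{y\in\prod_{j\in J}B_j}\sup_{j\in J}d_j(x_j,y_j)$, and compare it with $\sup_{j\in J}d_j(x_j,B_j)=\sup_{j\in J}\inf_{y_j\in B_j}d_j(x_j,y_j)$. The inequality ``$\geq$'' is the standard $\inf\sup\geq\sup\inf$ direction: for every $y\in\prod_{j\in J}B_j$ and every $j$ one has $d_j(x_j,y_j)\geq d_j(x_j,B_j)$, so $\sup_{j\in J}d_j(x_j,y_j)\geq\sup_{j\in J}d_j(x_j,B_j)$, and taking the infimum over $y$ preserves this. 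For ``$\leq$'', given $\varepsilon>0$ I would choose, coordinatewise, $y_j\in B_j$ with $d_j(x_j,y_j)\leq d_j(x_j,B_j)+\varepsilon\leq\sup_{k\in J}d_k(x_k,B_k)+\varepsilon$; assembling $y:=(y_j)_{j\in J}\in\prod_{j\in J}B_j$ yields $d_J(x,\prod_{j\in J}B_j)\leq\sup_{j\in J}d_j(x_j,y_j)\leq\sup_{k\in J}d_k(x_k,B_k)+\varepsilon$, and letting $\varepsilon\to 0$ finishes the case in which the right-hand side is finite (the infinite case being trivial).

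Parts (\rmn3) and (\rmn4) then follow formally. For (\rmn3) I would combine (\rmn2) with an exchange of two suprema: $H^*(\prod_{j\in J}A_j,\prod_{j\in J}B_j)=\sup_{x\in\prod_{j\in J}A_j}d_J(x,\prod_{j\in J}B_j)=\sup_{x\in\prod_{j\in J}A_j}\sup_{j\in J}d_j(x_j,B_j)=\sup_{j\in J}\sup_{x\in\prod_{j\in J}A_j}d_j(x_j,B_j)$, and observe that the inner supremum equals $H^*(A_j,B_j)$ because the projection of $\prod_{j\in J}A_j$ onto the $j$-th coordinate is exactly $A_j$ (using that every factor is nonempty). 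For (\rmn4) I would apply $H=\max\{H^*(\cdot,\cdot),H^*(\cdot,\cdot)\}$ from \eqref{hau} together with (\rmn3) and the elementary identity $\max\{\sup_{j}a_j,\sup_{j}b_j\}=\sup_{j}\max\{a_j,b_j\}$, valid in $[0,+\infty]$. The main obstacle is part (\rmn2): in general $\inf\sup$ strictly exceeds $\sup\inf$, and equality holds here only because the constraint set is a product, so near-optimal coordinates $y_j$ can be chosen independently and reassembled into a single admissible $y$. This is precisely the coordinatewise-selection mechanism already used in the proof of Lemma \ref{cdm}; once (\rmn2) is in hand, the rest is a routine manipulation of suprema.
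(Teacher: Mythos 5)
Your proposal is correct and follows essentially the same route as the paper's proof: closedness of the products via Lemma \ref{cdm} for (\rmn1), coordinatewise $\varepsilon$-optimal selection of $y_j\in B_j$ reassembled into one point for (\rmn2), an exchange of suprema plus the fact that the $j$-th projection of $\prod_{j\in J}A_j$ is $A_j$ for (\rmn3), and the identity $\sup_j a_j \vee \sup_j b_j = \sup_j (a_j \vee b_j)$ for (\rmn4). The only cosmetic difference is that you phrase the easy direction of (\rmn2) as the general $\inf\sup\geq\sup\inf$ inequality, whereas the paper fixes $j$, bounds $d_J(x,y)\geq d_j(x_j,y_j)$, and then takes the infimum over $y$ and the supremum over $j$ --- the same estimate.
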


\begin{proof} From Lemma \ref{cdm},
$\overline{\prod_{j\in J} A_j}= \prod_{j\in J} \overline{A_j}= \prod_{j\in J} A_j$. So $\prod_{j\in J} A_j$ is closed in $\prod_{j \in J}  X_j$.
Clearly $\prod_{j\in J} A_j\not= \emptyset$.
Thus $\prod_{j\in J} A_j\in C(\prod_{j \in J}  X_j)$.
Similarly $\prod_{j\in J} B_j \in C(\prod_{j \in J}  X_j)$.
So (\rmn1) is true.

Now we show (\rmn2).
Let $\varepsilon>0$. For each $j\in J$,
there is a $y_j\in B_j$
such that
$ d_{j} (x_{j}, y_j) \leq \inf_{z_j\in B_j} d_j (x_j, z_j)+ \varepsilon
 = d_j (x_j, B_j)+ \varepsilon $.
Put $y:= (y_j)_{j\in J}$. Clearly $y\in \prod_{j\in J} B_j$.
Then
$d_J(x,    \prod_{j\in J} B_j) \leq d_J(x,y) = \sup_{j\in J} d_{j} (x_{j}, y_j) \leq \sup_{j\in J} (d_j (x_j, B_j)+ \varepsilon ) = \sup_{j\in J} d_j (x_j, B_j)+ \varepsilon $.
Since $\varepsilon>0$ is arbitrary, we conclude that
$d_J(x,    \prod_{j\in J} B_j) \leq \sup_{j\in J} d_j (x_j, B_j) $.

Let $j\in J$. Then
$d_J(x,    \prod_{j\in J} B_j) = \inf_{y\in  \prod_{j\in J} B_j} d_J(x, y)
\geq
\inf_{y\in  \prod_{j\in J} B_j} d_j(x_j, y_j) = \inf_{y_j\in B_j} d_j (x_j, y_j)
= d_j(x_j, B_j).
$
As $j\in J$ is arbitrary, it follows that
$d_J(x,    \prod_{j\in J} B_j) \geq \sup_{j\in J} d_j(x_j, B_j)
$. So $d_J(x,    \prod_{j\in J} B_j) = \sup_{j\in J} d_j (x_j, B_j)$ and (\rmn2) is true.

By (\rmn2),
\begin{align*}
  H^* (\prod_{j\in J} A_j, \prod_{j\in J} B_j)
 & = \sup_{x \in \prod_{j\in J} A_j} d_J(x,    \prod_{j\in J} B_j) \\
 & =
\sup_{x \in \prod_{j\in J} A_j}    \sup_{j\in J} d_j (x_j,   B_j) =    \sup_{j\in J} \sup_{x \in \prod_{j\in J} A_j} d_j (x_j,   B_j)  \\
&
= \sup_{j\in J} \sup_{x_j \in A_j} d_j (x_j, B_j)
=   \sup_{j\in J}  H^* (A_j, B_j).
\end{align*}
So (\rmn3) is true. Using (\rmn3), we obtain
\begin{align*}
 & H  (\prod_{j\in J} A_j, \prod_{j\in J} B_j)
= H^* (\prod_{j\in J} A_j, \prod_{j\in J} B_j) \vee H^* (\prod_{j\in J} B_j, \prod_{j\in J} A_j)\\
&
=  \sup_{j\in J}  H^* (A_j, B_j) \vee  \sup_{j\in J}  H^* (B_j, A_j)
=  \sup_{j\in J}  (H^* (A_j, B_j) \vee    H^* (B_j, A_j))
= \sup_{j\in J}  H (A_j, B_j).
\end{align*}
So (\rmn4) is true.

Let $x\in \prod_{j \in J}  X_j$.
For each $j\in J$, put $A_j= \{x_j\}$ in (\rmn3)
(clearly $\{x_j\} \in C(X_j)$).
Then we obtain
$H^*(\prod_{j\in J} \{x_j\}, \prod_{j\in J} B_j) = \sup_{j\in J} H^*(\{x_j\}, B_j )$.
This means that
$d_J(x,    \prod_{j\in J} B_j)  = \sup_{j\in J}  d_j (x_j, B_j)$,
since
$d_J(x,    \prod_{j\in J} B_j) = H^*(\{x\}, \prod_{j\in J} B_j)=H^*(\prod_{j\in J} \{x_j\}, \prod_{j\in J} B_j)$ and $\sup_{j\in J}  d_j (x_j, B_j) =\sup_{j\in J} H^*(\{x_j\}, B_j )$.
So, in some sense, (\rmn2) can be seen as a special case of (\rmn3).

\end{proof}

Now, we give an example to show
that
there exists a metric space $X$ and $u,v\in F^1_{USC}(X)$ such that $d_p(u,v)$ is not well-defined; that is, $H([u]_\al, [v]_\al) $ is a non-measurable function
of $\al$ on
$[0,1]$.

\begin{eap}\label{nmf}
  {\rm
Consider the metric space $([0, 100] \setminus \{10\}, \rho_1|_{[0, 100] \setminus \{10\}})$, which is subspace of $\mathbb{R}$.
We also use $[0, 100] \setminus \{10\}$
to denote this metric space.
For each $z\in (0,1]$,
define $u^z \in F_{USC}^1 ([0,100] \setminus \{10\})$ by putting
\[
[u^z]_\al
=\left\{
  \begin{array}{ll}
     \{3\},  &  \al\in [z,1],  \\
  \{ 3\} \cup (10, 11-\varepsilon], & \al=  z \cdot\varepsilon,  \   0 \leq \varepsilon < 1.
  \end{array}
 \right.
\]
For each $z\in (0,1]$, define $v^z \in F_{USC}^1 ([0,100] \setminus \{10\})$ by putting
\[
[v^z]_\al
=\left\{
  \begin{array}{ll}
     \{73\},  &  \al\in (z,1],  \\
 \mbox{} [71,81], & \al \in [0,z].
  \end{array}
 \right.
\]
(see also clauses (\rmn1) and (\rmn2) at the end of this example.)
Then for each $z\in (0,1]$,
\begin{footnotesize}
 \begin{equation}\label{uvhc}
 \ H([u^z]_\al, [v^z]_\al)
=
\left\{
  \begin{array}{ll}
    H(\{3\},  \{73\}), & \al\in (z,1], \\
     H(\{3\}, [71,81]), & \al=z, \\
 H(\{ 3\} \cup (10, 11-\varepsilon], [71,81]), &    \al = z \cdot\varepsilon,  \   0 \leq \varepsilon<1,
  \end{array}
\right.
=
\left\{
  \begin{array}{ll}
    70, & \al\in (z,1], \\
   78, & \al=z, \\
    70+\varepsilon, &    \al = z \cdot\varepsilon,  \ 0 \leq \varepsilon < 1,
  \end{array}
\right.
\end{equation}
\end{footnotesize}
where $H$ is the Hausdorff metric on $C([0,100] \setminus \{10\})$ induced by $\rho_1|_{[0,100] \setminus \{10\}}$.

Consider the metric space $([0,9], \rho_1|_{[0,9]})$,
which is a subspace of $\mathbb{R}$.
Define
$w\in F([0,9])$ as
$w(t) =1$ for all $t\in [0,9]$.
As $[w]_\al= [0,9]$ for all $\al\in [0,1]$, it follows that
$w\in F^1_{USCB} ([0,9]) = F^1_{USC} ([0,9])$.

Let $A$ be a non-measurable set in $(0,1]$.
Put
 $u := \prod_{z\in [0,1]} u_z$
and
$v := \prod_{z\in [0,1]} v_z  $,
where
\begin{gather*}
u_z=\left\{
             \begin{array}{ll}
               u^z, & z\in A, \\
w, & z\in [0,1] \setminus A,
             \end{array}
           \right.
\
v_z=\left\{
             \begin{array}{ll}
               v^z, & z\in A, \\
w, & z\in [0,1] \setminus A.
             \end{array}
           \right.
\end{gather*}
Then by Theorem \ref{pfn},
$u$ and $v$ are fuzzy sets in $F_{USC}^1 ( \prod_{z\in [0,1]} X_z )$,
where
 \[ X_z=\left\{
     \begin{array}{ll}
[0,100] \setminus \{10\} , & z\in A,\\
\mbox{}[0,9], & z\in [0,1] \setminus A.
     \end{array}
   \right.
\]
Here we mention that $\prod_{z\in [0,1]} X_z$ is endowed with the metric $\lambda$ defined by \eqref{upm};
that is,
$
\lambda(x,y)= \sup\{ |x_z - y_z|:  z \in [0,1]\}
$
for each $x=(x_z)_{z \in [0,1]}$ and each $y=(y_z)_{z \in [0,1]}$ in $\prod_{z\in [0,1]} X_z$.

In the rest of this example, we use
$H$ to denote the Hausdorff metric on $C(\prod_{z\in [0,1]} X_z)$ induced by $\lambda$, use $H$ to denote the Hausdorff metric on $C([0,9])$ induced by $\rho_1|_{[0,9]}$, and use $H$ to denote the Hausdorff metric on $C([0,100] \setminus \{10\})$ induced by $\rho_1|_{[0,100] \setminus \{10\}}$.

For each $\al\in [0,1]$,
\begin{align*}
&H([u]_\al, [v]_\al ) = H(\prod_{z\in [0,1]}[u_z]_\al, \prod_{z\in [0,1]} [v_z]_\al ) = \sup_{z\in [0,1] } H( [u_z]_\al, [v_z]_\al   ) \mbox{  \hspace{1mm}    (by \eqref{pdf} and by Theorem \ref{hnp}(\rmn4)) } \\
&=\sup_{z\in A } H( [u^z]_\al, [v^z]_\al   ) \vee \sup_{z\in[0,1] \setminus A } H( [w]_\al, [w]_\al   )\\
& =\sup_{z\in A } H( [u^z]_\al, [v^z]_\al   )   \mbox{  \hspace{3cm} (since $H( [w]_\al, [w]_\al   )=0$) }     \\
& \left\{
    \begin{array}{ll}
      = 78, & \al \in A, \\
    \leq 71, & \al\in [0,1] \setminus A.
    \end{array}
  \right.   \mbox{  \hspace{1cm} (by \eqref{uvhc}. See also below clause (\rmn3))}
\end{align*}
So
$\{\al\in [0,1]: H([u]_\al, [v]_\al ) > 73\} =A$,
and thus
$H([u]_\al, [v]_\al) $ is a non-measurable function
of $\al$ on
$[0,1]$.

Below illustrations are easy to see.

(\rmn1) Let $z\in (0,1]$. Note that
$[u^z]_\al = \cap_{\beta<\al} [u^z]_{\beta}$ for all $\al\in (0,1]$
and $[u^z]_0 = \overline{\cup_{\al>0} [u^z]_{\al}}$, where $\overline{\mbox{} \,  \cdot \, \mbox{}}$ means the closure in $[0,100] \setminus \{10\}$.
Thus by Corollary \ref{repc}, $u^z \in  F([0,100] \setminus \{10\})$.
Clearly $[u^z]_1\not= \emptyset$
and $[u^z]_\al \in C([0,100] \setminus \{10\})$ for all $\al\in [0,1]$.
This means that $u^z\in F_{USC}^1 ([0,100] \setminus \{10\})$.

(\rmn2) Replace $u^z$ by $v^z$ in (\rmn2), then we obtain
a proof for the conclusion that $v^z\in F_{USC}^1 ([0,100] \setminus \{10\})$
for each $z\in (0,1]$.

(\rmn3)
By \eqref{uvhc},
for each $z\in (0,1]$ and each $\al\in [0,1]$, if $z=\al$, then
$H([u^z]_\al, [v^z]_\al)=78$;
if $z\not=\al$, then $H([u^z]_\al, [v^z]_\al)<71$.
So if $ \al \in A$, then
$\sup_{z\in A } H( [u^z]_\al, [v^z]_\al   )
     = H( [u^\alpha]_\al, [v^\alpha]_\al   ) = 78$;
if $\al\in [0,1] \setminus A$, then $z\not=\al$ for all $z\in A$,
and hence $\sup_{z\in A } H( [u^z]_\al, [v^z]_\al   ) \leq 71$.

}
\end{eap}

In the sequel, we give some improvements of Propositions \ref{gmn}
and \ref{lcpm} and Theorem \ref{rcm},
which are
 statements first presented in \cite{huang17}.
We first prove
Theorem \ref{cme}, which is an improvement of
Propositions \ref{gmn}
and \ref{lcpm}.
Then we show
  Theorem
\ref{crm} and use it to improve Theorem \ref{cme} and Theorem \ref{rcm}.

Let $v\in F^1_{USC} (X)$
and let $0\leq \alpha < \beta \leq 1$. The ``variation''
$w_v(\alpha,\beta)$ is defined as
$w_v(\alpha, \beta):= \sup \{H([v]_\xi, [v]_\eta)  : \xi, \eta \in (\al, \beta]   \}$.

\begin{tm} \label{cme}
   Let $u\in F^1_{USC} (X)$ and let $v\in F^1_{USCG} (X)$. Then
$H([u]_\al, [v]_\al)$ is a measurable function of $\al$ on $[0,1]$; that is, $d_p(u,v)$ is well-defined.
\end{tm}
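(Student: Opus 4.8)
The plan is to split the Hausdorff distance into its two pre-distances,
\[
H([u]_\al,[v]_\al)=\max\{H^*([u]_\al,[v]_\al),\,H^*([v]_\al,[u]_\al)\},
\]
and to prove that \emph{each} of these is a measurable function of $\al$ on $[0,1]$; since the maximum of two measurable functions is measurable, the theorem (and hence the well-definedness of $d_p(u,v)$) follows. The guiding idea is that the two halves have \emph{opposite} one-sided semicontinuity, and that this is exactly matched to the asymmetry of the hypotheses: $u$ is only upper semicontinuous, while $v\in F^1_{USCG}(X)$ has compact cuts and (by Proposition \ref{gnc}(\rmn1)) a left continuous cut function.

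First I would treat $f(\al):=H^*([u]_\al,[v]_\al)$ and show it is left lower semicontinuous on $(0,1]$, so that Proposition \ref{rsf} yields its measurability. Fix $\al\in(0,1]$ and pick $\gamma_n\uparrow\al$ realizing $\liminf_{\gamma\to\al-}f(\gamma)$, which I may assume finite. For any $x\in[u]_\al\subseteq[u]_{\gamma_n}$, the point-to-set triangle inequality (Remark \ref{hme}(b) with $U=\{x\}$) gives
\[
d(x,[v]_\al)\le d(x,[v]_{\gamma_n})+H^*([v]_{\gamma_n},[v]_\al)\le H^*([u]_{\gamma_n},[v]_{\gamma_n})+H([v]_{\gamma_n},[v]_\al).
\]
Letting $n\to\infty$ and using $H([v]_{\gamma_n},[v]_\al)\to0$ (Proposition \ref{gnc}(\rmn1), valid since $v\in F^1_{USCG}(X)$) yields $d(x,[v]_\al)\le\liminf_{\gamma\to\al-}f(\gamma)$, and taking the supremum over $x\in[u]_\al$ gives $f(\al)\le\liminf_{\gamma\to\al-}f(\gamma)$. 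Here $f$ may take the value $+\infty$, but this is permitted by Proposition \ref{rsf}.

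Next I would treat $g(\al):=H^*([v]_\al,[u]_\al)$ and show it is \emph{finite} and left \emph{upper} semicontinuous on $(0,1]$. Finiteness holds because $[v]_\al$ is compact, $[u]_\al\ne\emptyset$, and $y\mapsto d(y,[u]_\al)$ is $1$-Lipschitz, so the supremum defining $g(\al)$ is attained and finite. For $\gamma<\al$ and $y\in[v]_\gamma$, use compactness of $[v]_\al$ to pick $y'\in[v]_\al$ with $d(y,y')=d(y,[v]_\al)\le H^*([v]_\gamma,[v]_\al)$; since $[u]_\al\subseteq[u]_\gamma$ we obtain
\[
d(y,[u]_\gamma)\le d(y,y')+d(y',[u]_\gamma)\le d(y,y')+d(y',[u]_\al)\le H^*([v]_\gamma,[v]_\al)+g(\al).
\]
Taking the supremum over $y\in[v]_\gamma$ gives $g(\gamma)\le H^*([v]_\gamma,[v]_\al)+g(\al)$, and since $H^*([v]_\gamma,[v]_\al)\to0$ as $\gamma\to\al-$ we conclude $\limsup_{\gamma\to\al-}g(\gamma)\le g(\al)$. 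Equivalently, $-g$ is left lower semicontinuous on $(0,1]$; applying Proposition \ref{rsf} to $-g$ (which is real-valued, so the $\liminf$/$\limsup$ negation is clean) shows $-g$, and hence $g$, is measurable. Combining the two parts, $H([u]_\al,[v]_\al)=\max\{f,g\}$ is measurable.

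The step I expect to be the main obstacle is $g$, and the subtlety worth flagging is why the symmetric route of Theorem \ref{rcm} fails here. Trying to prove full left lower semicontinuity of the whole $H$ (as in $\mathbb{R}^m$) would require extracting a convergent subsequence from points $x_n\in[u]_{\gamma_n}$ that are merely bounded; but the cuts of $u$ are only closed, not compact, and a bounded sequence in a general metric space need not have a convergent subsequence, so the argument collapses. The device that circumvents this is precisely the recognition that $H^*([v]_\al,[u]_\al)$ is only left \emph{upper} semicontinuous (not lower), which is nonetheless enough for measurability through $-g$ and Proposition \ref{rsf}, and that establishing it uses only the compactness of $v$'s cuts together with the left continuity of $v$'s cut function, both of which are available.
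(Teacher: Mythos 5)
Your proof is correct, and it shares the paper's top-level structure --- the paper's own proof of Theorem \ref{cme} likewise splits $H$ into the two pre-distances, proving in Step 1 that $H^*([u]_\al,[v]_\al)$ is measurable and in Step 2 that $H^*([v]_\al,[u]_\al)$ is, before taking the maximum in Step 3 --- but your mechanism for extracting measurability is genuinely different. The paper argues globally through superlevel sets: fixing $\xi$ and $n$, it invokes Lemma 6.5 of \cite{huang17} to partition $(1/n,1]$ into finitely many blocks on which the variation $w_v$ is at most $1/k$, attaches to each point of $S_\xi$ (resp.\ $S^\xi$) a left-reaching (resp.\ right-reaching) interval within its block, and identifies $S_{\xi,n}$ (resp.\ $S^{\xi,n}$) with $\bigcap_{k=1}^{+\infty} T_k$ (resp.\ $\bigcap_{k=1}^{+\infty} T^k$), a countable intersection of finite unions of intervals. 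You argue locally and pointwise: the same two triangle-inequality estimates that drive the paper's inclusions $T_k\subseteq S_{\xi-\frac{1}{k},n}$ and $T^k\subseteq S^{\xi-\frac{1}{k},n}$ (its (a-1)/(b-1) and (a-2)/(b-2)) become, in your hands, left lower semicontinuity of $H^*([u]_\al,[v]_\al)$ and left upper semicontinuity (with finiteness) of $H^*([v]_\al,[u]_\al)$, with the pointwise left continuity of $v$'s cut map (Proposition \ref{gnc}(\rmn1)) replacing the uniform small-variation partition; measurability then comes from Proposition \ref{rsf}, applied to the first half directly and to the negative of the second. Relative to the paper's own toolkit your route is the more economical one: it uses only results already established in the paper (Propositions \ref{gnc} and \ref{rsf}, Remark \ref{hme}(b), Remark \ref{lfe}(\rmn4) for the sequence realizing the $\liminf$) and avoids importing Lemma 6.5 of \cite{huang17} altogether, while making the directional asymmetry transparent --- your closing diagnosis is sound, since a Theorem-\ref{rcm}-style argument for the half $H^*([v]_\al,[u]_\al)$ would require convergent subsequences inside the cuts of $u$, which are merely closed, and left lower semicontinuity of that half can genuinely fail when $u$'s cuts are non-compact. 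What the paper's scheme buys in exchange is symmetry: one interval-covering template handles both halves without ever applying the semicontinuity machinery to a sign-reversed function. The two details you flagged are indeed the ones the argument needs: finiteness of $g=H^*([v]_\al,[u]_\al)$ (from compactness of $[v]_\al$ and normality of $u$) together with the bound $g(\gamma)\le H^*([v]_\gamma,[v]_\al)+g(\al)$ make the $\liminf$/$\limsup$ negation for $-g$ legitimate within the paper's $\widehat{\mathbb{R}}$-valued conventions, so Proposition \ref{rsf} applies cleanly to $-g$.
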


\begin{proof}
The proof is divided into three steps.
We put easy proofs of some simple conclusions in clauses (\uppercase\expandafter{\romannumeral1}),  (\uppercase\expandafter{\romannumeral2}) and (\uppercase\expandafter{\romannumeral3}) at the end of this proof.

\textbf{Step 1} \
Prove statement (\romannumeral1)
 $H^*([u]_\al, [v]_\al)$ is a measurable function of $\al$ on $[0,1]$.

For each $\xi\in \mathbb{R}$ and each $n\in \mathbb{N}\setminus\{1\}$,
define
\begin{gather*}
  S_\xi := \{\al\in [0,1]:  H^*([u]_\al, [v]_\al) \geq \xi\},
 \\
  S_{\xi, n} := S_\xi \cap (\frac{1}{n}, 1].
\end{gather*}

 To show statement (\rmn1),
it suffices to
verify
statement (\rmn1-a) for each $\xi\in \mathbb{R}$ and each $n\in \mathbb{N}\setminus\{1\}$,
$S_{\xi, n}$ is a measurable set (see also  (\uppercase\expandafter{\romannumeral1})).
Fix
 $\xi\in \mathbb{R}$ and $n\in \mathbb{N}\setminus\{1\}$.

Since $v \in F^1_{USCG} (X)$, by Lemma 6.5 in \cite{huang17},
for each $k\in \mathbb{N}$,
there exist $\al_1^{(k)}, \cdots, \al_{l_k}^{(k)}$
such that
$\frac{1}{n} = \al_1^{(k)}  < \cdots < \al_{l_k}^{(k)} = 1$
and
$w_v(\al_i^{(k)}, \al_{i+1}^{(k)}) \leq \frac{1}{k}$ for all $i=1, \ldots, l_k-1$.

For each $k\in \mathbb{N}$ and
 each $i=1, \ldots, l_k-1$,
define
$T_{k,i} :=  \{ x: \ \mbox{there exists} \ s \in S_\xi  \ \mbox{such that}  \ \al_i^{(k)} < x \leq s \leq \al_{i+1}^{(k)} \}   $; in other words,
$T_{k,i} = \cup \{ (\al_i^{(k)}, s] : s \in S_\xi \cap (\al_i^{(k)}, \al_{i+1}^{(k)}]  \}$. Here we mention that the result of an empty union is $\emptyset$.
Put
$T_k := \cup_{i=1}^{l_k-1} T_{k,i}$.
Obviously for each $k\in \mathbb{N}$ and
 each $i=1, \ldots, l_k-1$,
$T_{k,i} \subseteq (\al_i^{(k)}, \al_{i+1}^{(k)}] $ and therefore
$T_k = \cup_{i=1}^{l_k-1} T_{k,i} \subseteq \cup_{i=1}^{l_k-1}(\al_i^{(k)}, \al_{i+1}^{(k)}] = (1/n,1]$.

We claim that
\\
(\romannumeral1-1) for each $k\in \mathbb{N}$, $T_k$ is a measurable set,
\\
(\romannumeral1-2) for each $k\in \mathbb{N}$, $T_k \supseteq S_{\xi, n}$, and
\\
(\romannumeral1-3) for each $k\in \mathbb{N}$, $T_k \subseteq S_{\xi-\frac{1}{k}, n}    $.

First we show (\rmn1-1). Let $k\in \mathbb{N}$. Clearly
for each $i\in \{1, \ldots, l_k-1\}$, $T_{k,i}$ is an interval (see also
(\uppercase\expandafter{\romannumeral2})).
Thus
$T_k= \cup_{i=1}^{l_k-1} T_{k,i}$ is a measurable set.
Hence (\romannumeral1-1) is true.

(\romannumeral1-2) follows immediately from the definition of $T_k$. See also (\uppercase\expandafter{\romannumeral3}).

Now we show (\rmn1-3). Let $k\in \mathbb{N}$.
Let $i\in \{1, \ldots, l_k-1\}$ and $x \in T_{k,i}$. There exists an
 $s \in S_\xi $ such that $\al_i^{(k)} < x \leq s \leq \al_{i+1}^{(k)}$.
Note that
\\
(a-1) By Remark \ref{hme}(b), $H^*([u]_{s}, [v]_x) + H^*([v]_x, [v]_{s})
\geq H^*([u]_{s}, [v]_{s})$. Then, as
$H^*([v]_x, [v]_{s}) $ is finite,
$H^*([u]_{s}, [v]_x)= H^*([u]_{s}, [v]_x) + H^*([v]_x, [v]_{s})
- H^*([v]_x, [v]_{s})
\geq H^*([u]_{s}, [v]_{s}) - H^*([v]_x, [v]_{s})$;
\\
(b-1) $H^*([v]_x, [v]_{s}) \leq H([v]_x, [v]_{s})\leq w_v(\al_i^{(k)}, \al_{i+1}^{(k)}) \leq \frac{1}{k}$.
\\
Thus
\begin{align*}
H^*([u]_x, [v]_x)
\geq
H^*([u]_{s}, [v]_x)
\geq H^*([u]_{s}, [v]_{s}) - H^*([v]_x, [v]_{s})
 \geq \xi - 1/k.
\end{align*}
Hence $x\in S_{\xi-\frac{1}{k}}$.
Since $i\in \{1, \ldots, l_k-1\}$ and $x \in T_{k,i}$ are arbitrary,
we conclude that
$T_k = \cup_{i=1}^{l_k-1} T_{k,i}\subseteq S_{\xi-\frac{1}{k}}$.
Then, as
$T_k \subseteq (\frac{1}{n}, 1]$, $T_k \subseteq  S_{\xi-\frac{1}{k}}\cap (\frac{1}{n}, 1]=  S_{\xi-\frac{1}{k}, n}$. Thus (\romannumeral1-3) is proved.

From (\romannumeral1-2) and (\romannumeral1-3), we have
\begin{equation}\label{seu}
  S_{\xi, n}  \subseteq \bigcap_{k=1}^{+\infty} T_k \subseteq \bigcap_{k=1}^{+\infty} S_{\xi-\frac{1}{k}, n} = S_{\xi,n}.
\end{equation}
By
(\romannumeral1-1),
$\bigcap_{k=1}^{+\infty} T_k   $ is measurable. Thus by \eqref{seu}, $S_{\xi,n} = \bigcap_{k=1}^{+\infty} T_k$ is measurable.
So statement (\rmn1-a) is true and then
statement (\rmn1) is proved.

\textbf{Step 2} \
Prove statement (\rmn2) $H^*([v]_\al, [u]_\al)$ is a measurable function of $\al$ on $[0,1]$.

For each $\xi\in \mathbb{R}$ and each $n\in \mathbb{N}\setminus \{1\}$,
define
\begin{gather*}
  S^\xi := \{\al\in [0,1]:  H^*([v]_\al, [u]_\al) \geq \xi\},
 \\
  S^{\xi, n} := S^\xi \cap (\frac{1}{n}, 1].
\end{gather*}

To show statement (\rmn2),
it suffices to
show statement (\rmn2-a) for each $\xi\in \mathbb{R}$ and each $n\in \mathbb{N}\setminus \{1\}$,
$S^{\xi, n}$ is a measurable set (see also  (\uppercase\expandafter{\romannumeral1})). Fix
 $\xi\in \mathbb{R}$ and $n\in \mathbb{N}\setminus\{1\}$.

For each $k\in \mathbb{N}$ and
 each $i=1, \ldots, l_k-1$,
define
$
T^{k,i} :=  \{ x: \ \mbox{there exists} \ s \in S^\xi \ \mbox{such that} \ \al_i^{(k)} < s \leq x \leq \al_{i+1}^{(k)}\}
$; in other words,
$T^{k,i} = \cup \{ [s, \al_i^{(k+1)}] : s \in S^\xi \cap (\al_i^{(k)}, \al_{i+1}^{(k)}]  \}$. Put
$T^k := \cup_{i=1}^{l_k-1} T^{k,i}$.
Obviously for each $k\in \mathbb{N}$ and
 each $i=1, \ldots, l_k-1$,
$T^{k,i} \subseteq (\al_i^{(k)}, \al_{i+1}^{(k)}] $ and therefore
$T^k = \cup_{i=1}^{l_k-1} T^{k,i} \subseteq \cup_{i=1}^{l_k-1}(\al_i^{(k)}, \al_{i+1}^{(k)}] = (1/n,1]$.

We claim
that
\\
(\rmn2-1) for each $k\in \mathbb{N}$, $T^k$ is a measurable set,
\\
(\rmn2-2) for each $k\in \mathbb{N}$, $T^k \supseteq S^{\xi, n}$, and
\\
(\rmn2-3) for each $k\in \mathbb{N}$, $T^k \subseteq S^{\xi-\frac{1}{k}, n}$.

First we show (\rmn2-1). Let $k\in \mathbb{N}$. Clearly
for each $i\in \{1, \ldots, l_k-1\}$, $T^{k,i}$ is an interval (see also  (\uppercase\expandafter{\romannumeral2})).
Thus
$T^k= \cup_{i=1}^{l_k-1} T^{k,i}$ is a measurable set.
Hence (\romannumeral2-1) is true.

(\romannumeral2-2) follows from the definition of $T^k$.
See also (\uppercase\expandafter{\romannumeral3}).

Now we show (\rmn2-3). Let $k\in \mathbb{N}$.
Let $i\in \{1, \ldots, l_k-1\}$ and
 $x \in T^{k,i}$. Then there exists an
 $s \in S^\xi $ such that $\al_i^{(k)} < s \leq x \leq \al_{i+1}^{(k)}$.
Note that
\\
(a-2) By Remark \ref{hme}(b),
 $H^*([v]_s, [v]_x) + H^*([v]_x, [u]_s) \geq H^*([v]_s, [u]_s)$. Then, as
$H^*([v]_s, [v]_{x}) $ is finite,
$H^*([v]_x, [u]_s)  =H^*([v]_s, [v]_x) + H^*([v]_x, [u]_s) - H^*([v]_s, [v]_x)
\geq H^*([v]_s, [u]_s) - H^*([v]_s, [v]_x)
$;
\\
(b-2) $H^*([v]_s, [v]_{x}) \leq H([v]_s, [v]_{x})\leq w_v(\al_i^{(k)}, \al_{i+1}^{(k)}) \leq \frac{1}{k}$.
\\
 Thus
\begin{align*}
H^*([v]_x, [u]_x) \geq H^*([v]_x, [u]_s) \geq H^*([v]_s, [u]_s) - H^*([v]_s, [v]_x) \geq \xi - 1/k.
\end{align*}
Hence
$x\in S^{\xi-\frac{1}{k}}$.
Since $i\in \{1, \ldots, l_k-1\}$ and $x \in T^{k,i}$ are arbitrary,
we conclude that
$T^k = \cup_{i=1}^{l_k-1} T^{k,i}\subseteq S^{\xi-\frac{1}{k}}$.
Then, as
$T^k \subseteq (\frac{1}{n}, 1]$, $T^k \subseteq  S^{\xi-\frac{1}{k}}\cap (\frac{1}{n}, 1]=  S^{\xi-\frac{1}{k}, n}$. Thus (\romannumeral2-3) is proved.

From (\romannumeral2-2) and (\romannumeral2-3),
\begin{equation}\label{pseu}
  S^{\xi, n} \subseteq \bigcap_{k=1}^{+\infty} T^k \subseteq \bigcap_{k=1}^{+\infty} S^{\xi-\frac{1}{k}, n} = S^{\xi,n}.
\end{equation}
By (\romannumeral2-1), $\bigcap_{k=1}^{+\infty} T^k$ is measurable. Thus, by \eqref{pseu}, $S^{\xi,n} = \bigcap_{k=1}^{+\infty} T^k$ is measurable.
So statement (\rmn2-a) is true and then
statement (\rmn2) is proved.

\textbf{Step 3} Prove statement (\rmn3)
 $H([u]_\al, [v]_\al)$ is a measurable function of $\al$ on $[0,1]$.

Note that
 $H([u]_\al, [v]_\al) = \max \{H^*([u]_\al, [v]_\al), H^*([v]_\al, [u]_\al)\}$.
Thus
statement (\rmn3) follows immediately
from statements (\rmn1) and (\rmn2), which say
that both $H^*([u]_\al, [v]_\al)$ and $H^*([v]_\al, [u]_\al)$ are measurable functions of $\al$ on $[0,1]$.
So the proof is completed.

\vspace{1mm}

Contents in the following clauses (\uppercase\expandafter{\romannumeral1}), (\uppercase\expandafter{\romannumeral2}) and (\uppercase\expandafter{\romannumeral3}) are easy to see.

(\uppercase\expandafter{\romannumeral1})
Consider statement (\rmn1-b) for each $\xi\in \mathbb{R}$,
$S_\xi$ is a measurable set.
Then statement (\rmn1), statement (\rmn1-a) and statement(\rmn1-b)
are equivalent to each other
(Obviously (\rmn1)$\Leftrightarrow$(\rmn1-b)$\Rightarrow$(\rmn1-a).
For each $\xi\in \mathbb{R}$, $\cup_{n=2}^{+\infty} S_{\xi, n}= S_\xi \cap (0,1] $ and therefore
$S_\xi = \cup_{n=2}^{+\infty} S_{\xi, n}$
or
$S_\xi = \cup_{n=2}^{+\infty} S_{\xi, n} \cup \{0\}$.
So (\rmn1-a)$\Rightarrow$(\rmn1-b). Thus (\rmn1)$\Leftrightarrow$(\rmn1-b)$\Leftrightarrow$(\rmn1-a).).
So
to show statement (\rmn1),
it suffices to
verify
statement (\rmn1-a).

Contents in the previous paragraph from ``Consider'' to the end are still correct if we replace
\rmn1, $S_\xi$ and $S_{\xi, n}$ everywhere by \rmn2, $S^\xi$ and $S^{\xi, n}$, respectively.

(\uppercase\expandafter{\romannumeral2})  Let $k\in \mathbb{N}$ and $i\in \{1, \ldots, l_k-1\}$. Then $T_{k,i}$ and $T^{k,i}$ are intervals.

Denote $S(i,k) := S_\xi \cap (\al_i^{(k)}, \al_{i+1}^{(k)}]$.
Let $x,y\in T_{k,i}$ with $x<y$. Then there exist $s_x$ and $s_y$
in $S(i,k) $ such that $x\in (\al_i^{(k)}, s_x]$ and $y\in (\al_i^{(k)}, s_y]$.
There is no loss of generality in assuming that $s_x\leq s_y$.
Then
$\{x,y\}\subsetneqq(\al_i^{(k)}, s_y]$.
Thus
$ [x,y]\subsetneqq (\al_i^{(k)}, s_y] \subseteq T_{k,i}$.
So
$T_{k,i}$ is an interval.

Clearly $T_{k,i}=\emptyset \Leftrightarrow S(i,k) = \emptyset$. Assume
$S(i,k) \not= \emptyset$. Put $s_1 =\sup S(i,k)$.
Then $T_{k,i} = (\al_i^{(k)}, s^1] \Leftrightarrow s_1 \in S(i,k)$,
and
$T_{k,i} = (\al_i^{(k)}, s_1)\Leftrightarrow s_1\notin S(i,k)$.

Denote $S'(i,k) := S^\xi \cap (\al_i^{(k)}, \al_{i+1}^{(k)}]$.
Let $x,y\in T^{k,i}$ with $x<y$. Then there exist $s^x$ and $s^y$
in $S'(i,k) $ such that $x\in [s^x, \al_i^{(k+1)}]$ and $y\in [s^y, \al_i^{(k+1)}]$.
There is no loss of generality in assuming that $s^x\leq s^y$.
Then
$\{x,y\}\subsetneqq [s^x, \al_i^{(k+1)}]$.
Thus
$ [x,y]\subseteq [s^x, \al_i^{(k+1)}]\subseteq T^{k,i}$.
So
$T^{k,i}$ is an interval.

Clearly $T^{k,i}=\emptyset \Leftrightarrow S'(i,k) = \emptyset$.
Assume
$S'(i,k) \not= \emptyset$. Put $s_2 =\inf S'(i,k)$.
 Then $s_2 \in S'(i,k) \Leftrightarrow T^{k,i} = [s_2, \al_i^{(k+1)}]$, and
$s_2 \notin S'(i,k)\Leftrightarrow T^{k,i} = (s_2, \al_i^{(k+1)}]$.
Clearly $s_2 = \al_i^{(k+1)} \Leftrightarrow S'(i,k)= \{\al_i^{(k+1)} \} \Leftrightarrow T^{k,i}= \{\al_i^{(k+1)}\} \Leftrightarrow$ $T^{k,i}$ is a singleton set (in this case,  $s_2\in S'(i,k)$).

(\uppercase\expandafter{\romannumeral3}) To show (\rmn1-2),
let $k\in \mathbb{N}$. For each $i\in \{1, \ldots, l_k-1\}$, $T_{k,i} \supseteq S_\xi  \cap (\al_i^{(k)}, \al_{i+1}^{(k)}]$.
So
$T_k= \cup_{i=1}^{l_k-1} T_{k,i}\supseteq \cup_{i=1}^{l_k-1} (S_\xi  \cap (\al_i^{(k)}, \al_{i+1}^{(k)}]) = S_\xi \cap (\cup_{i=1}^{l_k-1}  (\al_i^{(k)}, \al_{i+1}^{(k)}]) = S_\xi \cap (1/n,1]= S_{\xi, n}$.
Thus (\romannumeral1-2) is true.

To show (\rmn2-2),
let $k\in \mathbb{N}$. For each $i\in \{1, \ldots, l_k-1\}$, $T^{k,i} \supseteq S^\xi  \cap (\al_i^{(k)}, \al_{i+1}^{(k)}]$.
So
$T^k= \cup_{i=1}^{l_k-1} T^{k,i}\supseteq \cup_{i=1}^{l_k-1} (S^\xi  \cap (\al_i^{(k)}, \al_{i+1}^{(k)}]) = S^\xi \cap (\cup_{i=1}^{l_k-1}  (\al_i^{(k)}, \al_{i+1}^{(k)}]) = S^\xi \cap (1/n,1]= S^{\xi, n}$.
Thus (\romannumeral2-2) is true.

\end{proof}

\begin{re}
  {\rm
(\rmn1)
Theorem \ref{cme} is an improvement of
Proposition \ref{lcpm}.

(\rmn2)
Let $u\in F^1_{USC}(X)$ and $x\in X$. Then $\widehat{x}\in F^1_{USCG}(X)$.
Thus by Theorem \ref{cme}, $d_p(u, \widehat{x})$
is well-defined. It is easy to see that
$H([u]_\al, \{x_0\}) $ is a measurable function of $\al$ on $[0,1]$ if and only if $d_p(u, \widehat{x_0})$ is well-defined.
So
Proposition \ref{gmn}
is an easy consequence of Theorem \ref{cme},
and, in some sense,
Theorem \ref{cme} is an improvement of Proposition \ref{gmn}.

(\rmn3) Obviously, if $\xi\leq 0$, then $S_\xi=S^\xi=[0,1]$ and $S_{\xi, n} = S^{\xi, n} = (\frac{1}{n}, 1]$ for all $n\in \mathbb{N}\setminus \{1\}$.
}
\end{re}

Let $(Z, \rho)$ be a metric space and $S$ be a subset of $Z$.
We will use
 $\overline{S}^{(Z, \rho)}$ to denote
the closure of $S$
in
 $(Z, \rho)$ when we want to emphasis the space $(Z, \rho)$. If there is no
confusion, we will write $\overline{S}^{(Z, \rho)}$ as $\overline{S}^Z$.

Let $(X, d_X)$ be a subspace of $(Y, d_Y)$.
For each $u\in F(X)$, define
$\bm{u^Y \in   F_{USC}(Y)}$ by putting
\begin{equation}\label{uyn}
 [u^Y]_\al = \cap_{\beta<\al} \overline{[u]_\beta }^Y    \ \mbox{for all} \ \al\in (0,1].
\end{equation}

Note that
$
  [u^Y]_\al= \cap_{\beta<\al} [u^Y]_\beta \mbox{ for all } \al\in (0,1]
$ (for each $ \al\in (0,1]$,
$\cap_{\beta<\al} [u^Y]_\beta = \cap_{\beta<\al} \cap_{\gamma<\beta}  \overline{[u]_\gamma }^Y
=\cap_{\gamma<\al}  \overline{[u]_\gamma }^Y =    [u^Y]_\al
$).
Then by Theorem \ref{rep},
 $u^Y$ is a well-defined fuzzy set in $Y$.
By \eqref{uyn}, for each $\al\in (0,1]$,
 $[u^Y]_\al \in C(Y) \cup \{\emptyset\}$.
So $u^Y \in F_{USC}(Y)$.

\begin{pp} \label{egcn}
Let $(X, d_X)$ be a subspace of $(Y, d_Y)$ and $u\in F(X)$.
\\
(\romannumeral1)
$[u^Y]_\al \supseteq  \overline{[u]_\al}^Y \supseteq [u]_\al$ for all $\al\in (0,1]$.
\\
(\romannumeral2)
$\overline{[u]_\beta}^Y \supseteq [u^Y]_\al$ for all $\al,\beta$ with
$0\leq \beta < \al \leq 1$.
\\
(\romannumeral3)  $[u^Y]_0 = \overline{[u]_0}^Y$.
\\
(\rmn4)  If $u\in F^1(X)$, then $u^Y \in F^1_{USC}(Y)$. However, the converse is false.
\\
(\rmn5)   If $u\in F_{USCG}(X)$, then $[u^Y]_\al=[u]_\al$
for all $\al\in (0,1]$, and so $u^Y \in F_{USCG}(Y)$.
\\
(\rmn6)  (\rmn6-1) If $u\in F_{USC}^1(X)$, then $u^Y \in F^1_{USC}(Y)$.
 (\rmn6-2) If $u\in F_{USCG}^1(X)$, then $u^Y \in F^1_{USCG}(Y)$.
  (\rmn6-3) $u^Y \in F^1_{USCG}(Y)$ does not imply $u\in F_{USC}(X)$;
and so the converse of (\rmn6-2) is false.
\\
(\rmn7) $u\in F_{USC}(X)$ if and only if $u^X= u$.
\end{pp}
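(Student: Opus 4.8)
The plan is to prove the monotonicity inclusions (i) and (ii) first, because every later part rests on them, and to isolate two standard subspace-topology facts used throughout: (A) a set compact in $X$ is compact, hence (as $Y$ is metric, so Hausdorff) closed, in $Y$, so its $Y$-closure equals itself; and (B) for $S\subseteq X\subseteq Y$ one has $\overline{\overline{S}^X}^Y=\overline{S}^Y$. For (i), the inclusion $\overline{[u]_\al}^Y\supseteq[u]_\al$ is immediate, while $[u^Y]_\al\supseteq\overline{[u]_\al}^Y$ follows because $[u]_\beta\supseteq[u]_\al$ whenever $\beta<\al$, so $\overline{[u]_\beta}^Y\supseteq\overline{[u]_\al}^Y$ and the defining intersection \eqref{uyn} dominates $\overline{[u]_\al}^Y$. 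For (ii), when $0\le\beta<\al\le1$ the term $\overline{[u]_\beta}^Y$ literally appears in the intersection defining $[u^Y]_\al$, so $[u^Y]_\al\subseteq\overline{[u]_\beta}^Y$.

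For (iii) I would use that $u^Y\in F_{USC}(Y)$ (established right before the statement) gives $[u^Y]_0=\overline{\bigcup_{\al>0}[u^Y]_\al}^Y$ by Corollary \ref{repc}. The upper bound $[u^Y]_0\subseteq\overline{[u]_0}^Y$ comes from (ii) with $\beta=0$, which places every $[u^Y]_\al$ ($\al>0$) inside the closed set $\overline{[u]_0}^Y$; the lower bound comes from (i), which yields $\bigcup_{\al>0}[u^Y]_\al\supseteq\bigcup_{\al>0}[u]_\al$, combined with fact (B) and $[u]_0=\overline{\bigcup_{\al>0}[u]_\al}^X$ to get $\overline{\bigcup_{\al>0}[u]_\al}^Y=\overline{[u]_0}^Y$. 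The positive half of (iv) is then immediate: $u^Y\in F_{USC}(Y)$ always, and $[u^Y]_1\supseteq[u]_1\neq\emptyset$ by (i) when $u\in F^1(X)$. For (v) I invoke fact (A): each $[u]_\beta$ with $\beta\in(0,\al)$ is compact in $X$, hence closed in $Y$, so $\overline{[u]_\beta}^Y=[u]_\beta$; dropping the harmless $\beta=0$ term (it is the largest in the nested family, since $[u]_0\supseteq[u]_\beta$) and applying Theorem \ref{rep} gives $[u^Y]_\al=\bigcap_{0<\beta<\al}[u]_\beta=[u]_\al$, which is compact in $Y$, so $u^Y\in F_{USCG}(Y)$. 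Parts (vi-1) and (vi-2) follow by combining (iv) and (v) with the definitions $F^1_{USC}=F_{USC}\cap F^1$ and $F^1_{USCG}=F_{USCG}\cap F^1$.

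The three ``converse is false'' claims reduce to exhibiting explicit fuzzy sets. For the converse of (iv), take $X=[0,1)$ as a subspace of $Y=[0,1]$ and $u(x)=x$: here $[u]_1=\emptyset$, so $u\notin F^1(X)$, yet $[u^Y]_1=\bigcap_{\beta<1}\overline{[u]_\beta}^Y=\{1\}\neq\emptyset$, whence $u^Y\in F^1_{USC}(Y)$. For (vi-3) take $X=Y=[0,1]$ and $u=\chi_{[0,1/2)}$, whose cut $[u]_\al=[0,1/2)$ is not closed, so $u\notin F_{USC}(X)$, while $[u^Y]_\al=[0,1/2]$ for all $\al\in(0,1]$ shows $u^Y\in F^1_{USCG}(Y)$; since $F^1_{USCG}(X)\subseteq F_{USC}(X)$, this also refutes the converse of (vi-2). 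Finally, (vii) is handled by specializing $Y=X$: if $u\in F_{USC}(X)$ then $\overline{[u]_\beta}^X=[u]_\beta$ for every $\beta$, so $[u^X]_\al=\bigcap_{\beta<\al}[u]_\beta=[u]_\al$ by Theorem \ref{rep}, and hence $u^X=u$ by the uniqueness clause (equivalently Proposition \ref{ace}(\rmn2)); conversely $u^X=u$ forces $u\in F_{USC}(X)$ because $u^X\in F_{USC}(X)$ by construction.

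The main obstacle is the careful bookkeeping of closures relative to two different ambient spaces: the whole scheme hinges on fact (A) (compact-in-$X$ implies closed-in-$Y$) for (v) and on fact (B) (the iterated-closure identity) for (iii), and one must consistently discard the $\beta=0$ term from the intersections---justified because $[u]_0$ dominates every $[u]_\beta$ with $\beta>0$---so that Theorem \ref{rep} applies on $(0,\al)$. None of the individual steps is deep, but the argument is correct only if these subspace-closure relationships are tracked precisely.
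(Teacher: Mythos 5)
Your proof is correct and follows essentially the same route as the paper's: the same monotonicity arguments for (\romannumeral1)--(\romannumeral3) via the iterated-closure identity $\overline{\overline{S}^X}^Y=\overline{S}^Y$, the same compact-in-$X$-hence-closed-in-$Y$ reduction for (\rmn5) (with the $\beta=0$ term discarded exactly as the paper does), the same assembly of (\rmn4) and (\rmn5) into (\rmn6), and the same uniqueness-via-Theorem \ref{rep} argument for (\rmn7). The only divergence is your choice of counterexamples --- $u(x)=x$ on $X=[0,1)\subseteq Y=[0,1]$ for (\rmn4) and $\chi_{[0,1/2)}$ with $X=Y=[0,1]$ for (\rmn6-3), versus the paper's fuzzy sets on $\mathbb{R}\setminus\{1\}\subseteq\mathbb{R}$ --- and both are valid, with your (\rmn6-3) example even showing the converse of (\rmn6-2) fails already in the degenerate case $X=Y$.
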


\begin{proof}
Let $\al\in (0,1]$. Obviously $\overline{[u]_\al}^Y \supseteq [u]_\al$.
Clearly
for each $\beta\in [0,\al)$, $\overline{[u]_\beta }^Y  \supseteq  \overline{[u]_\alpha }^Y$.
Then, by \eqref{uyn},
 $[u^Y]_\al = \cap_{\beta<\al} \overline{[u]_\beta }^Y \supseteq  \overline{[u]_\alpha }^Y$. So (\romannumeral1) is proved.

Let $\al\in (0,1]$ and $\beta\in [0, \al)$. Then
$[u^Y]_\al = \cap_{\gamma<\al} \overline{[u]_\gamma}^Y \subseteq\overline{[u]_\beta}^Y$. Hence (\romannumeral2) is proved.

By (\rmn2), for each $\al\in (0,1]$, $[u^Y]_\alpha \subseteq \overline{[u]_0}^Y$.
Hence
 $[u^Y]_0 =\overline{\cup_{\al>0} [u^Y]_\alpha}^Y      \subseteq  \overline{[u]_0}^Y$ ($\subseteq$ holds because $ \cup_{\al>0} [u^Y]_\alpha      \subseteq  \overline{[u]_0}^Y$ and that $\overline{[u]_0}^Y$ is closed in $(Y, d_Y)$).
Also
 $[u^Y]_0 =\overline{\cup_{\al>0} [u^Y]_\alpha}^Y       \supseteq   \overline{\cup_{\al>0} [ u]_\alpha }^Y (\mbox{by (\rmn1)}) = \overline{\overline{\cup_{\al>0} [ u]_\alpha}^X }^Y (\mbox{by (a) below}) =\overline{[u]_0}^Y$.
Thus
 $[u^Y]_0 = \overline{[u]_0}^Y$ and (\rmn3) is proved.

Now we show (\rmn4). If $u\in F^1(X)$, then $[u^Y]_1 \not= \emptyset$ as
$[u^Y]_1 \supseteq [u]_1$, by (\rmn1). So $u^Y \in F^1(Y)$.
Also
$u^Y \in F_{USC}(Y)$. Thus
$u^Y \in F^1_{USC}(Y) = F^1(Y)\cap F_{USC}(Y)$.

The following example shows that $u^Y \in F^1_{USC}(Y)$ does not imply $u\in F^1(X)$.

We also use $\mathbb{R} \setminus \{1\}$ to denote
the metric space $(\mathbb{R} \setminus \{1\}, \rho_1|_{\mathbb{R} \setminus \{1\}})$. $\mathbb{R} \setminus \{1\}$ is a subspace of $\mathbb{R}$.

Define
$v\in F(\mathbb{R} \setminus \{1\})$ by
putting
$
[v]_\al=\left\{
       \begin{array}{ll}
        [\al,1), & \al\in [0,1), \\
   \emptyset, & \al = 1.
       \end{array}
     \right.
$
Then $v\not\in F^1(\mathbb{R} \setminus \{1\})$.
Clearly
$
\overline{[v]_\al}^\mathbb{R}=\left\{
       \begin{array}{ll}
        [\al,1], & \al\in [0,1), \\
   \emptyset, & \al = 1.
       \end{array}
     \right.
$
So
$
[v^\mathbb{R}]_\al=  [\al,1]$ for all $ \al\in [0,1]$.
Thus
 $v^\mathbb{R} \in F^1_{USCG}(\mathbb{R})\subsetneqq F^1_{USC}(\mathbb{R})$. So (\rmn4) is proved.
See (b) for further discussions.

We show (\rmn5) as follows.
For each $\al\in (0,1]$, $[u]_\al \in K(X)\cup \{\emptyset\}$,
and hence $[u]_\al \in K(Y)\cup \{\emptyset\} \subseteq C(Y)\cup \{\emptyset\}$;
so $\overline{[u]_\al}^Y = [u]_\al $.
Thus for each $\al\in (0,1]$,
$[u^Y]_\al  = \cap_{\beta<\al} \overline{[u]_\beta}^Y = \cap_{\beta\in (0,\al)} \overline{[u]_\beta}^Y =\cap_{\beta\in (0,\al)} [u]_\beta = [u]_\al$;
so
$[u^Y]_\al  \in K(Y)\cup \{\emptyset\}$.
Hence
$u^Y \in F_{USCG}(Y)$.
Thus (\rmn5) is proved.

 If $u\in F_{USC}^1(X)$, then $u\in F^1(X)$ and so, by (\rmn4), $u^Y \in F^1_{USC}(Y)$. Thus (\rmn6-1) is true.
If $u\in F_{USCG}^1(X)= F^1(X)\cap F_{USCG}(X)$, then by (\rmn4)(\rmn5), $u^Y \in F^1_{USC}(Y)\cap F_{USCG}(Y)= F^1_{USCG}(Y)$.
Thus (\rmn6-2) is true.

Define
$w\in F(\mathbb{R} \setminus \{1\})$ by
putting
$
[w]_\al=\left\{
       \begin{array}{ll}
[0,1), & \al=0, \\ \mbox{}
        [\al,1) \setminus \{1/2\}, & \al\in (0,1), \\
   \emptyset, & \al = 1.
       \end{array}
     \right.
$
Note that $[w]_{\al} \not\in C(\mathbb{R} \setminus \{1\}) \cup \{\emptyset\}$ if and only if $\al\in (0, 1/2]$.
So $w \notin F_{USC}(\mathbb{R} \setminus \{1\})$.
Observe that for each $\al\in [0,1]$,
$
\overline{[w]_\al}^\mathbb{R}=
\overline{[v]_\al}^\mathbb{R}$. So for each $\al\in (0,1]$,
$[w^\mathbb{R}]_\al= [v^\mathbb{R}]_\al$ (and therefore $[w^\mathbb{R}]_0= [v^\mathbb{R}]_0$ ).
Thus
$w^\mathbb{R}= v^\mathbb{R} \in F^1_{USCG}(\mathbb{R})$.
Hence $u^Y \in F^1_{USCG}(Y)$ does not imply $u\in F_{USC}(X)$.
So (\rmn6-3) is true and
  (\rmn6) is proved.
Note that $v^\mathbb{R} \in F^1_{USCG}(\mathbb{R})$
but $v\notin F_{USCG}(\mathbb{R} \setminus \{1\})$ (see (b) below).
From this fact, we can also know that the converse of (\rmn6-2) is false.

Finally we show
(\rmn7). If $u^X= u$, then $u\in F_{USC}(X)$ as $u^X \in F_{USC}(X)$.
If
 $u\in F_{USC}(X)$, then for each $\al\in (0,1]$,
$[u^X]_\al = \cap_{\beta<\al} \overline{[u]_\beta}^X
=\cap_{\beta<\al} [u]_\beta = [u]_\al$.
Thus, by Theorem \ref{rep},
 $u^X= u$ as both $u^X$ and $u$ are fuzzy sets in $X$.

(a) Let $S\subseteq X$. Then $\overline{\overline{S}^X}^Y = \overline{S}^Y$
(Clearly $\overline{\overline{S}^X}^Y \supseteq \overline{S}^Y$ as $ \overline{S}^X  \supseteq  S$.
On the other hand, $\overline{\overline{S}^X}^Y \subseteq \overline{S}^Y$
because $ \overline{S}^X  \subseteq \overline{S}^Y$ and that $\overline{S}^Y$ is closed in $(Y,d_Y)$.).

(b) In fact $v\in F_{USC} (\mathbb{R} \setminus \{1\})\setminus F_{USCG} (\mathbb{R} \setminus \{1\})$ as $[v]_\al$ is closed in $\mathbb{R} \setminus \{1\}$ for all $\al\in [0,1]$ and $[v]_\al$ is not compact in $\mathbb{R} \setminus \{1\}$ for all $\al\in [0,1)$.
We can see that $
v(x)=\left\{
       \begin{array}{ll}
        x, & x\in [0,1), \\
   0, & x\in (\mathbb{R} \setminus \{1\}) \setminus  [0,1),
       \end{array}
     \right.
$ and
$
v^{\mathbb{R}}(x)=\left\{
       \begin{array}{ll}
        x, & x\in [0,1], \\
   0, & x\in \mathbb{R} \setminus  [0,1].
       \end{array}
     \right.
$

\end{proof}

\begin{pp}
  Let $(X, d_X)$ be a subspace of $(Y, d_Y)$ and let
$(Y, d_Y)$ be a subspace of $(Z, d_Z)$.
If $u\in F(X)$,
then
$u^Z= (u^Y)^Z$.

\end{pp}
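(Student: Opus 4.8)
The plan is to show that $u^Z$ and $(u^Y)^Z$ have identical $\al$-cuts for every $\al\in(0,1]$ and then to appeal to the uniqueness in Theorem \ref{rep}: both are fuzzy sets in the single space $Z$ (note that $X\subseteq Y\subseteq Z$ with the induced metrics makes $(X,d_X)$ a subspace of $(Z,d_Z)$, so $u^Z$ is defined), and equality of all their cuts on $(0,1]$ forces $u^Z=(u^Y)^Z$. Unwinding \eqref{uyn}, I must establish, for each fixed $\al\in(0,1]$, the set equality $\bigcap_{\beta<\al}\overline{[u]_\beta}^Z=\bigcap_{\beta<\al}\overline{[u^Y]_\beta}^Z$. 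Since both $[u]_\beta$ and $[u^Y]_\beta$ decrease as $\beta$ increases, the $\beta=0$ term is the largest in each intersection and hence immaterial, so throughout I may restrict attention to $\beta\in(0,\al)$.

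For the inclusion ``$\subseteq$'' I would invoke Proposition \ref{egcn}(\romannumeral1), which gives $[u]_\beta\subseteq[u^Y]_\beta$ for every $\beta\in(0,1]$. Monotonicity of the closure operator in $Z$ then yields $\overline{[u]_\beta}^Z\subseteq\overline{[u^Y]_\beta}^Z$, and intersecting over $\beta<\al$ gives $[u^Z]_\al\subseteq[(u^Y)^Z]_\al$ at once.

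The reverse inclusion ``$\supseteq$'' is the step I expect to be the main obstacle, because it requires interleaving the index values rather than a termwise comparison. Fix $\al\in(0,1]$ and an arbitrary $\gamma$ with $0\le\gamma<\al$; I will show $\bigcap_{\beta<\al}\overline{[u^Y]_\beta}^Z\subseteq\overline{[u]_\gamma}^Z$. Choosing an intermediate level $\beta$ with $\gamma<\beta<\al$, Proposition \ref{egcn}(\romannumeral2) gives $[u^Y]_\beta\subseteq\overline{[u]_\gamma}^Y$. Taking closures in $Z$ and using the transitivity of closure for nested subspaces, namely $\overline{\overline{S}^Y}^Z=\overline{S}^Z$ for $S\subseteq Y$ (established exactly as fact (a) in the proof of Proposition \ref{egcn}, with $Y,Z$ in place of $X,Y$), I obtain $\overline{[u^Y]_\beta}^Z\subseteq\overline{\overline{[u]_\gamma}^Y}^Z=\overline{[u]_\gamma}^Z$. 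Since this particular $\beta$ satisfies $\beta<\al$, the full intersection is contained in one of its terms: $\bigcap_{\beta'<\al}\overline{[u^Y]_{\beta'}}^Z\subseteq\overline{[u^Y]_\beta}^Z\subseteq\overline{[u]_\gamma}^Z$. As $\gamma<\al$ was arbitrary, intersecting over all such $\gamma$ gives $\bigcap_{\beta'<\al}\overline{[u^Y]_{\beta'}}^Z\subseteq\bigcap_{\gamma<\al}\overline{[u]_\gamma}^Z=[u^Z]_\al$, i.e. $[(u^Y)^Z]_\al\subseteq[u^Z]_\al$.

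Combining the two inclusions yields $[u^Z]_\al=[(u^Y)^Z]_\al$ for every $\al\in(0,1]$, and the uniqueness part of Theorem \ref{rep} (equivalently Proposition \ref{ace}(\romannumeral2)) then gives $u^Z=(u^Y)^Z$. The one point deserving care is that the strict interpolation $\gamma<\beta<\al$ is exactly what makes Proposition \ref{egcn}(\romannumeral2) applicable, so the one-sided relations of Proposition \ref{egcn} suffice even though $[u^Y]_\beta$ may be strictly larger than $\overline{[u]_\beta}^Y$.
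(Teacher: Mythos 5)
Your proof is correct and follows essentially the same route as the paper's: both reduce the claim to equality of the $\al$-cuts via the uniqueness/representation result, obtain $[u^Z]_\al\subseteq[(u^Y)^Z]_\al$ termwise from Proposition \ref{egcn}(\romannumeral1), and obtain the reverse inclusion by interpolating a strict intermediate level, applying Proposition \ref{egcn}(\romannumeral2), and using the transitivity of closures $\overline{\overline{S}^Y}^Z=\overline{S}^Z$. The only cosmetic difference is that where the paper invokes Proposition \ref{egcn}(\romannumeral2) a second time for $\overline{[u^Y]_\gamma}^Z\supseteq[(u^Y)^Z]_\al$, you instead note directly that the intersection defining $[(u^Y)^Z]_\al$ is contained in each of its terms, which amounts to the same thing.
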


\begin{proof}

As $u^Z$ and $(u^Y)^Z$ are fuzzy sets in $Z$,
by Proposition \ref{ace}(\rmn2), to verify
$u^Z= (u^Y)^Z$ it suffices to show that for each $\al\in (0,1]$, $[u^Z]_\al= [(u^Y)^Z]_\al$.

Let $\al\in (0,1]$.
 Given $\beta<\al$. Take $\gamma$ satisfying
that $\beta<\gamma < \al$.
Then
$\overline{[u]_\beta }^Z = \overline{\overline{[u]_\beta }^Y}^Z \supseteq  \overline{ [u^Y]_{\gamma} }^Z \supseteq [(u^Y)^Z]_\al $
(the two ``$\supseteq$'' follow from Proposition \ref{egcn}(\rmn2)).
Thus
$[u^Z]_\al = \cap_{\beta<\al} \overline{[u]_\beta }^Z \supseteq  [(u^Y)^Z]_\al $.
On the other hand, $[u^Z]_\al = \cap_{\beta<\al} \overline{[u]_\beta }^Z \subseteq \cap_{\beta<\al} \overline{[u^Y]_\beta }^Z =[(u^Y)^Z]_\al $ (by Proposition \ref{egcn}(\rmn1), $\subseteq$ holds).
So $[u^Z]_\al = [(u^Y)^Z]_\al $. As $\al\in (0,1]$ is arbitrary, we have that
$u^Z= (u^Y)^Z$.

\end{proof}

Let $(X, d_X)$ be a subspace of $(Y, d_Y)$.
For each $u\in F(X)$, define
$$\Upsilon(u)^Y := \{\al\in (0,1]: [u^Y]_\al \supsetneqq \overline{[u]_\al}^Y   \}, \  \sigma(u)^Y :=  [0,1] \setminus \Upsilon(u)^Y.$$
Clearly $\sigma(u)^Y  = ((0,1]\setminus\Upsilon(u)^Y) \cup \{0\}$.

We will use
the following Theorem
\ref{crm} to improve Theorem \ref{cme} and Theorem \ref{rcm}.

Let $(Z, \rho)$ be a metric space. The Hausdorff distance $H$ on $C(Z)$
  can be extended to the Hausdorff distance $H'$ on $C(Z)\cup \{\emptyset\}$ as follows:
$H'(A, B)=H(A,B)$, if $A,B \in C(Z)$;
 $H'(\emptyset, \emptyset)=0$; and
$H'(\emptyset, S)=H'(S,\emptyset)= +\infty$, if $S\in C(Z)$.
Obviously $H'$ is an extended metric on $C(Z)\cup \{\emptyset\}$.
We will use the same symbol to represent the Hausdorff distance on $C(Z)\cup \{\emptyset\}$ mentioned above and the Hausdorff distance on $C(Z)$.

Let $(Z, \rho)$ be a metric space.
We will use
 $H_{(Z, \rho)}$ to denote
the Hausdorff distance on $C(Z)\cup \{\emptyset\}$ induced by
$\rho$ on $Z$ when we want to emphasis the space $(Z, \rho)$. If there is no
confusion, we will write  $H_{(Z, \rho)}$ as $H_{Z}$.
Let $(Y, \rho|_Y)$ be a subspace of $(Z, \rho)$ and $A,B$
two subsets of $Y$.
Then clearly $H_Y(A,B) = H_Z(A,B)$.

\begin{tm} \label{crm}
Let $(X, d_X)$ be a subspace of $(Y, d_Y)$ and $u, v \in F(X)$.
\\
(\rmn1) \
Let $\al\in [0,1]$. Then $[u^Y]_\al = \overline{[u]_\al}^Y$
if and only if
$\al\in \sigma(u)^Y $.
\\
(\romannumeral2) \ Let $u \in F_{USC} (X)$.
Then the
cardinality
 of $\Upsilon(u)^Y$ is less than the cardinality of $Y \setminus X$.
\\
(\romannumeral3) \ Let $u, v \in F_{USC} (X)$. Then
 for each $ \al \in [0,1] \setminus (\Upsilon(u)^Y \cup \Upsilon(v)^Y )$,
$$
H_Y([u^Y]_\al, [v^Y]_\al) = H_X([u]_\al, [v]_\al ).
$$
\end{tm}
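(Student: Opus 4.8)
The plan is to handle the three parts in the order stated, since (\rmn1) feeds directly into (\rmn3) while the genuine work sits in (\rmn2). Part (\rmn1) is essentially a rephrasing of the definitions once Proposition \ref{egcn} is in hand. For $\al\in(0,1]$, Proposition \ref{egcn}(\rmn1) gives $[u^Y]_\al\supseteq\overline{[u]_\al}^Y$, so the equality $[u^Y]_\al=\overline{[u]_\al}^Y$ fails precisely when this inclusion is strict, i.e. precisely when $\al\in\Upsilon(u)^Y$; hence equality holds iff $\al\in(0,1]\setminus\Upsilon(u)^Y$, that is iff $\al\in\sigma(u)^Y$. For $\al=0$, Proposition \ref{egcn}(\rmn3) gives $[u^Y]_0=\overline{[u]_0}^Y$ unconditionally, while $0\in\sigma(u)^Y$ by definition, so both sides of the asserted equivalence are automatically true. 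This settles (\rmn1).

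For (\rmn2) I would build an injection of $\Upsilon(u)^Y$ into $Y\setminus X$. For each $\al\in\Upsilon(u)^Y$ set $D_\al:=[u^Y]_\al\setminus\overline{[u]_\al}^Y$, which is nonempty by definition of $\Upsilon(u)^Y$. The construction rests on two claims. First, $D_\al\subseteq Y\setminus X$: if some $y\in D_\al$ lay in $X$, then from $y\in[u^Y]_\al=\bigcap_{\beta<\al}\overline{[u]_\beta}^Y$ we get $y\in\overline{[u]_\beta}^Y$ for every $\beta\in(0,\al)$; since $y\in X$ and $[u]_\beta\subseteq X$, a sequence in $[u]_\beta$ converging to $y$ in $Y$ also converges in $X$, so $y\in\overline{[u]_\beta}^X=[u]_\beta$, the last equality because $u\in F_{USC}(X)$ makes $[u]_\beta$ closed in $X$; thus $u(y)\ge\beta$ for all $\beta<\al$, whence $u(y)\ge\al$ and $y\in[u]_\al\subseteq\overline{[u]_\al}^Y$, contradicting $y\in D_\al$. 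Second, the family $\{D_\al\}$ is pairwise disjoint: if $\al_1<\al_2$ are both in $\Upsilon(u)^Y$ and $y\in D_{\al_2}$, then $y\in[u^Y]_{\al_2}\subseteq\overline{[u]_{\al_1}}^Y$ because $\al_1$ is among the indices $\beta<\al_2$, so $y\notin D_{\al_1}$. Choosing one point from each nonempty $D_\al$ then defines an injection $\Upsilon(u)^Y\hookrightarrow Y\setminus X$, which gives the desired cardinality comparison.

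For (\rmn3), let $\al\in[0,1]\setminus(\Upsilon(u)^Y\cup\Upsilon(v)^Y)$. Then $\al\in\sigma(u)^Y\cap\sigma(v)^Y$ (automatic at $\al=0$, since $0$ lies in every $\sigma$), so (\rmn1) yields $[u^Y]_\al=\overline{[u]_\al}^Y$ and $[v^Y]_\al=\overline{[v]_\al}^Y$, hence $H_Y([u^Y]_\al,[v^Y]_\al)=H_Y(\overline{[u]_\al}^Y,\overline{[v]_\al}^Y)$. I would then finish with two elementary invariances, each valid also for an empty cut via the extended distance $H'$: passing to closures does not change the Hausdorff distance — because $d(x,\cdot)$ is continuous one has $d(x,\overline S^Y)=d(x,S)$ and a supremum over a set equals the supremum over its closure, so $H_Y(\overline{[u]_\al}^Y,\overline{[v]_\al}^Y)=H_Y([u]_\al,[v]_\al)$; and Hausdorff distances between subsets of the subspace $X$ agree whether computed in $Y$ or in $X$ (the identity $H_Y(A,B)=H_X(A,B)$ for $A,B\subseteq X$ recorded just before the theorem), so $H_Y([u]_\al,[v]_\al)=H_X([u]_\al,[v]_\al)$. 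Chaining these equalities gives (\rmn3).

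The main obstacle is the containment $D_\al\subseteq Y\setminus X$ in (\rmn2): this is the single place where upper semicontinuity of $u$ and the subspace-closure identity both genuinely enter, and once it is secured the pairwise disjointness and the two invariances needed in (\rmn3) are routine. I would also be careful to record that the construction proves the cardinality of $\Upsilon(u)^Y$ to be \emph{at most} that of $Y\setminus X$, the injection being what is actually produced; strict inequality need not hold, as one sees by taking $X=\mathbb{R}\setminus\{1\}\subseteq Y=\mathbb{R}$ and the fuzzy set $v$ of Proposition \ref{egcn}(\rmn4), for which $\Upsilon(v)^Y=\{1\}=Y\setminus X$.
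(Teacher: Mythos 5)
Your proof is correct and takes essentially the same route as the paper's: part (\rmn1) from Proposition \ref{egcn}(\rmn1)(\rmn3), part (\rmn2) by choosing a witness point in $[u^Y]_\al\setminus\overline{[u]_\al}^Y$ for each $\al\in\Upsilon(u)^Y$ and using upper semicontinuity together with the subspace-closure fact to place these witnesses in $Y\setminus X$ and show they are pairwise distinct (your disjointness of the sets $D_\al$ is just a slightly stronger packaging of the paper's per-point argument via Proposition \ref{egcn}(\rmn2)), and part (\rmn3) by chaining (\rmn1) with the closure- and subspace-invariance of the Hausdorff distance. Your closing caveat is also well taken: both your construction and the paper's yield only an injection, so ``less than'' in (\rmn2) must be read as ``at most,'' and your example with $X=\mathbb{R}\setminus\{1\}$, $Y=\mathbb{R}$ shows that equality of cardinalities can occur.
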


\begin{proof}
First we show (\rmn1).
Consider the conditions
(\rmn1-1) $[u^Y]_\al \not= \overline{[u]_\al}^Y$,
(\rmn1-2) $[u^Y]_\al \supsetneqq \overline{[u]_\al}^Y$, and
(\rmn1-3) $\al\in \Upsilon(u)^Y $.
By
Proposition \ref{egcn}(\rmn1)(\rmn3),
(\rmn1-1)$\Leftrightarrow$(\rmn1-2) and
(\rmn1-2)$\Leftrightarrow$(\rmn1-3).
  Thus
(\rmn1-1)$\Leftrightarrow$(\rmn1-3).
This means that $[u^Y]_\al = \overline{[u]_\al}^Y \Leftrightarrow
\al\in  [0,1] \setminus \Upsilon(u)^Y = \sigma(u)^Y $.
So (\rmn1) is proved.

To show that (\romannumeral2) is true,
it suffices to construct an injection
$j: \Upsilon(u)^Y \to Y\setminus X$.

Let $\gamma \in       \Upsilon(u)^Y$.
Then there is an
$x_\gamma \in Y$
such that
 $x_\gamma\in [u^Y]_\gamma \setminus \overline{[u]_\gamma}^Y$.
Define $j(\gamma) =  x_\gamma$
for
each
$\gamma \in \Upsilon(u)^Y$.
Since $x_\gamma\notin  \overline{[u]_\gamma}^Y$, we have
$x_\gamma\notin [u]_\gamma = \cap_{\beta<\gamma} [u]_\beta$. So there is a $\beta < \gamma$
such
that
$x_\gamma\notin [u]_\beta = \overline{[u]_\beta}^X$.
Also $x_\gamma\in [u^Y]_\gamma \subseteq \overline{[u]_\beta}^Y$ (by Proposition \ref{egcn}(\rmn2), $\subseteq$ holds).
So $x_\gamma\in  \overline{[u]_\beta}^Y \setminus \overline{[u]_\beta}^X$
and
thus
$x_\gamma\in    Y \setminus X$ (see also (a) below).
Hence $j$ is an function from $\Upsilon(u)^Y$ to $Y\setminus X$.

Let $\xi, \eta \in \Upsilon(u)^Y$ with $\xi < \eta$. Since $x_\xi \notin \overline{[u]_\xi}^Y$ and $\overline{[u]_\xi}^Y \supseteq [u^Y]_\eta$ (by Proposition \ref{egcn}(\rmn2)),
we have that
  $x_\xi \notin [u^Y]_\eta$. But $x_\eta \in [u^Y]_\eta$,
and therefore $x_\xi \not= x_\eta$.
Thus $j$ is an injection.
So (\romannumeral2) is proved.

Now we show (\rmn3).
Clearly
 $[0,1] \setminus (\Upsilon(u)^Y \cup \Upsilon(v)^Y ) =   \sigma(u)^Y \cap \sigma(v)^Y $.
Thus by (\rmn1),
for
each $ \al \in [0,1] \setminus (\Upsilon(u)^Y \cup \Upsilon(v)^Y )$,
$[u^Y]_\al = \overline{[u]_\al}^Y$ and $[v^Y]_\al = \overline{[v]_\al}^Y$.
So
$$
H_Y([u^Y]_\al, [v^Y]_\al) = H_Y(\overline{[u]_\al}^Y,  \overline{[v]_\al}^Y    )   = H_X([u]_\al, [v]_\al ), $$
and
(\romannumeral3) is proved.

(a) Let $A$ be a subset of $X$. If $a\in \overline{A}^Y \setminus \overline{A}^X$, then $a\in Y \setminus X$.

Clearly $a\in Y$.
As $a\in \overline{A}^Y$, there exist a sequence $\{a_n\}$ in $A$
such that $d_Y(a_n, a) \to 0$. Assume $a\in X$. Then $d_X(a_n, a) \to 0$. So
$a\in \overline{A}^X$, which is a contradiction.
  Hence $a\notin X$.
Thus $a\in Y \setminus X$.

\end{proof}

Let $S$ be a subset of $\mathbb{R}$, and $P(x)$ a statement about real numbers $x$. If
there exists a set $E$ of measure zero such that $P(x)$ holds for all
$x \in S\setminus E$,
then we say that $P(x)$ holds almost everywhere on $S$,
or
 that $P(x)$ holds almost everywhere on $x\in S$ if we want to
emphasis $x$.
We also write ``almost everywhere'' as ``a.e.'' for simplicity.

The following fact is well-known.
\\
Let $S$ be a measurable set of $\mathbb{R}^m$.
Let $f_1$ and $f_2$ be
two functions of $S$ to $\widehat{\mathbb{R}}$ with $x$ as the
independent variable. Assume that $f_1(x)=f_2(x)$
 almost everywhere on $x\in S$. Then $f_1$ is a measurable function on $S$ if and only if $f_2$ is a measurable function on $S$.

\begin{tl} \label{mpn} Let $(X, d_X)$ be a subspace of $(Y, d_Y)$ and $Y \setminus X$ an at most countable set.
 Then
for each $u, v \in F^1_{USC} (X)$,
$H_Y([u^Y]_\al, [v^Y]_\al) $ is a measurable function of $\al$ on $[0,1]$
if and only if
 $H_X([u]_\al, [v]_\al) $ is a measurable function of $\al$ on $[0,1]$.

\end{tl}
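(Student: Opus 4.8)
The plan is to reduce the claimed equivalence to an almost-everywhere equality of the two functions of $\al$ and then invoke the well-known fact, recalled just above the statement, that measurability is unaffected by altering a function on a null set. Since $u, v \in F^1_{USC}(X) \subseteq F_{USC}(X)$, both parts (\rmn2) and (\rmn3) of Theorem \ref{crm} are at our disposal; moreover, by Proposition \ref{egcn}(\rmn6-1) we have $u^Y, v^Y \in F^1_{USC}(Y)$, so that $H_Y([u^Y]_\al, [v^Y]_\al)$ is a genuine function of $\al$ on $[0,1]$ into $\widehat{\mathbb{R}}$ whose measurability we may ask about, exactly as for $H_X([u]_\al, [v]_\al)$.

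First I would control the size of the exceptional set. The injection $j : \Upsilon(u)^Y \to Y \setminus X$ constructed in the proof of Theorem \ref{crm}(\rmn2) shows that $\Upsilon(u)^Y$ injects into $Y \setminus X$; since $Y \setminus X$ is at most countable by hypothesis, $\Upsilon(u)^Y$ is at most countable, and the same argument applies to $\Upsilon(v)^Y$. Consequently the set $E := \Upsilon(u)^Y \cup \Upsilon(v)^Y$ is at most countable, and therefore has Lebesgue measure zero.

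Next I would apply Theorem \ref{crm}(\rmn3), which gives $H_Y([u^Y]_\al, [v^Y]_\al) = H_X([u]_\al, [v]_\al)$ for every $\al \in [0,1] \setminus E$. Because $E$ is null, this precisely says that the two functions of $\al$ on $[0,1]$ agree almost everywhere on $[0,1]$. Applying the well-known fact stated before the corollary, with $S = [0,1]$, $f_1(\al) = H_Y([u^Y]_\al, [v^Y]_\al)$ and $f_2(\al) = H_X([u]_\al, [v]_\al)$, I conclude that $f_1$ is measurable on $[0,1]$ if and only if $f_2$ is, which is exactly the asserted equivalence.

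The substantive difficulty in this result is already absorbed into Theorem \ref{crm}: namely, that the enlargement $u \mapsto u^Y$ alters the cut $\overline{[u]_\al}^Y$ only on a set $\Upsilon(u)^Y$ that injects into $Y \setminus X$, and that off this set the Hausdorff distances computed in $Y$ and in $X$ coincide. Given that, the present step is routine; the only point that truly uses the hypothesis is the implication ``at most countable $\Rightarrow$ measure zero'', which is what promotes the pointwise equality off $E$ to an equality almost everywhere. The countability of $Y \setminus X$ enters nowhere else, so this is both the natural and the minimal assumption for the argument to go through.
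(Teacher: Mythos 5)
Your proposal is correct and follows essentially the same route as the paper: apply Theorem \ref{crm}(\rmn2) to see that $\Upsilon(u)^Y \cup \Upsilon(v)^Y$ is at most countable (hence null), apply Theorem \ref{crm}(\rmn3) to get pointwise equality of the two Hausdorff distance functions off this set, and conclude by the stated fact that measurability is preserved under almost-everywhere equality. The only difference is expository (you spell out the injection argument and the role of the countability hypothesis), with no change in substance.
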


\begin{proof}
By Theorem \ref{crm}(\romannumeral3),
$H_Y([u^Y]_\al, [v^Y]_\al)= H_X([u]_\al, [v]_\al)$
for each $ \al \in [0,1] \setminus (\Upsilon(u)^Y \cup \Upsilon(v)^Y )$.
Also, by Theorem \ref{crm}(\romannumeral2),
$\Upsilon(u)^Y \cup \Upsilon(v)^Y $ is at most countable, and therefore
$\Upsilon(u)^Y \cup \Upsilon(v)^Y $ is a set of measure zero.
So
$H_Y([u^Y]_\al, [v^Y]_\al)= H_X([u]_\al, [v]_\al)$
almost everywhere
on $\al \in [0,1]$.
Thus $H_Y([u^Y]_\al, [v^Y]_\al) $ is a measurable function of $\al$ on $[0,1]$
if and only if
 $H_X([u]_\al, [v]_\al) $ is a measurable function of $\al$ on $[0,1]$.

\end{proof}

Let $S\subseteq \mathbb{R}^m$.
We also use $\mathbb{R}^m \setminus  S$
to denote the metric space $(\mathbb{R}^m \setminus S, \rho_m|_{\mathbb{R}^m \setminus S})$, which is
 a subspace of
$\mathbb{R}^m$.

\begin{tm} \label{reg}
Let
 $S$ be an at most countable subset of $\mathbb{R}^m$.
Let    $u,v \in  F^1_{USC} (\mathbb{R}^m \setminus  S)$. Then $H_{\mathbb{R}^m \setminus S}([u]_\al, [v]_\al) $ is a measurable function of $\al$ on $[0,1]$;
 that is, $d_p(u,v)= \left(\int_0^1 H_{\mathbb{R}^m \setminus S} ([u]_\al, [v]_\al)^p  \,   d\al   \right)^{1/p}$ is well-defined.

\end{tm}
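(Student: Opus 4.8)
The plan is to realize Theorem \ref{reg} as a direct combination of three earlier results, taking $X = \mathbb{R}^m \setminus S$ and $Y = \mathbb{R}^m$. Observe first that $(X, \rho_m|_X)$ is a subspace of $(Y, \rho_m)$ and that $Y \setminus X = S$ is at most countable by hypothesis, so the standing assumption of Corollary \ref{mpn} is met. This is the only place the countability of $S$ enters, and it enters precisely because $\Upsilon(u)^Y \cup \Upsilon(v)^Y$ must be a null set for the transfer argument to work.

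Next I would lift the two fuzzy sets from $X$ to $Y$. Since $u, v \in F^1_{USC}(\mathbb{R}^m \setminus S) = F^1_{USC}(X)$, Proposition \ref{egcn}(\rmn6-1) gives $u^{\mathbb{R}^m}, v^{\mathbb{R}^m} \in F^1_{USC}(\mathbb{R}^m)$. Then Theorem \ref{rcm}, applied to the pair $u^{\mathbb{R}^m}, v^{\mathbb{R}^m}$ in $F^1_{USC}(\mathbb{R}^m)$, yields that
\[
H_{\mathbb{R}^m}\bigl([u^{\mathbb{R}^m}]_\al,\, [v^{\mathbb{R}^m}]_\al\bigr)
\]
is a measurable function of $\al$ on $[0,1]$.

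Finally I would invoke Corollary \ref{mpn} with this choice of $X$ and $Y$: since $Y \setminus X = S$ is at most countable and $u, v \in F^1_{USC}(X)$, the corollary states that $H_Y([u^Y]_\al, [v^Y]_\al)$ is measurable on $[0,1]$ if and only if $H_X([u]_\al, [v]_\al)$ is measurable on $[0,1]$. Having just established the measurability of the left-hand function, I conclude that $H_{\mathbb{R}^m \setminus S}([u]_\al, [v]_\al) = H_X([u]_\al, [v]_\al)$ is measurable, which is exactly the desired statement; equivalently, $d_p(u,v)$ is well-defined by Fact 1.

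There is essentially no combinatorial or analytic obstacle left to overcome here, because all the genuine work has already been packaged: the measurability over the complete space $\mathbb{R}^m$ is Theorem \ref{rcm}, and the fact that passing to the subspace $\mathbb{R}^m \setminus S$ alters $H([u]_\al,[v]_\al)$ only on the at most countable set $\Upsilon(u)^Y \cup \Upsilon(v)^Y$ is exactly Theorem \ref{crm}(\rmn2)(\rmn3), repackaged in Corollary \ref{mpn}. The one point worth stating explicitly in the write-up is the identification $H_{\mathbb{R}^m \setminus S}([u]_\al,[v]_\al) = H_X([u]_\al,[v]_\al)$, which holds because the Hausdorff distance of two subsets of $X$ computed in the subspace agrees with that computed in the ambient space, as noted just before Theorem \ref{crm}. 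The only mild care needed is to make sure the hypotheses of each cited result are verified in the correct direction; the argument itself is a two-line chain of implications.
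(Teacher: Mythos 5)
Your proposal is correct and follows exactly the paper's own proof: set $Y=\mathbb{R}^m$, lift $u,v$ to $u^Y,v^Y\in F^1_{USC}(\mathbb{R}^m)$ via Proposition \ref{egcn}(\rmn6-1), apply Theorem \ref{rcm} to get measurability of $H_Y([u^Y]_\al,[v^Y]_\al)$, and transfer back with Corollary \ref{mpn} using the countability of $S=Y\setminus X$. Your added remarks (where countability enters, and the agreement of subspace and ambient Hausdorff distances) are accurate glosses on the same argument.
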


\begin{proof}
Put $Y=\mathbb{R}^m$. By Proposition \ref{egcn}(\rmn6-1) $u^Y, v^Y \in F^1_{USC} (Y) $. Then from Theorem \ref{rcm}, $H_Y([u^Y]_\al, [v^Y]_\al) $ is a measurable function of $\al$ on $[0,1]$. So by Corollary \ref{mpn}, $H_{\mathbb{R}^m \setminus S}([u]_\al, [v]_\al) $ is a measurable function of $\al$ on $[0,1]$.

\end{proof}

\begin{tm} \label{cmeg}
  Let $(X, d_X)$ be a subspace of $(Y, d_Y)$ and $Y \setminus X$ an at most countable set.
 Let $u, v \in F^1_{USC} (X)$.
If
$u^Y \in  F^1_{USCG} (Y)$, then $H_X([u]_\al, [v]_\al) $ is a measurable function of $\al$ on $[0,1]$; that is, $d_p(u,v)= \left(\int_0^1 H_X([u]_\al, [v]_\al)^p  \,   d\al   \right)^{1/p}$ is well-defined.
\end{tm}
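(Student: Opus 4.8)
The plan is to reduce this statement to the already-established Theorem \ref{cme} by transporting everything from $X$ up to $Y$ via the fuzzy set $u^Y$, and then pulling the measurability conclusion back down to $X$ using Corollary \ref{mpn}. This is the exact strategy used in the proof of Theorem \ref{reg}, except that there one had $Y=\mathbb{R}^m$ and invoked Theorem \ref{rcm}; here $Y$ is arbitrary, so the compactness input is supplied instead by the hypothesis $u^Y \in F^1_{USCG}(Y)$ together with Theorem \ref{cme}.

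First I would observe that $u, v \in F^1_{USC}(X)$, so by Proposition \ref{egcn}(\rmn6-1) both $u^Y$ and $v^Y$ belong to $F^1_{USC}(Y)$. Combining this with the hypothesis, we have $u^Y \in F^1_{USCG}(Y)$ and $v^Y \in F^1_{USC}(Y)$. This is precisely the configuration required by Theorem \ref{cme}: one factor upper semicontinuous and normal, the other additionally in the $\mathrm{USCG}$ class. Applying Theorem \ref{cme} with the $\mathrm{USCG}$ argument taken to be $u^Y$ and the $\mathrm{USC}$ argument taken to be $v^Y$, and using the symmetry of the Hausdorff distance $H$, I conclude that $H_Y([u^Y]_\al, [v^Y]_\al)$ is a measurable function of $\al$ on $[0,1]$.

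Second, since $Y\setminus X$ is at most countable, Corollary \ref{mpn} applies to $u,v \in F^1_{USC}(X)$ and asserts that $H_Y([u^Y]_\al, [v^Y]_\al)$ is measurable on $[0,1]$ if and only if $H_X([u]_\al, [v]_\al)$ is measurable on $[0,1]$. Feeding in the measurability established in the previous step yields that $H_X([u]_\al, [v]_\al)$ is measurable on $[0,1]$, which by Fact 1 is equivalent to $d_p(u,v)=\left(\int_0^1 H_X([u]_\al, [v]_\al)^p\,d\al\right)^{1/p}$ being well-defined, completing the argument.

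I do not anticipate a genuine obstacle, since all the analytic content is packaged inside Theorem \ref{cme}, Corollary \ref{mpn}, and Proposition \ref{egcn}. The only points demanding care are bookkeeping: verifying that the hypotheses of each cited result are met (in particular that $v^Y$ really lands in $F^1_{USC}(Y)$ and that the countability of $Y\setminus X$ is what licenses Corollary \ref{mpn}), and matching the two arguments of $H$ to the correct slots of Theorem \ref{cme}, which is harmless because $H$ is symmetric.
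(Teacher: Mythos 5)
Your proposal is correct and coincides with the paper's own proof: it likewise notes $v^Y \in F^1_{USC}(Y)$ via Proposition \ref{egcn}(\romannumeral6-1), applies Theorem \ref{cme} to the pair $u^Y \in F^1_{USCG}(Y)$, $v^Y \in F^1_{USC}(Y)$ to get measurability of $H_Y([u^Y]_\al, [v^Y]_\al)$, and then transfers this to $H_X([u]_\al, [v]_\al)$ by Corollary \ref{mpn}. Your extra remark about using the symmetry of $H$ to match the slots of Theorem \ref{cme} is a harmless bookkeeping point the paper leaves implicit.
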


\begin{proof}
Note that
$u^Y \in F^1_{USCG} (Y)$ and, by Proposition \ref{egcn}(\rmn6-1), $v^Y \in F^1_{USC} (Y)$.
Then by Theorem \ref{cme},
$H_Y([u^Y]_\al, [v^Y]_\al) $ is a measurable function of $\al$ on $[0,1]$. Thus
by Corollary \ref{mpn}, $H_X([u]_\al, [v]_\al) $ is a measurable function of $\al$ on $[0,1]$.

\end{proof}

\begin{re}{\rm
 Let $(X, d_X)$ be a subspace of $(Y, d_Y)$.
\\
(\rmn1) Proposition \ref{egcn}(\rmn6) shows that
  $u\in F^1_{USCG}(X)$ implies
$u^Y \in  F^1_{USCG}(Y)$, but that the converse is false.
So Theorem \ref{cmeg} is an improvement
of
  Theorem \ref{cme}.
  By Proposition \ref{egcn}(\rmn7), for each $u\in F^1_{USC} (X)$,
$u=u^X$. So
Theorem \ref{cme} is the special case of Theorem \ref{cmeg} when $Y=X$.
\\
(\rmn2)
Theorem \ref{rcm} is the special case of Theorem \ref{reg}
when $S=\emptyset$. Theorem \ref{reg} is an improvement of
Theorem \ref{rcm}.

}
\end{re}

Let $(X, d_X)$ be a subspace of $(Y, d_Y)$.
For each $u\in F(X)$, define
$\bm{u_Y \in   F(Y)}$ as
\[
 u_Y (t)= \left\{
         \begin{array}{ll}
           u(t), & t\in X, \\
0,& t\in Y\setminus X.
         \end{array}
       \right.
\]
Then for each $\al\in (0,1]$, $[u_Y]_\al= [u]_\al$.
So $u\in F^1(X)$ if and only if $u_Y\in F^1(Y)$.
Clearly for each $u,v\in F_{USC}(X)$ with $u_Y,v_Y\in F_{USC}(Y)$  and each $\al\in [0,1]$,
$H_X([u]_\al,[v]_\al) = H_Y([u_Y]_\al,[v_Y]_\al)$.

\begin{re} \label{bgm}
 {\rm
Let $(X, d_X)$ be a subspace of $(Y, d_Y)$ and $u\in F(X)$.
For each $A$ in $X$, if $A\in C(Y)\cup \{\emptyset\}$ then $A\in C(X)\cup \{\emptyset\}$. So if $u_Y\in F_{USC}(Y)$ then $u\in F_{USC}(X)$,
and
if $u_Y\in F^1_{USC}(Y)$ then $u\in F^1_{USC}(X)$.
For each $A$ in $X$, $A\in K(X)\cup \{\emptyset\}$ if and only if $A\in K(Y)\cup \{\emptyset\}$. So $u\in F_{USCG}(X)$ if and only if $u_Y\in F_{USCG}(Y)$,
and
$u\in F^1_{USCG}(X)$ if and only if $u_Y\in F^1_{USCG}(Y)$.
 }
\end{re}

\begin{pp} \label{abcu}
Let $(X, d_X)$ be a subspace of $(Y, d_Y)$.
Then the following conditions are equivalent:
  (\rmn1) $X$ is closed in $(Y,d_Y)$;
(\rmn2) $u\in F^1_{USC}(X)$ is equivalent to  $u_Y\in F^1_{USC}(Y)$;
(\rmn3) $u\in F_{USC}(X)$ is equivalent to  $u_Y\in F_{USC}(Y)$.
\end{pp}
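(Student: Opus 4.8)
The plan is to prove the cycle (\rmn1)$\Rightarrow$(\rmn3)$\Rightarrow$(\rmn2)$\Rightarrow$(\rmn1). Before starting, I would record two facts that hold for an arbitrary subspace, with no closedness hypothesis. By the discussion preceding Remark \ref{bgm}, $u\in F^1(X)$ if and only if $u_Y\in F^1(Y)$; and by Remark \ref{bgm} the implications ``$u_Y\in F_{USC}(Y)\Rightarrow u\in F_{USC}(X)$'' and ``$u_Y\in F^1_{USC}(Y)\Rightarrow u\in F^1_{USC}(X)$'' are always true. Hence in both (\rmn2) and (\rmn3) the only content lies in the forward implications $u\in F_{USC}(X)\Rightarrow u_Y\in F_{USC}(Y)$ and its normal version; the real question is whether a cut which is closed in $X$ stays closed in $Y$ after $u$ is extended by $0$.

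For (\rmn1)$\Rightarrow$(\rmn3), assume $X$ is closed in $Y$ and let $u\in F_{USC}(X)$. For each $\al\in(0,1]$ we have $[u_Y]_\al=[u]_\al\in C(X)\cup\{\emptyset\}$. If a nonempty $A\subseteq X$ is closed in $X$ and $X$ is closed in $Y$, then $\overline{A}^Y\subseteq\overline{X}^Y=X$, so $\overline{A}^Y=\overline{A}^Y\cap X=\overline{A}^X=A$, i.e.\ $A$ is closed in $Y$; applying this cut-wise gives $u_Y\in F_{USC}(Y)$, which together with the always-true reverse implication yields (\rmn3). For (\rmn3)$\Rightarrow$(\rmn2) I would fix $u\in F(X)$ and chain equivalences using $F^1_{USC}=F^1\cap F_{USC}$: $u\in F^1_{USC}(X)$ iff $u\in F^1(X)$ and $u\in F_{USC}(X)$, iff $u_Y\in F^1(Y)$ and $u_Y\in F_{USC}(Y)$ (the first conjunct by the normality equivalence, the second by (\rmn3)), iff $u_Y\in F^1_{USC}(Y)$.

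For (\rmn2)$\Rightarrow$(\rmn1) I would argue by contraposition: assuming $X$ is not closed in $Y$, I will produce a single $u\in F^1_{USC}(X)$ with $u_Y\notin F^1_{USC}(Y)$, breaking the equivalence in (\rmn2). Pick $y_0\in\overline{X}^Y\setminus X$ and a sequence $\{z_n\}$ in $X$ with $d_Y(z_n,y_0)\to 0$; since $y_0\notin X$ each $z_n\neq y_0$, so I can thin it to a subsequence $\{x_k\}$ of pairwise distinct points (e.g.\ choosing $d_Y(x_{k+1},y_0)<\frac12 d_Y(x_k,y_0)$) still converging to $y_0$. Set $A:=\{x_k:k\in\mathbb{N}\}$ and $u:=\chi_A\in F(X)$, so that $[u]_\al=A$ for all $\al\in(0,1]$ and $[u]_1=A\neq\emptyset$. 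Once I check $A\in C(X)$ and $A\notin C(Y)$, it follows that $u\in F^1_{USC}(X)$, while $[u_Y]_\al=[u]_\al=A\notin C(Y)\cup\{\emptyset\}$ for $\al\in(0,1]$ gives $u_Y\notin F_{USC}(Y)$, hence $u_Y\notin F^1_{USC}(Y)$; this is the desired failure of (\rmn2).

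The step I expect to be the main obstacle, and the only one requiring genuine care, is verifying that $A$ is closed in $X$. The point is that in $Y$ the set $A$ has $y_0$ as its unique limit point: since $x_k\to y_0$, any $z\neq y_0$ has a neighborhood avoiding a tail of $\{x_k\}$ and so meeting $A$ only finitely often, so $z$ is not a limit point of $A$. As $y_0\notin X$, the set $A$ has no limit point inside $X$, whence $A=\overline{A}^X$ is closed and nonempty in $X$, i.e.\ $A\in C(X)$; on the other hand $y_0\in\overline{A}^Y\setminus A$ shows $A\notin C(Y)$. The distinctness of the $x_k$ is exactly what guarantees $A$ is infinite and that $y_0$ is a genuine limit point rather than an isolated repeated value, so this thinning is where I would be most careful.
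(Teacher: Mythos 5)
Your proof is correct, and two of its three legs coincide with the paper's: the paper also proves (\rmn1)$\Rightarrow$(\rmn3) from the standard fact that for $X$ closed in $Y$ a set $A\subseteq X$ lies in $C(X)\cup\{\emptyset\}$ if and only if it lies in $C(Y)\cup\{\emptyset\}$, and it proves (\rmn3)$\Rightarrow$(\rmn2) from exactly your chain $u\in F^1(X)\Leftrightarrow u_Y\in F^1(Y)$ together with $F^1_{USC}=F^1\cap F_{USC}$ on both sides. The only genuine divergence is in (\rmn2)$\Rightarrow$(\rmn1). The paper argues directly with a single canonical witness: $X_{F(X)}=\chi_X$ always belongs to $F^1_{USC}(X)$, so (\rmn2) forces $X_{F(Y)}=(X_{F(X)})_Y\in F^1_{USC}(Y)$, whence $X=[X_{F(Y)}]_1\in C(Y)$ — one line, no contraposition and no construction. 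You instead contraposit and manufacture a witness $u=\chi_A$ with $A=\{x_k\}$ a distinct sequence converging to some $y_0\in\overline{X}^Y\setminus X$; your limit-point analysis showing $\overline{A}^Y=A\cup\{y_0\}$, hence $A\in C(X)$ but $A\notin C(Y)$, is sound, so the leg is valid. Two small remarks: your distinctness thinning is unnecessary — $y_0\in\overline{A}^Y$ already follows from $z_n\to y_0$, and the argument that $y_0$ is the only possible limit point of the value set works for any sequence converging to $y_0$ (only finitely many distinct values lie outside each ball around $y_0$); and, more importantly, your own preliminary reduction already hands you the paper's shortcut, since $\chi_X$ is the simplest fuzzy set whose $1$-cut detects closedness of $X$ in $Y$. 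What your heavier route buys is a strictly smaller counterexample — failure of (\rmn2) witnessed by the characteristic function of a countable set rather than of all of $X$ — but for the equivalence itself the paper's test function makes the construction, and the care it demands, avoidable.
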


\begin{proof}
  It is known that (\rmn1) is equivalent to property (a)
for each $A$ in $X$, $A\in C(X)\cup \{\emptyset\}$ if and only if $A\in C(Y)\cup \{\emptyset\}$ ((\rmn1)$\Rightarrow$(a) is well-known. If (a) is true, then $X\in C(Y)$ since $X\in C(X)$. Thus (a)$\Rightarrow$(\rmn1).).
 So (\rmn1)$\Rightarrow$(\rmn3).
(\rmn3)$\Rightarrow$(\rmn2) follows
from the facts that
$u\in F^1(X) \Leftrightarrow u_Y\in F^1(Y)$,
$F^1_{USC}(X)= F^1(X) \cap F_{USC}(X)$ and $F^1_{USC}(Y)= F^1(Y) \cap F_{USC}(Y)$.
Assume that (\rmn2) is true.
Then $X_{F(Y)} = (X_{F(X)})_Y \in F^1_{USC}(Y)$ since $X_{F(X)} \in F^1_{USC}(X)$.
Thus
$X= [X_{F(Y)}]_1 \in C(Y)$; that is, (\rmn1) is true.
So (\rmn2)$\Rightarrow$(\rmn1).
This completes the proof.

\end{proof}

Let $S\subseteq \mathbb{R}^m$.
We also use $S$
to denote the metric space $(S, \rho_m|_{S})$, which is
 a subspace of
$\mathbb{R}^m$.

\begin{tm} \label{regn}
Let
 $A$ be a nonempty subset of $\mathbb{R}^m$ with $\overline{A}^{\mathbb{R}^m}\setminus A$ being at most countable.
Let    $u,v \in  F^1_{USC} (A)$.
Then $H_{A}([u]_\al, [v]_\al) $ is a measurable function of $\al$ on $[0,1]$;
 that is, $d_p(u,v)= \left(\int_0^1 H_{A} ([u]_\al, [v]_\al)^p  \,   d\al   \right)^{1/p}$ is well-defined.

\end{tm}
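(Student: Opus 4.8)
The plan is to interpose the closure $W:=\overline{A}^{\mathbb{R}^m}$ between $A$ and $\mathbb{R}^m$ and reduce the problem to Theorem \ref{rcm} in two stages. Observe that $(A,\rho_m|_A)$ is a subspace of $(W,\rho_m|_W)$, that $W$ is closed in $\mathbb{R}^m$, and that $W\setminus A=\overline{A}^{\mathbb{R}^m}\setminus A$ is at most countable by hypothesis. Since $u,v\in F^1_{USC}(A)$, Corollary \ref{mpn} (applied to the subspace $A$ of $W$, whose complement $W\setminus A$ is at most countable) shows that $H_A([u]_\al,[v]_\al)$ is a measurable function of $\al$ on $[0,1]$ if and only if $H_W([u^W]_\al,[v^W]_\al)$ is a measurable function of $\al$ on $[0,1]$. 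So it suffices to establish the latter.

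To dispose of the closed set $W$, I would pass to $\mathbb{R}^m$ through the zero-extension. By Proposition \ref{egcn}(\rmn6-1) we have $u^W,v^W\in F^1_{USC}(W)$, and since $W$ is closed in $\mathbb{R}^m$, Proposition \ref{abcu} yields $(u^W)_{\mathbb{R}^m},(v^W)_{\mathbb{R}^m}\in F^1_{USC}(\mathbb{R}^m)$. Because the zero-extension leaves every cut $[\cdot]_\al$ unchanged, the identity recorded just before Remark \ref{bgm} gives $H_W([u^W]_\al,[v^W]_\al)=H_{\mathbb{R}^m}([(u^W)_{\mathbb{R}^m}]_\al,[(v^W)_{\mathbb{R}^m}]_\al)$ for every $\al\in[0,1]$. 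Now Theorem \ref{rcm}, applied to the pair $(u^W)_{\mathbb{R}^m},(v^W)_{\mathbb{R}^m}\in F^1_{USC}(\mathbb{R}^m)$, shows that the right-hand side is measurable in $\al$; hence so is $H_W([u^W]_\al,[v^W]_\al)$. Combined with the first stage, this gives the measurability of $H_A([u]_\al,[v]_\al)$, which is equivalent to $d_p(u,v)$ being well-defined.

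The conceptual point, and the only real obstacle, is that one cannot run Corollary \ref{mpn} directly from $A$ to $\mathbb{R}^m$: that corollary demands an at most countable ambient complement, whereas $\mathbb{R}^m\setminus A$ may be large (for instance when $A$ is a ball). The hypothesis controls only $\overline{A}^{\mathbb{R}^m}\setminus A$, which is exactly what forces the intermediate closed set $W=\overline{A}^{\mathbb{R}^m}$ into the argument: Corollary \ref{mpn} bridges the countable gap $W\setminus A$, while Theorem \ref{rcm}, reached via the zero-extension that is legitimate precisely because $W$ is closed, handles $W$ itself. This also makes transparent that Theorem \ref{regn} generalizes Theorem \ref{reg}, since taking $A=\mathbb{R}^m\setminus S$ with $S$ at most countable gives $\overline{A}^{\mathbb{R}^m}=\mathbb{R}^m$ and $\overline{A}^{\mathbb{R}^m}\setminus A=S$.
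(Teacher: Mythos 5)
Your proposal is correct and follows essentially the same route as the paper's own proof: set $Y=\overline{A}^{\mathbb{R}^m}$, use Corollary \ref{mpn} to bridge the at most countable gap $Y\setminus A$, then pass from the closed set $Y$ to $\mathbb{R}^m$ via Proposition \ref{egcn}(\rmn6-1), Proposition \ref{abcu} and the cut-preserving zero-extension identity, and finish with Theorem \ref{rcm}. Your closing observation that $A=\mathbb{R}^m\setminus S$ recovers Theorem \ref{reg} likewise matches Remark \ref{rfgu}.
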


\begin{proof} Set $Y= \overline{A}^{\mathbb{R}^m}$.
We claim that
\\
(a) $H_{A}([u]_\al, [v]_\al) $ is a measurable function of $\al$ on $[0,1]$
if and only if
$H_Y([u^Y]_\al,[v^Y]_\al) $  is a measurable function of $\al$ on $[0,1]$;
\\
(b)
for each $\al\in [0,1]$,
$H_Y([u^Y]_\al,[v^Y]_\al) = H_{\mathbb{R}^m} ([(u^Y)_{\mathbb{R}^m}]_\al, \, [(v^Y)_{\mathbb{R}^m}]_\al)$;
\\
(c) $H_{\mathbb{R}^m} ([(u^Y)_{\mathbb{R}^m}]_\al, \, [(v^Y)_{\mathbb{R}^m}]_\al)$
 is a measurable function of $\al$ on $[0,1]$.

By Corollary \ref{mpn}, (a) is true.
By Proposition \ref{egcn}(\rmn6-1), $u^Y, v^Y \in  F^1_{USC} (Y)$.
As $Y\in C(\mathbb{R}^m)$, by Proposition \ref{abcu},
$(u^Y)_{\mathbb{R}^m} \in F^1_{USC} (\mathbb{R}^m)$ and
$(v^Y)_{\mathbb{R}^m} \in F^1_{USC} (\mathbb{R}^m)$.
So (b) is true, and
by Theorem \ref{rcm},
(c) is true.

From (a), (b) and (c), it follows that $H_{A}([u]_\al, [v]_\al) $ is a measurable function of $\al$ on $[0,1]$.

\end{proof}

\begin{re}\label{rfgu}
 {\rm
Let
 $S$ be an at most countable subset of $\mathbb{R}^m$ and $u,v \in  F^1_{USC} (\mathbb{R}^m \setminus  S)$.
Put
$A=\mathbb{R}^m \setminus S$. Then $\overline{A}^{\mathbb{R}^m}= \mathbb{R}^m$
and  $\overline{A}^{\mathbb{R}^m}\setminus A = \mathbb{R}^m \setminus A = S$.
Thus by Theorem \ref{regn},
 $H_{A}([u]_\al, [v]_\al) $ is a measurable function of $\al$ on $[0,1]$.
So
Theorem \ref{regn} is an improvement of
Theorem \ref{reg}.

Note that for each $A \subseteq \mathbb{R}^m$,  $\mathbb{R}^m \setminus A$
is at most countable if and only if $ \overline{A}^{\mathbb{R}^m}=\mathbb{R}^m$ and $\overline{A}^{\mathbb{R}^m} \setminus A$
is at most countable.
So
Theorem \ref{reg} is the special case of Theorem \ref{regn}
when $\overline{A}^{\mathbb{R}^m} = \mathbb{R}^m$.
 }
\end{re}

\begin{re} \label{cmegun}
{\rm
 (a) Let $(X, d_X)$ be a subspace of $(Y, d_Y)$ and $Y \setminus X$ an at most countable set. Let $(Y, d_Y)$ be a subspace of $(Z, d_Z)$.
 Let $u, v \in F^1_{USC} (X)$.
If
$(u^Y)_Z \in  F^1_{USCG} (Z)$, then $H_X([u]_\al, [v]_\al) $ is a measurable function of $\al$ on $[0,1]$; that is, $d_p(u,v)= \left(\int_0^1 H_X([u]_\al, [v]_\al)^p  \,   d\al   \right)^{1/p}$ is well-defined.

By Remark \ref{bgm}, $(u^Y)_Z \in  F^1_{USCG} (Z)$ if and only if
$u^Y \in  F^1_{USCG} (Y)$. So conclusion (a) follows immediately from
 Theorem \ref{cmeg}. Clearly conclusion (a) is equivalent to Theorem \ref{cmeg}.
 There exist conclusions such as conclusion (a), which are easy consequences
of Theorem \ref{cmeg} and other results. We will not list them all.
}
\end{re}

The results in this section
except Theorem \ref{regn} and Remark \ref{rfgu}
were recorded in \cite{huang17c}.
In essence, contents including
Theorem \ref{crm}, Corollary \ref{mpn} and Theorem \ref{reg}
 have already been proved in chinaXiv:202108.00116v1, which is a previous version of \cite{huang17c}.

\section{Properties of $d_p^*$ metric and $H_{\rm end}$ metric} \label{per}

In this section, we give the relationship between the $d_p^*$ metric and the $H_{\rm end}$ metric.
We illustrate
the relations between
the property that $U$ is uniformly $p$-mean bounded
and other properties of $U$.

\begin{tm} \label{dpeu} Let $(X,d)$ be a metric space and let $u, v \in F^1_{USC} (X)$.
\\
(\romannumeral1)
\begin{equation}\label{dpe}
  d_p^* (u,v) \geq   \left(  \frac{H_{\rm end} (u,v)^{p+1} }{p+1}   \right)^{1/p}.
\end{equation}
\\
(\romannumeral2) \
 If a sequence $\{u_n\}$ in $F^1_{USC} (X)$ satisfies that $d_p^*(u_n, u) \to 0$ as $n\to\infty$, then
  $H_{\rm end} (u_n, u) \to 0$ as $n\to\infty$.
\end{tm}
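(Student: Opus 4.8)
The plan is to prove part (\romannumeral1) directly and then obtain part (\romannumeral2) as an immediate consequence. Write $\eta := H_{\rm end}(u,v) = H({\rm end}\,u, {\rm end}\,v)$. Since $d_p^*(u,v)$ and the right-hand side of \eqref{dpe} are symmetric in $u$ and $v$, and $\eta$ equals one of the two one-sided distances $H^*({\rm end}\,u,{\rm end}\,v)$, $H^*({\rm end}\,v,{\rm end}\,u)$, I would assume without loss of generality that $\eta = H^*({\rm end}\,u,{\rm end}\,v)$. If $\eta = 0$ there is nothing to prove, so I fix $\epsilon \in (0,\eta)$ and choose $(x_0, t_0) \in {\rm end}\,u$ with $d((x_0,t_0), {\rm end}\,v) > \eta - \epsilon$.

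The crucial observation — and the step that makes the whole estimate work — is that every endograph contains the slice $X \times \{0\}$, because $w(x) \ge 0$ for every fuzzy set $w$ and every $x$. In particular $(x_0, 0) \in {\rm end}\,v$, so $d((x_0,t_0), {\rm end}\,v) \le \overline{d}((x_0,t_0),(x_0,0)) = t_0$. Hence $t_0 > \eta - \epsilon$, which guarantees that the interval $[t_0 - (\eta-\epsilon), t_0]$ lies inside $[0,1]$. Without this fact the worst point could sit at a small height $t_0$ and the integration window would be truncated at $0$, destroying the constant; so I expect this short but essential point to be the main obstacle to get right.

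Next I would convert the endograph distance into cut distances. For every $s \in [0,1]$ normality gives $[v]_s \ne \emptyset$, and each $(y,s)$ with $y \in [v]_s$ lies in ${\rm end}\,v$, so $d((x_0,t_0),{\rm end}\,v) \le d(x_0,[v]_s) + |t_0 - s|$, whence $d(x_0,[v]_s) \ge (\eta-\epsilon) - |t_0 - s|$. Specialising to $s = \alpha \le t_0$ and using $x_0 \in [u]_{t_0} \subseteq [u]_\alpha$, I obtain $H([u]_\alpha,[v]_\alpha) \ge H^*([u]_\alpha,[v]_\alpha) \ge d(x_0,[v]_\alpha) \ge (\eta-\epsilon) - (t_0 - \alpha)$ for all $\alpha \in [t_0-(\eta-\epsilon), t_0]$, where the right-hand side is nonnegative. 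Then for any measurable $f \ge H([u]_\cdot,[v]_\cdot)$ the substitution $r = \alpha - (t_0 - (\eta-\epsilon))$ gives
\[ \int_0^1 f(\alpha)^p\,d\alpha \ge \int_{t_0-(\eta-\epsilon)}^{t_0} \big((\eta-\epsilon)-(t_0-\alpha)\big)^p\,d\alpha = \int_0^{\eta-\epsilon} r^p\,dr = \frac{(\eta-\epsilon)^{p+1}}{p+1}. \]
Taking the infimum over admissible $f$ yields $d_p^*(u,v)^p \ge (\eta-\epsilon)^{p+1}/(p+1)$, and letting $\epsilon \to 0^+$ gives \eqref{dpe}.

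Finally, part (\romannumeral2) is immediate from (\romannumeral1): rearranging \eqref{dpe} gives $H_{\rm end}(u,v) \le ((p+1)\,d_p^*(u,v)^p)^{1/(p+1)}$, so $d_p^*(u_n,u) \to 0$ forces $H_{\rm end}(u_n,u) \to 0$.
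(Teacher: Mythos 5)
Your proof is correct and follows essentially the same route as the paper's: both pick a near-extremal point $(x_0,t_0)$ in one endograph, use the observation that $X\times\{0\}\subseteq{\rm end}\,v$ to force $t_0>\eta-\epsilon$ so the integration window fits in $[0,1]$, derive the linear lower bound $H([u]_\alpha,[v]_\alpha)\geq(\eta-\epsilon)-(t_0-\alpha)$ on that window, and integrate; your $\epsilon$-approach to $\eta$ is just a repackaging of the paper's ``for each $r$ with $H_{\rm end}(u,v)>r$'' formulation with $r_n\to H_{\rm end}(u,v)-$. The only cosmetic slip is writing $d$ for $\overline{d}$ on $X\times[0,1]$ in a couple of places.
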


\begin{proof}
To show (\rmn1), we only need to show that for each $r>0$,
if $H_{\rm end} (u,v) > r$ then $d_p^* (u,v) \geq \left( \frac{r^{p+1}}{p+1}  \right)^{1/p}$ (see also (\uppercase\expandafter{\romannumeral1}) at the end of this proof).

Let $r>0$.
Assume that $H_{\rm end} (u,v) > r$. Then without loss of generality we suppose that $H^*({\rm end}\,u,    {\rm end}\,v) > r$. Thus there is an $(x,\beta) \in {\rm end}\, u$ such that
$\overline{d}((x,\beta), {\rm end}\, v ) >r$.
Let $\alpha\in [0, \beta]$. We claim that (a-\rmn1) $1\geq \beta>r$,
(a-\rmn2) $d(x, [v]_{\alpha}) > r-(\beta-\al)$, and
(a-\rmn3) $H^*([u]_\alpha, [v]_{\alpha}) > r-(\beta-\al)$.

$\beta\leq 1$ since $(x,\beta) \in {\rm end}\, u$.
 $\beta= \overline{d}((x,\beta), X\times \{0\})  \geq  \overline{d}((x,\beta), {\rm end}\, v ) >r$. So (a-\rmn1) is true.
$d(x, [v]_{\alpha}) + (\beta-\al) = \overline{d}((x,\beta), [v]_{\alpha}\times \{\al\}) \geq \overline{d}((x,\beta), {\rm end}\, v ) >r$, so (a-\rmn2) is true.
By (a-\rmn1), $\beta>0$ and then $x\in  [u]_\beta \subseteq [u]_\al$.
(a-\rmn3) follows from (a-\rmn2) and the fact that $x\in [u]_\al$.

Given a measurable function $f$ on $[0,1]$
satisfying that
 $f(\al) \geq H([u]_\al, [v]_\al  )$ for all $\al\in [0,1]$.
Then, using (a-\rmn1) and (a-\rmn3), we have that
\begin{align*}
& \left( \int_0^1  f(\al)^p \,d\al \right)^{1/p}
 \geq \left( \int_{\beta-r}^{\beta}  f(\alpha)^p     \,d\alpha \right)^{1/p} \\
&\geq     \left(    \int_{\beta-r}^{\beta}  (r-(\beta-\al))^p   \,d \alpha
 \right)^{1/p}   \hspace{1cm}  \mbox{(this $\geq$ can be repalced by $>$)}
 \\
&=    \left(    \int_{\beta-r}^{\beta}  (\alpha-(\beta-r))^p   \,d \alpha \right)^{1/p}  =  \left( \frac{ (\alpha-(\beta-r))^{p+1} }{p+1}\Big|_{\beta-r}^{\beta} \right)^{1/p} =
  \left( \frac{r^{p+1}}{p+1}  \right)^{1/p}. \  \mbox{(see also (\uppercase\expandafter{\romannumeral2})) }
\end{align*}
So from the definition of $d_p^* (u,v)$, we have that
$d_p^* (u,v) \geq \left( \frac{r^{p+1}}{p+1}  \right)^{1/p}$.
Thus (\rmn1) is true.
(\romannumeral2) follows immediately from (\romannumeral1)
(see also (\uppercase\expandafter{\romannumeral3})).

\vspace{1mm}

The contents in the following
(\uppercase\expandafter{\romannumeral1})--(\uppercase\expandafter{\romannumeral3}) are basic and easy.
\\
(\uppercase\expandafter{\romannumeral1})  \  \eqref{dpe} is equivalent to statement (b) for each $r>0$,
if $H_{\rm end} (u,v) > r$ then $d_p^* (u,v) \geq \left( \frac{r^{p+1}}{p+1}  \right)^{1/p}$. Obviously \eqref{dpe}$\Rightarrow$(b).
Assume that (b) is true.
If $H_{\rm end} (u,v) = 0$, then \eqref{dpe} holds obviously.
Suppose $H_{\rm end} (u,v) > 0$.
Then
we can
choose a sequence $\{r_n\}$ in $\mathbb{R}^+$ satisfying that $r_n\to H_{\rm end} (u,v) -$ as $n\to\infty$. Then $d_p^* (u,v) \geq \lim_{n\to\infty}\left( \frac{r_n^{p+1}}{p+1}  \right)^{1/p} = \left( \frac{H_{\rm end} (u,v)^{p+1}}{p+1}  \right)^{1/p}$. Thus \eqref{dpe} is true.
Hence (b)$\Rightarrow$\eqref{dpe}. So \eqref{dpe}$\Leftrightarrow$(b).
\\
(\uppercase\expandafter{\romannumeral2})  \
$ \left(    \int_{\beta-r}^{\beta}  (\alpha-(\beta-r))^p   \,d \alpha \right)^{1/p}  =  \left( \int^{r}_0  t^p     \,d t \right)^{1/p}
= \left( \frac{r^{p+1}}{p+1}  \right)^{1/p}$.
\\
(\uppercase\expandafter{\romannumeral3})  \
We show (\rmn2) by (\rmn1) as follows.
 Given $n\in \mathbb{N}$, put $c_n:=  \left(  \frac{H_{\rm end} (u_n,u)^{p+1} }{p+1}   \right)^{1/p}$;
then
  $d_p^* (u_n,u) \geq  c_n \geq 0$, by (\rmn1) and the fact that $H_{\rm end} (u_n,u) \geq 0$.  Since $d_p^* (u_n,u) \to 0$ as $n\to\infty$, we have
that $ c_n \to 0$ as $n\to\infty$.
Note that for each $n\in \mathbb{N}$,
$H_{\rm end} (u_n,u) = ((p+1) {c_n}^p )^{\frac{1}{p+1}} $.
Thus $H_{\rm end} (u_n,u) \to 0$ as $n\to \infty$.
So (\rmn2) is proved. (If $H_{\rm end} (u_n,u) \to 0$ then obviously $c_n\to 0$.)

\end{proof}

\begin{re} \label{pse}
  {\rm
(\rmn1) Clearly, if
$d_p(u,v)$ is well-defined (at this time $d^*_p(u,v) = d_p(u,v)$),
 then
$d^*_p(u,v)$ can be replaced by $d_p(u,v)$ in Theorem \ref{dpeu}.

(\rmn2) Theorem \ref{dpeu}(\rmn2) is Theorem 6.2 in \cite{huang19c}.
 Theorem \ref{dpeu}(\rmn1) is an improvement of Theorem \ref{dpeu}(\rmn2) (i.e. Theorem 6.2 in \cite{huang19c}).

}
\end{re}

The following example shows that
  the ``='' can be obtained in \eqref{dpe}.
\begin{eap} \label{ecn}
{\rm
Define $u$ and $v$ in $F^1_{USCB}(\mathbb{R})$ as
\[
u(x)
=
\left\{
  \begin{array}{ll}
1, & x=0,\\
0.5-x, & x\in (0, 0.5],\\
0, & \mbox{otherwise},
  \end{array}
\right.
\
v(x)
=
\left\{
  \begin{array}{ll}
1, & x=0,\\
0.5, & x\in (0, 0.5],\\
0, & \mbox{otherwise}.
  \end{array}
\right.
\]
Then ${\rm end}\, u \subsetneqq {\rm end}\, v$, $H_{\rm end} (u,v) =H^*({\rm end}\, v, {\rm end}\, u)= \overline{\rho_1}((0.5,0.5), {\rm end}\, u) = 0.5$,
and
\[
H([u]_\al, [v]_\al)
=
\left\{
  \begin{array}{ll}
   0, & \al\in (0.5,1],\\
\al, & \al \in [0, 0.5].
  \end{array}
\right.
\]
Thus
$d_p(u,v) = (\int_0^{0.5}  \al^p  \,d\al)^{1/p} = \left(\frac{0.5^{p+1}}{p+1} \right)^{1/p} =  \left(  \frac{H_{\rm end} (u,v)^{p+1} }{p+1}   \right)^{1/p}$.

}
\end{eap}

The metric $\widehat{d}$ on $X \times [0,1]$ is defined
as
$ \widehat{d}((x,\al), (y, \beta)) = \max\{ d(x,y), |\al-\beta| \}$
 for each $(x,\al)$ and each $(y, \beta)$ in $X\times [0,1]$.

We can see
that
 for each $(x,\al)$ and each $(y, \beta)$ in $X\times [0,1]$,
\begin{align}\label{mern}
&\widehat{d}((x,\al), (y, \beta)) \leq \overline{d}((x,\al), (y, \beta))
= d(x,y) + |\alpha-\beta| \nonumber
\\
&\leq \min\{ 2\widehat{d}((x,\al), (y, \beta)) , \  d(x,y) +1  \}
\mbox{\ \  \ (note that $|\alpha-\beta|\leq |1-0|\leq  1$)} \nonumber
\\
&\leq \min\{ 2\widehat{d}((x,\al), (y, \beta)) , \  \widehat{d}((x,\al), (y, \beta)) +1  \}.
\end{align}
By \eqref{mern}, $\widehat{d}$ induces the same topology
 on $X\times [0,1]$ as $\overline{d}$.
 $C(X\times [0,1])$ is the set of all non-empty closed subsets of $(X\times [0,1],\overline{d})$.
Then $C(X\times [0,1])$ is also the set of all non-empty closed subsets of $(X\times [0,1], \widehat{d})$.

We use $\bm{H'}$ to denote the Hausdorff extended metric on $C(X\times [0,1])$ induced by $\widehat{d}$ on $X \times [0,1]$.
We use
$\bm{H'^{*}}$ to denote the Hausdorff pre-distance related to $H'$, which is induced by $\widehat{d}$ on $X \times [0,1]$ (see Section \ref{bas}).

The endograph metric $H_{\rm end}' $ and the sendograph extended metric $H_{\rm send}' $
can be defined on $F^1_{USC}(X)$ by using
the Hausdorff extended
metric $H'$ on $C(X \times [0,1])$ as follows.
For each $u,v \in F^1_{USC}(X)$,
\begin{gather*}
\bm{  H_{\rm end}'(u,v)    }: =  H'({\rm end}\, u,  {\rm end}\, v ),
\\
\bm{  H_{\rm send}'(u,v)    }: =  H'({\rm send}\, u,  {\rm send}\, v ).
  \end{gather*}
 Clearly for each $u,v \in F^1_{USC} (X)$,
$
  d_\infty(u,v) \geq H_{\rm send}'(u,v) \geq H_{\rm end}'(u,v).
$

\begin{re}
  {\rm
We can see that
each of
the $H'$ on $C(X \times [0,1])$ and the $H_{\rm send}'$ on $F^1_{USC}(X)$ is an extended metric but may not a metric.

We can see that both $H'$ on $C(\mathbb{R}^m \times [0,1])$ and $H_{\rm send}'$ on $F^1_{USCG}(\mathbb{R}^m)$ are not metrics, they are extended metrics.

For simplicity,
in this paper, we call the Hausdorff extended
metric $H'$ on $C(X \times [0,1])$ the Hausdorff
metric $H'$,
and call
$H_{\rm send}'$ on $F^1_{USC}(X)$ the $H_{\rm send}'$ metric or the sendograph metric $H_{\rm send}'$.
}
\end{re}

In some references, the sendograph metric and the endograph metric refer to $H_{\rm send}'$ and $H_{\rm end}'$, respectively.
In the following, we give some conclusions on
the relationship between $H_{\rm send}$ and
$H_{\rm send}'$, and the relationship between $H_{\rm end}$ and $H_{\rm end}'$.

We say a function $f: \mathbb{R}^+ \to \mathbb{R}^+$ is increasing
if
$f(x) \leq f(y)$ when $x < y$.

\begin{pp}\label{fdhc}
  Let $f$ be an increasing and continuous function from $\mathbb{R}^+$ to $\mathbb{R}^+$.
Let $Z$ be a set and let $d_1$ and $d_2$ be two metrics on
$Z$. If $d_1(\xi,\eta) \leq f(d_2(\xi,\eta))$ for all $\xi, \eta \in Z$, then
for each $S \subseteq Z$ and each $W \subseteq Z$,
\\
(\romannumeral1) \ for each $\xi \in Z$, $d_1(\xi,S)   \leq    f(d_2(\xi, S))$,
\\
(\romannumeral2) \ $H^*_1(W,S)  \leq     f(H^*_2(W,S) )$, and
\\
(\romannumeral3) \ $H_1(W,S)  \leq     f(H_2(W,S) )$,
\\
where
$H_1$ and $H_2$ are the Hausdorff extended metrics induced by $d_1$ and $d_2$, respectively,
$H^*_1$ and $H^*_2$ are the Hausdorff pre-distance related to $H_1$ and $H_2$, respectively.
\end{pp}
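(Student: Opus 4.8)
The plan is to prove the three parts in the order (i), (ii), (iii), with (i) feeding into (ii) and (ii) into (iii); a single monotone--continuity fact about $f$ does all the essential work. First I would record the behaviour of an increasing continuous $f:\mathbb{R}^+\to\mathbb{R}^+$ under infima and suprema of a nonempty set $A\subseteq\mathbb{R}^+$: namely $\inf_{a\in A}f(a)=f(\inf A)$ and $\sup_{a\in A}f(a)=f(\sup A)$. For the infimum identity, the inequality $\geq$ is immediate from monotonicity, while the reverse inequality $\leq$ --- the one I actually need --- is obtained by picking $a_n\in A$ with $a_n\to\inf A$ and using continuity of $f$ to pass $f$ through the limit; this single use of continuity is the crux of the argument. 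For the supremum side, only the monotone bound $\sup_{a\in A}f(a)\leq f(\sup A)$ is needed, and it holds trivially because $a\leq\sup A$ forces $f(a)\leq f(\sup A)$.

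For (i), fix $\xi\in Z$ and a nonempty $S\subseteq Z$. The hypothesis gives $d_1(\xi,s)\leq f(d_2(\xi,s))$ for every $s\in S$, so taking the infimum over $s$ yields $d_1(\xi,S)\leq\inf_{s\in S}f(d_2(\xi,s))$. Applying the infimum identity to $A=\{d_2(\xi,s):s\in S\}$ rewrites the right-hand side as $f(\inf_{s\in S}d_2(\xi,s))=f(d_2(\xi,S))$, which is exactly (i).

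For (ii), I apply (i) pointwise: for each $w\in W$, $d_1(w,S)\leq f(d_2(w,S))$. Taking the supremum over $w$ and using the monotone supremum bound with $A=\{d_2(w,S):w\in W\}$ gives
\[
H^*_1(W,S)=\sup_{w\in W}d_1(w,S)\leq\sup_{w\in W}f(d_2(w,S))\leq f\Big(\sup_{w\in W}d_2(w,S)\Big)=f(H^*_2(W,S)).
\]
Part (iii) then follows from monotonicity alone: since $f$ is increasing, $\max\{f(x),f(y)\}=f(\max\{x,y\})$, so applying (ii) to both orderings of the pair,
\[
H_1(W,S)=\max\{H^*_1(W,S),H^*_1(S,W)\}\leq\max\{f(H^*_2(W,S)),f(H^*_2(S,W))\}=f(H_2(W,S)).
\]

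The point needing care --- and the main obstacle beyond the commuting fact --- is the possibility of infinite values. Since $H_1,H_2$ are extended metrics, $d_2(\xi,S)$ or $H^*_2(W,S)$ may equal $+\infty$, at which $f$ is not a priori defined; taking $S,W$ nonempty at least rules out the degenerate $\sup\emptyset=0$, $\inf\emptyset=+\infty$ complications. I would resolve this by extending $f$ to $[0,+\infty]$ via $f(+\infty):=\lim_{t\to+\infty}f(t)$, which exists in $[0,+\infty]$ because $f$ is increasing; with this extension the argument above goes through verbatim in $\widehat{\mathbb{R}}$. In the intended applications to $\overline{d}$ and $\widehat{d}$ (via \eqref{mern}) one takes $f(t)=t$ or $f(t)=2t$, so $f(+\infty)=+\infty$ and the infinite case is immediate.
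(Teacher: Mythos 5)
Your proof is correct and takes essentially the same route as the paper's: part (\romannumeral1) by passing $f$ through the infimum using monotonicity plus continuity, part (\romannumeral2) by applying (\romannumeral1) pointwise and taking the supremum over $W$, and part (\romannumeral3) by commuting the increasing $f$ with the maximum of the two pre-distances. Your extra care about infinite values---extending $f$ by $f(+\infty):=\lim_{t\to+\infty}f(t)$, which is genuinely needed since $H^*_2(W,S)$ can be $+\infty$ even though $d_1,d_2$ are metrics---is a point the paper's proof leaves implicit, but it is a refinement of the same argument rather than a different approach.
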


\begin{proof} For each $\xi \in Z$,
$d_1(\xi,S) = \inf_{\eta \in S} d_1(\xi,\eta)    \leq \inf_{\eta\in S} f(d_2(\xi,\eta)) = f(\inf_{\eta\in S} d_2(\xi,\eta)) = f(d_2(\xi, S)).$
So (\romannumeral1) is true.

By (\romannumeral1),
$H^*_1(W,S) = \sup_{\xi\in W} d_1(\xi,S)    \leq \sup_{\xi\in W} f(d_2(\xi,S)) = f(\sup_{\xi\in W} d_2(\xi,S)) = f(H^*_2(W,S) )$.
So (\romannumeral2) is true.

By (\romannumeral2),
\begin{align*}
&H_1(W,S) = H^*_1(W,S) \vee   H^*_1(S,W) \nonumber \\
& \leq  f(H^*_2(W,S)) \vee  f( H^*_2(S,W))    \nonumber  \\
&= f(H^*_2(W,S) \vee  H^*_2(S,W)) = f(H_2(W,S) ).
\end{align*}
So (\romannumeral3) is true.

\end{proof}

\begin{pp}\label{senr}
 Let $(X,d)$ be a metric space
and $u,v \in  F^1_{USC}(X)$.
\\
(\rmn1) $H_{\rm send}' (u,v) \leq H_{\rm send} (u,v) \leq \min\{ 2 H_{\rm send}' (u,v)  , \  H_{\rm send}' (u,v) +1  \}$,
\\
(\rmn2) $H_{\rm end}' (u,v) \leq H_{\rm end} (u,v) \leq  \min\{ 2 H_{\rm end}' (u,v)  , \ 1  \}$.

\end{pp}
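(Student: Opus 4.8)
The plan is to deduce both chains of inequalities from the pointwise two-sided comparison of the metrics $\widehat{d}$ and $\overline{d}$ recorded in \eqref{mern}, namely $\widehat{d} \le \overline{d} \le \min\{2\widehat{d},\, \widehat{d}+1\}$ on $X\times[0,1]$, fed into Proposition \ref{fdhc}. Recall that $H$ is the Hausdorff metric induced by $\overline{d}$ and $H'$ the one induced by $\widehat{d}$, and that $H_{\rm send}(u,v)=H({\rm send}\,u,{\rm send}\,v)$, $H_{\rm send}'(u,v)=H'({\rm send}\,u,{\rm send}\,v)$, with the analogous identities for $H_{\rm end}$ and $H_{\rm end}'$ using endographs. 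Thus every assertion reduces to a comparison between $H$ and $H'$ applied to the fixed sets ${\rm send}\,u,{\rm send}\,v$ or ${\rm end}\,u,{\rm end}\,v$, and Proposition \ref{fdhc} is exactly the device that transports a pointwise bound $d_1\le f(d_2)$ between two metrics to the bound $H_1\le f(H_2)$ between the associated Hausdorff metrics.

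For the left-hand inequalities I would take $Z=X\times[0,1]$, $d_1=\widehat{d}$, $d_2=\overline{d}$ and the identity $f(t)=t$; since $\widehat{d}\le\overline{d}=f(\overline{d})$ by \eqref{mern}, Proposition \ref{fdhc}(\rmn3) yields $H'(W,S)\le H(W,S)$ for all $W,S$, and specializing to the sendographs and endographs of $u,v$ gives at once $H_{\rm send}'(u,v)\le H_{\rm send}(u,v)$ and $H_{\rm end}'(u,v)\le H_{\rm end}(u,v)$. For the right-hand inequalities I would reverse the roles, taking $d_1=\overline{d}$, $d_2=\widehat{d}$, and apply Proposition \ref{fdhc}(\rmn3) twice: with the increasing continuous map $f(t)=2t$ (using $\overline{d}\le 2\widehat{d}$) to obtain $H\le 2H'$, and with $f(t)=t+1$ (using $\overline{d}\le\widehat{d}+1$) to obtain $H\le H'+1$. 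Evaluating on the sendographs then gives $H_{\rm send}(u,v)\le 2H_{\rm send}'(u,v)$ and $H_{\rm send}(u,v)\le H_{\rm send}'(u,v)+1$, whose minimum is the claimed upper bound in (\rmn1).

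For the endograph upper bound in (\rmn2) the same $f(t)=2t$ step gives $H_{\rm end}(u,v)\le 2H_{\rm end}'(u,v)$; the second term of the minimum is $1$ rather than $H_{\rm end}'(u,v)+1$, and this I would supply not from \eqref{mern} but from the a priori bound $H_{\rm end}(u,v)\le 1$ valid for all $u,v\in F^1_{USC}(X)$ (recorded in the remark following \eqref{bre}). Combining $H_{\rm end}(u,v)\le 2H_{\rm end}'(u,v)$ with $H_{\rm end}(u,v)\le 1$ produces $H_{\rm end}(u,v)\le\min\{2H_{\rm end}'(u,v),\,1\}$, completing (\rmn2).

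The only point requiring care — and the part I would treat as the main obstacle — is that $H$, $H'$, $H_{\rm send}$ and $H_{\rm send}'$ are extended metrics that may equal $+\infty$ (as in Example \ref{pefium}), so one must check that Proposition \ref{fdhc} and the three chosen functions $f$ behave correctly at infinite values. This is benign: each $f\in\{t,\,2t,\,t+1\}$ is increasing and continuous from $\mathbb{R}^+$ to $\mathbb{R}^+$, so the supremum and infimum interchange identities used inside Proposition \ref{fdhc} persist under the monotone extension $f(+\infty):=\lim_{t\to+\infty}f(t)=+\infty$, and the inequalities hold in $\widehat{\mathbb{R}}$. Once this is noted the argument is purely a matter of applying Proposition \ref{fdhc} in the correct direction for each of the four bounds.
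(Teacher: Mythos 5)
Your proposal is correct and takes essentially the same route as the paper: both deduce everything by feeding the two-sided pointwise comparison \eqref{mern} of $\widehat{d}$ and $\overline{d}$ into Proposition \ref{fdhc}(\rmn3) and then invoking the a priori bound $H_{\rm end}(u,v)\leq 1$ to obtain the endograph case. The only cosmetic difference is that the paper applies Proposition \ref{fdhc}(\rmn3) once with the single increasing continuous function $f_2(x)=\min\{2x,\, x+1\}$, whereas you apply it twice with $f(t)=2t$ and $f(t)=t+1$ and take the minimum afterwards; your extra remark on extended (possibly infinite) Hausdorff values is sound and is already accommodated by the statement of Proposition \ref{fdhc}, which is phrased for the induced Hausdorff extended metrics.
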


\begin{proof}

Define two functions $f_1$ and $f_2$
from $\mathbb{R}^+$ to $\mathbb{R}^+$
given
by
 $f_1(x)=x$ and $f_2(x) = \min\{2x, x+1\}$.
Then $f_1$ and $f_2$
are
increasing and continuous functions.

By \eqref{mern}, for each $(x,\al)$ and each $(y, \beta)$ in $X\times [0,1]$,
$\widehat{d}((x,\al), (y, \beta)) \leq f_1(\overline{d}((x,\al), (y, \beta))) $
and
$\overline{d}((x,\al), (y, \beta)) \leq f_2( \widehat{d}((x,\al), (y, \beta))) $.
Then,
by Proposition \ref{fdhc}(\romannumeral3),
$H_{\rm send}' (u,v) \leq H_{\rm send} (u,v) \leq \min\{ 2 H_{\rm send}' (u,v)  , \  H_{\rm send}' (u,v) +1  \}$;
by Proposition \ref{fdhc}(\romannumeral3) and the fact that $H_{\rm end} (u',v') \leq 1$ for all $u',v' \in  F^1_{USC}(X)$, we obtain that
$
 H_{\rm end}' (u,v) \leq H_{\rm end} (u,v) \leq \min\{ 2 H_{\rm end}' (u,v)  , \  H_{\rm end}' (u,v) +1, \ 1  \}
= \min\{ 2 H_{\rm end}' (u,v)  , \ 1  \}$.
So (\rmn1) and (\rmn2) are proved.

\end{proof}

\begin{re} \label{bpcn}
  {\rm
Let $u$ and $u_n$, $n=1,2,\ldots$, in $F^1_{USC}(X)$.
(\romannumeral1) By Proposition \ref{senr}(\rmn1), $H_{\rm send}'(u_n,u) \to 0$ if and only if $H_{\rm send}(u_n,u) \to 0$.
(\romannumeral2) By Proposition \ref{senr}(\rmn2), $H_{\rm end}'(u_n,u) \to 0$ if and only if $H_{\rm end}(u_n,u) \to 0$.

}
\end{re}

The following
Proposition \ref{dpeumr} and Example \ref{eym} illustrate the relationship between $d_p^*$
and
$H'_{\rm end}$ on $F^1_{USC} (X)$. Proposition \ref{dpeumr} and Example \ref{eym} are
 parallel conclusions of
Theorem \ref{dpeu}
and Example \ref{ecn}, respectively.
The idea of proving Proposition \ref{dpeumr} is similar to that of proving Theorem \ref{dpeu}.
The idea of constructing
 Example \ref{eym} is similar to that of constructing Example \ref{ecn}.

\begin{pp} \label{dpeumr} Let $(X,d)$ be a metric space and let $u, v \in F^1_{USC} (X)$.
\\
(\romannumeral1)
\begin{equation}\label{dperym}
  d_p^* (u,v) \geq    H_{\rm end}' (u,v)  ^{1+1/p}.
\end{equation}
\\
(\romannumeral2)
  If a sequence $\{u_n\}$ in $F^1_{USC} (X)$ satisfies that
$d_p^*(u_n, u) \to 0$ as $n\to\infty$, then
  $H_{\rm end}' (u_n, u) \to 0$ as $n\to\infty$.
\end{pp}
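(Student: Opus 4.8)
The plan is to mirror the proof of Theorem \ref{dpeu}, replacing the metric $\overline{d}$ by $\widehat{d}$ throughout and tracking how the $\max$-structure of $\widehat{d}$ changes the cut-level estimate. As in clause (\uppercase\expandafter{\romannumeral1}) of that proof, I would first reduce \eqref{dperym} to the statement: for each $r>0$, if $H_{\rm end}'(u,v)>r$ then $d_p^*(u,v)\geq r^{1+1/p}$. The passage back to \eqref{dperym} is then obtained by choosing a sequence $r_n\to H_{\rm end}'(u,v)-$ and using continuity of $t\mapsto t^{1+1/p}$, the case $H_{\rm end}'(u,v)=0$ being trivial.

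So I would assume $H_{\rm end}'(u,v)>r$. By symmetry I may suppose $H'^{*}({\rm end}\,u,\,{\rm end}\,v)>r$, so there is an $(x,\beta)\in{\rm end}\,u$ with $\widehat{d}((x,\beta),\,{\rm end}\,v)>r$. Exactly as in Theorem \ref{dpeu} I would first record that $r<\beta\leq 1$: here $\beta\leq 1$ because $(x,\beta)\in{\rm end}\,u$, and $\beta>r$ follows from $X\times\{0\}\subseteq{\rm end}\,v$ together with $\widehat{d}((x,\beta),X\times\{0\})=\beta$, which gives $r<\widehat{d}((x,\beta),\,{\rm end}\,v)\leq\beta$.

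The key step, and the place where the two metrics diverge, is the cut-level estimate. I claim that for every $\alpha\in[\beta-r,\beta]$ one has $H([u]_\alpha,[v]_\alpha)\geq r$. Indeed, such $\alpha$ lies in $(0,1]$, the relation $u(x)\geq\beta\geq\alpha$ gives $x\in[u]_\alpha$, and for any $y\in[v]_\alpha$ the point $(y,\alpha)$ lies in ${\rm end}\,v$, so $\max\{d(x,y),|\beta-\alpha|\}=\widehat{d}((x,\beta),(y,\alpha))>r$; since $|\beta-\alpha|=\beta-\alpha\leq r$, the maximum can exceed $r$ only through $d(x,y)>r$. Hence $d(x,[v]_\alpha)\geq r$ and therefore $H([u]_\alpha,[v]_\alpha)\geq H^*([u]_\alpha,[v]_\alpha)\geq d(x,[v]_\alpha)\geq r$. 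This is cleaner than the corresponding estimate in Theorem \ref{dpeu}: whereas the sum metric $\overline{d}$ forced the linearly decaying bound $H^*([u]_\alpha,[v]_\alpha)>r-(\beta-\alpha)$, the $\max$ metric $\widehat{d}$ yields the flat bound $\geq r$ on the whole interval of length $r$, because the level increment $|\beta-\alpha|$ no longer competes with the spatial budget.

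Finally, for any measurable $f$ on $[0,1]$ with $f(\alpha)\geq H([u]_\alpha,[v]_\alpha)$ for all $\alpha$, the flat estimate gives $\int_0^1 f(\alpha)^p\,d\alpha\geq\int_{\beta-r}^{\beta}r^p\,d\alpha=r^{p+1}$, whence $(\int_0^1 f^p\,d\alpha)^{1/p}\geq r^{1+1/p}$; taking the infimum over such $f$ yields $d_p^*(u,v)\geq r^{1+1/p}$, which is the reduced statement. Part (\romannumeral2) then follows from (\romannumeral1) just as Theorem \ref{dpeu}(\romannumeral2) follows from Theorem \ref{dpeu}(\romannumeral1): setting $c_n:=H_{\rm end}'(u_n,u)^{1+1/p}\leq d_p^*(u_n,u)\to 0$ forces $c_n\to 0$, and then $H_{\rm end}'(u_n,u)=c_n^{p/(p+1)}\to 0$. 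I expect the only genuinely delicate point to be the cut-level claim above — specifically verifying that $|\beta-\alpha|\leq r$ really forces the spatial distance to exceed $r$ — while the integral computation and the limiting argument are routine.
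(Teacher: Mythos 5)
Your proposal is correct and follows essentially the same route as the paper's proof: the same reduction to the statement ``$H_{\rm end}'(u,v)>r$ implies $d_p^*(u,v)\geq r^{1+1/p}$'', the same choice of $(x,\beta)\in{\rm end}\,u$ with $\widehat{d}((x,\beta),{\rm end}\,v)>r$, the same flat cut-level bound $H^*([u]_\alpha,[v]_\alpha)\geq r$ on $[\beta-r,\beta]$, and the same integral computation and limiting argument for part (\romannumeral2). Your only deviation is cosmetic: you establish $d(x,[v]_\alpha)\geq r$ directly from $\widehat{d}((x,\beta),(y,\alpha))>r$ for each $y\in[v]_\alpha$, whereas the paper's main text argues by contradiction via the identity $\widehat{d}((x,\beta),[v]_\alpha\times\{\alpha\})=\max\{d(x,[v]_\alpha),|\beta-\alpha|\}$ --- and indeed the paper itself records your direct variant as an alternative proof of its claim (a-\romannumeral2).
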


\begin{proof}
To show (\romannumeral1), we only need to show that for each $r>0$,
if $H_{\rm end}' (u,v) > r$ then $d_p^* (u,v) \geq r^{1+1/p}$.
 (see also (\uppercase\expandafter{\romannumeral1}) at the end of this proof).

Let $r>0$.
Assume that $H_{\rm end}' (u,v) > r$.
Then without loss of generality we can suppose that $H'^*({\rm end}\,u,    {\rm end}\,v) > r$.
Thus there is an $(x,\beta) \in {\rm end}\, u$ with
$\widehat{d}((x,\beta), {\rm end}\, v ) >r$.
Let $\alpha\in [\beta-r, \beta]$.
We claim that (a-\rmn1) $1\geq\beta > r$,
(a-\rmn2) $d(x, [v]_{\alpha}) > r$,
 and (a-\rmn3)
$H^*([u]_\alpha, [v]_{\alpha}) > r $.

$\beta\leq 1$ since $(x,\beta) \in {\rm end}\, u$.
 $\beta= \widehat{d}((x,\beta), X\times \{0\})  \geq  \widehat{d}((x,\beta), {\rm end}\, v ) >r$, so (a-\rmn1) is true.
  Assume that $d(x, [v]_{\alpha}) \leq r$. Then
$ \widehat{d}((x,\beta), {\rm end}\, v )\leq \widehat{d}((x,\beta), [v]_{\alpha}\times \{\al\}) = \max\{d(x, [v]_{\alpha} ), |\beta-\al|\} \leq r$ (see also (\uppercase\expandafter{\romannumeral2})). This is a contradiction. So (a-\rmn2) is true.
By (a-\rmn1), $\beta>0$ and then $x\in  [u]_\beta \subseteq [u]_\al$. (a-\rmn3) follows from (a-\rmn2) and the fact that $x\in [u]_\al$.

Given
 a measurable function $f$ on $[0,1]$
satisfying that
 $f(\al) \geq H([u]_\al, [v]_\al  )$ for all $\al\in [0,1]$.
Then, using (a-\rmn1) and (a-\rmn3), we have that
\begin{align*}
& \left( \int_0^1  f(\al)^p \,d\al \right)^{1/p}
 \geq \left( \int_{\beta-r}^{\beta}  f(\alpha)^p     \,d\alpha \right)^{1/p} \\
&\geq     \left(    \int_{\beta-r}^{\beta}  r^p   \,d \alpha \right)^{1/p}  \mbox{\ \ \ (this $\geq$ can be repalced by $>$)}\\
&= (r^p\cdot (\beta-(\beta-r)))^{1/p} = r^{1+1/p}.
\end{align*}
So from the definition of $d_p^* (u,v)$, we have that
$d_p^* (u,v) \geq r^{1+1/p}$. Thus (\rmn1) is true.
(\romannumeral2) follows immediately from (\romannumeral1) (see also (\uppercase\expandafter{\romannumeral3})).
So the proof is completed.

We can also show (\rmn2) as follows.
By Theorem \ref{dpeu}(\rmn2), $H_{\rm end} (u_n, u) \to 0$ as $n\to\infty$.
Then by Remark \ref{bpcn}(\rmn2), $H_{\rm end}' (u_n, u) \to 0$ as $n\to\infty$.
So (\romannumeral2) is true.

\vspace{1mm}

The contents in the following
(\uppercase\expandafter{\romannumeral1})--(\uppercase\expandafter{\romannumeral3}) are basic and easy.

(\uppercase\expandafter{\romannumeral1})  \  \eqref{dperym} is equivalent to statement (b) for each $r>0$,
if $H_{\rm end}' (u,v) > r$ then $d_p^* (u,v) \geq r^{1+1/p}$. Obviously \eqref{dperym}$\Rightarrow$(b).
Assume that (b) is true.
If $H_{\rm end}' (u,v) = 0$, then \eqref{dperym} holds obviously.
Suppose $H_{\rm end}' (u,v) > 0$.
Then
we can
choose a sequence $\{r_n\}$ in $\mathbb{R}^+$ satisfying that $r_n\to H_{\rm end}' (u,v) -$ as $n\to\infty$. Then $d_p^* (u,v) \geq \lim_{n\to\infty} r_n^{1+1/p} = H_{\rm end}' (u,v)^{1+1/p}  $. Thus \eqref{dperym} is true.
Hence (b)$\Rightarrow$\eqref{dperym}. So \eqref{dperym}$\Leftrightarrow$(b).

(\uppercase\expandafter{\romannumeral2})  \
Let $x\in X$, $\al,\beta \in [0,1]$ and $v\in F^1_{USC}(X)$.
Then $\widehat{d}((x,\beta), [v]_{\alpha}\times \{\al\})=\max\{d(x, [v]_{\alpha} ), |\beta-\al|\}$.

Denote $\xi:=\widehat{d}((x,\beta), [v]_{\alpha}\times \{\al\})$
 and $\eta:=\max\{d(x, [v]_{\alpha} ), |\beta-\al|\}$.
 Then
 $\xi= \inf\{\widehat{d}((x,\beta), (y,\al)): (y,\al)\in [v]_{\alpha} \times \{\al\} \}= \inf\{\max\{d(x, y), |\beta-\al|\}: y\in [v]_{\alpha}\}$. Clearly $\xi \geq \eta$.
 Choose a sequence $\{y_n\}$ in $[v]_\al$ such that $d(x, [v]_{\alpha}) = \lim_{n\to\infty} d(x,y_n)$. Then  $\eta=\max\{\lim_{n\to\infty} d(x,y_n), |\beta-\al|\} =\lim_{n\to\infty} \max\{d(x,y_n), |\beta-\al|\}  =\lim_{n\to\infty} \widehat{d}((x,\beta), (y_n, \al)   ) \geq \xi$.
 So $\xi=\eta$.

(\uppercase\expandafter{\romannumeral3})  \
We show (\rmn2) by (\rmn1) as follows.
 Given $n\in \mathbb{N}$, put $c_n:=  H_{\rm end}' (u_n, u)  ^{1+1/p}$;
then
  $d_p^* (u_n,u) \geq  c_n \geq 0$, by (\rmn1) and the fact that $H_{\rm end}' (u_n,u) \geq 0$.  Since $d_p^* (u_n,u) \to 0$ as $n\to\infty$, we have
that $ c_n \to 0$ as $n\to\infty$.
Note that for each $n\in \mathbb{N}$,
$H_{\rm end}' (u_n,u) =  {c_n}^{\frac{1}{1+\frac{1}{p}}} $.
Thus $H_{\rm end}' (u_n,u) \to 0$ as $n\to \infty$.
So (\rmn2) is proved. (If $H_{\rm end}' (u_n,u) \to 0$ then obviously $c_n\to 0$.)

(a-\rmn2) can also be shown as follows.
Choose an $\varepsilon>0$ such that $\widehat{d}((x,\beta), {\rm end}\, v ) \geq r+\varepsilon$.
Given $y\in [v]_\al$. Then $ (y, \al) \in {\rm end}\, v$ and hence $r+\varepsilon \leq \widehat{d}((x,\beta), {\rm end}\, v )\leq \widehat{d}((x,\beta), (y, \al)) =\max\{d(x, y), |\beta-\al|\}$.
As $|\beta-\al|\leq r$, we have that
$r+\varepsilon \leq d(x,y)$.
Since $y\in [v]_\al$ is arbitrary, it follows that
$r+\varepsilon \leq d(x,[v]_\al)$, thus (a-\rmn2) is true.

(a-\rmn2) can also be shown as follows.
 Assume that $d(x, [v]_{\alpha}) \leq r$. Given $\varepsilon>0$. Then
 there is a
 $y\in [v]_{\alpha}$ such that $d(x,y) < r+\varepsilon$.
 As $ (y, \al) \in {\rm end}\, v$, we have that
$ \widehat{d}((x,\beta), {\rm end}\, v )\leq \widehat{d}((x,\beta), (y, \al))=\max\{d(x, y), |\beta-\al|\} \leq r+\varepsilon$. Since $\varepsilon>0$
is arbitrary, it follows that $\widehat{d}((x,\beta), {\rm end}\, v) \leq r$. This is a contradiction. So (a-\rmn2) is true.
\end{proof}

\begin{re} \label{psem}
  {\rm
Clearly, if
$d_p(u,v)$ is well-defined (at this time $d^*_p(u,v) = d_p(u,v)$),
 then
$d^*_p(u,v)$ can be replaced by $d_p(u,v)$ in Proposition \ref{dpeumr}.

}
\end{re}
The following example shows that
  the ``='' can be obtained in \eqref{dperym}.
\begin{eap} \label{eym}
{\rm
Define $u$ and $v$ in $F^1_{USCB}(\mathbb{R})$ as
\[
u(x)
=
\left\{
  \begin{array}{ll}
1, & x=0,\\
0.5, & x\in (0, 0.5],\\
0, & \mbox{otherwise},
  \end{array}
\right.
\
v(x)
=
\left\{
  \begin{array}{ll}
1, & x=0.5,\\
0.5, & x\in [0, 0.5),\\
0, & \mbox{otherwise}.
  \end{array}
\right.
\]
Then
$H'^* ({\rm end}\,u, {\rm end}\,v) = \overline{\rho_1} ((0,1), (0.5,1)) =0.5$
and
$H'^* ({\rm end}\,v, {\rm end}\,u)= \overline{\rho_1} ((0.5,1),(0,1)) =0.5$.
So
$H_{\rm end}' (u,v) = 0.5$.
We can see that
\[
H([u]_\al, [v]_\al)
=
\left\{
  \begin{array}{ll}
   0.5, & \al\in (0.5,1],\\
0, & \al \in [0, 0.5].
  \end{array}
\right.
\]
Thus
$d_p(u,v) = (\int_{0.5}^1  0.5^p  \,d\al)^{1/p} = 0.5^{1+1/p} =  H_{\rm end}' (u,v)^{1+1/p}$.

}
\end{eap}

\begin{re} \label{mytn}
  {\rm
(\rmn1)
Let $f$ be a function from $[\mu, \nu]$ to $\widehat{\mathbb{R}}$
and $[\xi, \eta]$
a subinterval of $[\mu, \nu]$. The restriction of $f$ to a subset $S$ of $[\mu, \nu]$ is denoted by $f|_{S}$.
\\
(\rmn1-1) If $f$
is left continuous on $(\mu, \nu]$,
then clearly $f|_{[\xi, \eta]}$
is left continuous on  $(\xi, \eta]$,
and thus $f|_{[\xi, \eta]}$ is measurable on
 $[\xi, \eta]$, by Corollary \ref{rsfc}.
\\
(\rmn1-2) If $f$ is measurable on $[\mu, \nu]$, then for each measurable subset $S$ of $[\mu, \nu]$,
$f|_{S}$ is measurable on $S$; clearly, $f|_{[\xi, \eta]}$
 is measurable on
 $[\xi, \eta]$ as $[\xi, \eta]$ is measurable.
\\
(\rmn1-3) Suppose $x$ is the independent variable of $f$ and $h\in \mathbb{R}$.
If $f(x)$ is measurable on $[\mu, \nu]$, then $f(x-h)$
 is measurable on $[\mu+h, \nu+h]$.

If $f$
is left continuous on $(\mu, \nu]$, we can also show the measurability of
$f|_{[\xi, \eta]}$ as follows.
Note that $f$ is measurable on $[\mu, \nu]$, by Corollary \ref{rsfc}. Thus $f|_{[\xi, \eta]}$
 is measurable on
 $[\xi, \eta]$. In the sequel, we also use $f$ to denote $f|_S$ if there is no confusion.

(\rmn2)
In the proof of Proposition \ref{emu},
we consider
the function
 $H( [u]_{\alpha},  [u]_{\alpha - \frac{1}{N} h}   ) $ of variable $\al$.
 We have conclusion
\\
(\rmn2-a) For each
$k=0,\ldots, N-1$,
$H( [u]_{\alpha},  [u]_{\alpha - \frac{1}{N} h}   ) $
is left continuous on
$({h- \frac{k}{N} h},  {1- \frac{k}{N} h} ] $.
Thus for each
$k=0,\ldots, N-1$, $H( [u]_{\alpha},  [u]_{\alpha - \frac{1}{N} h}   ) $
is
measurable on $[{h- \frac{k}{N} h},  {1- \frac{k}{N} h} ] $.
This means
the expression
  $ \sum_{k=0}^{N-1} \left(      \int_{h- \frac{k}{N} h}^{1- \frac{k}{N} h}  H( [u]_{\alpha},  [u]_{\alpha - \frac{1}{N} h}   )^p   \;d\alpha       \right)^{1/p}$
 appearing in the proof of Proposition \ref{emu} is well-defined.

Method 1 to prove conclusion (\rmn2-a) is
by combining (\rmn1-1) and Proposition \ref{lcpmcu}(\rmn1), which is a conclusions on left continuity of Hausdorff distance functions given in Section \ref{meau}.
Method 1 is
 as follows.
By
 Proposition \ref{lcpmcu}(\rmn1),
 $H( [u]_{\alpha},  [u]_{\alpha - \frac{1}{N} h}   ) $ is left continuous on $(h/N,1]$.
Observe that
$[{h- \frac{k}{N} h},  {1- \frac{k}{N} h} ] \subseteq [h/N,1]$
for each $k=0,\ldots, N-1$;
so, by (\rmn1-1),
 we obtain conclusion (\rmn2-a).

Method 2 to prove conclusion (\rmn2-a) is similar to the proof of
Proposition \ref{lcpmcu}(\rmn1)(\rmn2), which
 uses Proposition \ref{gnc}(\romannumeral1),
 the triangle inequality of the Hausdorff metric (see Remark \ref{tra})
 and Corollary \ref{rsfc}.

In our opinion,
methods 1 and 2 to prove conclusion (\rmn2-a) are essentially the same.

(\rmn3)
In the proof of Theorem \ref{pbe},
we consider the  expressions that contain integrations in the forms (\rmn3-1) $\left(      \int_{\xi}^{\eta}   H( [u]_{\alpha},  \{x_0\}   )^p   \;d\alpha       \right)^{1/p}$, (\rmn3-2) $\left(      \int_{\xi}^{\eta}   H( [u]_{\alpha-h},  \{x_0\}   )^p   \;d\alpha       \right)^{1/p}$, and (\rmn3-3)
$\left( \int_\xi^\eta   H( [u]_{\alpha},  [u]_{\alpha-h}   )^p   \;d\alpha   \right)^{1/p}$. Here $[\xi, \eta]$ denotes the corresponding integral intervals of the integrations.

By Proposition \ref{gmn} and (\rmn1-2), the function
$ H( [u]_{\alpha},  \{x_0\}   )$ of $\al$ is measurable on
 $[\xi, \eta]$, as these $[\xi, \eta]$ are subintervals of $[0,1]$.
So (\rmn3-1) is well-defined.
By Proposition \ref{gmn} and (\rmn1-3), the function
$ H( [u]_{\alpha-h},  \{x_0\}   )$ of $\al$ is measurable on
 $[h, 1+h]$. Then by (\rmn1-2), the function
$ H( [u]_{\alpha-h},  \{x_0\}   )$ of $\al$ is measurable on $[\xi, \eta]$,
as these $[\xi, \eta]$ are subintervals of $[h, 1+h]$.
So (\rmn3-2) is well-defined.
By
 Proposition \ref{lcpmcu}(\rmn1) and (\rmn1-1), we have that
 $H( [u]_{\alpha},  [u]_{\alpha - h}   ) $ is left continuous on $[\xi, \eta]$
and thus measurable on $[\xi, \eta]$, as these $[\xi, \eta]$ are subintervals of $[h, 1]$.
So (\rmn3-3) is well-defined.

(\rmn4)
The discussions of this paper
 involve the
  Hausdorff distance functions $f: [\mu, \nu] \to \mathbb{R}^+ \cup \{0\}$
and expressions $(\int_\mu^\nu  f(\alpha)^p \, d\al)^{1/p}$.
Some of these Hausdorff distance functions $f$ are listed in the above clauses (\romannumeral2) and (\romannumeral3).
We have conclusion (c): $f$ is
measurable on $[\mu, \nu]$,
which means that $(\int_\mu^\nu  f(\alpha)^p \, d\al)^{1/p}$ is well-defined.

One way to prove conclusion (c)
is
by
combining several of (\rmn1-1), (\rmn1-2) and (\rmn1-3) and
   conclusions on measurability (respectively, left continuity) of the Hausdorff distance functions given in Section \ref{meau}. See method 1 in clause (\rmn2).

Another way to prove conclusion (c)
is by proceeding similarly to conclusions on measurability (respectively, left continuity) of the Hausdorff distance functions given in Section \ref{meau}.
See method 2 in clause (\rmn2).

As it is easy to verify conclusion (c) by the above mentioned ways,
in the sequel, we will not explain
the well-definedness of these expressions
 $(\int_\mu^\nu  f(\alpha)^p \, d\al)^{1/p}$ one by one.

  (\rmn1-1), (\rmn1-2) and (\rmn1-3) are already known (we consider Corollary \ref{rsfc}
 to be a known conclusion, although we can not find this conclusion in the references that we can obtain.).
Thus
 conclusion (c) can be seen as corollaries of
conclusions on measurability (respectively, left continuity) of the Hausdorff distance functions given in Section \ref{meau}.

}
\end{re}

 The following two concepts are essentially proposed by
 Diamond and Kloeden \cite{da3} and Ma \cite{ma}, respectively.

For each $u\in F^1_{USCG}   (  X )$
and each $h\in [0,1)$,
 $\left( \int_h^1  H([u]_\alpha,   [u]_{\alpha-h})^p \;d\alpha   \right)^{1/p}$ is well-defined (see Proposition \ref{lcpmcu}).

\begin{df}
  \cite{da3}
  Let $u\in F^1_{USCG}   (  X )^p$. If for given $\varepsilon>0$, there is a $ \delta=\delta(u,\varepsilon) \in (0,1]$
  such that for all $0\leq h <\delta$
  $$ \left( \int_h^1  H([u]_\alpha,   [u]_{\alpha-h})^p \;d\alpha   \right)^{1/p}  <\varepsilon ,   $$
where  $1\leq p <+\infty$, then we say that $u$ is $p$-mean left continuous.

Suppose that $U$ is a set in $F^1_{USCG}   (  X )^p$.
If the above inequality holds uniformly for all $u\in U$,
then
we say that $U$ is  $p$-mean equi-left-continuous.
\end{df}

\begin{df}
   \cite{ma}
   A set $U$ in $ F^1_{USCG}   (  X )^p$
   is said to be uniformly $p$-mean bounded if there exists an $M\in [0,+\infty)$ and an $x_0\in X$
   such that
   $d_p(u, \widehat{x_0})\leq M$
   for all $u\in U$.
\end{df}

\begin{re} \label{ube}
  {\rm

Let $U$ be a set in $ F^1_{USCG}  (  X )^p$. Then the following conditions
are equivalent to each other:
(\rmn1) $U$ is uniformly $p$-mean bounded,
 (\rmn2) for each $w\in  F^1_{USCG}  (  X )^p$,
 there exists an $M(w)\in [0, +\infty)$
   such that
   $d_p(u, w)\leq M(w)$ for all $u\in U$, and
(\rmn3)
   $\sup \{d_p(u, v): u, v\in U\}< +\infty$.

If $U=\emptyset$, then obviously (\rmn1)$\Leftrightarrow$(\rmn2)$\Leftrightarrow$(\rmn3).
 Assume $U\not=\emptyset$.
(\rmn3)$\Rightarrow$(\rmn2). Suppose (\rmn3) is true.
Choose $u_0\in U$. Given $w\in  F^1_{USCG}  (  X )^p$.
Set $L:=\sup \{d_p(u, v): u, v\in U\}$
and
$M(w):= L+ d_p(u_0, w)$.
Then $M(w)\in [0,+\infty)$, and
for each $u\in U$,
 $d_p(u, w)\leq  d_p(u, u_0) + d_p(u_0, w)\leq L+ d_p(u_0, w) = M(w)$. So (\rmn2) is true.
As for each $x\in X$, $\widehat{x}\in  F^1_{USCG}  (  X )^p$,
it follows that
(\rmn2)$\Rightarrow$(\rmn1).
(\rmn1)$\Rightarrow$(\rmn3).
Suppose (\rmn1) is true. Then
for each $u,v\in U$,
 $d_p(u, v)\leq d_p(u, \widehat{x_0})+ d_p(v, \widehat{x_0})\leq 2M$.
Note that
$2M \in [0,\infty)$. So (\rmn3) is true.
Thus (\rmn1)$\Leftrightarrow$(\rmn2)$\Leftrightarrow$(\rmn3).

We say that a set $U$ is \emph{bounded} in a metric space $(Y, \rho)$ if
 there exists an $M\in [0,+\infty)$ and an $x_0\in Y$
   such that
   $\rho(x, x_0)\leq M$
   for all $x\in U$.

Let $U$ be a set in $(Y,\rho)$. Then the following conditions
are equivalent to each other:
(a) $U$ is bounded in $(Y,\rho)$,
 (b) for each $x\in Y$,
 there exists an  $M(x)\in [0, +\infty)$
   such that
   $\rho(u, x)\leq M(x)$ for all $u\in U$, and
(c)    $\sup \{\rho(u, v): u, v\in U\}< +\infty$.
The proof of (a)$\Leftrightarrow$(b)$\Leftrightarrow$(c)
is similar to that of the above (\rmn1)$\Leftrightarrow$(\rmn2)$\Leftrightarrow$(\rmn3).

Let $U$ be a set in $ F^1_{USCG}   (  X )^p$.
  Clearly, $U$ is uniformly $p$-mean bounded is equivalent to $U$ is
   bounded in $(F^1_{USCG}   (X)^p, d_p)$ (each of these two statements
is equivalent to $\sup \{d_p(u, v): u, v\in U\}< +\infty$.).

}
\end{re}

Suppose that
$U$ is a subset of $F^1_{USC} (X)$ and $\al\in [0,1]$.
For
writing convenience,
we denote
\begin{itemize}
  \item  $U(\al):= \bigcup_{u\in U} [u]_\al$, and

\item  $U_\al : =  \{[u]_\al: u \in U\}$.
\end{itemize}

Let $U$ be a
set in $F^1_{USCG}   ( X )^p$.
The following Lemma \ref{pmu} and Theorem \ref{pbe} illustrate
the relations between
the property that $U$ is uniformly $p$-mean bounded
and other properties of $U$.

\begin{lm} \label{pmu}
  Let $U$ be a
subset of $F^1_{USCG}   ( X )^p$.
If $U$ is uniformly $p$-mean bounded, then for each $h\in (0,1]$, $U(h)$ is bounded in $(X,d)$.
\end{lm}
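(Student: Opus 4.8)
The plan is to exploit the monotonicity of the map $\al \mapsto H([u]_\al, \{x_0\})$ established in Proposition \ref{gmn}, combined with the integral bound furnished by uniform $p$-mean boundedness. First I would unfold the hypothesis: since $U$ is uniformly $p$-mean bounded, there exist $M\in [0,+\infty)$ and $x_0\in X$ such that $d_p(u, \widehat{x_0})\leq M$ for all $u\in U$. Note that each $d_p(u,\widehat{x_0})$ is well-defined (indeed finite, as $U\subseteq F^1_{USCG}(X)^p$), so this is meaningful. By Remark \ref{bsm}(a) we have $H([u]_\al, \{x_0\}) = \sup_{y\in [u]_\al} d(y,x_0)$ for each $\al\in [0,1]$, and by Proposition \ref{gmn} this quantity is a decreasing function of $\al$ on $[0,1]$.

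The key step is to fix $h\in (0,1]$ and $u\in U$ and restrict the defining integral of $d_p(u,\widehat{x_0})$ to the subinterval $[0,h]$, where the decreasing function dominates its value at $h$. Concretely, since $H([u]_\al, \{x_0\}) \geq H([u]_h, \{x_0\})$ for all $\al\in [0,h]$, one obtains
$$
M^p \geq d_p(u,\widehat{x_0})^p = \int_0^1 H([u]_\al, \{x_0\})^p\,d\al \geq \int_0^h H([u]_h, \{x_0\})^p\,d\al = h\cdot H([u]_h, \{x_0\})^p,
$$
and hence $H([u]_h, \{x_0\}) \leq M/h^{1/p}$. Crucially, this bound is independent of the particular $u\in U$.

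Finally I would transfer this to the points of $U(h)$: for any $x\in [u]_h$ we have $d(x,x_0)\leq \sup_{y\in [u]_h} d(y,x_0) = H([u]_h, \{x_0\}) \leq M/h^{1/p}$. As $x_0$ and the radius $M/h^{1/p}$ do not depend on $u$, every point of $U(h)=\bigcup_{u\in U}[u]_h$ lies within distance $M/h^{1/p}$ of $x_0$, so $U(h)$ is bounded in $(X,d)$, as required.

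I do not expect a genuine obstacle here; the argument is essentially routine once the monotonicity of $H([u]_\al, \{x_0\})$ is in hand. The only points requiring mild care are the justification for restricting the integral to $[0,h]$ and the use of monotonicity to pull out the factor $h$. It is worth remarking that the estimate degenerates as $h\to 0^+$ because $h^{-1/p}\to +\infty$, which explains why the conclusion is stated only for $h\in (0,1]$ and not for $h=0$.
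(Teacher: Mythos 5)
Your proof is correct and uses essentially the same argument as the paper: both rest on the monotonicity of $H([u]_\al, \{x_0\})$ from Proposition \ref{gmn} and the estimate $d_p(u,\widehat{x_0})^p \geq \int_0^h H([u]_h,\{x_0\})^p\,d\al = h\cdot H([u]_h,\{x_0\})^p$. The only difference is presentational — the paper phrases it as a proof by contradiction (assuming some $U(h_0)$ is unbounded and deriving $\sup_{u\in U} d_p(u,\widehat{x_0}) = +\infty$), whereas you extract the explicit uniform bound $H([u]_h,\{x_0\}) \leq M/h^{1/p}$ directly, which is arguably cleaner.
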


\begin{proof}
  Assume that there is an $h_0\in (0,1]$ such that $U(h_0)$ is not bounded in $(X,d)$. Choose $x_0\in X$.
Then
$\sup \{d(x, x_0) :   x\in U(h_0)\} = +\infty$ since otherwise
$\sup \{d(x, x_0) :   x\in U(h_0)\} < +\infty$, i.e. $U(h_0)$ is bounded, which is a contradiction.
Note that $\sup \{d(x, x_0) :   x\in U(h_0)\}  =
 \sup_{u\in U} \sup\{d(x, x_0)  :   x\in [u]_{h_0} \} =
 \sup_{u\in U}  H([u]_{h_0}, \{x_0\})$ (see (a) in Remark \ref{bsm}).
Thus
$\sup \{H([u]_{h_0}, \{x_0\}) :   u\in U \} = +\infty$.

By Proposition \ref{gmn}, $H( [u]_\alpha,  \{x_0\}   )$ is a decreasing function of $\al$ on $[0,1]$.
Thus for each $u\in U$,
\begin{footnotesize}
  $$ \hspace{-3mm} \left(    \int_0^1  H( [u]_\alpha,  \{x_0\}   )^p   \;d\alpha       \right)^{1/p} \geq
  \left(    \int_0^{h_0}  H( [u]_\alpha,  \{x_0\}   )^p   \;d\alpha       \right)^{1/p} \geq \left(    \int_0^{h_0}  H( [u]_{h_0},  \{x_0\}   )^p   \;d\alpha       \right)^{1/p} =
 {h_0}^{1/p} \cdot H([u]_{h_0}, \{x_0\}). $$
\end{footnotesize}
As $\sup \{H([u]_{h_0}, \{x_0\}) :   u\in U \} = +\infty$, it follows that
$\sup \left\{\left(      \int_0^1  H( [u]_\alpha,  \{x_0\}   )^p   \;d\alpha       \right)^{1/p}: u\in U \right\} = +\infty$,
which
contradicts the assumption that $U$ is uniformly $p$-mean bounded.

So for each $h\in (0,1]$, $U(h)$ is bounded in $(X,d)$.

\end{proof}

 \begin{tm} \label{pbe}
 Let $U$ be a
subset of $F^1_{USCG}   ( X )^p$.
If $U$ is $p$-mean equi-left-continuous, then the following
three properties are equivalent:
\\
(\romannumeral1)
    There exists an $h\in (0,1)$ such that $U(h)$ is bounded in $(X,d)$;
    \\
    (\romannumeral2) For each $h\in (0,1]$, $U(h)$ is bounded in $(X,d)$;
    \\
 (\romannumeral3) $U$ is uniformly $p$-mean bounded.
 \end{tm}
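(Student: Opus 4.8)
The plan is to prove the equivalence via the cycle $(\romannumeral3)\Rightarrow(\romannumeral2)\Rightarrow(\romannumeral1)\Rightarrow(\romannumeral3)$. The first two implications are cheap and do not even use the standing hypothesis of $p$-mean equi-left-continuity. Indeed, $(\romannumeral3)\Rightarrow(\romannumeral2)$ is precisely Lemma \ref{pmu}, and $(\romannumeral2)\Rightarrow(\romannumeral1)$ is immediate by choosing any single $h\in(0,1)$, say $h=1/2$. Thus the only substantive step, and where the $p$-mean equi-left-continuity of $U$ is needed, is $(\romannumeral1)\Rightarrow(\romannumeral3)$.

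For $(\romannumeral1)\Rightarrow(\romannumeral3)$ I would fix $h_0\in(0,1)$ with $U(h_0)$ bounded, so that there exist $x_0\in X$ and $R\in[0,+\infty)$ with $d(x,x_0)\le R$ for all $x\in U(h_0)$; by (a) of Remark \ref{bsm} this says $H([u]_{h_0},\{x_0\})\le R$ for every $u\in U$. The goal is to bound $d_p(u,\widehat{x_0})$ uniformly in $u$. Splitting $\int_0^1$ into $\int_0^{h_0}$ and $\int_{h_0}^1$ and using $(a+b)^{1/p}\le a^{1/p}+b^{1/p}$, the tail is handled at once: since $H([u]_\alpha,\{x_0\})$ is decreasing in $\alpha$ (Proposition \ref{gmn}), it is $\le R$ on $[h_0,1]$, so $\big(\int_{h_0}^1 H([u]_\alpha,\{x_0\})^p\,d\alpha\big)^{1/p}\le R$. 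For the head, the triangle inequality $H([u]_\alpha,\{x_0\})\le H([u]_\alpha,[u]_{h_0})+H([u]_{h_0},\{x_0\})$ together with Minkowski's inequality reduces everything to bounding $\big(\int_0^{h_0}H([u]_\alpha,[u]_{h_0})^p\,d\alpha\big)^{1/p}$ uniformly in $u\in U$ (the remaining piece being $\le (h_0)^{1/p}R\le R$).

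This uniform bound is the main obstacle, and it is exactly what $p$-mean equi-left-continuity delivers. Given $\varepsilon>0$, I would take $\delta\in(0,1]$ from the equi-left-continuity of $U$ for this $\varepsilon$, and pick $N\in\mathbb{N}$ so large that $h:=h_0/N<\delta$ and $h_0+h\le 1$ (possible because $h_0<1$). For $\alpha\in(0,h_0)$ one telescopes upward in steps of $h$, clamping the level at $h_0$:
\[
H([u]_\alpha,[u]_{h_0})\le\sum_{k=0}^{N-1}H\big([u]_{(\alpha+kh)\wedge h_0},\,[u]_{(\alpha+(k+1)h)\wedge h_0}\big),
\]
where each summand vanishes once $\alpha+kh\ge h_0$. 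Integrating over $(0,h_0)$, pulling the sum out of the $L_p$-norm by Minkowski, and substituting $\beta=\alpha+kh$ in the $k$-th term turns that term into $\big(\int_{kh}^{h_0}H([u]_\beta,[u]_{(\beta+h)\wedge h_0})^p\,d\beta\big)^{1/p}$. The decisive point to check carefully is that clamping only decreases $H$: for $\beta\le h_0\le\beta+h$ one has the nesting $[u]_{\beta+h}\subseteq[u]_{(\beta+h)\wedge h_0}\subseteq[u]_\beta$, and since $H(A,C)\ge H(A,B)$ whenever $A\supseteq B\supseteq C$, it follows that $H([u]_\beta,[u]_{(\beta+h)\wedge h_0})\le H([u]_\beta,[u]_{\beta+h})$. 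Hence each term is at most $\big(\int_0^{1-h}H([u]_\beta,[u]_{\beta+h})^p\,d\beta\big)^{1/p}=\big(\int_h^1 H([u]_\gamma,[u]_{\gamma-h})^p\,d\gamma\big)^{1/p}<\varepsilon$, the inclusion $[kh,h_0]\subseteq[0,1-h]$ being guaranteed by $h_0+h\le1$.

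Summing the $N$ terms gives $\big(\int_0^{h_0}H([u]_\alpha,[u]_{h_0})^p\,d\alpha\big)^{1/p}\le N\varepsilon$, a bound independent of $u\in U$. Combining the three estimates yields $d_p(u,\widehat{x_0})\le N\varepsilon+2R=:M<+\infty$ for all $u\in U$, so $U$ is uniformly $p$-mean bounded, which is $(\romannumeral3)$. All integrands appearing above are measurable by the results of Section \ref{meau} (Proposition \ref{gmn} and Proposition \ref{lcpmcu}), so every integral is well-defined, as noted in Remark \ref{mytn}. The hard part is really just the clamped telescoping estimate: the monotonicity-under-nesting lemma and the bookkeeping of integration limits are what make the fixed-shift bound from equi-left-continuity applicable to the variable gap $h_0-\alpha$.
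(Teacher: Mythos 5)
Your proposal is correct, and its skeleton matches the paper's: the paper likewise gets (iii)$\Rightarrow$(ii) from Lemma \ref{pmu}, treats (ii)$\Rightarrow$(i) as obvious, and puts all the work into (i)$\Rightarrow$(iii). On that key implication, however, your decomposition is genuinely different from the paper's. The paper picks $h\le\min\{1-h_1,h_2\}$ with $N=1/h\in\mathbb{N}$, partitions $[0,1]$ into the blocks $[kh,(k+1)h]$, anchors the top block by $\bigl(\int_{1-h}^{1} H([u]_\alpha,\{x_0\})^p\,d\alpha\bigr)^{1/p}\le M$ (valid since $1-h\ge h_1$ and $U(h_1)$ is bounded), and then bounds the drift between adjacent block norms by $1$: after a change of variables, the reverse triangle inequality $\bigl|\,\|f\|_p-\|g\|_p\bigr|\le\|f-g\|_p$ together with the pointwise estimate $|H([u]_\alpha,\{x_0\})-H([u]_{\alpha-h},\{x_0\})|\le H([u]_\alpha,[u]_{\alpha-h})$ (cf. Remark \ref{tra}) reduces each drift to the equi-left-continuity modulus, and descending from the top yields the bound $NM+N(N-1)/2$. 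You instead split the integral at $h_0$, dispose of the tail by the monotonicity of $H([u]_\alpha,\{x_0\})$ (Proposition \ref{gmn}), route the head through $[u]_{h_0}$ via the triangle inequality, and control $\bigl(\int_0^{h_0}H([u]_\alpha,[u]_{h_0})^p\,d\alpha\bigr)^{1/p}$ by a clamped telescoping through the levels $(\alpha+kh)\wedge h_0$. This costs you one extra ingredient the paper never uses — the nesting monotonicity $H(A,C)\ge H(A,B)$ for $A\supseteq B\supseteq C$, which you verify correctly, and whose proof is exactly $H^*(B,A)=H^*(C,A)=0$ plus $H^*(A,B)\le H^*(A,C)$ — but it buys the cleaner bound $N\varepsilon+2R$ and avoids the paper's adjacent-norm bookkeeping. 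Your telescoping is in substance the device used in the proof of Proposition \ref{emu}, and the paper itself observes right after that proposition that (i)$\Rightarrow$(iii) can alternatively be proved via Proposition \ref{emu}; your argument essentially carries out that alternative in self-contained form. One point you pass over quickly, the measurability of the clamped integrands, is indeed routine: after your substitution the $k$-th integrand equals $H([u]_\beta,[u]_{\beta+h})$ on $[kh,h_0-h]$ and $H([u]_\beta,[u]_{h_0})$ on $[h_0-h,h_0]$, both handled by Proposition \ref{lcpmcu} and the left-continuity machinery of Section \ref{meau} (cf. Remark \ref{mytn}), so Minkowski applies as you use it.
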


\begin{proof}
  (\romannumeral1)$\Rightarrow$(\romannumeral3).
   Assume that (\romannumeral1) is true, i.e. there is an $h_1\in (0,1)$ such that
      $U(h_1)$ is bounded in $(X,d)$.
      Then there exists an $L>0$ such that
      $\sup\{ d(x,y) : x,y\in U(h_1)\}\leq L$. Put $M= L \cdot (1-h_1)^{1/p}$.
      Then for all $h\in [h_1, 1]$ and $u\in U$,
 \begin{equation}\label{bou}
\left( \int_h^1  H( [u]_\alpha,  \{x_0\}   )^p   \;d\alpha       \right)^{1/p}    \leq  L\cdot (1-h_1)^{1/p} = M.
  \end{equation}

 Since $U$ is $p$-mean equi-left-continuous, there is an $h_2 > 0$ such that
      for all $h\in [0, h_2]$ and $u\in U$,
      \begin{equation}\label{elc}
        \left(      \int_h^{1}  H( [u]_{\alpha},  [u]_{\alpha-h}   )^p   \;d\alpha       \right)^{1/p} < 1
      \end{equation}

      Choose an $h \leq \min\{1-h_1, h_2\}$ satisfying $1/h \in \mathbb{N}$.
Set $N := 1/h $.
Then by \eqref{elc} for $k=1,\ldots, N-1$ and $u\in U$,
\begin{align} \label{lce}
 &\left|
 \left(      \int_{kh}^{(k+1)h}   H( [u]_{\alpha},  \{x_0\}   )^p   \;d\alpha       \right)^{1/p}
 -\left(      \int_{(k-1)h}^{kh}  H( [u]_\alpha,  \{x_0\}   )^p   \;d\alpha       \right)^{1/p}
 \right| \nonumber
 \\
 & = \left|
 \left(      \int_{kh}^{(k+1)h}    H( [u]_{\alpha},  \{x_0\}   )^p   \;d\alpha       \right)^{1/p}
 -
 \left(      \int_{kh}^{(k+1)h}   H( [u]_{\alpha-h},  \{x_0\}   )^p   \;d\alpha       \right)^{1/p}
\right| \nonumber
\\
&\leq
  \left(      \int_{kh}^{(k+1)h}   H( [u]_{\alpha},  [u]_{\alpha-h}   )^p   \;d\alpha       \right)^{1/p}
\leq\left( \int_h^1   H( [u]_{\alpha},  [u]_{\alpha-h}   )^p   \;d\alpha   \right)^{1/p} <1,
\end{align}
 and thus by \eqref{bou} and \eqref{lce}, for all $u\in U$,
  \begin{align*}
  & \left(      \int_0^1  H( [u]_\alpha,  \{x_0\}   )^p   \;d\alpha       \right)^{1/p}   \\
 & \leq
 \sum_{k=0}^{N-1} \left(      \int_{kh}^{(k+1)h}   H( [u]_\alpha,  \{x_0\}   )^p   \;d\alpha       \right)^{1/p}
 \\
   & <  M + \cdots+ (M+(N-1))
   \\
  & = N\cdot M + N(N-1)/2,
   \end{align*}
and so
(\romannumeral3) is true.

 (\romannumeral3)$\Rightarrow$(\romannumeral2)
 follows from Lemma \ref{pmu}.

(\romannumeral2)$\Rightarrow$(\romannumeral1) is obvious.

\end{proof}

\begin{re}
{\rm
From Corollary \ref{pcnrem} and Theorem \ref{rcs},
we know
that for
 a $p$-mean equi-left-continuous set $U$ in $F^1_{USCG}   (\mathbb{R}^m )^p$,
 the properties
 (\romannumeral1) $U(\al)$
is bounded in $\mathbb{R}^m$ for each $\al \in (0,1]$,
and
 (\romannumeral2) $U$ is uniformly $p$-mean bounded,
are equivalent.
}

\end{re}

\begin{pp} \label{emu}
     Let $U$ be a
subset of $F^1_{USCG}   ( X )^p$.
If $U$ is $p$-mean equi-left-continuous, then for each $h\in [0,1]$, there exists a $C_h \in \mathbb{R}$ such that
  for all $u\in U$,
 \begin{equation*}\label{elscgr}
       \left(      \int_{h}^{1}  H( [u]_{\alpha},  [u]_{\alpha-h}   )^p   \;d\alpha       \right)^{1/p}
     \leq C_h.
           \end{equation*}

\end{pp}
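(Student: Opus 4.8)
The plan is to reduce the estimate for an arbitrary $h$ to the estimate for very small increments, which is supplied directly by the hypothesis. Taking $\varepsilon=1$ in the definition of $p$-mean equi-left-continuity, I first obtain a single $\delta=\delta(U,1)\in(0,1]$ such that
\[
\left(\int_{h'}^1 H([u]_\alpha,[u]_{\alpha-h'})^p\,d\alpha\right)^{1/p}<1
\]
holds simultaneously for every $u\in U$ and every $h'\in[0,\delta)$. The whole point is that this $\delta$ depends only on $U$, not on the individual $u$.

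Next, given $h\in(0,1]$ (the case $h=0$ being trivial, since then the integrand vanishes identically), I would fix an integer $N$ with $h/N<\delta$ and chain the cuts through the intermediate levels $\alpha-\frac{k}{N}h$, $k=0,\dots,N$. The triangle inequality for the Hausdorff metric (Remark~\ref{hme}) gives, pointwise in $\alpha\in[h,1]$,
\[
H([u]_\alpha,[u]_{\alpha-h})\le\sum_{k=0}^{N-1}H\bigl([u]_{\alpha-\frac{k}{N}h},[u]_{\alpha-\frac{k+1}{N}h}\bigr),
\]
and since $t\mapsto t^p$ is increasing together with Minkowski's inequality in $L_p[h,1]$ (all the integrals being well defined by Remark~\ref{mytn}), I obtain
\[
\left(\int_h^1 H([u]_\alpha,[u]_{\alpha-h})^p\,d\alpha\right)^{1/p}\le\sum_{k=0}^{N-1}\left(\int_h^1 H\bigl([u]_{\alpha-\frac{k}{N}h},[u]_{\alpha-\frac{k+1}{N}h}\bigr)^p\,d\alpha\right)^{1/p}.
\]

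The key computation is to control each summand. Applying the substitution $\beta=\alpha-\frac{k}{N}h$ turns the $k$-th integral into $\int_{h(N-k)/N}^{\,1-\frac{k}{N}h}H([u]_\beta,[u]_{\beta-h/N})^p\,d\beta$; since $0\le k\le N-1$ the interval $[\,h(N-k)/N,\,1-\frac{k}{N}h\,]$ is contained in $[h/N,1]$, and because the integrand is nonnegative this integral is at most $\int_{h/N}^1 H([u]_\beta,[u]_{\beta-h/N})^p\,d\beta$. As $h/N<\delta$, the first step bounds the latter by $1$, so every summand is $<1$ and the whole sum is at most $N$. Hence $C_h:=N$ works uniformly over $u\in U$, which proves the proposition.

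I expect the main obstacle to be bookkeeping rather than anything conceptual: one must verify that the chosen $N$ (hence $C_h$) genuinely depends only on $h$ and on $\delta(U,1)$ and not on $u$, and one must check that the change of variables lands each sub-integral inside the range $[h/N,1]$ on which the equi-left-continuity estimate applies. The measurability of the functions $\alpha\mapsto H([u]_{\alpha-\frac{k}{N}h},[u]_{\alpha-\frac{k+1}{N}h})$, needed to invoke Minkowski, is exactly what Remark~\ref{mytn}(\rmn2) records, so that point can be cited rather than reproved.
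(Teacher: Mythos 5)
Your proof is correct and takes essentially the same route as the paper's own: extract a uniform $\delta$ from $p$-mean equi-left-continuity with $\varepsilon=1$, chain the cuts through the intermediate levels $\alpha-\frac{k}{N}h$ using the triangle inequality for the Hausdorff metric together with Minkowski's inequality, and then shift each summand by the substitution $\beta=\alpha-\frac{k}{N}h$ into a subinterval of $[h/N,1]$ where the uniform estimate applies, yielding $C_h=N$. The only cosmetic difference is that the paper disposes of small $h$ (those with $h\le h_0$) directly with $C_h=1$ and runs the chaining argument only for $h>h_0$, whereas you handle all $h\in(0,1]$ uniformly by choosing $N$ with $h/N<\delta$; the measurability points you defer to Remark \ref{mytn} are exactly the ones the paper cites as well.
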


\begin{proof}
Since $U$ is $p$-mean equi-left-continuous,
then
there is an $h_0>0$ such that
  for all $u\in U$ and $h\in [0, h_0]$,
 \begin{equation}\label{elsc}
       \left(      \int_{h}^{1}  H( [u]_{\alpha},  [u]_{\alpha-h}   )^p   \;d\alpha       \right)^{1/p}
     \leq 1.
           \end{equation}

Let $h\in [0,1]$.
If $h \in [0, h_0]$, then put $C_h=1$ and the desired result follows from \eqref{elsc}.

  If $h\in (h_0, 1]$, then there is an $N=   N(h) \in \mathbb{N} $ such that
  $ h / N \leq h_0$. Put $C_h=N(h)$.
  Thus for all $u\in U$,
  \begin{align*}
       & \left(      \int_{h}^{1}  H( [u]_{\alpha},  [u]_{\alpha-h}   )^p   \;d\alpha       \right)^{1/p}
       \\
       & \leq    \sum_{k=0}^{N-1} \left(      \int_{h}^{1}  H( [u]_{\alpha- \frac{k}{N} h},  [u]_{\alpha - \frac{k+1}{N} h}   )^p   \;d\alpha       \right)^{1/p}
       \\
         & \leq    \sum_{k=0}^{N-1} \left(      \int_{h- \frac{k}{N} h}^{1- \frac{k}{N} h}  H( [u]_{\alpha},  [u]_{\alpha - \frac{1}{N} h}   )^p   \;d\alpha       \right)^{1/p}
           \\
         & \leq    \sum_{k=0}^{N-1} \left(      \int_{\frac{1}{N} h}^{1}  H( [u]_{\alpha},  [u]_{\alpha - \frac{1}{N} h}   )^p   \;d\alpha       \right)^{1/p}
       \\
       & \leq  N=C_h.
  \end{align*}

Here we mention the fact that
for each $k=0,\ldots, N-1$,
 $H( [u]_{\alpha- \frac{k}{N} h},  [u]_{\alpha - \frac{k+1}{N} h})   $ is
left continuous on $(h,1]$ and thus is measurable on $[h,1]$;
so the expression
$\sum_{k=0}^{N-1} \left(      \int_{h}^{1}  H( [u]_{\alpha- \frac{k}{N} h},  [u]_{\alpha - \frac{k+1}{N} h}   )^p   \;d\alpha       \right)^{1/p}$
 is well-defined.
 The proof of this fact is similar to the proof of
Proposition \ref{lcpmcu}(\rmn1)(\rmn2), which
 use Proposition \ref{gnc}(\romannumeral1),
 the triangle inequality of the Hausdorff metric (see Remark \ref{tra})
 and Corollary \ref{rsfc}.
 Of course, there is another way of proving this fact, which is essentially the same as the one described above, see also Remark \ref{mytn}.

\end{proof}

We can see that
(\romannumeral1)$\Rightarrow$(\romannumeral3) in the proof of
Theorem \ref{pbe} can also be proved by using
Proposition \ref{emu}.

\section{Characterizations of compactness in $ (F^1_{USCG} (X)^p, d_p)$} \label{cng}

In this section, we give the characterizations of total boundedness, relative compactness and compactness in
$(F^1_{USCG} (X)^p, d_p)$.

Some fundamental conclusions and concepts in classic analysis and topology are listed below, which are useful in this paper.

\begin{itemize}
 \item A subset $Y$ of a topological space $Z$ is said to be \emph{compact} if for every set $I$
and every family of open sets, $O_i$, $i\in I$, such that $Y\subset \bigcup_{i\in I} O_i$ there exists
a finite family $O_{i_1}$, $O_{i_2}$ \ldots, $O_{i_n}$ such that
$Y\subseteq O_{i_1}\cup O_{i_2}\cup\ldots \cup O_{i_n}$.
In
the case of a metric topology, the criterion for compactness becomes that any sequence in $Y$ has a convergent subsequence in $Y$.

 \item
A \emph{relatively compact} subset $Y$ of a topological space $Z$ is a subset with compact closure. In the case of a metric topology, the criterion for relative compactness becomes that any sequence in $Y$ has a subsequence convergent in $Z$.

 \item Let $(X, d)$ be a metric space. A set $U$ in $X$ is \emph{totally bounded} if and only if, for each $\varepsilon>0$, it contains a finite $\varepsilon$ approximation, where an $\varepsilon$ approximation to $U$ is a subset $S$ of $U$ such that $U \subseteq \bigcup_{x\in S}B(x,\varepsilon)$.
An $\varepsilon$ approximation to $U$ is also called an $\varepsilon$-net of $U$.

\item Let $(X, d)$ be a metric space. A set $U$ is compact in $(X,d)$ implies that $U$ is relatively compact
in $(X,d)$, which in turn
implies that $U$ is totally bounded in $(X, d)$.

\end{itemize}

We list
the following conclusions in \cite{huang19c} which are on the property of $H_{\rm end}$ metric,
the characterizations of total boundedness, relative compactness and compactness for
$(F^1_{USCG} (X), H_{\rm end})$, and the completion of $(F^1_{USCG} (X), H_{\rm end})$, respectively. These conclusions will be useful in this paper.

\begin{tm} \cite{huang19c} \label{aec}
Let $u$ be a fuzzy set in $F^1_{USCG} (X)$ and let $u_n$, $n=1,2,\ldots$, be fuzzy sets in $F^1_{USC} (X)$.
Then $H_{\rm end}(u_n, u) \to 0$ if and only if
 $H([u_n]_\al, [u]_\al) \rightarrow 0$ holds a.e. on $\al\in (0,1)$, which is denoted by $H([u_n]_\alpha, [u]_\alpha) \str{   \rm {a.e.}   }{\longrightarrow}   0 \ (  [0,1]    )$.

\end{tm}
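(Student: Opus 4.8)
The plan is to prove both implications by estimating the Hausdorff distance $H({\rm end}\,u_n,{\rm end}\,u)$ on $C(X\times[0,1])$ directly in terms of the level sets, using two structural facts. First, the base $X\times\{0\}$ is contained in every endograph, so any point of height $t$ can always be matched to a point of the other endograph at cost $t$. Second, for $u\in F^1_{USCG}(X)$ the cut map $\alpha\mapsto[u]_\alpha$ is left continuous on $(0,1]$ (Proposition \ref{lrcm}(\rmn1), Proposition \ref{gnc}(\rmn1)), and it is continuous except on an at most countable set $D\subseteq(0,1)$, which is therefore null. I would record this countability at the outset: writing the right limit at $\alpha$ as $\overline{\{u>\alpha\}}$ (Proposition \ref{gnc}(\rmn2)), a right discontinuity yields a point $p_\alpha\in[u]_\alpha$ with $d(p_\alpha,\overline{\{u>\alpha\}})>0$; for $\alpha<\alpha'$ one has $[u]_{\alpha'}\subseteq\overline{\{u>\alpha\}}$, so the $p_\alpha$ whose jump exceeds $1/k$ are $1/k$-separated and, on any $(s_0,1)$, lie in the single compact set $[u]_{s_0}$, hence are finite in number. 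A double union over $k$ and over $s_0=1/m$ shows $D$ is at most countable.

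For the implication $\Leftarrow$, assume $H([u_n]_\alpha,[u]_\alpha)\to0$ for a.e.\ $\alpha$, and let $G\subseteq(0,1)$ be the (full-measure, hence interval-meeting) set of such levels. Fixing $\varepsilon>0$, I would choose finitely many $s_1<\cdots<s_k$ in $G$ with $s_1<\varepsilon/2$, consecutive gaps $<\varepsilon/2$, and $s_k>1-\varepsilon/2$, and then $N$ with $H([u_n]_{s_i},[u]_{s_i})<\varepsilon/2$ for all $i$ and all $n\ge N$ (normality of $u,u_n$ keeps all these cuts nonempty). To bound $H^*({\rm end}\,u,{\rm end}\,u_n)$ for $n\ge N$, a point $(x,t)\in{\rm end}\,u$ with $t<s_1$ is matched to $(x,0)\in{\rm end}\,u_n$, at cost $t<\varepsilon$; for $t\ge s_1$ I take the largest $s_j\le t$, so $t-s_j<\varepsilon/2$, and since $x\in[u]_t\subseteq[u]_{s_j}$ and $H^*([u]_{s_j},[u_n]_{s_j})<\varepsilon/2$ there is $y\in[u_n]_{s_j}$ with $\overline{d}((x,t),(y,s_j))<\varepsilon$. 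The estimate for $H^*({\rm end}\,u_n,{\rm end}\,u)$ is the same with $u,u_n$ interchanged, giving $H_{\rm end}(u_n,u)\le\varepsilon$ for $n\ge N$.

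For $\Rightarrow$, set $\varepsilon_n:=H_{\rm end}(u_n,u)\to0$ and fix a continuity point $\alpha\in(0,1)\setminus D$. Applying $H^*({\rm end}\,u,{\rm end}\,u_n)\le\varepsilon_n$ to points of ${\rm end}\,u$ at height $\alpha+2\varepsilon_n$, I would obtain $H^*([u]_{\alpha+2\varepsilon_n},[u_n]_\alpha)\le2\varepsilon_n$: the height offset $2\varepsilon_n$, chosen slightly above the error, forces every near-minimizing matched level to exceed $\alpha$, so the matching point lies in $[u_n]_\alpha$. Combined with the subadditivity of $H^*$ (Remark \ref{hme}) and the right continuity of $[u]_\cdot$ at $\alpha$, this gives $H^*([u]_\alpha,[u_n]_\alpha)\to0$. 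Symmetrically, $H^*({\rm end}\,u_n,{\rm end}\,u)\le\varepsilon_n$ applied at height $\alpha$ yields $H^*([u_n]_\alpha,[u]_{\alpha-2\varepsilon_n})\le2\varepsilon_n$, and left continuity at $\alpha$ gives $H^*([u_n]_\alpha,[u]_\alpha)\to0$. Hence $H([u_n]_\alpha,[u]_\alpha)\to0$ for every $\alpha\notin D$, i.e.\ a.e.\ on $(0,1)$.

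The main obstacle is the uniformity needed in the direction $\Leftarrow$: a.e.\ convergence of the cuts is only pointwise in $\alpha$, yet a single $N$ must control all of ${\rm end}\,u$ simultaneously. This is resolved by reducing to a finite $\varepsilon/2$-mesh of good levels together with the observation that endograph points of small height require no level convergence at all, being matched to the shared base $X\times\{0\}$. The parallel subtlety in $\Rightarrow$ is that a bound relating $[u]_\alpha$ to $[u_n]$ at a shifted level cannot be pushed to level $\alpha$ through $u_n$, whose cuts we may not assume continuous; this is circumvented by shifting only the level of $u$ (continuous off the countable $D$) and by taking the height offset equal to $2\varepsilon_n$, so that inexact Hausdorff matchings still land in the intended cut.
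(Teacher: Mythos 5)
Your proof is correct, but note that this paper does not actually prove Theorem \ref{aec}: it is imported from \cite{huang19c} without proof, so there is no internal argument to compare against. Judged on its own, your argument is complete and uses only tools the paper does make available. The countability of the discontinuity set $D$ is handled properly: by Proposition \ref{gnc}(i) the cut map of $u$ is left continuous on $(0,1]$, by Proposition \ref{gnc}(ii) the right limit at $\alpha$ is $\overline{\{u>\alpha\}}$, and your separation argument (witnesses of jumps exceeding $1/k$ are $1/k$-separated because $p_{\alpha'}\in[u]_{\alpha'}\subseteq\overline{\{u>\alpha\}}$, and they all lie in the compact cut $[u]_{1/m}$) is exactly where the hypothesis $u\in F^1_{USCG}(X)$, as opposed to merely $F^1_{USC}(X)$, enters --- correctly mirroring the asymmetry of the theorem's hypotheses on $u$ versus $u_n$. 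The $\Leftarrow$ direction is sound: the finite mesh $s_1<\cdots<s_k$ drawn from the full-measure set, together with matching low points to the common base $X\times\{0\}$, yields the needed uniform bound on both $H^*$-pre-distances, and no compactness of the $u_n$-cuts is required. The $\Rightarrow$ direction is also sound: the shift-by-$2\varepsilon_n$ device does force near-minimizing matches into the intended cut (with $\delta<\varepsilon_n$ slack one even gets the bound $\varepsilon_n+\delta$, so your $2\varepsilon_n$ is safe), and combining with the subadditivity of $H^*$ from Remark \ref{hme}(b) and one-sided continuity of $u$'s cut map at $\alpha\notin D$ gives $H([u_n]_\alpha,[u]_\alpha)\to0$, crucially without ever assuming any regularity of the cut maps of $u_n$. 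Only cosmetic points remain to tidy in a write-up: restrict to $n$ large enough that $\alpha+2\varepsilon_n\le 1$ and $\alpha-2\varepsilon_n\ge 0$, and dispose of the trivial case $\varepsilon_n=0$ (where the closed endographs, hence all cuts, coincide) separately.
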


\begin{tm} \cite{huang19c} \label{tbfegn}
  Let $U$ be a subset of $F^1_{USCG} (X)$. Then $U$ is totally bounded in $(F^1_{USCG} (X), H_{\rm end})$
if and only if
$U(\al)$
is totally bounded in $(X,d)$ for each $\al \in (0,1]$.
\end{tm}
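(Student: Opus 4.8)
The plan is to prove both implications directly from the definition of total boundedness, bridging the endograph metric and the level sets through an elementary estimate that I would establish first, since it drives the whole argument: \textbf{if} $0<1/N<\varepsilon$ and $u,v\in F^1_{USCG}(X)$ satisfy $H([u]_{i/N},[v]_{i/N})<\varepsilon$ for every $i=1,\dots,N$, \textbf{then} $H_{\rm end}(u,v)\le 2\varepsilon$. To prove this, given $(x,t)\in{\rm end}\,u$ I would split on the height $t$. If $t\le 1/N$, I map $(x,t)$ to $(x,0)\in{\rm end}\,v$, which always lies in ${\rm end}\,v$ because $v(x)\ge 0$, at cost $t\le 1/N<\varepsilon$. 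If $t>1/N$, I take the grid level $\alpha_{i-1}=(i-1)/N$ with $\alpha_{i-1}<t\le \alpha_i$; then $\alpha_{i-1}\ge 1/N>0$ and $u(x)\ge t>\alpha_{i-1}$ give $x\in[u]_{\alpha_{i-1}}$, so the hypothesis furnishes $y\in[v]_{\alpha_{i-1}}$ with $d(x,y)<\varepsilon$, whence $(y,\alpha_{i-1})\in{\rm end}\,v$ and $\overline{d}((x,t),(y,\alpha_{i-1}))=d(x,y)+(t-\alpha_{i-1})<\varepsilon+1/N<2\varepsilon$. Taking the supremum over $(x,t)$ and using symmetry yields $H_{\rm end}(u,v)\le 2\varepsilon$.

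For the direction that total boundedness of $U$ forces each $U(\alpha)$ to be totally bounded, I would fix $\alpha\in(0,1]$ and $\delta\in(0,\alpha)$, take a finite $\delta$-net $\{u_1,\dots,u_n\}\subseteq U$ of $U$ in $H_{\rm end}$, and show every point of every $[u]_\alpha$ lies within $\delta$ of $K:=\bigcup_{i=1}^n[u_i]_{\alpha-\delta}$. Indeed, from $H_{\rm end}(u,u_i)<\delta$ and $(x,\alpha)\in{\rm end}\,u$ one obtains $(y,s)\in{\rm end}\,u_i$ with $d(x,y)+|\alpha-s|<\delta$, so $s>\alpha-\delta>0$ and $y\in[u_i]_{\alpha-\delta}$, giving $d(x,[u_i]_{\alpha-\delta})<\delta$. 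Since each $[u_i]_{\alpha-\delta}$ is compact ($u_i\in F^1_{USCG}(X)$, $\alpha-\delta\in(0,1]$), $K$ is totally bounded, hence so is its $\delta$-neighbourhood; as $U(\alpha)\subseteq B(K,\delta)$, $U(\alpha)$ is totally bounded in $(X,d)$.

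Conversely, assuming each $U(\alpha)$ is totally bounded, I would fix $\eta>0$, set $\varepsilon=\eta/4$ and choose $N$ with $1/N<\varepsilon$, writing $\alpha_i=i/N$. The key auxiliary input is the standard fact that the hyperspace of a totally bounded set is totally bounded: each $U_{\alpha_i}=\{[u]_{\alpha_i}:u\in U\}$ consists of nonempty compact subsets of the totally bounded set $U(\alpha_i)$, and covering $U(\alpha_i)$ by finitely many $\varepsilon$-balls with centres $z_1,\dots,z_k$ and assigning to a compact $C$ the set $F_C=\{z_j:d(z_j,C)<\varepsilon\}$ produces a finite $\varepsilon$-net $\mathcal F_i\subseteq C(X)$ of $U_{\alpha_i}$ in the Hausdorff metric. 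The finite product $\mathcal F_1\times\dots\times\mathcal F_N$ then classifies the elements of $U$; for each realized tuple I pick a representative $u_\phi\in U$, obtaining a finite family $T\subseteq U$. For arbitrary $u\in U$, its tuple is $\varepsilon$-close coordinatewise to some realized $\phi$, so $H([u]_{\alpha_i},[u_\phi]_{\alpha_i})<2\varepsilon$ for all $i$, and the bridging estimate (applied with $2\varepsilon$ in place of $\varepsilon$, legitimate since $1/N<\varepsilon<2\varepsilon$) gives $H_{\rm end}(u,u_\phi)\le 4\varepsilon=\eta$. Hence $T$ is a finite $\eta$-net and $U$ is totally bounded.

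The main obstacle I anticipate lies in this backward direction, and within it two points need care. First, the behaviour near height $0$: since $[u]_0$ may be noncompact or unbounded, no control is available at level $0$, which is exactly why the partition must start strictly above $0$ and why the low slice $t\le 1/N$ is handled by collapsing to $(x,0)$ rather than to a level set. Second, the passage from total boundedness of each $U(\alpha_i)$ in $(X,d)$ to that of $U_{\alpha_i}$ in $(C(X),H)$ is the one genuinely nontrivial ingredient and should be proved (or cited) as the total boundedness of the hyperspace. A final routine matter is keeping the nets inside $U$ to match the stated definition, which is harmless because an ambient $\varepsilon$-net can always be thinned to a $2\varepsilon$-net drawn from the set itself.
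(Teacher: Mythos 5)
Your proposal is correct in substance, and a comparison with ``the paper's own proof'' is not really possible here: this paper does not prove Theorem \ref{tbfegn} at all, but imports it from \cite{huang19c} and uses it (together with Theorem \ref{tcn}) to derive the $d_p$-characterizations. Your argument is therefore a self-contained replacement, and its two pillars are sound. The bridging lemma (if $1/N<\varepsilon$ and $H([u]_{i/N},[v]_{i/N})<\varepsilon$ for $i=1,\dots,N$, then $H_{\rm end}(u,v)\le 2\varepsilon$) correctly exploits the two special features of the endograph metric: points of height $t\le 1/N$ can be matched to $(x,0)\in{\rm end}\,v$ at cost $t$, which is exactly why no hypothesis at level $0$ is needed and why the theorem can quantify only over $\al\in(0,1]$; and the forward direction correctly converts an $H_{\rm end}$-net into a covering of $U(\al)$ by a $\delta$-neighbourhood of the compact set $K=\bigcup_i [u_i]_{\al-\delta}$, using $\al-\delta>0$ to keep compactness. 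The backward direction's genuinely nontrivial ingredient --- total boundedness of $U_{\al_i}$ in $(K(X),H)$ from total boundedness of $U(\al_i)$ in $(X,d)$, via the assignment $C\mapsto F_C=\{z_j: d(z_j,C)<\varepsilon\}$ into subsets of a finite net --- is the classical hyperspace argument and is carried out correctly, including the representative-picking that keeps the final $\eta$-net inside $U$.

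Two small repairs are needed, neither a genuine gap. First, the clause ``$K$ is totally bounded, hence so is its $\delta$-neighbourhood'' is false in a general metric space: take $X$ infinite with the discrete metric, $K$ a singleton, $\delta=2$; then $B(K,\delta)=X$ is not totally bounded. Your conclusion survives only because $\delta\in(0,\al)$ is arbitrary in your setup: given $\eta>0$, choose $\delta=\min\{\eta/2,\al/2\}$, cover the compact $K$ by finitely many $\eta/2$-balls, and then $U(\al)\subseteq B(K,\delta)$ is covered by finitely many $\eta$-balls, whose centres you thin into $U(\al)$ as you note. State it this way rather than via the false neighbourhood claim. Second, the triangle inequality yields $H([u]_{\al_i},[u_\phi]_{\al_i})\le 2\varepsilon$, not $<2\varepsilon$, while your lemma is stated with strict hypotheses; either restate the lemma with non-strict inequalities (its proof goes through, giving $H_{\rm end}\le 2\varepsilon'+1/N$ after an infimum argument in the choice of $y$) or apply it at scale $2\varepsilon+\theta$ and let $\theta\to 0$; since your final bound is really $3\varepsilon<4\varepsilon=\eta$, there is ample slack.
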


\begin{tm} \cite{huang19c} \label{rcfegn}
  Let $U$ be a subset of $F^1_{USCG} (X)$. Then $U$ is relatively compact in $(F^1_{USCG} (X), H_{\rm end})$
if and only if
$U(\al)$
is relatively compact in $(X, d)$ for each $\al \in (0,1]$.
\end{tm}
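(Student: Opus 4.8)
The plan is to use the sequential characterization of relative compactness recalled above: in a metric space a set is relatively compact precisely when every sequence in it admits a subsequence converging to a point of the ambient space. I would prove the two implications separately, using Theorem \ref{aec} as the bridge between $H_{\rm end}$-convergence and almost-everywhere convergence of the level cuts.

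For the necessity, suppose $U$ is relatively compact in $(F^1_{USCG}(X), H_{\rm end})$ and fix $\alpha \in (0,1]$. Given any sequence $\{x_m\}$ in $U(\alpha)$, I would choose $u_m \in U$ with $x_m \in [u_m]_\alpha$; relative compactness then yields a subsequence $u_{m_j} \to u$ in $H_{\rm end}$ with $u \in F^1_{USCG}(X)$. By Theorem \ref{aec} we have $H([u_{m_j}]_\beta, [u]_\beta) \to 0$ for almost every $\beta$, so I can pick a single level $\beta \in (0,\alpha)$ lying in this full-measure set. Since the cuts are nested, $x_{m_j} \in [u_{m_j}]_\alpha \subseteq [u_{m_j}]_\beta$, whence $d(x_{m_j}, [u]_\beta) \le H^*([u_{m_j}]_\beta, [u]_\beta) \to 0$. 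As $\beta > 0$, the set $[u]_\beta$ is compact, so $\{x_{m_j}\}$ has a further subsequence converging in $X$; this shows $U(\alpha)$ is relatively compact.

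For the sufficiency, assume each $U(\alpha)$ is relatively compact, hence totally bounded, so that $U$ is totally bounded in $(F^1_{USCG}(X),H_{\rm end})$ by Theorem \ref{tbfegn}; the real work is to produce an actual limit. Given a sequence $\{u_n\}$ in $U$, I would fix a countable dense set $D \subseteq (0,1)$, observe that for each $r \in D$ the cuts $[u_n]_r$ all lie in the compact set $\overline{U(r)}^{X}$, and use the compactness of the hyperspace of compact subsets of a compact metric space together with a diagonal argument to extract a subsequence $\{u_{n_k}\}$ with $H([u_{n_k}]_r, A_r) \to 0$ for every $r \in D$, where each $A_r \in K(X)$. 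Passing inclusions to Hausdorff limits shows $\{A_r\}$ is decreasing, so I would set $[u]_\alpha := \bigcap_{r \in D,\, r<\alpha} A_r$ and obtain a fuzzy set $u$ via Theorem \ref{rep}. Membership in $F^1_{USCG}(X)$ would then follow because the cuts are compact (intersections of nested compacta) and nonempty, the normality $[u]_1 \neq \emptyset$ coming from points $z_n \in [u_n]_1 \subseteq U(1)$ and the relative compactness of $U(1)$. It remains to verify $H_{\rm end}(u_{n_k}, u) \to 0$ through Theorem \ref{aec}, i.e. that $H([u_{n_k}]_\alpha, [u]_\alpha) \to 0$ for almost every $\alpha$.

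The main obstacle is precisely this last almost-everywhere statement. For a fixed $\alpha$, choosing $r,s \in D$ with $r<\alpha<s$ squeezes $[u_{n_k}]_s \subseteq [u_{n_k}]_\alpha \subseteq [u_{n_k}]_r$, and Proposition \ref{cmer} controls how $A_r$ and $A_s$ approach the left and right limits $\bigcap_{r<\alpha}A_r$ and $\overline{\bigcup_{s>\alpha}A_s}$ as $r,s \to \alpha$. The squeeze forces convergence at every $\alpha$ where these two limits coincide, so I must show that the set of \emph{jump levels} where they differ is at most countable, hence null. I expect to reduce this to the classical fact that a bounded monotone real function has only countably many discontinuities: fixing a finite Borel measure $\nu$ of full support on a fixed ambient compactum, the map $\alpha \mapsto \int d(x, [u]_\alpha)\,d\nu(x)$ is monotone and bounded, and its points of discontinuity coincide with the jump levels. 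This counting step, rather than the algebra of the cuts, is where the genuine care is required.
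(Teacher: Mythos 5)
The paper itself contains no proof of this statement: Theorem \ref{rcfegn} is imported verbatim from \cite{huang19c} and is used here as a known ingredient, so there is no in-paper argument to compare yours against; I assess your proposal on its own. Both directions of your plan are sound. The necessity argument is correct as written: Theorem \ref{aec} lets you pick a single good level $\beta\in(0,\alpha)$ with $H([u_{m_j}]_\beta,[u]_\beta)\to 0$, the nesting $[u_{m_j}]_\alpha\subseteq[u_{m_j}]_\beta$ gives $d(x_{m_j},[u]_\beta)\to 0$, and compactness of $[u]_\beta$ finishes it. The sufficiency scheme --- Blaschke-type compactness of $(K(\overline{U(r)}),H)$ plus a diagonal extraction over a countable dense $D$, the representation theorem (Theorem \ref{rep}) to assemble $u$ from $[u]_\alpha=\bigcap_{r<\alpha,\,r\in D}A_r$, and the squeeze via Proposition \ref{cmer} at levels where the one-sided limits $A_{\alpha-}=\bigcap_{r<\alpha}A_r$ and $A_{\alpha+}=\overline{\bigcup_{s>\alpha}A_s}$ agree --- is a complete and correct route, and you correctly identified the countability of the jump set as the crux.

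Two points in that last step need tightening, though neither is fatal. First, there is in general no single ``fixed ambient compactum'': only each $\overline{U(r)}$, $r>0$, is compact, and nothing forces $\overline{\bigcup_{\alpha>0}U(\alpha)}$ to be compact. So run your counting argument separately on each interval $[1/n,1]$, inside $Z_n:=\overline{U(1/(2n))}$, which contains $A_r$ and $[u]_\alpha$ for all $r,\alpha\geq 1/n$, and take the countable union of the resulting jump sets. Second, you should justify the existence of a full-support finite Borel measure on $Z_n$ (e.g.\ $\nu=\sum_i 2^{-i}\delta_{z_i}$ for a countable dense $\{z_i\}\subseteq Z_n$); alternatively you can avoid measures altogether, since the countably many bounded monotone functions $\alpha\mapsto d(z_i,[u]_\alpha)$ each have at most countably many discontinuities, and every jump level is a discontinuity of at least one of them (by density, if all these functions are continuous at $\alpha$ then $[u]_\alpha\subseteq A_{\alpha+}$, forcing $A_{\alpha-}=A_{\alpha+}$). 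Note also that only the implication ``jump $\Rightarrow$ discontinuity'' is needed, so your claim that the two sets coincide is a harmless overstatement, and that normality of $u$ is immediate because $[u]_1=\bigcap_{r\in D}A_r$ is a nested intersection of nonempty compacta --- the detour through points $z_n\in[u_n]_1$ is unnecessary.
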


\begin{tm} \cite{huang19c} \label{cfeg}
  Let $U$ be a subset of $F^1_{USCG} (X)$. Then the following are equivalent:
\begin{enumerate}
\renewcommand{\labelenumi}{(\roman{enumi})}

\item
 $U$ is compact in $(F^1_{USCG} (X), H_{\rm end})$;

\item  $U(\al)$
is relatively compact in $(X, d)$ for each $\al \in (0,1]$ and $U$ is closed in $(F^1_{USCG} (X), H_{\rm end})$;

\item  $U(\al)$
is compact in $(X, d)$ for each $\al \in (0,1]$ and $U$ is closed in $(F^1_{USCG} (X), H_{\rm end})$.
\end{enumerate}

\end{tm}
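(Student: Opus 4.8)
The plan is to prove the three equivalences by reducing the compactness question to the already-established characterization of relative compactness (Theorem \ref{rcfegn}) together with the elementary fact that, in a metric space, a subset is compact if and only if it is both relatively compact and closed. Since $H_{\rm end}$ is a metric on $F^1_{USC}(X)$, the space $(F^1_{USCG} (X), H_{\rm end})$ is a metric space and this elementary fact is available throughout.

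First I would dispose of the easy directions. The implication (iii)$\Rightarrow$(ii) is immediate, since a compact subset of $(X,d)$ is relatively compact. For (ii)$\Rightarrow$(i): the hypothesis that $U(\al)$ is relatively compact in $(X,d)$ for every $\al\in(0,1]$ is, by Theorem \ref{rcfegn}, exactly the statement that $U$ is relatively compact in $(F^1_{USCG} (X), H_{\rm end})$; combined with the hypothesis that $U$ is closed, the elementary fact gives that $U$ is compact.

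The substance of the proof is (i)$\Rightarrow$(iii). Assuming $U$ compact, it is closed (metric space) and relatively compact, so Theorem \ref{rcfegn} already yields that each $U(\al)$ is relatively compact; it remains to upgrade this to compactness, i.e.\ to show each $U(\al)$ is closed in $(X,d)$. Here I would fix $\al\in(0,1]$ and take $x\in\overline{U(\al)}$, say $x_n\to x$ with $x_n\in[u_n]_\al$, $u_n\in U$. By compactness of $U$ I pass to a subsequence with $H_{\rm end}(u_{n_k},u)\to 0$ for some $u\in U$. By Theorem \ref{aec} we have $H([u_{n_k}]_\beta,[u]_\beta)\to 0$ for a.e.\ $\beta\in(0,1)$, so I can choose levels $\beta_j\uparrow\al$, each avoiding the (measure-zero) exceptional set. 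For every fixed $j$, the monotonicity $x_{n_k}\in[u_{n_k}]_\al\subseteq[u_{n_k}]_{\beta_j}$ gives $d(x_{n_k},[u]_{\beta_j})\leq H^*([u_{n_k}]_{\beta_j},[u]_{\beta_j})\leq H([u_{n_k}]_{\beta_j},[u]_{\beta_j})\to 0$, whence $d(x,[u]_{\beta_j})=0$ and so $x\in[u]_{\beta_j}$, since $[u]_{\beta_j}$ is closed. Letting $j\to\infty$ and using the intersection representation $[u]_\al=\bigcap_{\beta<\al}[u]_\beta=\bigcap_j[u]_{\beta_j}$ (Theorem \ref{rep}), I obtain $x\in[u]_\al\subseteq U(\al)$. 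Thus $U(\al)$ is closed, hence compact, and (iii) holds.

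The main obstacle is precisely this last step: Theorem \ref{aec} only delivers convergence of the cut sets $H([u_{n_k}]_\beta,[u]_\beta)$ at almost every level $\beta$, not at the single prescribed level $\al$. The device that circumvents it is to use nearby ``good'' levels $\beta_j<\al$ where convergence does hold, and then recover the level-$\al$ conclusion from the left-continuity/intersection structure $[u]_\al=\bigcap_{\beta<\al}[u]_\beta$; the inclusion $[u_{n_k}]_\al\subseteq[u_{n_k}]_{\beta_j}$ is what transfers the limit point $x$ down to each good level. Everything else is routine.
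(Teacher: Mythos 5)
Your proposal is correct. Note that the paper itself gives no proof of this statement --- it is quoted from \cite{huang19c} as background --- so the only comparison available is with the machinery the paper supplies, and your argument uses exactly that machinery in the natural way: the easy implications (iii)$\Rightarrow$(ii)$\Rightarrow$(i) via Theorem \ref{rcfegn} and the metric-space fact that closed plus relatively compact equals compact, and the substantive implication (i)$\Rightarrow$(iii) reduced to closedness of each $U(\al)$. Your handling of the one delicate point is sound: since Theorem \ref{aec} only gives $H([u_{n_k}]_\beta,[u]_\beta)\to 0$ at almost every level, you correctly route around the prescribed level $\al$ by picking good levels $\beta_j\uparrow\al$ in the full-measure set, transferring the limit point $x$ into each $[u]_{\beta_j}$ (closed since $u\in F^1_{USCG}(X)$ has compact cuts) via $[u_{n_k}]_\al\subseteq[u_{n_k}]_{\beta_j}$ and $d(x,[u]_{\beta_j})\leq d(x,x_{n_k})+H^*([u_{n_k}]_{\beta_j},[u]_{\beta_j})$, and then concluding $x\in[u]_\al=\bigcap_j [u]_{\beta_j}$ from the representation theorem; this also covers $\al=1$, since only levels strictly below $\al$ are needed.
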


\begin{tm} \cite{huang19c} \label{fgecom}
 $(F^1_{USCG} (\widetilde{X}), H_{\rm end})$ is a completion of $(F^1_{USCG} (X), H_{\rm end})$.

\end{tm}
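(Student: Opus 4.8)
The plan is to treat $\widetilde X$ as a complete metric space in which $(X,d)$ sits as a dense subspace (identifying $X$ with its canonical isometric image) and to realise the completion through the extension operator $u\mapsto u^{\widetilde X}$ defined by \eqref{uyn}. First I would check that $\Phi(u):=u^{\widetilde X}$ maps $F^1_{USCG}(X)$ into $F^1_{USCG}(\widetilde X)$: this is exactly Proposition \ref{egcn}(\rmn6-2), and moreover Proposition \ref{egcn}(\rmn5) gives $[u^{\widetilde X}]_\al=[u]_\al$ for every $\al\in(0,1]$. The key point of this first step is that $\Phi$ is an isometry for $H_{\rm end}$. Since the cuts of $u^{\widetilde X}$ and $u$ coincide at every positive level and differ only by a closure at level $0$, a short argument using $[u^{\widetilde X}]_t=\cap_{\beta<t}\overline{[u]_\beta}^{\widetilde X}$ shows ${\rm end}\,u^{\widetilde X}=\overline{{\rm end}\,u}$, the closure being taken in $\widetilde X\times[0,1]$ (the completion of $X\times[0,1]$). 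Because the Hausdorff distance of two sets equals that of their closures, and because the Hausdorff distance computed in a subspace agrees with the one computed in the ambient space (the remark preceding Theorem \ref{crm}), I obtain
\[
H_{\rm end}(u^{\widetilde X},v^{\widetilde X})=H_{\widetilde X\times[0,1]}(\overline{{\rm end}\,u},\overline{{\rm end}\,v})=H_{X\times[0,1]}({\rm end}\,u,{\rm end}\,v)=H_{\rm end}(u,v),
\]
so $\Phi$ is an isometric embedding.

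Second I would prove that $\Phi(F^1_{USCG}(X))$ is dense in $(F^1_{USCG}(\widetilde X),H_{\rm end})$. Fix $w\in F^1_{USCG}(\widetilde X)$ and $\varepsilon>0$. By Theorem \ref{aec} it suffices to produce $u\in F^1_{USCG}(X)$ with $H([u]_\al,[w]_\al)$ small for a.e.\ $\al$. I would first replace $w$ by a ``staircase'' fuzzy set $w'$ having only finitely many distinct compact cuts $K_1\supseteq\cdots\supseteq K_m$ in $\widetilde X$, taken at the levels $1/m,2/m,\dots,1$; the left continuity of the cut map (Proposition \ref{gnc}(\rmn1)) together with Theorem \ref{aec} guarantees $H_{\rm end}(w',w)$ is small. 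Each compact $K_i\subseteq\widetilde X$ admits a finite $\varepsilon$-net, and since $X$ is dense in $\widetilde X$ this net may be chosen inside $X$, giving finite $F_i\subseteq X$ with $H(F_i,K_i)\le\varepsilon$. To keep the cuts nested I would set $G_i:=F_i\cup F_{i+1}\cup\cdots\cup F_m$, so that $G_1\supseteq\cdots\supseteq G_m$ are finite subsets of $X$ still within a controlled Hausdorff distance of the $K_i$. Defining $u$ by $[u]_\al:=G_i$ on the appropriate level bands yields $u\in F^1_{USCG}(X)$ with $H([u]_\al,[w']_\al)$ small for all $\al$, and combining the two approximations through the isometry of the first step gives $H_{\rm end}(u^{\widetilde X},w)$ as small as desired.

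Finally I would establish that $(F^1_{USCG}(\widetilde X),H_{\rm end})$ is itself complete. Given an $H_{\rm end}$-Cauchy sequence $\{w_n\}$, I would pass to the level sets: using the relation between $H_{\rm end}$-convergence and a.e.\ convergence of cuts (Theorem \ref{aec}) together with the Cauchy hypothesis, I would show that for a.e.\ $\al$ the sequence $\{[w_n]_\al\}$ is Cauchy in $(K(\widetilde X),H)$. Since $\widetilde X$ is complete, $(K(\widetilde X),H)$ is complete, so these cuts converge to compact sets $K_\al\subseteq\widetilde X$; I would then assemble a limit fuzzy set $w$ from $\{K_\al\}$ via the representation Theorem \ref{rep} and Corollary \ref{repc}, after regularising so that $K_\al=\cap_{\beta<\al}K_\beta$ (using Proposition \ref{cmer} to control the nested and monotone limits), and verify $w\in F^1_{USCG}(\widetilde X)$ and $H_{\rm end}(w_n,w)\to0$. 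Once isometry, density and completeness are in hand, $(F^1_{USCG}(\widetilde X),H_{\rm end})$ is a completion of $(F^1_{USCG}(X),H_{\rm end})$. The hard part will be this completeness step: a Cauchy sequence in $H_{\rm end}$ only controls the cuts up to an a.e.\ and subsequential level, so the real work is to pin down a single coherent family $\{K_\al\}$ that genuinely forms the $\al$-cuts of a normal upper semicontinuous fuzzy set with compact cuts — and it is precisely here that the completeness of $\widetilde X$ is indispensable, guaranteeing that the limiting cuts are compact rather than merely closed. The density construction is the second most delicate point, the subtlety being the enforcement of nestedness of the approximating finite cuts while keeping them $\varepsilon$-close to the target.
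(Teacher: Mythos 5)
The paper never proves this statement itself --- Theorem \ref{fgecom} is imported from \cite{huang19c} without proof (its completeness ingredient is the ``Theorem 6.1 in \cite{huang19c}'' invoked later in the proof of Theorem \ref{scp}) --- so your proposal has to be judged on its own merits.

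Your first two steps are essentially sound. The isometry via ${\rm end}\,u^{\widetilde X}=\overline{{\rm end}\,u}$ is correct, and it is consistent with Section 7 of the paper, where the zero-extension $f(u)$ --- which coincides with $u^{\widetilde X}$ for $u\in F^1_{USCG}(X)$ by Proposition \ref{egcn}(\rmn5) and Proposition \ref{ace}(\rmn2) --- is asserted to preserve $H_{\rm end}$. The density argument (staircase approximation plus finite $\varepsilon$-nets pushed into the dense subset $X$, with the union trick to restore nestedness) also works, with one small slip: your staircase uses the cut at the \emph{right} endpoint $i/m\geq\al$ of each band, so Proposition \ref{gnc}(\rmn1) (left continuity) is not the relevant fact; you need the right-limit statement \ref{gnc}(\rmn2) together with the observation that a nested family of compact cuts has at most countably many $H$-discontinuities, or else use left endpoints while avoiding the band $(0,1/m]$, where $[w]_0$ need not be compact. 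Since Theorem \ref{aec} only requires a.e.\ convergence, this is fixable.

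The genuine gap is the completeness step. You propose to deduce, ``using Theorem \ref{aec} together with the Cauchy hypothesis,'' that for a.e.\ $\al$ the cuts $\{[w_n]_\al\}$ are Cauchy in $(K(\widetilde X),H)$. But Theorem \ref{aec} presupposes a limit $u\in F^1_{USCG}$ and says nothing about Cauchy sequences, so it cannot launch this step: the reasoning is circular exactly where you yourself flag ``the hard part.'' Moreover levelwise Cauchyness genuinely fails at individual levels: take $w_n\in F^1_{USCG}(\mathbb{R})$ with $[w_n]_\al=[0,1]$ for $\al\leq \beta_n:=1/2+(-1)^n/n$ and $[w_n]_\al=\{0\}$ for $\al>\beta_n$; then $H_{\rm end}(w_n,w_m)\leq|\beta_n-\beta_m|$, so the sequence is $H_{\rm end}$-Cauchy, while $\{[w_n]_{1/2}\}$ oscillates between $[0,1]$ and $\{0\}$. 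To get an a.e.\ statement for the whole sequence you would need a quantitative bound on the measure of $\{\al: H([w_n]_\al,[w_m]_\al)>\eta\}$ in terms of $H_{\rm end}(w_n,w_m)$, which you neither state nor prove and which is not obviously available uniformly in $n,m$. The standard way to close this step avoids levels entirely: endographs are nonempty closed subsets of the complete metric space $\widetilde X\times[0,1]$, and the Hausdorff (extended) metric on the nonempty closed subsets of a complete space is complete, so ${\rm end}\,w_n$ converges in $H$ to a closed set $E$; one then checks $E$ is an endograph of some $w\in F^1_{USCG}(\widetilde X)$ --- the stacked property ($(x,t)\in E$ and $s<t$ imply $(x,s)\in E$) and the relation $[w]_\al=\cap_{\beta<\al}[w]_\beta$ pass to Hausdorff limits because $E$ is closed, while normality and compactness of the positive cuts follow from the estimate that $H_{\rm end}(w_n,w_m)<\varepsilon$ forces $[w_n]_\al$ into the $\varepsilon$-neighbourhood of the compact set $[w_m]_{\al-\varepsilon}$, giving uniform total boundedness, and then completeness of $\widetilde X$ yields compact limit cuts. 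This is where completeness of $\widetilde X$ really enters --- through total boundedness of cuts --- rather than through a level-by-level application of completeness of $(K(\widetilde X),H)$ as your sketch envisages.
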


The relationship between the $d_p$ metric and the $H_{\rm end}$ metric
given in Theorem \ref{dpeu} (see the illustrations in Remark \ref{pse})
will be used repeatedly in the sequel.

\begin{lm}
 \label{uplc}
If  $u\in F^1_{USCG}   (  X )^p$,
 then
  $u$ is $p$-mean left continuous.

\end{lm}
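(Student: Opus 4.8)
The plan is to establish the equivalent statement that
\[
\left(\int_h^1 H([u]_\al,[u]_{\al-h})^p\,d\al\right)^{1/p}\longrightarrow 0\quad\text{as }h\to 0+,
\]
which together with the trivial value $0$ at $h=0$ yields $p$-mean left continuity. Fix $x_0\in X$ and write $g(\al):=H([u]_\al,\{x_0\})$. Since $u\in F^1_{USCG}(X)^p$, by definition $\int_0^1 g(\al)^p\,d\al<+\infty$; by Proposition \ref{gmn} the function $g$ is decreasing on $[0,1]$, and it is finite on $(0,1]$ because each $[u]_\al\in K(X)$ is bounded. By Proposition \ref{lcpmcu} all the integrals below are well-defined. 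The essential pointwise input is Proposition \ref{gnc}(\romannumeral1): for each $\al\in(0,1]$, $H([u]_{\al-h},[u]_\al)\to 0$ as $h\to 0+$.

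First I would record the triangle inequality bound: since $[u]_\al,[u]_{\al-h},\{x_0\}\in C(X)$, the triangle inequality for the Hausdorff metric (Remark \ref{hme}) gives, for $\al\in[h,1]$,
\[
H([u]_\al,[u]_{\al-h})\le g(\al)+g(\al-h).
\]
Then I would split $\int_h^1$ at a small threshold $\eta\in(0,1)$. On the tail $[h,\eta]$, using $(a+b)^p\le 2^{p-1}(a^p+b^p)$ (convexity, $p\ge 1$) and the substitution $\beta=\al-h$,
\[
\int_h^\eta H([u]_\al,[u]_{\al-h})^p\,d\al\le 2^{p-1}\Big(\int_0^\eta g^p+\int_0^{\eta-h} g^p\Big)\le 2^p\int_0^\eta g(\beta)^p\,d\beta,
\]
which can be made smaller than any prescribed $\varepsilon^p/2$ by choosing $\eta$ small, via the absolute continuity of the Lebesgue integral; this is exactly where the hypothesis $\int_0^1 g^p<+\infty$, i.e.\ $u\in F^1_{USCG}(X)^p$, is used. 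On $[\eta,1]$, for $0<h<\eta/2$ we have $\al-h\ge\eta/2$, so monotonicity of $g$ gives $H([u]_\al,[u]_{\al-h})\le 2g(\eta/2)$, a finite constant independent of $h$; combined with the pointwise convergence to $0$ from Proposition \ref{gnc}(\romannumeral1), the bounded convergence theorem on the finite-measure set $[\eta,1]$ forces $\int_\eta^1 H([u]_\al,[u]_{\al-h})^p\,d\al\to 0$, hence this is $<\varepsilon^p/2$ for all sufficiently small $h$. Adding the two estimates produces a $\delta\le\eta/2$ with $\int_h^1 H([u]_\al,[u]_{\al-h})^p\,d\al<\varepsilon^p$ for all $0\le h<\delta$, as required.

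The main obstacle is the behaviour near $\al=0$: the function $g=H([u]_\cdot,\{x_0\})$ may be unbounded there, since $[u]_0$ need not be compact for $u\in F^1_{USCG}(X)$, so no single constant dominates the integrand on all of $[h,1]$ and a naive dominated-convergence argument fails. This is precisely why the integral must be split: the uniform smallness of $\int_0^\eta g^p$ guaranteed by $u\in F^1_{USCG}(X)^p$ controls the tail, while an elementary bounded-convergence argument handles the part bounded away from $0$.
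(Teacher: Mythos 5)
Your proof is correct and follows essentially the same route as the paper's, which simply invokes the proof of Lemma 4.3 in \cite{huang97} with $\{\mathbf{0}\}$ replaced by $\{x_0\}$: there, too, one splits the integral at a small threshold $\eta$, controls the part near $\al=0$ by the integrability of $H([u]_\al,\{x_0\})^p$ (absolute continuity of the integral, i.e.\ the defining property of $F^1_{USCG}(X)^p$), and handles $[\eta,1]$ by the left continuity from Proposition \ref{gnc}(\romannumeral1) together with a dominated/bounded convergence argument. Your closing observation about why a single dominating constant fails near $\al=0$ is precisely the reason for the split in that proof, so nothing further is needed.
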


\begin{proof}
  The desired result can be proved in a similar fashion to Lemma 4.3 in \cite{huang97} by replacing $\{\mathbf{0}\}$ with $\{x_0\}$, where $\mathbf{0}$ denotes the point $(\str{m}{\overbrace{0,\ldots,0}})$ in $\mathbb{R}^m$ and $x_0$ is a point in $X$.

\end{proof}

\begin{tm}
\label{pcn}
Let $U$ be a subset of $F^1_{USCG}   ( X )^p$. Then
 $U$ is a relatively compact set in $(F^1_{USCG}   ( X )^p,   d_p)$
 if and only if
 \\
 (\romannumeral1) \ $U$ is a relatively compact set in $(F^1_{USCG}  (X),  H_{\rm end})$, and
 \\
(\romannumeral2) \ $U$ is $p$-mean equi-left-continuous.
\end{tm}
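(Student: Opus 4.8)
The plan is to prove the two implications separately, with almost all of the effort going into the ``if'' (sufficiency) direction. Throughout, every cut of every fuzzy set in sight lies in $K(X)$, so by Proposition \ref{lcpm} all the functions $\al\mapsto H([w]_\al,[w']_\al)$ are measurable and every $d_p$ below is well-defined; I will use the triangle inequality of $H$, Minkowski's inequality in $L^p$, and the translation invariance of Lebesgue measure without comment.

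\emph{Necessity.} Assume $U$ is relatively compact in $(F^1_{USCG}(X)^p,d_p)$. For (\romannumeral1), any sequence in $U$ has a subsequence with $d_p(u_{n_k},u)\to0$ for some $u$ in the $d_p$-closure, hence $u\in F^1_{USCG}(X)^p\subseteq F^1_{USCG}(X)$; since $d_p=d_p^*$ here, Theorem \ref{dpeu}(\romannumeral2) gives $H_{\rm end}(u_{n_k},u)\to0$, so $U$ is relatively compact in $(F^1_{USCG}(X),H_{\rm end})$. For (\romannumeral2), relative compactness forces total boundedness: given $\varepsilon>0$ take a finite $\varepsilon$-net $u_1,\dots,u_M\in U$, each of which is $p$-mean left continuous by Lemma \ref{uplc}. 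For $u\in U$ pick $u_i$ with $d_p(u,u_i)<\varepsilon$; splitting $H([u]_\al,[u]_{\al-h})\le H([u]_\al,[u_i]_\al)+H([u_i]_\al,[u_i]_{\al-h})+H([u_i]_{\al-h},[u]_{\al-h})$ and taking $L^p[h,1]$-norms, with the substitution $\beta=\al-h$ in the last term, yields
\[
\Big(\int_h^1 H([u]_\al,[u]_{\al-h})^p\,d\al\Big)^{1/p}\le d_p(u,u_i)+\Big(\int_h^1 H([u_i]_\al,[u_i]_{\al-h})^p\,d\al\Big)^{1/p}+d_p(u,u_i).
\]
Setting $\delta:=\min_i\delta(u_i,\varepsilon)$ makes the middle term $<\varepsilon$ for all $h<\delta$, so the left side is $<3\varepsilon$ uniformly in $u\in U$; thus $U$ is $p$-mean equi-left-continuous.

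\emph{Sufficiency: total boundedness.} Assume (\romannumeral1) and (\romannumeral2). By (\romannumeral1) and Theorem \ref{rcfegn}, each $U(\al)$ is relatively compact, hence bounded, in $(X,d)$; fixing any $h\in(0,1)$ and combining with (\romannumeral2) through Theorem \ref{pbe} shows $U$ is uniformly $p$-mean bounded, say $d_p(u,\widehat{x_0})\le M$ for all $u\in U$. Now fix $\varepsilon>0$ and, by (\romannumeral2), choose $h<\delta$ with $N:=1/h\in\mathbb N$ and $\sup_{u\in U}\int_h^1 H([u]_\al,[u]_{\al-h})^p\,d\al<\varepsilon^p$. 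For $w\in U$ let $\tilde w$ be the step fuzzy set with $[\tilde w]_\al=[w]_{(j-1)h}$ for $\al\in((j-1)h,jh]$, $j\ge2$, and $[\tilde w]_\al=[w]_h$ for $\al\in(0,h]$. Using the nestings $[w]_{\al-h}\supseteq[w]_{(j-1)h}\supseteq[w]_\al$ (for $j\ge2$) and $[w]_\al\supseteq[w]_h\supseteq[w]_{\al+h}$ (for $\al\in(0,h]$) together with the elementary monotonicity of the Hausdorff distance under nesting ($H(C,B)\le H(C,A)$ and $H(A,B)\le H(A,C)$ whenever $A\supseteq B\supseteq C$), the approximation error is dominated on $(h,1]$ pointwise by $H([w]_\al,[w]_{\al-h})^p$, while on the first sub-interval
\[
\int_0^h H([w]_\al,[w]_h)^p\,d\al\le\int_0^h H([w]_\al,[w]_{\al+h})^p\,d\al=\int_h^{2h} H([w]_\al,[w]_{\al-h})^p\,d\al .
\]
Hence $d_p(w,\tilde w)^p\le2\varepsilon^p$ uniformly in $w$. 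Since each $U(jh)$ is relatively compact, the families $U_{jh}=\{[w]_{jh}:w\in U\}$ are totally bounded in $(K(X),H)$ (cover $\overline{U(jh)}$ by a finite set and take all of its subsets as Hausdorff centers); covering each $U_{jh}$, $1\le j\le N-1$, by finitely many $H$-balls and grouping the $w$'s by which balls contain $[w]_h,\dots,[w]_{(N-1)h}$ gives finitely many classes, on each of which $d_p(\tilde w,\tilde w')$ is small, hence $d_p(w,w')$ is small. This produces a finite $d_p$-net, so $U$ is totally bounded in $d_p$.

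\emph{Sufficiency: from total boundedness to relative compactness, and the main obstacle.} Given a sequence in $U$, total boundedness yields a $d_p$-Cauchy subsequence, and (\romannumeral1) lets me pass to a further subsequence with $H_{\rm end}(u_{n_k},u)\to0$, $u\in F^1_{USCG}(X)$. By Theorem \ref{aec}, $H([u_{n_k}]_\al,[u]_\al)\to0$ a.e. on $(0,1)$; since $|H([u_{n_k}]_\al,\{x_0\})-H([u]_\al,\{x_0\})|\le H([u_{n_k}]_\al,[u]_\al)$, Fatou's lemma with the uniform bound $M$ gives $\int_0^1 H([u]_\al,\{x_0\})^p\,d\al\le M^p$, so $u\in F^1_{USCG}(X)^p$. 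Because $H([u_{n_k}]_\al,[u_{n_l}]_\al)\to H([u_{n_k}]_\al,[u]_\al)$ a.e. as $l\to\infty$, Fatou again gives $d_p(u_{n_k},u)^p\le\liminf_l d_p(u_{n_k},u_{n_l})^p$, which tends to $0$ by the Cauchy property; thus $d_p(u_{n_k},u)\to0$ with $u\in F^1_{USCG}(X)^p$, establishing relative compactness. The principal difficulty is the uniform control of the step-approximation error, and specifically the boundary contribution on $(0,h]$, where no downward shift below $0$ is available; this is exactly what the monotonicity of $H$ under nesting resolves, by dominating that term with $\int_h^{2h}H([w]_\al,[w]_{\al-h})^p\,d\al$ and hence by the $p$-mean equi-left-continuity modulus. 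The secondary subtlety — that total boundedness need not give relative compactness in a possibly incomplete space — is handled by importing the candidate limit from the $H_{\rm end}$-relative compactness and closing the argument with Fatou.
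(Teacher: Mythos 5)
Your proof is correct, and while your necessity half coincides with the paper's (Theorem \ref{dpeu} for (\romannumeral1), and the same finite $\varepsilon/3$-net plus Lemma \ref{uplc} triangle-inequality argument for (\romannumeral2)), your sufficiency half takes a genuinely different route. The paper extracts an $H_{\rm end}$-convergent subsequence via (\romannumeral1) and Theorem \ref{aec}, then proves $d_p$-convergence directly: on $[h,1]$ by Lebesgue dominated convergence with the bound $M(h)$ coming from compactness of $\overline{U(h/2)}$ (Theorem \ref{rcfegn}), near $\al=0$ by the equi-left-continuity, with the details outsourced to the proof of Theorem 4.1 in \cite{huang97}; membership $v\in F^1_{USCG}(X)^p$ is a separate third step. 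You instead first prove $d_p$-total boundedness outright, via the step fuzzy sets $\tilde w$ whose error is pointwise dominated by $H([w]_\al,[w]_{\al-h})$ on $(h,1]$ and --- the key boundary observation --- by $H([w]_\al,[w]_{\al+h})$ on $(0,h]$ through the nesting monotonicity of $H$, followed by a finite covering of the families $U_{jh}$ in $(K(X),H)$; you then close with a $d_p$-Cauchy subsequence, the $H_{\rm end}$-limit supplied by (\romannumeral1), and two applications of Fatou's lemma (one giving $u\in F^1_{USCG}(X)^p$ via the uniform $p$-mean bound from Theorem \ref{pbe}, one giving $d_p(u_{n_k},u)\to0$ from the Cauchy property). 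What your route buys: it is self-contained, it needs no domination function on all of $[0,1]$ (Fatou replaces dominated convergence), and it inverts the paper's logical order --- the paper deduces total boundedness (Theorem \ref{tcn}) from Theorem \ref{pcn} by passing to the completion $(F^1_{USCG}(\widetilde X),H_{\rm end})$ via Theorem \ref{fgecom}, whereas your step-approximation proves the total-boundedness characterization directly, without any completeness hypothesis. What the paper's route buys is brevity, since the Step 2 machinery was already established in \cite{huang97} for $\mathbb{R}^m$. Two small points you should make explicit, both trivially arrangeable: the substitution $\int_0^h H([w]_\al,[w]_h)^p\,d\al\le\int_h^{2h}H([w]_\al,[w]_{\al-h})^p\,d\al$ requires $2h\le1$, i.e.\ $N\ge2$, so shrink $h$ if necessary; and the finitely many classes in your covering argument should be represented by points chosen inside $U$, to match the paper's definition of a finite $\varepsilon$-approximation as a subset of $U$.
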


\begin{proof}
We prove the ``only if'' part of the theorem. \ Assume that $U$ is a relatively compact set in $(F^1_{USCG}   ( X )^p,   d_p)$.
By Theorem \ref{dpeu},
(\romannumeral1) is true.

Now we prove (\rmn2). Given $\varepsilon>0$. Since $U$ is a relatively compact set in $(F^1_{USCG} (X)^p,   d_p)$, there exists a finite $\varepsilon/3$ net $S$ of $U$. From Lemma \ref{uplc}, each $v\in S$ is
$p$-mean left-continuous. This means that
$S$ is $p$-mean equi-left-continuous, since $S$ is finite. Hence we
have
\\
(a) there exists $\delta\in (0,1]$ such that
$
  \left(\int_h^{1}    H([v]_\alpha, [v]_{\alpha-h} )^p \;d\alpha  \right)^{1/p}   <   \varepsilon/3
$
for all $h\in [0,\delta)$ and all $v\in S$.
\\
Given $u\in U$, there is a $v_0\in S$ such that $d_p(u, v_0)< \varepsilon/3$, and thus, by (a), we have that for all $h\in [0,\delta)$,
$
        \left(\int_h^1   H([u]_\alpha,  [u]_{\alpha-h} )^p \;d\alpha   \right)^{1/p}
  \leq      \left(\int_h^1   H([u]_\alpha,  [v_0]_{\alpha} )^p \;d\alpha   \right)^{1/p}
   +
    \left(\int_h^1   H([v_0]_\alpha,  [v_0]_{\alpha-h} )^p \;d\alpha   \right)^{1/p}
   +
    \left(\int_h^1   H([v_0]_{\alpha-h},  [u]_{\alpha-h} )^p \;d\alpha   \right)^{1/p}
      <  \varepsilon/3
  +
   \varepsilon/3
      +
      \varepsilon/3
 =\varepsilon.
$
Since $\varepsilon>0$ and $u\in U$ are arbitrary, it follows that $U$ is $p$-mean equi-left-continuous.
The necessity of (\romannumeral2) is essentially proved in
  the proof of
  Theorem 4.1 in \cite{huang97}. This proof of the necessity of (\romannumeral2) is similar to that of the necessity of the condition (\rmn2)
in
  Theorem 4.1 of \cite{huang97} (see Page 26 of \cite{huang97}).
 Note that $U$ is a totally bounded set in $(F^1_{USCG}   ( X )^p,   d_p)$.
  Thus the necessity of (\romannumeral2) follows from the necessity of (\rmn2) in Theorem
\ref{tcn}.

We prove the ``if'' part of the theorem.   The proof is similar to ``sufficient'' part of the proof of Theorem 4.1 in \cite{huang97}.
  A sketch of the proof is given as follows.

Assume that (\rmn1) and (\rmn2) are true.
To show that
 $U$ is a relatively compact set in $(F^1_{USCG}   ( X )^p,   d_p)$, let $\{u_n\}$ be a sequence in $U$. We only need
to find a subsequence $\{v_n\}$ of $\{u_n\}$ and a $v \in F^1_{USCG}   (  X )^p$
such that $\{v_n\}$ converges to $v$ according to the $d_p$ metric.
We split the proof into three steps.

\textbf{Step 1.} \ Find a subsequence $\{v_n\}$ of $\{u_n\}$
and a $v \in F^1_{USCG}   (X)$
such that $H_{\rm end} (v_n, v) \to 0$; that is, by Theorem \ref{aec},
\begin{equation}\label{vnec}
  H([v_n]_\alpha, [v]_\alpha) \str{   \rm {a.e.}   }{\longrightarrow}   0 \ (  [0,1]    ).
\end{equation}

From (\romannumeral1),
 this step can be done immediately.

\textbf{Step 2. }
Prove
that
\begin{equation}\label{vnc}
   \left(      \int_0^1  H( [v_n]_\alpha,   [v]_\alpha    )^p   \;d\alpha       \right)^{1/p}      \to 0 \mbox{ as } n\to +\infty.
\end{equation}

  Proceeding according to the
\textbf{Step 2} in the proof of Theorem 4.1 in \cite{huang97},
  we can obtain the desired result.

Here we mention one thing.
 In this proof of step 2, we need to prove the conclusion: for each $h \in (0,1]$,
 \begin{equation} \label{hdpc}
   \left(      \int_h^1  H( [v_n]_\alpha,   [v]_\alpha    )^p   \;d\alpha       \right)^{1/p}    \to 0 \mbox{ as } n\to +\infty.
\end{equation}
In the proof of the corresponding conclusion in \cite{huang97}, which is at Page 28 Line 6 from the bottom in \cite{huang97},
 there is a small mistake (or misprints).
 In the following, we give a slightly adjusted proof for the above conclusion
(Obviously, the proof of the corresponding conclusion in \cite{huang97}
can be adjusted similarly).

 Note that
 $[v_n]_h$ and $[v]_h$    are contained in $\overline{U(h/2)}$,
 which is compact in $X$ according to Theorem \ref{rcfegn}.
 Then
 there is an $M(h) \geq 0$ such that
 $$ \max \{  d(x,y):  x,y \in \overline{U(h/2)}   \} \leq  M(h). $$
Hence
   $$H( [v_n]_\alpha,   [v]_\alpha   )    \leq    M(h)$$
  for $\al\in [h, 1]$
  and $n=1,2,\ldots$.
 Combined with \eqref{vnec} and by using the Lebesgue's dominated convergence theorem,
we thus obtain \eqref{hdpc}.

\textbf{Step 3.} Show that $v\in F^1_{USCG}   (  X )^p$.

Since $v \in F^1_{USCG}   (X)$, it suffices to show
that
$ \left(      \int_0^1  H( [v]_\alpha,  \{x_0\}   )^p   \;d\alpha       \right)^{1/p} < +\infty$ for some $x_0 \in X$,
which
can be proved in a similar fashion
to
the conclusion ``$ \left(      \int_0^1  H( [v]_\alpha,  \{\mathbf{0}\}   )^p   \;d\alpha       \right)^{1/p} < +\infty$'' in
 the
\textbf{Step 3} in the proof of Theorem 4.1 in \cite{huang97}
by replacing $\{\mathbf{0}\}$ with $\{x_0\}$, where $x_0 \in X$.

\end{proof}

\begin{tm}
\label{pcnr} \ Let $U$ be a subset of $F^1_{USCG}   ( X )^p$.
Then
 $U$ is a relatively compact set in $(F^1_{USCG}   ( X )^p,   d_p)$
 if and only if
 \\
 (\romannumeral1) \ $U(\al)$
is relatively compact in $(X, d)$ for each $\al \in (0,1]$, and
 \\
(\romannumeral2) \ $U$ is $p$-mean equi-left-continuous.
\end{tm}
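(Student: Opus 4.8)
The plan is to obtain this theorem as an immediate consequence of Theorem \ref{pcn} combined with Theorem \ref{rcfegn}, since the only difference between the two characterizations lies in how condition (\romannumeral1) is phrased. Theorem \ref{pcn} already establishes that $U$ is relatively compact in $(F^1_{USCG}(X)^p, d_p)$ if and only if $U$ is relatively compact in $(F^1_{USCG}(X), H_{\rm end})$ and $U$ is $p$-mean equi-left-continuous. The second condition here, $p$-mean equi-left-continuity of $U$, is verbatim the same as condition (\romannumeral2) of the present theorem, so that part transfers with no additional argument.

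The core step, then, is to replace the endograph-relative-compactness condition by the cutwise condition. This is exactly what Theorem \ref{rcfegn} provides: for a subset $U$ of $F^1_{USCG}(X)$, $U$ is relatively compact in $(F^1_{USCG}(X), H_{\rm end})$ if and only if $U(\al)$ is relatively compact in $(X,d)$ for each $\al\in(0,1]$. Thus condition (\romannumeral1) of Theorem \ref{pcn} is equivalent to condition (\romannumeral1) of the present theorem, and the equivalence chains straightforwardly.

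Concretely, I would argue as follows. Since $U\subseteq F^1_{USCG}(X)^p \subseteq F^1_{USCG}(X)$, Theorem \ref{rcfegn} applies, giving that ``$U$ is relatively compact in $(F^1_{USCG}(X), H_{\rm end})$'' is equivalent to ``$U(\al)$ is relatively compact in $(X,d)$ for each $\al\in(0,1]$''. Substituting this equivalence into the statement of Theorem \ref{pcn} yields precisely that $U$ is relatively compact in $(F^1_{USCG}(X)^p, d_p)$ if and only if conditions (\romannumeral1) and (\romannumeral2) of the present theorem hold. There is no genuine obstacle: the theorem is a packaging of two results already in hand, and the main point to verify is merely that the hypotheses of Theorem \ref{rcfegn} are met (namely $U\subseteq F^1_{USCG}(X)$), which is immediate from $F^1_{USCG}(X)^p \subseteq F^1_{USCG}(X)$.
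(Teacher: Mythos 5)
Your proposal is correct and matches the paper's own proof, which likewise derives Theorem \ref{pcnr} immediately from Theorems \ref{rcfegn} and \ref{pcn}. The substitution of the cutwise relative-compactness condition for the endograph one, justified by $U\subseteq F^1_{USCG}(X)^p\subseteq F^1_{USCG}(X)$, is exactly the intended argument.
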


\begin{proof} The desired result follows immediately from Theorems \ref{rcfegn} and \ref{pcn}.

\end{proof}

\begin{tm} \label{tcn}
 Let $U$ be a subset of $F^1_{USCG}   ( X )^p$.
 Then
  $U$ is a totally bounded set in $(F^1_{USCG}   (  X)^p,   d_p)$
 if and only if
 \\
 (\romannumeral1) \  $U$ is a totally bounded set in $(F^1_{USCG}   (  X),   H_{\rm end})$, and
 \\
(\romannumeral2) \ $U$ is $p$-mean equi-left-continuous.
\end{tm}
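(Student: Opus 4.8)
The plan is to prove both directions by leaning on the relative-compactness characterizations already established (Theorems \ref{pcn} and \ref{pcnr}) together with the domination of $H_{\rm end}$ by $d_p$ in Theorem \ref{dpeu}(\rmn1). For the necessity direction, assume $U$ is totally bounded in $(F^1_{USCG}(X)^p, d_p)$. Condition (\rmn1) is immediate: Theorem \ref{dpeu}(\rmn1) rearranges to $H_{\rm end}(u,v)\le ((p+1)\,d_p(u,v)^p)^{1/(p+1)}$, so the identity map from $(F^1_{USCG}(X)^p, d_p)$ into $(F^1_{USCG}(X), H_{\rm end})$ is Hölder, hence uniformly, continuous, and a uniformly continuous image of a totally bounded set is totally bounded; concretely, a finite $\varepsilon$-net $S\subseteq U$ in $d_p$ serves as a finite $((p+1)\varepsilon^p)^{1/(p+1)}$-net of $U$ in $H_{\rm end}$. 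For condition (\rmn2) I would reuse verbatim the argument given for the necessity of (\rmn2) in the proof of Theorem \ref{pcn}: fix $\varepsilon>0$, take a finite $\varepsilon/3$-net $S\subseteq U$ (which exists by total boundedness), observe that each member of $S$ is $p$-mean left continuous by Lemma \ref{uplc} so that the finite set $S$ is $p$-mean equi-left-continuous, and then transfer this to an arbitrary $u\in U$ by picking $v_0\in S$ with $d_p(u,v_0)<\varepsilon/3$ and applying the $L_p$ triangle inequality, where the substitution $\beta=\alpha-h$ shows the shifted term $\left(\int_h^1 H([v_0]_{\alpha-h},[u]_{\alpha-h})^p\,d\alpha\right)^{1/p}$ is again at most $d_p(u,v_0)$.

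For the sufficiency direction the cleanest route, I believe, is to pass to the completion $\widetilde X$ of $(X,d)$ and reduce to the relative-compactness theorem, exploiting that relative compactness always implies total boundedness. The first step is to record that $u\mapsto u^{\widetilde X}$ is an isometric embedding of $(F^1_{USCG}(X)^p, d_p)$ into $(F^1_{USCG}(\widetilde X)^p, d_p)$. Indeed, by Proposition \ref{egcn}(\rmn5),(\rmn6-2) we have $[u^{\widetilde X}]_\alpha=[u]_\alpha$ for all $\alpha\in(0,1]$, so $u^{\widetilde X}\in F^1_{USCG}(\widetilde X)$ and, since the defining integral is unchanged, $u^{\widetilde X}\in F^1_{USCG}(\widetilde X)^p$; meanwhile the identity $H_Y(A,B)=H_Z(A,B)$ for common subsets (recorded just before Theorem \ref{crm}) gives $H_{\widetilde X}([u^{\widetilde X}]_\alpha,[v^{\widetilde X}]_\alpha)=H_X([u]_\alpha,[v]_\alpha)$ for every $\alpha\in(0,1]$, whence $d_p(u^{\widetilde X},v^{\widetilde X})=d_p(u,v)$. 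Write $\widetilde U:=\{u^{\widetilde X}:u\in U\}$, and note $\widetilde U(\alpha)=U(\alpha)$ as subsets of $\widetilde X$ for each $\alpha\in(0,1]$.

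Assuming (\rmn1) and (\rmn2), I would then verify the hypotheses of Theorem \ref{pcnr} for $\widetilde U$ in the complete space $\widetilde X$. Condition (\rmn2) transfers at once, since the $p$-mean modulus is unchanged under the embedding, so $\widetilde U$ is $p$-mean equi-left-continuous. For the cut condition, Theorem \ref{tbfegn} turns (\rmn1) into the statement that each $U(\alpha)$ is totally bounded in $(X,d)$; as $\widetilde U(\alpha)=U(\alpha)$ and $\widetilde X$ is complete, $U(\alpha)$ is totally bounded in $\widetilde X$ and therefore relatively compact there. Thus Theorem \ref{pcnr} yields that $\widetilde U$ is relatively compact in $(F^1_{USCG}(\widetilde X)^p, d_p)$, and relative compactness implies total boundedness, so $\widetilde U$ is totally bounded. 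Since total boundedness is an intrinsic property of a metric subspace and is preserved under isometry, its isometric preimage $U$ is totally bounded in $(F^1_{USCG}(X)^p, d_p)$, completing the argument.

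The routine parts are the isometry and the transfer of equi-left-continuity; the crux, which I expect to be the main obstacle, is the translation of condition (\rmn1) across the completion, namely the equivalence between ``$U(\alpha)$ totally bounded in $X$'' and ``$\widetilde U(\alpha)=U(\alpha)$ relatively compact in $\widetilde X$''. This rests on the completeness of $\widetilde X$ and on the fact that both total boundedness and the Hausdorff distance between common subsets are superspace-independent, and it also requires checking carefully that $\widetilde U\subseteq F^1_{USCG}(\widetilde X)^p$ so that Theorem \ref{pcnr} genuinely applies. A self-contained alternative that avoids the completion would mimic the three-step proof of Theorem \ref{pcn}, but extract only an $H_{\rm end}$-Cauchy subsequence (available from total boundedness via Theorem \ref{tbfegn}) and show it is $d_p$-Cauchy; the difficulty there is the lack of an actual limit in $X$, which is exactly the complication the completion argument sidesteps.
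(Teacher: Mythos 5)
Your proposal is correct and takes essentially the same route as the paper: the necessity of (\rmn1) via the inequality \eqref{dpe} and of (\rmn2) via the finite $\varepsilon/3$-net argument with Lemma \ref{uplc} is exactly the paper's argument, and your sufficiency is the paper's pass-to-the-completion strategy. The only difference is cosmetic: you upgrade total boundedness to relative compactness at the level of the cuts $U(\al)$ inside the complete space $\widetilde{X}$ (Theorem \ref{tbfegn} plus completeness of $\widetilde{X}$, then Theorem \ref{pcnr}), whereas the paper performs the same upgrade in the fuzzy set space itself via the completeness of $(F^1_{USCG}(\widetilde{X}), H_{\rm end})$ (Theorem \ref{fgecom}) followed by Theorem \ref{pcn}; since Theorem \ref{pcnr} is precisely Theorems \ref{rcfegn} and \ref{pcn} combined, the two versions coincide, and your explicit verification of the isometric embedding $u\mapsto u^{\widetilde{X}}$ merely makes precise what the paper leaves implicit.
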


\begin{proof}
We prove the ``only if'' part of the theorem.
Suppose that $U$ is totally bounded in $(F^1_{USCG}   (  X)^p,   d_p)$.
Then
by \eqref{dpe},
 $U$ is a totally bounded set in $(F^1_{USCG}   (  X),   H_{\rm end})$;
that is, (\romannumeral1) is true.

We obtain the proof of
the necessity of (\rmn2) by
replacing
``since $U$ is a relatively compact set in $(F^1_{USCG} (X)^p,   d_p)$'' by
``since $U$ is totally bounded in $(F^1_{USCG} (X)^p,   d_p)$''
in the proof of the necessity of (\romannumeral2) in Theorem \ref{pcn}.

The necessity of (\romannumeral2) is essentially proved in
the proof of
  Theorem 4.1 in \cite{huang97} (see Page 26 in \cite{huang97}).

Now we prove the ``if'' part of the theorem. Suppose that $U$ satisfies (\romannumeral1)
and
 (\romannumeral2).
Clearly
 (\romannumeral1) is equivalent to $U$ is a totally bounded set in $(F^1_{USCG}   (  \widetilde{X}),   H_{\rm end})$, which,
by Theorem \ref{fgecom}, is equivalent to $U$ is a relatively compact set in $(F^1_{USCG}   (  \widetilde{X}),   H_{\rm end})$.
Then, by Theorem \ref{pcn},
  $U$ is a relatively compact set in $(F^1_{USCG}   (  \widetilde{X})^p,   d_p)$.
    Thus $U$ is a totally bounded set in $(F^1_{USCG}   (  \widetilde{X})^p,   d_p)$. This means that
$U$ is a totally bounded set in $(F^1_{USCG}   (  X)^p,   d_p)$.

\end{proof}

\begin{tm} \label{tcns}
 Let $U$ be a subset of $F^1_{USCG}   ( X )^p$.
 Then
  $U$ is a totally bounded set in $(F^1_{USCG}   (  X)^p,   d_p)$
 if and only if
 \\
 (\romannumeral1) \  $U(\al)$
is totally bounded in $(X,d)$ for each $\al \in (0,1]$, and
 \\
(\romannumeral2) \ $U$ is $p$-mean equi-left-continuous.
\end{tm}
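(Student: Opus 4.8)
The plan is to derive Theorem \ref{tcns} directly from the two characterizations already in hand, in exactly the same way that Theorem \ref{pcnr} was obtained from Theorems \ref{rcfegn} and \ref{pcn}. First I would invoke Theorem \ref{tcn}, which asserts that $U$ is a totally bounded set in $(F^1_{USCG}(X)^p, d_p)$ if and only if $U$ is a totally bounded set in $(F^1_{USCG}(X), H_{\rm end})$ and $U$ is $p$-mean equi-left-continuous. This reduces the task to re-expressing the first of these two conditions, the total boundedness in the endograph metric, in terms of the $\al$-cuts of the members of $U$.

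Next I would apply Theorem \ref{tbfegn}, which characterizes total boundedness for the endograph metric: $U$ is totally bounded in $(F^1_{USCG}(X), H_{\rm end})$ if and only if $U(\al)$ is totally bounded in $(X,d)$ for each $\al \in (0,1]$. Substituting this equivalence into condition (\romannumeral1) of Theorem \ref{tcn} replaces ``$U$ is a totally bounded set in $(F^1_{USCG}(X), H_{\rm end})$'' by ``$U(\al)$ is totally bounded in $(X,d)$ for each $\al \in (0,1]$'', which is precisely condition (\romannumeral1) of the present statement; meanwhile condition (\romannumeral2), the $p$-mean equi-left-continuity of $U$, is carried over verbatim. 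Thus conditions (\romannumeral1) and (\romannumeral2) of Theorem \ref{tcns}, taken together, are equivalent to $U$ being a totally bounded set in $(F^1_{USCG}(X)^p, d_p)$.

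Since both ingredients are already established, there is essentially no obstacle here; the proof is a one-line combination of Theorems \ref{tcn} and \ref{tbfegn}. The only point worth a moment's care is that the quantifier over the level $\al$ ranges over $(0,1]$ in both Theorem \ref{tbfegn} and the target statement, so the two range matches without any adjustment. The desired result therefore follows immediately.
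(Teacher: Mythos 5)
Your proposal is correct and matches the paper's own proof, which likewise obtains Theorem \ref{tcns} immediately by combining Theorem \ref{tcn} with the endograph-metric characterization in Theorem \ref{tbfegn}. Nothing further is needed.
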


\begin{proof}
The desired result follows immediately from
Theorems \ref{tbfegn} and \ref{tcn}.

\end{proof}

\begin{tm} \label{gscn}
\ Let
 $U$ be a subset of $F^1_{USCG}   ( X )^p$.
Then $U$ is    compact  in $ (F^1_{USCG}   (  X )^p,   d_p)$
 if and only if
 \\
 (\romannumeral1) \  $U(\al)$
is relatively compact in $(X, d)$ for each $\al \in (0,1]$,
 \\
(\romannumeral2) \ $U$ is $p$-mean equi-left-continuous, and
\\
 (\romannumeral3) \  $U$ is a closed set in $ (F^1_{USCG}   (  X)^p,   d_p)$.
\end{tm}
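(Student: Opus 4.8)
The plan is to deduce Theorem \ref{gscn} from the relative compactness characterization in Theorem \ref{pcnr}, combined with the elementary fact that in a metric space a set is compact precisely when it is both relatively compact and closed. Since the $d_p$ distance is a genuine (finite-valued) metric on $F^1_{USCG}(X)^p$, the pair $(F^1_{USCG}(X)^p, d_p)$ is a metric space, so this elementary fact is available. Thus the whole proof amounts to repackaging Theorem \ref{pcnr} and inserting the extra closedness condition.

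First I would record the metric-space fact explicitly: for a subset $U$ of a metric space $(Z,\rho)$, $U$ is compact if and only if $U$ is relatively compact (that is, $\overline{U}$ is compact) and $U$ is closed. The forward implication uses that a compact subset of a Hausdorff space is closed, so that $U=\overline{U}$, and that a compact $U$ is trivially relatively compact. The reverse implication uses that if $U$ is closed then $U=\overline{U}$, and the latter is compact by relative compactness.

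For the necessity, I would assume that $U$ is compact in $(F^1_{USCG}(X)^p, d_p)$. Then $U$ is closed, which is exactly condition (\romannumeral3), and $U$ is relatively compact; applying Theorem \ref{pcnr} yields conditions (\romannumeral1) and (\romannumeral2). For the sufficiency, I would assume (\romannumeral1), (\romannumeral2) and (\romannumeral3). By Theorem \ref{pcnr}, conditions (\romannumeral1) and (\romannumeral2) give that $U$ is relatively compact in $(F^1_{USCG}(X)^p, d_p)$, so $\overline{U}$ is compact; condition (\romannumeral3) gives $U=\overline{U}$, and therefore $U$ is compact.

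I do not expect a real obstacle here, since the statement is a direct consequence of Theorem \ref{pcnr} once the notion of compactness is split into relative compactness and closedness. The only point deserving a word of care is confirming that the ambient space is a metric space rather than merely an extended-metric space, so that the equivalence ``compact $\Leftrightarrow$ relatively compact and closed'' truly applies; this holds because, as noted after the definition of $F^1_{USCG}(X)^p$, the $d_p$ distance is a metric on $F^1_{USCG}(X)^p$.
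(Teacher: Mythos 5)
Your proposal is correct and matches the paper's proof, which likewise derives the result immediately from Theorem \ref{pcnr} via the standard equivalence ``compact $\Leftrightarrow$ relatively compact and closed'' in a metric space. You merely make explicit what the paper leaves implicit, including the (valid) check that $d_p$ is a genuine metric on $F^1_{USCG}(X)^p$, so the metric-space fact applies.
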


\begin{proof} \ The desired result follows immediately from Theorem \ref{pcnr}.

 \end{proof}

\begin{tm} \label{gscnre}
\ Let
 $U$ be a subset of $F^1_{USCG}   ( X )^p$.
Then $U$ is    compact  in $ (F^1_{USCG}   (  X )^p,   d_p)$
 if and only if
 \\
 (\romannumeral1) \  $U(\al)$
is compact in $(X, d)$ for each $\al \in (0,1]$,
 \\
(\romannumeral2) \ $U$ is $p$-mean equi-left-continuous, and
\\
 (\romannumeral3) \  $U$ is a closed set in $ (F^1_{USCG}   (  X)^p,   d_p)$.
\end{tm}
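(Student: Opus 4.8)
The plan is to deduce Theorem~\ref{gscnre} from Theorem~\ref{gscn}, observing that the two statements differ only in condition (\rmn1): Theorem~\ref{gscn} requires each $U(\al)$ to be \emph{relatively compact} in $(X,d)$, whereas Theorem~\ref{gscnre} requires each $U(\al)$ to be \emph{compact}. Conditions (\rmn2) and (\rmn3) are verbatim the same. Hence the whole content reduces to showing that, under the remaining hypotheses, relative compactness of the cuts is equivalent to their compactness; concretely, that $U$ being compact in $(F^1_{USCG}(X)^p, d_p)$ forces each $U(\al)$ to be closed, and thus compact, in $(X,d)$.

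For the ``if'' part I would argue directly: assuming (\rmn1)--(\rmn3) of Theorem~\ref{gscnre}, each $U(\al)$ is compact, hence relatively compact, so conditions (\rmn1)--(\rmn3) of Theorem~\ref{gscn} all hold, and that theorem yields that $U$ is compact in $(F^1_{USCG}(X)^p, d_p)$.

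For the ``only if'' part, assume $U$ is compact in $(F^1_{USCG}(X)^p, d_p)$. Theorem~\ref{gscn} immediately supplies (\rmn2), (\rmn3) and the relative compactness of each $U(\al)$, so it remains only to upgrade relative compactness to compactness, for which it suffices to prove each $U(\al)$ is closed. Fix $\al\in(0,1]$ and take a sequence $\{x_n\}$ in $U(\al)=\bigcup_{u\in U}[u]_\al$ with $x_n\to x$ in $X$, choosing $u_n\in U$ with $x_n\in[u_n]_\al$. By compactness of $U$ there is a subsequence $\{u_{n_k}\}$ and some $u\in U$ with $d_p(u_{n_k},u)\to 0$; since $d_p$ is well-defined on $F^1_{USCG}(X)^p$ and there agrees with $d_p^*$ (see Remark~\ref{pse}), Theorem~\ref{dpeu}(\rmn2) gives $H_{\rm end}(u_{n_k},u)\to 0$, and then Theorem~\ref{aec} gives $H([u_{n_k}]_\beta,[u]_\beta)\to 0$ for almost every $\beta\in(0,1)$. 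As the exceptional set is null, I pick $\beta_j$ with $0<\beta_j<\al$ and $\beta_j\to\al$ along which this convergence holds. For each $j$, from $x_{n_k}\in[u_{n_k}]_\al\subseteq[u_{n_k}]_{\beta_j}$ I get $d(x_{n_k},[u]_{\beta_j})\le H([u_{n_k}]_{\beta_j},[u]_{\beta_j})\to 0$, and combined with $x_{n_k}\to x$ this yields $d(x,[u]_{\beta_j})=0$, i.e. $x\in[u]_{\beta_j}$ because $[u]_{\beta_j}$ is closed. Finally $x\in\bigcap_j[u]_{\beta_j}=[u]_\al$, where the intersection identity is the cut property established in the proof of Proposition~\ref{ace} (valid since $\beta_j\to\al$ with $\beta_j\in[0,\al]$). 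Thus $x\in U(\al)$, so $U(\al)$ is closed; being relatively compact and closed, it is compact.

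The main obstacle is the passage from the almost-everywhere convergence furnished by Theorem~\ref{aec} to usable information at the single, possibly exceptional, level $\al$. The device that resolves this is to approach $\al$ from below through levels $\beta_j$ lying outside the null set, exploiting the nesting $[u_{n_k}]_\al\subseteq[u_{n_k}]_{\beta_j}$ together with the left-hand cut identity $[u]_\al=\bigcap_{\beta<\al}[u]_\beta$. One must keep $\al>0$ so that all the $\beta_j$ can be chosen strictly positive, which is precisely why the statement concerns $\al\in(0,1]$.
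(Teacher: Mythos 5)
Your proof is correct, and its outer skeleton coincides with the paper's: both deduce everything from Theorem~\ref{gscn}, note that the ``if'' direction is immediate since compact cuts are in particular relatively compact, and reduce the ``only if'' direction to showing that compactness of $U$ in $(F^1_{USCG}(X)^p, d_p)$ upgrades each $U(\al)$ from relatively compact to compact. Where you genuinely diverge is in how that upgrade is obtained. The paper does it in two lines: by \eqref{dpe}, compactness of $U$ under $d_p$ transfers to compactness of $U$ under $H_{\rm end}$, and then Theorem~\ref{cfeg} (the endograph-metric compactness characterization quoted from \cite{huang19c}) directly yields that $U(\al)$ is compact for every $\al\in(0,1]$. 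You instead prove closedness of $U(\al)$ by hand: extract a $d_p$-convergent subsequence $u_{n_k}\to u\in U$, pass to $H_{\rm end}$-convergence via Theorem~\ref{dpeu}(\romannumeral2) (legitimately replacing $d_p^*$ by $d_p$ as in Remark~\ref{pse}), invoke Theorem~\ref{aec} for a.e.\ levelwise Hausdorff convergence, and approach the possibly exceptional level $\al$ from below through good levels $\beta_j<\al$, concluding with the cut identity $[u]_\al=\bigcap_j [u]_{\beta_j}$; closed plus relatively compact then gives compact. This is in effect a self-contained re-derivation, inside the $d_p$ setting, of the fragment of Theorem~\ref{cfeg}'s implication (\romannumeral1)$\Rightarrow$(\romannumeral3) that the paper cites as a black box. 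Your route costs more work but makes the mechanism explicit (left-continuity of cuts plus density of the non-exceptional levels) and only relies on the convergence-transfer results, whereas the paper's route is shorter and cleaner given that the $H_{\rm end}$ characterization is already available. All the individual steps you take check out, including the choice of $\beta_j$ outside the null set and the use of $\al>0$ to keep the $\beta_j$ strictly positive.
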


\begin{proof} \
By Theorem \ref{gscn},
to show the desired result,
we only need to show that if $U$ is    compact  in $ (F^1_{USCG}   (  X )^p,   d_p)$,
then (\romannumeral1) is true.

Assume that
 $U$ is    compact  in $ (F^1_{USCG}   (  X )^p,   d_p)$, then by \eqref{dpe}, $U$ is    compact  in $ (F^1_{USCG}   (  X ),   H_{\rm end})$, hence
from Theorem \ref{cfeg},  (\romannumeral1) is true.

 \end{proof}

\begin{tm} \label{gscpnre}
\ Let
 $U$ be a subset of $F^1_{USCG}   ( X )^p$.
Then $U$ is a compact set in $ (F^1_{USCG}   (  X )^p,   d_p)$
 if and only if
 \\
 (\romannumeral1) \ $U$ is a compact set in $(F^1_{USCG}   (  X),   H_{\rm end})$, and
 \\
(\romannumeral2) \ $U$ is $p$-mean equi-left-continuous.
\end{tm}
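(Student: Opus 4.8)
The plan is to move back and forth between the two metrics $d_p$ and $H_{\rm end}$ using inequality \eqref{dpe} of Theorem \ref{dpeu}, and to reduce compactness to the relative-compactness criterion already established in Theorem \ref{pcn} together with a short closedness argument. The single observation that drives everything is that the inclusion map $\iota\colon (F^1_{USCG}(X)^p,\, d_p) \to (F^1_{USCG}(X),\, H_{\rm end})$ is continuous. Indeed, on $F^1_{USCG}(X)^p$ the $d_p$ distance is well-defined and coincides with $d_p^*$, so Theorem \ref{dpeu}(\romannumeral2) tells us that $d_p(u_n,u)\to 0$ forces $H_{\rm end}(u_n,u)\to 0$; since both spaces are (extended) metric spaces, this sequential statement is exactly continuity of $\iota$. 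I would state and verify this at the outset, since it is used in both directions.

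For the ``only if'' direction I would assume $U$ is compact in $(F^1_{USCG}(X)^p,\, d_p)$. Applying the continuous map $\iota$, the image $\iota(U)=U$ is compact in $(F^1_{USCG}(X),\, H_{\rm end})$, which is precisely condition (\romannumeral1). For (\romannumeral2), I note that a compact set is relatively compact, so $U$ is relatively compact in $(F^1_{USCG}(X)^p,\, d_p)$, and Theorem \ref{pcn} then yields that $U$ is $p$-mean equi-left-continuous. This settles the necessity of both conditions.

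For the ``if'' direction I would assume (\romannumeral1) and (\romannumeral2). From (\romannumeral1), $U$ is in particular relatively compact in $(F^1_{USCG}(X),\, H_{\rm end})$, so Theorem \ref{pcn} combined with (\romannumeral2) gives that $U$ is relatively compact in $(F^1_{USCG}(X)^p,\, d_p)$. It then remains only to check that $U$ is closed in $(F^1_{USCG}(X)^p,\, d_p)$, for then $U$ coincides with its own $d_p$-closure, which is compact. Here the continuity of $\iota$ is invoked a second time: being compact in the metric space $(F^1_{USCG}(X),\, H_{\rm end})$, the set $U$ is closed there, and since $U\subseteq F^1_{USCG}(X)^p$ we have $\iota^{-1}(U)=U$, so $U$ is closed in $(F^1_{USCG}(X)^p,\, d_p)$ as the preimage of a closed set under a continuous map. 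Relative compactness together with closedness gives compactness, completing the proof.

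I expect the only genuinely delicate point to be this transfer of closedness. One must take care that the ambient space for the $d_p$-topology is $F^1_{USCG}(X)^p$ itself, so that any $d_p$-limit $w$ of a sequence from $U$ already lies in $F^1_{USCG}(X)^p$ (not merely in the larger $F^1_{USCG}(X)$), and that $U\subseteq F^1_{USCG}(X)^p$ makes the identification $\iota^{-1}(U)=U$ exact. Everything else is a routine application of the two quoted theorems and the standard facts that continuous images of compact sets are compact and that compact subsets of metric spaces are closed.
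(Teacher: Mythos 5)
Your proposal is correct and takes essentially the same route as the paper: both directions rest on Theorem \ref{dpeu} (which you merely repackage as continuity of the inclusion map into $(F^1_{USCG}(X), H_{\rm end})$), on Theorem \ref{pcn} for relative compactness, and on the same transfer of closedness from the $H_{\rm end}$-topology to the $d_p$-topology via $d_p$-convergence implying $H_{\rm end}$-convergence. The only cosmetic difference is that you derive the necessity of (\romannumeral2) directly from Theorem \ref{pcn} where the paper cites Theorem \ref{gscnre}; the underlying argument is identical.
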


\begin{proof} We prove the ``only if'' part of the theorem.
Assume that $U$ is compact in $ (F^1_{USCG}   (  X )^p,   d_p)$.
Hence by Theorem \ref{dpeu}, $U$ is compact in $(F^1_{USCG}   (  X),   H_{\rm end})$, i.e. (\romannumeral1) is true.
By Theorem \ref{gscnre}, (\romannumeral2) is true.

We prove the ``if'' part of the theorem. Assume that (\romannumeral1) and (\romannumeral2) are true.
Then
by Theorem \ref{pcn}, $U$ is relatively compact in $ (F^1_{USCG}   (  X )^p,   d_p)$.
To show that $U$ is compact in $ (F^1_{USCG}   (  X )^p,   d_p)$,
we only need to show that $U$ is closed in $ (F^1_{USCG}   (  X)^p,   d_p)$.

To do this,
let $\{u_n\}$ be a sequence in $U$
and $d_p(u_n, u) \to 0$.
Then by Theorem \ref{dpeu}, $H_{\rm end}(u_n, u) \to 0$.
Since
from (\romannumeral1),
$U$ is closed in $(F^1_{USCG} (X), H_{\rm end})$,
we have that $u\in U$.
So $U$ is closed in $ (F^1_{USCG}   (  X)^p,   d_p)$.

\end{proof}

From Theorems \ref{pcn}, \ref{tcn} and \ref{gscpnre},
   we obtain the following conclusion:
\begin{itemize}
  \item
Let $U$ be a subset in $F^1_{USCG}   (  X )^p$. Then
$U$ is total bounded (respectively, relatively
compact, compact) in $ (F^1_{USCG}   (  X )^p,   d_p)$
if and only if $U$ is total bounded (respectively, relatively
compact, compact) in $(F^1_{USCG} (X), H_{\rm end})$
and $U$ is $p$-mean equi-left-continuous.
\end{itemize}

\section{Characterizations of compactness in $ (F^1_{USCG} (\mathbb{R}^m)^p, d_p)$ }\label{cnr}

In this section, we discuss the characterizations of total boundedness, relative compactness and compactness in $ (F^1_{USCG} (\mathbb{R}^m)^p, d_p)$.
We point out that
the conclusions on the characterizations of total boundedness, relative compactness and compactness in $ (F^1_{USCG} (\mathbb{R}^m)^p, d_p)$ given in our previous work
\cite{huang9} can be seen as
corollaries of the corresponding results for $ (F^1_{USCG} (X)^p, d_p)$ given in Section \ref{cng} of this paper.
Furthermore, by using results in Sections \ref{per} and \ref{cng},
we give new
characterizations of total boundedness, relative compactness and compactness in $ (F^1_{USCG} (\mathbb{R}^m)^p, d_p)$.

Note
that for
a subset $V$ of $\mathbb{R}^m$,
 the conditions
(\romannumeral1)
$V$
is relatively compact in $\mathbb{R}^m$,
(\romannumeral2)
$V$
is totally bounded in $\mathbb{R}^m$, and
(\romannumeral3)
$V$
is bounded in $\mathbb{R}^m$, are equivalent to each other.
Thus
Theorems \ref{pcnr}, \ref{tcns}, \ref{gscn} and \ref{gscnre}
imply the following four conclusions on the characterizations of compactness in $ (F^1_{USCG} (\mathbb{R}^m)^p, d_p)$, respectively.

\begin{tl} \label{pcnrem} \ Let $U$ be a subset of $F^1_{USCG}   ( \mathbb{R}^m )^p$.
Then
 $U$ is a relatively compact set in $(F^1_{USCG}   ( \mathbb{R}^m )^p,   d_p)$
 if and only if
 \\
 (\romannumeral1) \ $U(\al)$
is bounded in $\mathbb{R}^m$ for each $\al \in (0,1]$, and
 \\
(\romannumeral2) \ $U$ is $p$-mean equi-left-continuous.
\end{tl}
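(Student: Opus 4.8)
The plan is to deduce this corollary directly from Theorem \ref{pcnr} by specializing the ambient metric space to $(X,d) = (\mathbb{R}^m, \rho_m)$. Theorem \ref{pcnr} characterizes relative compactness of $U$ in $(F^1_{USCG}(X)^p, d_p)$ by the two conditions that $U(\al)$ be relatively compact in $(X,d)$ for each $\al \in (0,1]$, together with $p$-mean equi-left-continuity of $U$. Condition (\romannumeral2) of the present corollary is literally condition (\romannumeral2) of Theorem \ref{pcnr}, so no work is needed there.

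The only point requiring attention is the translation of condition (\romannumeral1). First I would invoke the standard fact, already recorded in the paragraph preceding the statement, that for a subset $V$ of $\mathbb{R}^m$ the three properties---relatively compact, totally bounded, and bounded---are mutually equivalent; this is essentially the Heine--Borel theorem. Applying this with $V = U(\al) = \bigcup_{u\in U}[u]_\al$ for each fixed $\al \in (0,1]$, the condition that $U(\al)$ is relatively compact in $(\mathbb{R}^m, \rho_m)$, appearing in Theorem \ref{pcnr}(\romannumeral1), is equivalent to the condition that $U(\al)$ is bounded in $\mathbb{R}^m$, which is exactly condition (\romannumeral1) here.

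Combining these observations, $U$ is relatively compact in $(F^1_{USCG}(\mathbb{R}^m)^p, d_p)$ if and only if conditions (\romannumeral1) and (\romannumeral2) of the corollary hold, which completes the argument. I do not anticipate any genuine obstacle: the entire content is carried by Theorem \ref{pcnr}, and the passage from relative compactness to boundedness of each slice $U(\al)$ is immediate from Heine--Borel in $\mathbb{R}^m$. The same specialization, using Theorems \ref{tcns}, \ref{gscn} and \ref{gscnre} in place of Theorem \ref{pcnr}, yields the parallel characterizations of total boundedness and compactness in $(F^1_{USCG}(\mathbb{R}^m)^p, d_p)$.
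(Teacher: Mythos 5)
Your proof is correct and coincides with the paper's own argument: the paper derives Corollary \ref{pcnrem} by specializing Theorem \ref{pcnr} to $X=\mathbb{R}^m$ and invoking the Heine--Borel equivalence of relative compactness, total boundedness, and boundedness for subsets of $\mathbb{R}^m$, exactly as you do. Your closing remark about the parallel corollaries also matches how the paper treats Corollaries \ref{tcnsrm}, \ref{gscnrm} and \ref{gscnrme}.
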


\begin{tl} \label{tcnsrm}
 Let $U$ be a subset of $F^1_{USCG}   ( \mathbb{R}^m )^p$.
 Then
  $U$ is a totally bounded set in $(F^1_{USCG}   (  \mathbb{R}^m)^p,   d_p)$
 if and only if
 \\
 (\romannumeral1) \  $U(\al)$
is bounded in $\mathbb{R}^m$ for each $\al \in (0,1]$, and
 \\
(\romannumeral2) \ $U$ is $p$-mean equi-left-continuous.
\end{tl}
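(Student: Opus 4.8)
The plan is to derive this corollary directly from Theorem \ref{tcns}, which is the corresponding characterization of total boundedness in the general metric space setting. Setting $X = \mathbb{R}^m$ equipped with the Euclidean metric $\rho_m$, Theorem \ref{tcns} tells us that $U$ is a totally bounded set in $(F^1_{USCG}(\mathbb{R}^m)^p, d_p)$ if and only if $U(\al)$ is totally bounded in $(\mathbb{R}^m, \rho_m)$ for each $\al \in (0,1]$, together with $U$ being $p$-mean equi-left-continuous. So the entire content of the present statement is already carried by Theorem \ref{tcns}, and only a translation of one of its hypotheses remains.

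The only gap between Theorem \ref{tcns} and the present statement is that condition (\romannumeral1) here asks for $U(\al)$ to be \emph{bounded} rather than \emph{totally bounded}. First I would invoke the standard fact, recalled in the paragraph preceding the corollary, that a subset $V$ of $\mathbb{R}^m$ is totally bounded if and only if it is bounded. Applying this equivalence with $V = U(\al)$ for each $\al \in (0,1]$ shows that condition (\romannumeral1) of the corollary is equivalent to condition (\romannumeral1) of Theorem \ref{tcns}. Since condition (\romannumeral2) is verbatim the same in both statements, the two characterizations coincide, and the corollary follows.

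I expect no genuine obstacle here: the substance is entirely supplied by Theorem \ref{tcns}, and the passage to $\mathbb{R}^m$ merely exploits the Heine--Borel-type equivalence of boundedness and total boundedness in finite-dimensional Euclidean space. The one point worth a brief mention is that this equivalence is special to $\mathbb{R}^m$ and is precisely what permits replacing the abstract total-boundedness hypothesis of Theorem \ref{tcns} by the more concrete boundedness hypothesis stated in the corollary; in a general metric space $(X,d)$ such a replacement would not be available, which is why Theorem \ref{tcns} must be phrased in terms of total boundedness.
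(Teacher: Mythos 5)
Your proposal is correct and is exactly the paper's own route: the paper derives Corollary \ref{tcnsrm} from Theorem \ref{tcns} together with the equivalence, noted in the paragraph preceding the corollaries, of boundedness and total boundedness (and relative compactness) for subsets of $\mathbb{R}^m$. Your remark that this equivalence is the only point special to $\mathbb{R}^m$ matches the paper's presentation.
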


\begin{tl} \label{gscnrm}
\ Let
 $U$ be a subset of $F^1_{USCG}   ( \mathbb{R}^m )^p$.
Then $U$ is    compact  in $ (F^1_{USCG}   (  \mathbb{R}^m )^p,   d_p)$
 if and only if
 \\
 (\romannumeral1) \  $U(\al)$
is bounded in $\mathbb{R}^m$ for each $\al \in (0,1]$,
 \\
(\romannumeral2) \ $U$ is $p$-mean equi-left-continuous, and
\\
 (\romannumeral3) \  $U$ is a closed set in $ (F^1_{USCG}   (  \mathbb{R}^m)^p,   d_p)$.
\end{tl}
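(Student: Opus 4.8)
The plan is to derive Corollary \ref{gscnrm} as a direct specialization of Theorem \ref{gscn} to the case $X=\mathbb{R}^m$, combined with the Heine--Borel type equivalence recalled immediately before the statement. Theorem \ref{gscn} already characterizes compactness of a subset $U$ of $F^1_{USCG}(X)^p$ in $(F^1_{USCG}(X)^p,d_p)$ by three conditions: each $U(\al)$ is relatively compact in $(X,d)$, $U$ is $p$-mean equi-left-continuous, and $U$ is closed in $(F^1_{USCG}(X)^p,d_p)$. Hence the only thing to do is to rewrite the first condition in the concrete metric space $(\mathbb{R}^m,\rho_m)$.

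First I would set $X=\mathbb{R}^m$ in Theorem \ref{gscn}. Conditions (\romannumeral2) and (\romannumeral3) of that theorem are literally conditions (\romannumeral2) and (\romannumeral3) of the corollary, so no work is needed there. For condition (\romannumeral1), I would invoke the fact stated just before the corollary that, for a subset $V$ of $\mathbb{R}^m$, the properties ``$V$ is relatively compact'', ``$V$ is totally bounded'' and ``$V$ is bounded'' are all equivalent. Applying this with $V=U(\al)$ for each $\al\in(0,1]$ shows that ``$U(\al)$ is relatively compact in $\mathbb{R}^m$ for each $\al\in(0,1]$'' is equivalent to ``$U(\al)$ is bounded in $\mathbb{R}^m$ for each $\al\in(0,1]$'', which is exactly condition (\romannumeral1) of the corollary. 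Chaining these equivalences with Theorem \ref{gscn} then yields the stated characterization.

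There is essentially no obstacle here: the corollary is a verbatim translation of Theorem \ref{gscn} under the dictionary ``relatively compact in $\mathbb{R}^m$ $=$ bounded in $\mathbb{R}^m$''. One could equally start from Theorem \ref{gscnre}, whose first condition is ``$U(\al)$ is compact in $(X,d)$'', and use that bounded closed subsets of $\mathbb{R}^m$ are compact; but since $U(\al)$ need not be closed, the cleanest route is through the relatively-compact formulation of Theorem \ref{gscn}. The one point worth making explicit in the write-up is that the equivalence of the three boundedness-type conditions for subsets of $\mathbb{R}^m$ holds for each fixed $\al$ separately, so the quantifier ``for each $\al\in(0,1]$'' is preserved intact when passing between the two formulations.
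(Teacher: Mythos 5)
Your proposal is correct and is exactly the paper's own derivation: the paper obtains Corollary \ref{gscnrm} by specializing Theorem \ref{gscn} to $X=\mathbb{R}^m$ and invoking the equivalence, stated just before the corollaries, of relative compactness, total boundedness and boundedness for subsets of $\mathbb{R}^m$, applied to each $U(\al)$. Your remark that one should route through the relatively-compact formulation of Theorem \ref{gscn} rather than the compact formulation of Theorem \ref{gscnre} matches the paper's pairing of theorems with corollaries.
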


\begin{tl} \label{gscnrme}
\ Let
 $U$ be a subset of $F^1_{USCG}   ( \mathbb{R}^m )^p$.
Then $U$ is    compact  in $ (F^1_{USCG}   (  \mathbb{R}^m )^p,   d_p)$
 if and only if
 \\
 (\romannumeral1) \  $U(\al)$
is compact in $\mathbb{R}^m$ for each $\al \in (0,1]$,
 \\
(\romannumeral2) \ $U$ is $p$-mean equi-left-continuous, and
\\
 (\romannumeral3) \  $U$ is a closed set in $ (F^1_{USCG}   (  \mathbb{R}^m)^p,   d_p)$.
\end{tl}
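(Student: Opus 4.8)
The plan is to obtain Corollary~\ref{gscnrme} as the direct specialization of Theorem~\ref{gscnre} to the metric space $X=\mathbb{R}^m$. First I would observe that $(\mathbb{R}^m,\rho_m)$ is a metric space, so all the hypotheses of Theorem~\ref{gscnre} are met with $X=\mathbb{R}^m$, and that $F^1_{USCG}(\mathbb{R}^m)^p$ equipped with the $d_p$ metric is exactly the space to which that theorem applies. Thus the whole argument reduces to reading off Theorem~\ref{gscnre} in this particular case.

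Next I would match the three conditions one by one. Condition~(\romannumeral1) of Theorem~\ref{gscnre}, namely that $U(\al)$ be compact in $(X,d)$ for each $\al\in(0,1]$, becomes verbatim the requirement that $U(\al)$ be compact in $\mathbb{R}^m$ for each $\al\in(0,1]$; condition~(\romannumeral2), the $p$-mean equi-left-continuity of $U$, is a property formulated through the $d_p$ quantities alone and so is unchanged; and condition~(\romannumeral3), closedness of $U$ in $(F^1_{USCG}(X)^p,d_p)$, reads as closedness of $U$ in $(F^1_{USCG}(\mathbb{R}^m)^p,d_p)$. Since all three conditions correspond exactly, Theorem~\ref{gscnre} yields the stated equivalence at once.

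I expect essentially no obstacle here, which is worth contrasting with the neighbouring Corollaries~\ref{pcnrem}, \ref{tcnsrm} and \ref{gscnrm}: those corollaries must first invoke the equivalence, valid in $\mathbb{R}^m$, of boundedness, total boundedness and relative compactness of a subset of $\mathbb{R}^m$ in order to rewrite the $\al$-cut hypothesis as boundedness of $U(\al)$. For the present statement no such reformulation is needed, because Theorem~\ref{gscnre} is already phrased in terms of compactness of $U(\al)$, which is precisely condition~(\romannumeral1) of the corollary. Hence the only thing to verify is the trivial observation that conditions~(\romannumeral2) and~(\romannumeral3) are literally the same assertions in the two theorems, and the proof consists of the single remark that the result follows immediately from Theorem~\ref{gscnre} with $X=\mathbb{R}^m$.
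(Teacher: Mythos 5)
Your proposal is correct and coincides with the paper's own derivation: the paper obtains Corollary~\ref{gscnrme} precisely as the specialization of Theorem~\ref{gscnre} to $X=\mathbb{R}^m$, with no intermediate reformulation required. You are also right that the remark about the equivalence of boundedness, total boundedness and relative compactness in $\mathbb{R}^m$ is only needed for the neighbouring Corollaries~\ref{pcnrem}, \ref{tcnsrm} and \ref{gscnrm}, not for this one.
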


In \cite{huang97},
we have obtained the following three conclusions on the characterizations of compactness in $ (F^1_{USCG} (\mathbb{R}^m)^p, d_p)$.

\begin{tm} \label{rcs}  {\rm (Theorem 4.1 in \cite{huang97})} \
Let $U$ be a subset of $F^1_{USCG}   (\mathbb{R}^m )^p$.
Then
 $U$ is a relatively compact set in $(F^1_{USCG}   (  \mathbb{R}^m )^p,   d_p)$
 if and only if
 \\
 (\romannumeral1) \  $U$ is uniformly $p$-mean bounded, and
 \\
(\romannumeral2) \ $U$ is $p$-mean equi-left-continuous.
\end{tm}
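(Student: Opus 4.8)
The plan is to deduce this characterization directly from Corollary \ref{pcnrem} together with Theorem \ref{pbe}, both of which are already available. Corollary \ref{pcnrem} (the case $X=\mathbb{R}^m$) states that $U$ is relatively compact in $(F^1_{USCG}(\mathbb{R}^m)^p, d_p)$ if and only if (a) $U(\al)$ is bounded in $\mathbb{R}^m$ for each $\al\in(0,1]$ and (b) $U$ is $p$-mean equi-left-continuous. Since condition (b) is identical to condition (\romannumeral2) of the present statement, the entire task reduces to showing that, in the presence of $p$-mean equi-left-continuity, condition (a) is equivalent to condition (\romannumeral1), i.e. to $U$ being uniformly $p$-mean bounded.

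First I would record that this last equivalence is exactly the content of Theorem \ref{pbe} specialized to $X=\mathbb{R}^m$: assuming that $U$ is $p$-mean equi-left-continuous, Theorem \ref{pbe} asserts that its property (\romannumeral2), ``$U(h)$ is bounded in $\mathbb{R}^m$ for every $h\in(0,1]$'', and its property (\romannumeral3), ``$U$ is uniformly $p$-mean bounded'', are equivalent. Hence condition (a) of Corollary \ref{pcnrem} and condition (\romannumeral1) here coincide whenever condition (\romannumeral2) holds.

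The proof then proceeds by straightforward logical bookkeeping. Writing $P$ for relative compactness, $Q$ for condition (\romannumeral2), $R$ for condition (\romannumeral1), and $S$ for condition (a) of Corollary \ref{pcnrem}, we have $P\Leftrightarrow(S\wedge Q)$ by Corollary \ref{pcnrem} and $(Q\wedge R)\Leftrightarrow(Q\wedge S)$ by Theorem \ref{pbe}; combining these yields $P\Leftrightarrow(R\wedge Q)$, which is precisely the claim that $U$ is relatively compact if and only if conditions (\romannumeral1) and (\romannumeral2) hold. I would spell out both directions explicitly: if $U$ is relatively compact, then Corollary \ref{pcnrem} gives $S$ and $Q$, and Theorem \ref{pbe} upgrades $S$ to $R$; conversely, if $R$ and $Q$ hold, Theorem \ref{pbe} yields $S$, whence Corollary \ref{pcnrem} delivers relative compactness.

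Since every nontrivial ingredient---the compactness characterization in $(F^1_{USCG}(X)^p, d_p)$, its relation to the $H_{\rm end}$ metric, and the equivalence of boundedness notions under equi-left-continuity---is already established, there is no genuine analytic obstacle in this step; the only point requiring care is to verify that Corollary \ref{pcnrem} and Theorem \ref{pbe} employ the very same notion of $p$-mean equi-left-continuity, so that the common factor $Q$ can be cancelled cleanly from both equivalences.
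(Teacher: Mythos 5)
Your proposal is correct and is exactly the route the paper takes: the paper obtains Theorem \ref{rcs} by combining Corollary \ref{pcnrem} with Theorem \ref{pbe}, using the equivalence (under $p$-mean equi-left-continuity) between levelwise boundedness of $U(\al)$ and uniform $p$-mean boundedness. Your logical bookkeeping, including the observation that one common notion of $p$-mean equi-left-continuity appears in both cited results, matches the paper's derivation.
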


\begin{tm} \label{tcnr}  {\rm (Theorem 4.2 in \cite{huang97})} \
Let $U$ be a subset of $F^1_{USCG}   (\mathbb{R}^m )^p$.
Then
  $U$ is a totally bounded set in $(F^1_{USCG}   (\mathbb{R}^m )^p,   d_p)$
 if and only if
 \\
 (\romannumeral1) \  $U$ is uniformly $p$-mean bounded, and
 \\
(\romannumeral2) \ $U$ is $p$-mean equi-left-continuous.
\end{tm}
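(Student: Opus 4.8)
The plan is to derive this characterization directly from Corollary~\ref{tcnsrm} together with Theorem~\ref{pbe}, exploiting the fact that in $\mathbb{R}^m$ a subset is bounded if and only if it is totally bounded (equivalently, relatively compact). By Corollary~\ref{tcnsrm}, $U$ is totally bounded in $(F^1_{USCG}(\mathbb{R}^m)^p, d_p)$ if and only if (a) $U(\al)$ is bounded in $\mathbb{R}^m$ for each $\al \in (0,1]$, and (b) $U$ is $p$-mean equi-left-continuous. Since condition (b) coincides verbatim with condition~(\romannumeral2) of the present statement, the entire proof reduces to showing that, under the standing hypothesis of $p$-mean equi-left-continuity, condition (a) is equivalent to $U$ being uniformly $p$-mean bounded.

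First I would treat the ``only if'' direction. Assuming $U$ totally bounded, Corollary~\ref{tcnsrm} yields (a) and (b). Because (b) holds, Theorem~\ref{pbe} applies to $U \subseteq F^1_{USCG}(\mathbb{R}^m)^p$: condition (a) is precisely clause~(\romannumeral2) of Theorem~\ref{pbe}, so the equivalence (\romannumeral2)$\Leftrightarrow$(\romannumeral3) there forces clause~(\romannumeral3), namely that $U$ is uniformly $p$-mean bounded. This gives conditions (\romannumeral1) and (\romannumeral2) of the theorem.

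For the ``if'' direction I would assume (\romannumeral1) and (\romannumeral2). Since $U$ is $p$-mean equi-left-continuous, Theorem~\ref{pbe} again applies; clause~(\romannumeral1) of the statement is exactly clause~(\romannumeral3) of Theorem~\ref{pbe}, so the equivalence yields clause~(\romannumeral2) of Theorem~\ref{pbe}, i.e. $U(h)$ is bounded in $\mathbb{R}^m$ for every $h \in (0,1]$. This is condition (a) of Corollary~\ref{tcnsrm}, which together with the equi-left-continuity gives total boundedness. Hence the two formulations match.

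The proof is short because all the substance has been packed upstream into Theorem~\ref{pbe}; in particular its implication (\romannumeral1)$\Rightarrow$(\romannumeral3), whose verification requires the integral estimates built from $p$-mean equi-left-continuity, is where the real work lies. Thus the main obstacle is not in this statement but is already resolved in Theorem~\ref{pbe}, and the present theorem is genuinely a corollary of the space-valued results of Section~\ref{cng}. I would also note that the companion Theorem~\ref{rcs} follows by the identical argument with Corollary~\ref{pcnrem} in place of Corollary~\ref{tcnsrm}.
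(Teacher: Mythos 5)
Your proof is correct and takes essentially the same route as the paper: the paper likewise treats Theorem~\ref{tcnr} as a corollary of Corollary~\ref{tcnsrm}, using the equivalence (\romannumeral2)$\Leftrightarrow$(\romannumeral3) of Theorem~\ref{pbe} (valid under the standing hypothesis of $p$-mean equi-left-continuity, which both directions secure before invoking it) to pass between levelwise boundedness of $U(\al)$ and uniform $p$-mean boundedness. Your closing observation that Theorem~\ref{rcs} follows identically with Corollary~\ref{pcnrem} in place of Corollary~\ref{tcnsrm} also matches the paper's remark.
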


\begin{tm} \label{gscnrc}  {\rm (Theorem 4.3 in \cite{huang97})} \
Let
  $U$ be a subset of $F^1_{USCG}   (\mathbb{R}^m )^p$.
Then $U$ is    compact  in $ (F^1_{USCG}   (  \mathbb{R}^m )^p,   d_p)$
 if and only if
 \\
 (\romannumeral1) \  $U$ is uniformly $p$-mean bounded,
 \\
(\romannumeral2) \ $U$ is $p$-mean equi-left-continuous, and
\\
 (\romannumeral3) \  $U$ is a closed set in $ (F^1_{USCG}   (  \mathbb{R}^m )^p,   d_p)$.
\end{tm}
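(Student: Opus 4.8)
The plan is to obtain Theorem~\ref{gscnrc} as a direct specialization of the general compactness characterization already established for $(F^1_{USCG}(X)^p, d_p)$, combined with the equivalence packaged in Theorem~\ref{pbe}. Taking $X=\mathbb{R}^m$ in Corollary~\ref{gscnrm}, I would start from the fact that $U$ is compact in $(F^1_{USCG}(\mathbb{R}^m)^p, d_p)$ if and only if (a) $U(\al)$ is bounded in $\mathbb{R}^m$ for each $\al\in(0,1]$, (b) $U$ is $p$-mean equi-left-continuous, and (c) $U$ is closed in $(F^1_{USCG}(\mathbb{R}^m)^p, d_p)$. Conditions (b) and (c) coincide verbatim with conditions (\romannumeral2) and (\romannumeral3) of the present statement, so the entire task reduces to replacing the cutwise boundedness condition (a) by the global condition (\romannumeral1) that $U$ is uniformly $p$-mean bounded, the replacement being performed while condition (\romannumeral2) is in force.

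To justify that swap I would use Lemma~\ref{pmu} and Theorem~\ref{pbe} with $X=\mathbb{R}^m$. For the implication that uniform $p$-mean boundedness (condition (\romannumeral1) here) gives the cutwise boundedness (a), Lemma~\ref{pmu} already shows, with no further hypotheses, that uniform $p$-mean boundedness forces $U(h)$ to be bounded in $\mathbb{R}^m$ for every $h\in(0,1]$. For the reverse implication I would invoke Theorem~\ref{pbe}, which states that when $U$ is $p$-mean equi-left-continuous the three properties ``some $U(h)$ is bounded'', ``every $U(h)$ is bounded'', and ``$U$ is uniformly $p$-mean bounded'' are mutually equivalent; hence under (b) the condition (a) yields condition (\romannumeral1). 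Thus, under the standing hypothesis (\romannumeral2), conditions (a) and (\romannumeral1) are equivalent.

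Assembling the two directions then finishes the argument: if $U$ is compact, Corollary~\ref{gscnrm} gives (a), (b), (c), and the equivalence just described upgrades (a) to (\romannumeral1), yielding (\romannumeral1), (\romannumeral2), (\romannumeral3); conversely, from (\romannumeral1), (\romannumeral2), (\romannumeral3) the same equivalence produces (a), and Corollary~\ref{gscnrm} returns compactness. I do not expect a genuine obstacle inside this argument, since it is essentially a bookkeeping substitution; the only point requiring care is that the exchange of (a) for (\romannumeral1) is legitimate \emph{only} in the presence of $p$-mean equi-left-continuity, so I must keep (\romannumeral2) present throughout rather than attempting to prove the equivalence of the two boundedness conditions in isolation. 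All the substantive content---the lower bound $d_p^*(u,v)\geq (H_{\rm end}(u,v)^{p+1}/(p+1))^{1/p}$ of Theorem~\ref{dpeu}, the $H_{\rm end}$-characterizations used in Section~\ref{cng}, and the Arzel\`a-type equivalence of Theorem~\ref{pbe}---has already been absorbed into the results cited above.
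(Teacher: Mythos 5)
Your proposal is correct and follows essentially the same route as the paper: the paper's remark in Section \ref{cnr} derives Theorem \ref{gscnrc} precisely by specializing Theorem \ref{gscn} to $X=\mathbb{R}^m$ (Corollary \ref{gscnrm}) and then using Theorem \ref{pbe} to exchange, under $p$-mean equi-left-continuity, the cutwise boundedness of each $U(\al)$ for uniform $p$-mean boundedness. Your additional citation of Lemma \ref{pmu} for the unconditional direction is consistent with the paper's machinery and introduces no gap.
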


\begin{re}
  {\rm

From Theorem \ref{pbe},
we can see:
\\
Corollary \ref{pcnrem} implies Theorem 4.1 in \cite{huang97} (which is Theorem \ref{rcs} in this paper), and the converse is true;
\\
Corollary \ref{tcnsrm} implies Theorem 4.2 in \cite{huang97} (which is Theorem \ref{tcnr} in this paper), and the converse is true;
\\
Corollary \ref{gscnrm} implies Theorem 4.3 in \cite{huang97} (which is Theorem \ref{gscnrc} in this paper), and the converse is true.
\\
So
the characterizations of compactness for $(F^1_{USCG}   (\mathbb{R}^m )^p,   d_p)$ in \cite{huang97} (Theorems 4.1, 4.2 and 4.3 in \cite{huang97}) can be seen as corollaries of the characterizations of compactness for $(F^1_{USCG}   (X )^p,   d_p)$ in this paper (Theorems \ref{pcnr}, \ref{tcns} and \ref{gscn}).
The results of the characterizations of compactness for $(F^1_{USCG}   (X )^p,   d_p)$ in this paper
generalize the corresponding results
 for $(F^1_{USCG}   (\mathbb{R}^m )^p,   d_p)$ in \cite{huang97}.

}
\end{re}

In the following, we give new characterizations of compactness for $(F^1_{USCG}   (\mathbb{R}^m)^p,   d_p)$.

Using Theorem \ref{pbe},
we can obtain the following characterizations of compactness for $(F^1_{USCG}   (\mathbb{R}^m)^p,   d_p)$ from
Corollaries \ref{pcnrem}, \ref{tcnsrm} and \ref{gscnrm}.

\begin{tm} \label{rcsm}
Let $U$ be a subset of $F^1_{USCG}   (\mathbb{R}^m )^p$.
Then
 $U$ is a relatively compact set in $(F^1_{USCG}   (  \mathbb{R}^m )^p,   d_p)$
 if and only if
 \\
 (\romannumeral1) \   There exists an $h\in (0,1)$ such that $U(h)$ is bounded in $\mathbb{R}^m $, and
 \\
(\romannumeral2) \ $U$ is $p$-mean equi-left-continuous.
\end{tm}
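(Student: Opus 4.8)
The plan is to derive Theorem \ref{rcsm} directly from Corollary \ref{pcnrem} by using Theorem \ref{pbe} as the bridge between the two different boundedness hypotheses. Corollary \ref{pcnrem} already characterizes relative compactness in $(F^1_{USCG}(\mathbb{R}^m)^p, d_p)$ by the two conditions that $U(\al)$ is bounded in $\mathbb{R}^m$ for \emph{every} $\al \in (0,1]$ together with $p$-mean equi-left-continuity. The present theorem weakens the first condition to the existence of merely \emph{one} $h \in (0,1)$ with $U(h)$ bounded, so the whole content is showing that, in the presence of $p$-mean equi-left-continuity, this apparently weaker condition is equivalent to the stronger one.

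For the ``only if'' direction I would assume $U$ is relatively compact in $(F^1_{USCG}(\mathbb{R}^m)^p, d_p)$ and simply invoke Corollary \ref{pcnrem}: it gives at once that $U(\al)$ is bounded in $\mathbb{R}^m$ for each $\al \in (0,1]$, which in particular yields condition (\romannumeral1) (pick any $h \in (0,1)$), and that $U$ is $p$-mean equi-left-continuous, which is condition (\romannumeral2).

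For the ``if'' direction I would assume conditions (\romannumeral1) and (\romannumeral2) hold. Since $U$ is $p$-mean equi-left-continuous by (\romannumeral2), Theorem \ref{pbe} applies, and its three listed properties become equivalent. Condition (\romannumeral1) is exactly property (\romannumeral1) of Theorem \ref{pbe}, so property (\romannumeral2) of Theorem \ref{pbe} also holds; that is, $U(h)$ is bounded in $(\mathbb{R}^m, \rho_m)$ for \emph{every} $h \in (0,1]$. This is precisely condition (\romannumeral1) of Corollary \ref{pcnrem}, and condition (\romannumeral2) of the present theorem is condition (\romannumeral2) of that corollary. Applying Corollary \ref{pcnrem} then gives that $U$ is relatively compact in $(F^1_{USCG}(\mathbb{R}^m)^p, d_p)$, completing the argument.

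There is no genuine analytic obstacle here, since both heavy ingredients are already established: the real work sits inside Corollary \ref{pcnrem} (which rests on the relationship between $d_p$ and $H_{\rm end}$ in Theorem \ref{dpeu} and on Theorem \ref{pcnr}) and inside Theorem \ref{pbe}. The only point requiring care is the bookkeeping step of matching hypotheses, namely recognizing that the upgrade from ``some $h$'' to ``all $h$'' is exactly the implication (\romannumeral1)$\Rightarrow$(\romannumeral2) of Theorem \ref{pbe} and that it is legitimate only because $p$-mean equi-left-continuity is assumed; dropping condition (\romannumeral2) would break this equivalence. I would also note in passing that throughout I am using the standard fact, recorded just before the corollaries in Section \ref{cnr}, that for a subset of $\mathbb{R}^m$ boundedness, total boundedness and relative compactness coincide, so that ``$U(h)$ bounded in $\mathbb{R}^m$'' may be read interchangeably with the corresponding relative compactness statement used in Corollary \ref{pcnrem}.
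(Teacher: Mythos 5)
Your proposal is correct and follows essentially the same route as the paper, which obtains Theorem \ref{rcsm} from Corollary \ref{pcnrem} precisely by using the equivalence of properties (\romannumeral1) and (\romannumeral2) in Theorem \ref{pbe} under the standing hypothesis of $p$-mean equi-left-continuity. Your bookkeeping remarks (that the upgrade from ``some $h$'' to ``all $h$'' is licensed only by condition (\romannumeral2), and that boundedness, total boundedness and relative compactness coincide for subsets of $\mathbb{R}^m$) match the paper's implicit justification exactly.
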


\begin{tm} \label{tcnrsm}
Let $U$ be a subset of $F^1_{USCG}   (\mathbb{R}^m )^p$.
Then
  $U$ is a totally bounded set in $(F^1_{USCG}   (\mathbb{R}^m )^p,   d_p)$
 if and only if
 \\
 (\romannumeral1) \  There exists an $h\in (0,1)$ such that $U(h)$ is bounded in $\mathbb{R}^m $, and
 \\
(\romannumeral2) \ $U$ is $p$-mean equi-left-continuous.
\end{tm}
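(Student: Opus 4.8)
The plan is to obtain Theorem \ref{tcnrsm} as a direct consequence of Corollary \ref{tcnsrm} and Theorem \ref{pbe}, exactly in parallel with the way Theorem \ref{rcsm} is derived from Corollary \ref{pcnrem}. The two characterizations share hypothesis (\romannumeral2), namely that $U$ is $p$-mean equi-left-continuous, so the entire task reduces to showing that, under this shared hypothesis, condition (\romannumeral1) of the present theorem (existence of a single $h\in(0,1)$ with $U(h)$ bounded in $\mathbb{R}^m$) is equivalent to condition (\romannumeral1) of Corollary \ref{tcnsrm} (boundedness of $U(\al)$ in $\mathbb{R}^m$ for \emph{every} $\al\in(0,1]$). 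This equivalence is precisely the content of Theorem \ref{pbe}, applied with $X=\mathbb{R}^m$.

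For the ``only if'' direction I would assume that $U$ is totally bounded in $(F^1_{USCG}(\mathbb{R}^m)^p, d_p)$. Then Corollary \ref{tcnsrm} immediately gives that $U$ is $p$-mean equi-left-continuous, so (\romannumeral2) holds, and that $U(\al)$ is bounded in $\mathbb{R}^m$ for every $\al\in(0,1]$; choosing any particular $h\in(0,1)$ yields (\romannumeral1).

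For the ``if'' direction I would assume (\romannumeral1) and (\romannumeral2). Since $U$ is $p$-mean equi-left-continuous, Theorem \ref{pbe} is available with $X=\mathbb{R}^m$. Condition (\romannumeral1) of the present theorem is exactly property (\romannumeral1) of Theorem \ref{pbe}, so the equivalence established there upgrades it to property (\romannumeral2) of Theorem \ref{pbe}: $U(h)$ is bounded in $\mathbb{R}^m$ for every $h\in(0,1]$. This is precisely condition (\romannumeral1) of Corollary \ref{tcnsrm}, and together with (\romannumeral2) it lets Corollary \ref{tcnsrm} conclude that $U$ is totally bounded in $(F^1_{USCG}(\mathbb{R}^m)^p, d_p)$.

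There is no substantive obstacle here; the weight of the argument rests entirely on Theorem \ref{pbe}, whose role is to convert boundedness of a single cut into boundedness of all cuts under the equi-left-continuity assumption. The only points to keep in mind are that Theorem \ref{pbe} genuinely requires the $p$-mean equi-left-continuity hypothesis (so it cannot be invoked before (\romannumeral2) is in force), and that the equivalences in $\mathbb{R}^m$ among relative compactness, total boundedness, and boundedness of subsets of $\mathbb{R}^m$ are what allow the phrase ``bounded in $\mathbb{R}^m$'' to stand in for the corresponding conditions appearing in the general-space results of Section \ref{cng}.
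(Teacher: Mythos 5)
Your proposal is correct and follows exactly the paper's route: the paper also obtains Theorem \ref{tcnrsm} by combining Corollary \ref{tcnsrm} with Theorem \ref{pbe}, using the equi-left-continuity hypothesis to pass between boundedness of a single cut $U(h)$ and boundedness of all cuts $U(\al)$, $\al\in(0,1]$. Your handling of the two directions, including the observation that boundedness, total boundedness, and relative compactness coincide for subsets of $\mathbb{R}^m$, matches the paper's (sketched) argument.
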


\begin{tm} \label{gscnrcsum}
Let
  $U$ be a subset of $F^1_{USCG}   (\mathbb{R}^m )^p$.
Then $U$ is    compact  in $ (F^1_{USCG}   (  \mathbb{R}^m )^p,   d_p)$
 if and only if
 \\
 (\romannumeral1) \  There exists an $h\in (0,1)$ such that $U(h)$ is bounded in $\mathbb{R}^m $,
 \\
(\romannumeral2) \ $U$ is $p$-mean equi-left-continuous, and
\\
 (\romannumeral3) \  $U$ is a closed set in $ (F^1_{USCG}   (  \mathbb{R}^m )^p,   d_p)$.
\end{tm}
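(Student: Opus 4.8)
The plan is to deduce this theorem directly from Corollary~\ref{gscnrm}, which already characterizes compactness in $(F^1_{USCG}(\mathbb{R}^m)^p, d_p)$ through the three conditions: (a) $U(\al)$ is bounded in $\mathbb{R}^m$ for every $\al\in(0,1]$, (b) $U$ is $p$-mean equi-left-continuous, and (c) $U$ is a closed set. Since conditions (b) and (c) coincide verbatim with conditions (\rmn2) and (\rmn3) of the present statement, the whole task reduces to showing that, in the presence of $p$-mean equi-left-continuity, condition (\rmn1) here (the existence of a single $h\in(0,1)$ with $U(h)$ bounded) is equivalent to condition (a) of Corollary~\ref{gscnrm} (boundedness of $U(\al)$ for all $\al$). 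This equivalence is exactly what Theorem~\ref{pbe} supplies.

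For the ``only if'' direction I would assume $U$ is compact and apply Corollary~\ref{gscnrm} to obtain (a), (b) and (c); then (\rmn2) and (\rmn3) hold immediately, and choosing any fixed $h\in(0,1)$ in (a) yields (\rmn1). For the ``if'' direction I would assume (\rmn1), (\rmn2) and (\rmn3). Because $U$ is $p$-mean equi-left-continuous by (\rmn2), Theorem~\ref{pbe} applies with $X=\mathbb{R}^m$; hence the hypothesis (\rmn1) that some $U(h)$ is bounded upgrades to the statement that $U(\al)$ is bounded in $\mathbb{R}^m$ for every $\al\in(0,1]$, which is precisely condition (a). Combining this with (\rmn2)$\,=\,$(b) and (\rmn3)$\,=\,$(c), Corollary~\ref{gscnrm} then forces $U$ to be compact, completing the argument.

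I do not expect a genuine obstacle here: the substantive content lives in Theorem~\ref{pbe}, and the remaining reasoning is a bookkeeping match of conditions. The only points requiring a word of care are that boundedness, total boundedness and relative compactness of a subset of $\mathbb{R}^m$ coincide (so that condition (a) may be read interchangeably as boundedness), and that Theorem~\ref{pbe} must be invoked only after $p$-mean equi-left-continuity has been secured, which is why (\rmn2) is quoted first in both directions before the trade between ``some $h$'' and ``all $\al$'' is performed.
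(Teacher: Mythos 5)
Your proposal is correct and matches the paper's own route exactly: the paper obtains Theorem \ref{gscnrcsum} from Corollary \ref{gscnrm} by invoking Theorem \ref{pbe} (under $p$-mean equi-left-continuity) to trade boundedness of $U(\al)$ for all $\al\in(0,1]$ against boundedness of $U(h)$ for a single $h\in(0,1)$, just as you do. Your two directions and the cautions about when Theorem \ref{pbe} may be invoked are precisely the bookkeeping the paper leaves implicit.
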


\section{Completion of $(F^1_{USCG}   (X)^p, d_p)$}

In this section, we show that
 $ (F^1_{USCG}   (\widetilde{X}   )^p, d_p)$ is a completion of  $ (F^1_{USCB}   (  X ), d_p)$,
and thus a completion of $ (F^1_{USCG}   (  X )^p, d_p)$.

We use $(\widetilde{X}, \widetilde{d})$ to denote the completion of $(X, d)$.
We see $(X, d)$ as a subspace of $(\widetilde{X}, \widetilde{d})$.
Let
$S \subseteq \widetilde{X}$.
The symbol $\widetilde{\overline{S}}$ is used to denote
the closure of $S$
in
 $(\widetilde{X}, \widetilde{d})$.

As defined previously, we have
$K(\widetilde{X})$,
 $C(\widetilde{X})$, $F^1_{USC}(\widetilde{X})$,
$F^1_{USCG} (\widetilde{X})$, etc. according to $(\widetilde{X}, \widetilde{d})$.
For example,
  \begin{gather*}
   F^1_{USC}(\widetilde{X}) :=\{ u\in F(\widetilde{X}) : [u]_\al \in  C(\widetilde{X})  \  \mbox{for all} \   \al \in [0,1]   \}, \\
   F^1_{USCG} (\widetilde{X}) := \{ u\in  F(\widetilde{X}): [u]_\al \in K(\widetilde{X}) \ \mbox{for all} \   \al\in (0,1] \}.
 \end{gather*}

If there is no confusion,
 we also
use $H$ to denote the Hausdorff metric
on
 $C(\widetilde{X})$ induced by $\widetilde{d}$.
  We also
use $H$ to denote the Hausdorff metric
 on $C(\widetilde{X}\times [0,1])$ induced by $\overline{\widetilde{d}}$.
 We
 also use $H_{\rm end}$ to denote the endograph metric on $F^1_{USC}(\widetilde{X})$
given by using $H$ on
$C(\widetilde{X} \times [0,1])$.
We
 also use $d_p$ to denote the $d_p$ metric on $F^1_{USCG}(\widetilde{X})$.

Define $f: F^1_{USCG} (X) \to F^1_{USCG} (\widetilde{X})$ as follows:
for $u \in F^1_{USCG} (X)$,
\[f(u)(t) = \left\{
        \begin{array}{ll}
          u(t), & t\in X, \\
0, & t\in \widetilde{X} \setminus X.
        \end{array}
      \right.
\]
Then $[f(u)]_\al = [u]_\al$ for all $\al\in (0,1]$, and so $f(u) \in F^1_{USCG} (\widetilde{X})$.
We can see that for
$\rho = d_\infty, d_p, H_{\rm end}, H_{\rm send}$, $\rho(u,v) = \rho(f(u), f(v))$.
So for
$\rho = d_\infty, d_p, H_{\rm send}$, $ (F^1_{USCG} (X), \rho)$
can be embedded as an extended metric subspace of $ (F^1_{USCG} (\widetilde{X}), \rho)$.
$ (F^1_{USCG} (X), H_{\rm end})$
can be embedded as a metric subspace of $ (F^1_{USCG} (\widetilde{X}), H_{\rm end})$.

In this paper,
we see
$ (F^1_{USCG} (X), H_{\rm end})$
as a metric subspace of
$ (F^1_{USCG} (\widetilde{X}), H_{\rm end})$.
We see
$(F^1_{USCG} (X)^p, d_p) $ as a metric subspace of $(F^1_{USCG} (\widetilde{X})^p, d_p)  $.
We see
$ (F^1_{USCG} (X), d_p)$
as an extended metric subspace of
$ (F^1_{USCG} (\widetilde{X}), d_p)$.

\begin{tm} \label{scp}
 $(X,d)$ is complete
 if and only if
 $(F^1_{USCG}   (  X )^p, d_p)$ is complete.
\end{tm}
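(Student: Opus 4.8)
The plan is to prove both implications separately, drawing on the total--boundedness and relative--compactness characterizations of Section~\ref{cng} for the ``if'' direction and on the isometric embeddings together with the completion $(\widetilde X,\widetilde d)$ for the ``only if'' direction.

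First I would treat the direction ``$(X,d)$ complete $\Rightarrow (F^1_{USCG}(X)^p,d_p)$ complete''. Let $\{u_n\}$ be a $d_p$-Cauchy sequence and put $U:=\{u_n:n\in\mathbb{N}\}$. Being the range of a Cauchy sequence, $U$ is totally bounded in $(F^1_{USCG}(X)^p,d_p)$, so Theorem~\ref{tcns} gives that $U(\al)$ is totally bounded in $(X,d)$ for each $\al\in(0,1]$ and that $U$ is $p$-mean equi-left-continuous. Since $(X,d)$ is complete, every totally bounded subset of $X$ is relatively compact, so each $U(\al)$ is relatively compact in $(X,d)$; Theorem~\ref{pcnr} then yields that $U$ is relatively compact in $(F^1_{USCG}(X)^p,d_p)$. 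Hence $\{u_n\}$ has a subsequence converging to some $u\in F^1_{USCG}(X)^p$, and a Cauchy sequence with a convergent subsequence converges to the same limit, so $d_p(u_n,u)\to 0$ and the space is complete.

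For the reverse direction I would argue by contraposition, exhibiting a $d_p$-Cauchy sequence with no limit whenever $(X,d)$ is incomplete. The map $x\mapsto\widehat{x}$ is an isometric embedding of $(X,d)$ into $(F^1_{USCG}(X)^p,d_p)$, since $[\widehat{x}]_\al=\{x\}$ for all $\al$ gives $d_p(\widehat{x},\widehat{y})=\big(\int_0^1 H(\{x\},\{y\})^p\,d\al\big)^{1/p}=d(x,y)$. Choose a Cauchy sequence $\{x_n\}$ in $X$ with no limit in $X$; then $\{\widehat{x_n}\}$ is $d_p$-Cauchy. Viewing $(F^1_{USCG}(X)^p,d_p)$ as a metric subspace of $(F^1_{USCG}(\widetilde{X})^p,d_p)$ through the embedding $f$, the image $f(\widehat{x_n})$ is the fuzzy point $\widehat{x_n}$ regarded in $F(\widetilde{X})$, and $d_p(f(\widehat{x_n}),\widehat{\tilde x})=\widetilde{d}(x_n,\tilde x)\to 0$, where $\tilde x\in\widetilde{X}$ is the limit of $\{x_n\}$ in the completion; here $\tilde x\in\widetilde{X}\setminus X$ precisely because $\{x_n\}$ has no limit in $X$.

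Finally I would derive the contradiction. If $\{\widehat{x_n}\}$ converged to some $u\in F^1_{USCG}(X)^p$, then in the metric space $(F^1_{USCG}(\widetilde{X})^p,d_p)$ we would have both $f(\widehat{x_n})\to f(u)$ and $f(\widehat{x_n})\to\widehat{\tilde x}$, so uniqueness of limits forces $f(u)=\widehat{\tilde x}$; but $f(u)$ vanishes on $\widetilde{X}\setminus X$, whereas $\widehat{\tilde x}(\tilde x)=1$ with $\tilde x\notin X$, which is impossible. Thus $\{\widehat{x_n}\}$ is Cauchy yet non-convergent and $(F^1_{USCG}(X)^p,d_p)$ is not complete, proving the contrapositive. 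I expect the last step to be the main obstacle: the non-convergence cannot be detected inside $X$, and one must pass to $\widetilde{X}$ and use $f$ to pin down the putative limit and contradict it, while the total--boundedness machinery of Section~\ref{cng} carries the easier forward implication.
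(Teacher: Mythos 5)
Your proof is correct, but both halves take a genuinely different route from the paper's. For the forward direction the paper also reduces completeness to relative compactness of the Cauchy range $U$, but it passes through the endograph metric: Theorem \ref{tcn} gives total boundedness of $U$ in $(F^1_{USCG}(X),H_{\rm end})$ together with $p$-mean equi-left-continuity, completeness of $(F^1_{USCG}(X),H_{\rm end})$ (Theorem 6.1 of \cite{huang19c}, valid since $X$ is complete) upgrades this to relative compactness in $H_{\rm end}$, and Theorem \ref{pcn} transfers it back to $d_p$. You instead work level-wise: Theorem \ref{tcns} gives total boundedness of each $U(\al)$ in $(X,d)$, the elementary fact that totally bounded subsets of a complete metric space are relatively compact upgrades each $U(\al)$, and Theorem \ref{pcnr} closes the loop. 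Your route trades the external completeness theorem for $(F^1_{USCG}(X),H_{\rm end})$ for an elementary fact about $X$ itself, though the level-wise characterizations you invoke are themselves derived in the paper from the $H_{\rm end}$ results, so the underlying machinery is shared. For the converse the paper argues directly rather than by contraposition: $x\mapsto\widehat x$ is an isometry of $X$ onto $\widehat X$, and $\widehat X$ is closed in $(F^1_{USCG}(X)^p,d_p)$ because any $d_p$-limit of fuzzy points $\widehat{x_n}$ must itself be a fuzzy point $\widehat x$ --- a claim the paper asserts without detailed proof --- whence $X$ is isometric to a closed subspace of a complete space. Your contrapositive via the embedding $f$ into the metric space $(F^1_{USCG}(\widetilde X)^p,d_p)$ and uniqueness of limits neatly avoids having to characterize such limits inside $F^1_{USCG}(X)^p$: the putative limit $u$ is pinned down by $f(u)=\widehat{\tilde x}$ and refuted by evaluating at $\tilde x\in\widetilde X\setminus X$, where $f(u)$ vanishes but $\widehat{\tilde x}$ equals $1$. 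This is slightly longer than the paper's closedness argument but supplies exactly the detail the paper glosses over; both versions ultimately rest on the same isometry $x\mapsto\widehat x$.
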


\begin{proof}
We prove the ``only if'' part of the theorem.
  Suppose that $(X,d)$ is complete.
  To show that $(F^1_{USCG}   (  X )^p, d_p)$
 is complete, we only
 need to show that each Cauchy sequence in $(F^1_{USCG}   (  X )^p, d_p)$ is relatively compact.

  Let
$\{u_n:  n\in \mathbb{N}\}$ be a Cauchy sequence in      $(F^1_{USCG}   (  X )^p, d_p)$.
 Then $\{u_n:  n\in \mathbb{N}\}$ is totally bounded in $(F^1_{USCG}   (  X )^p, d_p)$.
By Theorems \ref{pcn} and \ref{tcn},
to show
that
$\{u_n:  n\in \mathbb{N}\}$ is relatively compact in $(F^1_{USCG}   (  X )^p, d_p)$, we only need to show that
 $\{u_n:  n\in \mathbb{N}\}$ is relatively compact
in
 $(F^1_{USCG}   (  X )^p, H_{\rm end})$.

      By Theorem \ref{tcn},
  $\{u_n:  n\in \mathbb{N}\}$ is totally bounded in      $(F^1_{USCG}   (  X ), H_{\rm end})$.
 Since,
by Theorem 6.1 in \cite{huang19c},
  $(F^1_{USCG}   (  X ), H_{\rm end})$ is complete, and thus $\{u_n:  n\in \mathbb{N}\}$ is relatively compact in $(F^1_{USCG}   (  X ), H_{\rm end})$.

We prove the ``if'' part of the theorem. \ Suppose that $(F^1_{USCG}   (  X )^p, d_p)$ is complete.
Let $\widehat{X} = \{ \widehat{x} : x\in X  \}$.
Then $\widehat{X}\subseteq  F^1_{USCB} (X)$.
Define $f: X\to \widehat{X}$ by $f(x)= \widehat{x}$.
Note that $d(x,y) = d_p (\widehat{x},\,  \widehat{y})$. Hence $f$ is a isometry
from $X$ to $\widehat{X}$.
If
 $\{\widehat{x_n}\}$ converges to $u\in F^1_{USCG} (X)^p$,
then there exists an $x\in X$
such
that $[u]_\al = \{x\}$ for all $\al\in [0,1]$; that is $u = \widehat{x}$.
Thus
$\widehat{X}$ is a closed subspace of
 $(F^1_{USCG}   (  X )^p, d_p)$.
So $(X,d)$ is isometric to
a closed subspace of
 $(F^1_{USCG}   (  X )^p, d_p)$, and then $(X,d)$ is complete.

\end{proof}

\begin{tl} \label{scpr}

 $(F^1_{USCG}   (  \mathbb{R}^m )^p, d_p)$ is complete.
\end{tl}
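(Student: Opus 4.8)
The plan is to obtain Corollary \ref{scpr} as an immediate application of Theorem \ref{scp}. Recall that Theorem \ref{scp} asserts the equivalence that, for an arbitrary metric space $(X,d)$, the space $(X,d)$ is complete if and only if $(F^1_{USCG}(X)^p, d_p)$ is complete. Since this holds for \emph{every} metric space, I would simply specialize it to the case $X = \mathbb{R}^m$ endowed with the Euclidean metric $\rho_m$.

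First I would invoke the classical fact that $(\mathbb{R}^m, \rho_m)$ is a complete metric space, i.e. every Cauchy sequence in $\mathbb{R}^m$ converges in $\mathbb{R}^m$. Then, applying the ``only if'' direction of Theorem \ref{scp} with $(X,d) = (\mathbb{R}^m, \rho_m)$, I conclude that $(F^1_{USCG}(\mathbb{R}^m)^p, d_p)$ is complete, which is precisely the assertion of Corollary \ref{scpr}.

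There is essentially no obstacle in this step, since all the substantive work has already been carried out in Theorem \ref{scp}; the only additional ingredient is the completeness of Euclidean space, which is standard. The depth of the argument lies entirely in the ``only if'' direction of Theorem \ref{scp}, which in turn rests on the total boundedness and relative compactness characterizations (Theorems \ref{pcn} and \ref{tcn}) together with the completeness of $(F^1_{USCG}(X), H_{\rm end})$. Consequently the proof of the corollary reduces to a single line citing Theorem \ref{scp} and the completeness of $(\mathbb{R}^m,\rho_m)$.
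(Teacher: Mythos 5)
Your proposal is correct and coincides with the paper's own proof: the paper also derives Corollary \ref{scpr} by specializing Theorem \ref{scp} to $X=\mathbb{R}^m$ and citing the completeness of Euclidean space. No further comment is needed.
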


\begin{proof}
 Since $\mathbb{R}^m$ is complete, the desired result follows immediately from Theorem \ref{scp}.

\end{proof}

\begin{re}{\rm
  Corollary \ref{scpr} is Theorem 5.1 in \cite{huang97}.
So
Theorem \ref{scp} in this paper generalizes Theorem 5.1 in \cite{huang97}.
}
\end{re}

For $u \in F^1_{USCG} (X)$ and $\varepsilon>0$,
define $u^\varepsilon \in  F^1_{USCB} (X)$ by
  \[
  [u^\varepsilon]_{\al} =\left\{
         \begin{array}{ll}
           [u]_\al , & \al \in (\varepsilon, 1], \\
          \mbox{} [u]_\varepsilon, & \al \in  [0,\varepsilon ].
         \end{array}
       \right.
  \]

\begin{tm} \label{sln}
  $F^1_{USCB}   (  X)$ is a dense set in $(F^1_{USCG}   (X)^p, d_p)$.
\end{tm}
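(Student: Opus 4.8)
The plan is to show that every $u \in F^1_{USCG}(X)^p$ is a $d_p$-limit of elements of $F^1_{USCB}(X)$, using the truncations $u^\varepsilon$ introduced immediately before the theorem. First I would record that $F^1_{USCB}(X)\subseteq F^1_{USCG}(X)^p$: for $w\in F^1_{USCB}(X)$ the cut $[w]_0$ is compact, hence bounded, so $H([w]_\alpha,\{x_0\})\le H([w]_0,\{x_0\})<+\infty$ for every $\alpha$ and therefore $d_p(w,\widehat{x_0})<+\infty$. Consequently it suffices to prove that for each $u\in F^1_{USCG}(X)^p$ and each $\delta>0$ there is an $\varepsilon>0$ with $u^\varepsilon\in F^1_{USCB}(X)$ and $d_p(u,u^\varepsilon)<\delta$.

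Next I would check that $u^\varepsilon$ indeed lies in $F^1_{USCB}(X)$ and compute the distance. Since $[u^\varepsilon]_\alpha=[u]_\alpha\in K(X)$ for $\alpha\in(\varepsilon,1]$ and $[u^\varepsilon]_\alpha=[u]_\varepsilon\in K(X)$ for $\alpha\in[0,\varepsilon]$ (in particular $[u^\varepsilon]_0=[u]_\varepsilon$ is compact), the fuzzy set $u^\varepsilon$ belongs to $F^1_{USCB}(X)$. From the definition of $u^\varepsilon$ we get $H([u]_\alpha,[u^\varepsilon]_\alpha)=0$ for $\alpha\in(\varepsilon,1]$ and $H([u]_\alpha,[u^\varepsilon]_\alpha)=H([u]_\alpha,[u]_\varepsilon)$ for $\alpha\in[0,\varepsilon]$, so
\[
d_p(u,u^\varepsilon)^p=\int_0^\varepsilon H([u]_\alpha,[u]_\varepsilon)^p\,d\alpha .
\]

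The heart of the argument is to bound this integral and let $\varepsilon\to 0^+$. I would fix once and for all a point $x_1\in[u]_1$; since $[u]_1\subseteq[u]_\varepsilon\subseteq[u]_\alpha$ whenever $0\le\alpha\le\varepsilon\le 1$, Remark \ref{bsm}(a) gives, for each such $\alpha$,
\[
H([u]_\alpha,[u]_\varepsilon)=\sup_{x\in[u]_\alpha}d(x,[u]_\varepsilon)\le\sup_{x\in[u]_\alpha}d(x,x_1)=H([u]_\alpha,\{x_1\}),
\]
the first equality holding because $[u]_\varepsilon\subseteq[u]_\alpha$ forces $H^*([u]_\varepsilon,[u]_\alpha)=0$. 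Hence $d_p(u,u^\varepsilon)^p\le\int_0^\varepsilon H([u]_\alpha,\{x_1\})^p\,d\alpha$. Because $u\in F^1_{USCG}(X)^p$, the nonnegative function $g(\alpha):=H([u]_\alpha,\{x_1\})^p$ is measurable (Proposition \ref{gmn}) and integrable on $[0,1]$, its integral being $d_p(u,\widehat{x_1})^p<+\infty$. By absolute continuity of the Lebesgue integral, $\int_0^\varepsilon g\to 0$ as $\varepsilon\to 0^+$, so choosing $\varepsilon$ small enough yields $d_p(u,u^\varepsilon)<\delta$, which establishes the density.

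The main obstacle is the estimate in the third paragraph: one cannot bound $H([u]_\alpha,[u]_\varepsilon)$ by a single pointwise quantity that is uniformly small, since these Hausdorff distances may be large, even unbounded, as $\alpha\to 0^+$. The decisive idea is instead to dominate the integrand by the fixed integrable function $g$ and invoke absolute continuity of the integral; this is precisely where the hypothesis $u\in F^1_{USCG}(X)^p$ (finiteness of $d_p(u,\widehat{x_1})$) enters, and it is what makes the $\varepsilon$-truncation converge despite the possibly wild behaviour of the cuts near $\alpha=0$.
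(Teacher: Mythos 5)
Your proof is correct and takes essentially the same approach as the paper: the paper establishes the theorem by showing $d_p(v^{(1/n)},v)\to 0$ for the truncations $v^{(1/n)}$ defined just before the statement (deferring the details to Theorem 5.2 of \cite{huang97}), which is precisely your argument with $\varepsilon=1/n$. Your domination of $H([u]_\alpha,[u]_\varepsilon)$ by the integrable function $H([u]_\alpha,\{x_1\})$ with $x_1\in[u]_1$, followed by absolute continuity of the Lebesgue integral, is the standard way those details are carried out, and it is exactly where the hypothesis $u\in F^1_{USCG}(X)^p$ is needed.
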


\begin{proof}

The desired result can be proved in a similar fashion
to Theorem 5.2 in \cite{huang97}.
In fact,
it is
 shown that
for each $v\in F^1_{USCG}   (X)^p$,
$d_p(v^{(1/n)},  v) \to 0$.

\end{proof}

\begin{tm} \label{com}
  $ (F^1_{USCG}   (\widetilde{X}   )^p, d_p)$ is a completion of  $ (F^1_{USCB}   (  X ), d_p)$.
\end{tm}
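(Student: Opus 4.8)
The plan is to verify the two defining properties of a completion: that $(F^1_{USCG}(\widetilde{X})^p, d_p)$ is complete, and that it contains an isometric, dense copy of $(F^1_{USCB}(X), d_p)$. Completeness is immediate, since $(\widetilde{X}, \widetilde{d})$ is complete: Theorem \ref{scp} applied to $\widetilde{X}$ in place of $X$ shows that $(F^1_{USCG}(\widetilde{X})^p, d_p)$ is complete. For the embedding, note first that $F^1_{USCB}(X) \subseteq F^1_{USCG}(X)^p$, because if $u \in F^1_{USCB}(X)$ then $[u]_0 \in K(X)$ is bounded, so $H([u]_\al, \{x_0\}) \le H([u]_0, \{x_0\}) < +\infty$ for all $\al$ and hence $d_p(u, \widehat{x_0}) < +\infty$. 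The map $f$ introduced before Theorem \ref{scp} satisfies $d_p(u,v) = d_p(f(u), f(v))$ and sends each $u \in F^1_{USCB}(X)$ to $f(u) \in F^1_{USCB}(\widetilde{X}) \subseteq F^1_{USCG}(\widetilde{X})^p$ (its cuts coincide with those of $u$ for $\al \in (0,1]$, and $[f(u)]_0 = [u]_0$ remains compact in $\widetilde{X}$). Thus $f$ identifies $(F^1_{USCB}(X), d_p)$ isometrically with a metric subspace of $(F^1_{USCG}(\widetilde{X})^p, d_p)$, and it remains only to prove density.

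For density I would argue in two stages. By Theorem \ref{sln} applied to $\widetilde{X}$, the set $F^1_{USCB}(\widetilde{X})$ is dense in $(F^1_{USCG}(\widetilde{X})^p, d_p)$; so it suffices to show that $f(F^1_{USCB}(X))$ is dense in $(F^1_{USCB}(\widetilde{X}), d_p)$. Since $d_p \le d_\infty$ (recall $d_\infty \ge d_p^* = d_p$ where $d_p$ is well-defined), it is enough to approximate a given $u \in F^1_{USCB}(\widetilde{X})$ arbitrarily well in the $d_\infty$ metric by some $f(w)$ with $w \in F^1_{USCB}(X)$.

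The heart of the argument is this construction. Fix $\varepsilon > 0$. As $[u]_0$ is compact in $\widetilde{X}$ and $X$ is dense in $\widetilde{X}$, choose a finite set $\{x_1, \ldots, x_k\} \subseteq X$ that is an $\varepsilon$-net of $[u]_0$, and define $w \in F(X)$ by $w(x_i) := \sup\{\al \in [0,1] : \widetilde{d}(x_i, [u]_\al) \le \varepsilon\}$ (with $\sup \emptyset = 0$) and $w \equiv 0$ off $\{x_1, \ldots, x_k\}$. Then $w$ has finite support, so each $[w]_\al$ is a finite, hence compact, subset of $X$ and $w$ is automatically upper semicontinuous; moreover a point of $[u]_1$ lies within $\varepsilon$ of some $x_i$, which forces $w(x_i) = 1$, so $w$ is normal and $w \in F^1_{USCB}(X)$. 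The key estimate is $H([w]_\al, [u]_\al) \le \varepsilon$ for every $\al \in (0,1]$. For the bound $H^*([u]_\al, [w]_\al) \le \varepsilon$, any $y \in [u]_\al$ has a net point $x_i$ with $\widetilde{d}(x_i,y) \le \varepsilon$, whence $\widetilde{d}(x_i, [u]_\al) \le \varepsilon$ and so $w(x_i) \ge \al$, i.e. $x_i \in [w]_\al$. For the reverse bound $H^*([w]_\al, [u]_\al) \le \varepsilon$, one uses that $\beta \mapsto \widetilde{d}(x_i, [u]_\beta)$ is nondecreasing and left continuous (the latter via Proposition \ref{gnc}(\romannumeral1)), so that $w(x_i) \ge \al$ yields $\widetilde{d}(x_i, [u]_\al) \le \varepsilon$. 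Consequently $d_\infty(f(w), u) = \sup_{\al} H([w]_\al, [u]_\al) \le \varepsilon$, which completes the density proof and hence the theorem.

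The only delicate point is this two-sided Hausdorff estimate, where the monotonicity and left continuity of the point-to-cut distances are exactly what make the level-wise approximation of $u$ compatible with the nesting of the cuts; everything else reduces to invoking Theorems \ref{scp} and \ref{sln} and the isometry property of $f$.
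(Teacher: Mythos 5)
Your proof is correct, and its skeleton coincides with the paper's: completeness of $(F^1_{USCG}(\widetilde{X})^p, d_p)$ via Theorem \ref{scp} applied to $\widetilde{X}$, density of $F^1_{USCB}(\widetilde{X})$ in $(F^1_{USCG}(\widetilde{X})^p, d_p)$ via Theorem \ref{sln}, and then density of $F^1_{USCB}(X)$ in $(F^1_{USCB}(\widetilde{X}), d_p)$. Where you genuinely diverge is in this last step: the paper simply imports the needed lemma from the proof of Theorem 6.3 in \cite{huang19c} (for each $v\in F^1_{USCB}(\widetilde{X})$ and $\varepsilon>0$ there is $w\in F^1_{USCB}(X)$ with $H([w]_\al,[v]_\al)\leq\varepsilon$ for all $\al\in[0,1]$), whereas you re-prove it inline by taking a finite $\varepsilon$-net $\{x_1,\ldots,x_k\}\subseteq X$ of the compact set $[u]_0$ (possible since $X$ is dense in $\widetilde{X}$) and setting $w(x_i)=\sup\{\al\in[0,1]:\widetilde{d}(x_i,[u]_\al)\leq\varepsilon\}$. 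Your two-sided Hausdorff estimate is sound: $H^*([u]_\al,[w]_\al)\leq\varepsilon$ follows directly from the net property, and $H^*([w]_\al,[u]_\al)\leq\varepsilon$ correctly rests on the monotonicity of $\beta\mapsto\widetilde{d}(x_i,[u]_\beta)$ together with its left continuity, which follows from $|\widetilde{d}(x,[u]_\al)-\widetilde{d}(x,[u]_\beta)|\leq H([u]_\al,[u]_\beta)$ and Proposition \ref{gnc}(\rmn1), and which is exactly what handles the boundary case where $\al$ equals the supremum defining $w(x_i)$. What your route buys is self-containedness (the theorem no longer leans on an external proof) and, like the cited lemma, a level-uniform $d_\infty$-approximation rather than a merely $d_p$-level one. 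One cosmetic remark: you establish the estimate only for $\al\in(0,1]$ but then write $d_\infty(f(w),u)=\sup_\al H([w]_\al,[u]_\al)\leq\varepsilon$, whose supremum includes $\al=0$; either note that the $\al=0$ case also holds (apply the net argument to points of $\bigcup_{\al>0}[u]_\al$ and pass to closures), or observe that for the $d_p$ bound you actually need, the single point $\al=0$ is irrelevant to the integral.
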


\begin{proof}

In the proof of Theorem 6.3 in \cite{huang19c}, we show the following conclusion
\begin{itemize}
  \item
for each  $v \in F^1_{USCB} (\widetilde{X})$ and each $\varepsilon>0$, there is a $w\in F^1_{USCB} (X)$ such that
$H([w]_\al, [v]_\al) \leq \varepsilon$ for all $\al\in [0,1]$.
\end{itemize}
Thus
we have
$d_p(v, w) \leq \varepsilon$. This means that $F^1_{USCB}   (  X )$ is dense
in
$(F^1_{USCB}   (  \widetilde{X}), d_p)$.

From Theorem \ref{sln}, we know that $F^1_{USCB}   (  \widetilde{X})$ is dense in $(F^1_{USCG}   (\widetilde{X})^p, d_p)$.

Combined the above conclusions, we obtain that
$F^1_{USCB}   (  X )$
is dense in
$ (F^1_{USCG}   (\widetilde{X}   )^p, d_p)$.
By Theorem \ref{scp}, $ (F^1_{USCG}   (\widetilde{X}   )^p, d_p)$ is complete.
So
 $(F^1_{USCG}   (\widetilde{X}   )^p, d_p)$ is a completion of  $ (F^1_{USCB}   (  X ), d_p)$.

\end{proof}

\begin{tl}\label{gcn}
    $ (F^1_{USCG}   (\widetilde{X}   )^p, d_p)$ is a completion of  $ (F^1_{USCG}   (  X )^p, d_p)$.
\end{tl}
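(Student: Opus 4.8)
The plan is to deduce Corollary \ref{gcn} directly from Theorem \ref{com}, using the inclusion chain $\widehat{X}\subseteq F^1_{USCB}(X) \subseteq F^1_{USCG}(X)^p \subseteq F^1_{USCG}(\widetilde{X})^p$ established before Remark \ref{dfu} and in Section 7. Recall that to show $(F^1_{USCG}(\widetilde{X})^p, d_p)$ is a completion of $(F^1_{USCG}(X)^p, d_p)$, I must verify three things: (i) $(F^1_{USCG}(X)^p, d_p)$ embeds isometrically into $(F^1_{USCG}(\widetilde{X})^p, d_p)$; (ii) the ambient space $(F^1_{USCG}(\widetilde{X})^p, d_p)$ is complete; and (iii) the image of $F^1_{USCG}(X)^p$ is dense in the ambient space.

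First, the isometric embedding required in (i) is already in hand: through the map $f$ defined before Theorem \ref{scp}, the paper regards $(F^1_{USCG}(X)^p, d_p)$ as a metric subspace of $(F^1_{USCG}(\widetilde{X})^p, d_p)$, with $d_p(u,v) = d_p(f(u), f(v))$. Next, both the completeness in (ii) and the density of $F^1_{USCB}(X)$ in the ambient space are exactly the content of Theorem \ref{com}, which states that $(F^1_{USCG}(\widetilde{X})^p, d_p)$ is a completion of $(F^1_{USCB}(X), d_p)$; in particular $(F^1_{USCG}(\widetilde{X})^p, d_p)$ is complete (this completeness can alternatively be read off from Theorem \ref{scp} applied to the complete space $\widetilde{X}$) and $F^1_{USCB}(X)$ is dense in it.

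It then remains only to settle (iii). Since $F^1_{USCB}(X) \subseteq F^1_{USCG}(X)^p$, the set $F^1_{USCG}(X)^p$ contains the dense set $F^1_{USCB}(X)$, and a superset of a dense set is dense; hence $F^1_{USCG}(X)^p$ is dense in $(F^1_{USCG}(\widetilde{X})^p, d_p)$. Combining (i), (ii) and (iii) gives the assertion. In truth there is no substantial obstacle here, as all the analytic work has been front-loaded into Theorems \ref{com} and \ref{sln}; the only point demanding care is the bookkeeping of identifications, namely ensuring that $F^1_{USCB}(X)$ and $F^1_{USCG}(X)^p$ are viewed as subspaces of the \emph{same} ambient space $(F^1_{USCG}(\widetilde{X})^p, d_p)$ via the single embedding $f$, so that the transfer of density from the smaller set to the larger set is legitimate.
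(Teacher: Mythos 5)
Your proposal is correct and follows essentially the same route as the paper, which deduces Corollary \ref{gcn} from Theorem \ref{com} via the inclusions $F^1_{USCB}(X) \subseteq F^1_{USCG}(X)^p \subseteq F^1_{USCG}(\widetilde{X})^p$ and the fact that a superset of a dense set is dense. You merely spell out the completeness, isometric-embedding and density bookkeeping that the paper leaves implicit.
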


\begin{proof} Since $F^1_{USCB}   (  X ) \subseteq  F^1_{USCG}   (  X )^p   \subseteq F^1_{USCG}   (\widetilde{X}   )^p$,
  the desired result follows immediately from Theorem \ref{com}.

  \end{proof}

\section{Conclusions}

In this paper, we discussed properties of the $d_p$ metrics and the spaces of fuzzy sets in a general metric space $(X,d)$ with $d_p$ metrics.

In what cases the $d_p$ metrics are well-defined is a fundamental question.
For each $u,v\in F^1_{USC}(X)$,
$d_p(u,v)= \left(\int_0^1 H ([u]_\al, [v]_\al)^p  \,   d\al   \right)^{1/p}$
 is well-defined if and only if
$H([u]_\al, [v]_\al)$ is a measurable function of $\al$ on $[0, 1]$.
In this paper, we consider this problem
not only with fuzzy sets having compact
$\al$-cuts, but also
with fuzzy sets not necessarily having compact
$\al$-cuts. The motivations are as follows.
\\
(a)
Since the Hausdorff distance
can also be considered between sets not necessarily being compact, it is natural for us to consider the measurability of Hausdorff distance function induced by fuzzy sets not necessarily having compact
$\al$-cuts.

Consider $u_0= [0,+\infty)_{F(\mathbb{R})}$ and $v_0=[1,+\infty)_{F(\mathbb{R})}$. Then $H([u_0]_\al, [v_0]_\al)= 1$
for all $\al\in [0,1]$. So $H([u_0]_\al, [v_0]_\al)$ is a measurable function of $\al$ on $[0,1]$ and $d_p(u_0,v_0)$ is well-defined.
We can see that $u_0,v_0$ are fuzzy sets with non-compact
$\al$-cuts, and their corresponding Hausdorff distance function $H([u_0]_\al, [v_0]_\al)$ is a common function.
It is natural to discuss the well-definedness of $d_p(u_0,v_0)$.
\\
(b) We think that the applications of fuzzy sets will definitely involve cases when fuzzy sets not necessarily having compact
$\al$-cuts. For example, a kind of fuzzy sets $u$ with the Gaussian membership functions are defined
as follows
$u(x;\sigma,c)=\exp
-\frac{(x-c)^2}{2\sigma}$ for each $x\in \mathbb{R}$, where $\sigma > 0$ and $c\in \mathbb{R}$. Clearly $[u]_0 = \mathbb{R}$ and $[u]_0$ is not compact in $\mathbb{R}$.
The kind of fuzzy sets $u$ with the Gaussian membership functions
are used widely in applications.

In Section \ref{meau} of this paper, we obtain the following conclusion.

(\romannumeral1) \ Let $(X, d_X)$ be a metric subspace of $(Y, d_Y)$ and $Y \setminus X$ an at most countable set.
 Let $u, v \in F^1_{USC} (X)$.
If
$u^Y \in  F^1_{USCG} (Y)$, then $d_p(u,v)= \left(\int_0^1 H_{X} ([u]_\al, [v]_\al)^p  \,   d\al   \right)^{1/p}$  is well-defined.

In the special case of $Y=X$, the above conclusion become:

Let $u\in F^1_{USC} (X)$ and let $v\in F^1_{USCG} (X)$. Then $d_p(u,v)$ is well-defined.

(\romannumeral2) \
Let
 $A$ be a nonempty subset of $\mathbb{R}^m$ with $\overline{A}^{\mathbb{R}^m}\setminus A$ being at most countable.
Let    $u,v \in  F^1_{USC} (A)$.
Then $d_p(u,v)= \left(\int_0^1 H_{A} ([u]_\al, [v]_\al)^p  \,   d\al   \right)^{1/p}$ is well-defined.

In the special case of $\overline{A}^{\mathbb{R}^m} = \mathbb{R}^m$,
 the above conclusion become:
\\
Let $S$ be an at most countable subset of $\mathbb{R}^m$.
For each   $u,v \in  F^1_{USC} (\mathbb{R}^m \setminus  S)$, $d_p(u,v)= \left(\int_0^1 H_{\mathbb{R}^m \setminus  S} ([u]_\al, [v]_\al)^p  \,   d\al   \right)^{1/p}$ is well-defined.

In the special case of $S=\emptyset$, the above conclusion become:
\\
For each   $u,v \in  F^1_{USC} (\mathbb{R}^m)$, $d_p(u,v)$ is well-defined (we point out this conclusion in \cite{huang17}).

(\romannumeral3) \
 There exists a metric space $X$ and $u,v\in F^1_{USC}(X)$ such that $d_p(u,v)$ is not well-defined (we point out this conclusion in \cite{huang17}). In \cite{huang17}, we introduce the $d_p^*$ metric on $F^1_{USC}(X)$, which is an expansion of the $d_p$ distance on $F^1_{USC}(X)$.

We have shown that
for each $u, v \in F^1_{USC} (X)$,
 $ d_p^* (u,v) \geq   \left(  \frac{H_{\rm end} (u,v)^{p+1} }{p+1}   \right)^{1/p}$
and
$
  d_p^* (u,v) \geq    H_{\rm end}' (u,v)  ^{1+1/p}.
$
If $d_p(u,v)$ is well-defined, then  $d_p(u,v) = d_p^*(u,v)$ and of course $d_p^*(u,v)$ can be replaced by $d_p(u,v)$ in the above inequalities.

We have obtained the characterizations of total boundedness, relative compactness and compactness in
$(F^1_{USCG} (X)^p, d_p)$. These conclusions generalize the corresponding conclusions
in \cite{huang9}.
Our results indicate that
for a
  subset $U$ in $F^1_{USCG}   (  X )^p$,
$U$ is total bounded (respectively, relatively
compact, compact) in $ (F^1_{USCG}   (  X )^p,   d_p)$
if and only if $U$ is total bounded (respectively, relatively
compact, compact) in $(F^1_{USCG} (X), H_{\rm end})$
and $U$ is $p$-mean equi-left-continuous.

We have shown that
 $ (F^1_{USCG}   (\widetilde{X}   )^p, d_p)$ is a completion of  $ (F^1_{USCB}   (  X ), d_p)$,
and thus a completion of $ (F^1_{USCG}   (  X )^p, d_p)$.

We believe that
the results of this paper have potential applications in the work relevant to $d_p$ distance on fuzzy sets.

This paper is essentially the paper chinaXiv:202110.00083v7 submitted to http://chinaxiv.org/ on 2022.06.20. Compared to the latter, we add
Theorem \ref{regn} and
Theorem \ref{pfn} in this paper. Theorem \ref{pfn} is an immediately corollary of
 Theorem \ref{pfne}.
 In additional, we make some minor adjustments.

\section*{Acknowledgement}

The author would like to thank the Area Editor and the four anonymous referees
for their invaluable comments and suggestions
which improve the readability of this paper.

\end{document}